\documentclass[11pt,reqno,tbtags,draft]{amsart}
\usepackage{amssymb}
\usepackage{url}
\usepackage[square,numbers]{natbib}
\usepackage{bm}
\bibpunct[, ]{[}{]}{;}{n}{,}{,}

\title[Positive autocorrelation:\ random walk Metropolis--Hastings]
{Positive autocorrelation at unit lag for stationary random walk Metropolis--Hastings in $\bbR^d$
}

\newcommand\urladdrx[1]{{\urladdr{\def~{{\tiny$\sim$}}#1}}}
\author{James Allen Fill}
\address{Department of Applied Mathematics and Statistics,
The Johns Hopkins University,
3400 N.~Charles Street,
Baltimore, MD 21218-2682 USA}
\email{jimfill@jhu.edu}
\urladdrx{http://www.ams.jhu.edu/~fill/}
\thanks{Research for the first author supported by
the Acheson~J.~Duncan Fund for the Advancement of Research in
Statistics.}

\author{Svante Janson}
\thanks{Research for the second author supported by the Knut and Alice Wallenberg Foundation
and
the Swedish Research Council (VR)
}
\address{Department of Mathematics, Uppsala University, PO Box 480,
SE-751~06 Uppsala, Sweden}
\email{svante.janson@math.uu.se}
\urladdrx{http://www2.math.uu.se/~svantejs}

\makeatletter
\@namedef{subjclassname@2020}{\textup{2020} Mathematics Subject Classification}
\makeatother

\keywords{Random walk Metropolis--Hastings, covariance, correlation, total variation distance, unimodal density, Gauss's inequality, Winkler--Camp--Meidell inequality}
\subjclass[2020]{Primary:\ 60J05; Secondary:\ 60J22, 60E15}

\overfullrule 0pt 


\numberwithin{equation}{section}

\allowdisplaybreaks





\theoremstyle{plain}
\newtheorem{theorem}{Theorem}[section]
\newtheorem{lemma}[theorem]{Lemma}
\newtheorem{proposition}[theorem]{Proposition}
\newtheorem{corollary}[theorem]{Corollary}
\newtheorem{conj}[theorem]{Conjecture}

\theoremstyle{definition}
\newtheorem{example}[theorem]{Example}

\newtheorem{remark}[theorem]{Remark}

\theoremstyle{remark}

\newenvironment{romenumerate}[1][-10pt]{
\addtolength{\leftmargini}{#1}\begin{enumerate}
 }{\end{enumerate}}

\newenvironment{alphiienumerate}[1][-3pt]{
\addtolength{\leftmarginii}{#1}\begin{enumerate}

 }{\end{enumerate}}

\newcounter{oldenumi}
{\setcounter{oldenumi}{\value{enumi}}
\begin{romenumerate} \setcounter{enumi}{\value{oldenumi}}}
{\end{romenumerate}}

\newcounter{thmenumerate}

\newcounter{xenumerate}   



\newcommand{\refT}[1]{Theorem~\ref{#1}}
\newcommand{\refC}[1]{Corollary~\ref{#1}}
\newcommand{\refL}[1]{Lemma~\ref{#1}}
\newcommand{\refR}[1]{Remark~\ref{#1}}
\newcommand{\refS}[1]{Section~\ref{#1}}

\newcommand{\refP}[1]{Proposition~\ref{#1}}

\newcommand{\refE}[1]{Example~\ref{#1}}

\newcommand{\refApp}[1]{Appendix~\ref{#1}}
\newcommand{\refConj}[1]{Conjecture~\ref{#1}}

\begingroup
  \count255=\time
  \divide\count255 by 60
  \count1=\count255
  \multiply\count255 by -60
  \advance\count255 by \time
  \ifnum \count255 < 10 \xdef\klockan{\the\count1.0\the\count255}
  \else\xdef\klockan{\the\count1.\the\count255}\fi
\endgroup

\newcommand\nopf{\qed}   



\newcommand{\sumnz}{\sum_{n\in\bbZ}}

\newcommand\set[1]{\ensuremath{\{#1\}}}

\newcommand\bigpar[1]{\bigl(#1\bigr)}

\newcommand\bigsqpar[1]{\bigl[#1\bigr]}
\newcommand\Bigsqpar[1]{\Bigl[#1\Bigr]}

\newcommand\xcpar[1]{\{#1\}}
\newcommand\bigcpar[1]{\bigl\{#1\bigr\}}

\def\rompar(#1){\textup(#1\textup)}    

\newcommand\innprod[1]{\langle#1\rangle}

\def\xexp(#1){e^{#1}}
\newcommand\ceil[1]{\lceil#1\rceil}
\newcommand\floor[1]{\lfloor#1\rfloor}

\newcommand\norm[1]{\|#1\|}

\newcommand\punkt{.\spacefactor=1000}    

\newcommand\ie{i.e\punkt}

\newcommand{\as}{a.s\punkt}


\newcommand\eqd{\overset{\mathrm{d}}{=}}

\newcommand\bbR{\mathbb R}

\newcommand\bbN{\mathbb N}

\newcommand\bbZ{\mathbb Z}

\newcounter{CC}
\newcounter{cc}

\newcommand\E{\operatorname{\mathbb E{}}}
\renewcommand\P{\operatorname{\mathbb P{}}}
\renewcommand\L{\operatorname{L}}
\newcommand\Var{\operatorname{Var}}
\newcommand\Cov{\operatorname{Cov}}
\newcommand\trace{\operatorname{tr}}
\newcommand\trCov{\operatorname{trCov}}
\newcommand\trVar{\operatorname{trVar}}
\newcommand\trCorr{\operatorname{trCorr}}
\newcommand\Corr{\operatorname{Corr}}

\newcommand\sgn{\operatorname{sgn}}

\newcommand\ga{\alpha}
\newcommand\gb{\beta}

\newcommand\gD{\Delta}

\newcommand\gam{\gamma}

\newcommand\gl{\lambda}

\newcommand\gs{\sigma}

\newcommand\gth{\theta}
\newcommand\eps{\varepsilon}

\newcommand\bX{\overline X}
\newcommand{\bXX}{\overline \XX}
\newcommand{\bpi}{\overline \pi}
\newcommand{\bphi}{\overline \phi}
\newcommand{\hrho}{\hat{\rho}}
\newcommand{\trho}{\tilde{\rho}}

\newcommand\cL{{\mathcal L}}

\newcommand\bb{\mathbf b}
\newcommand\cc{\mathbf c}   
\newcommand\ccc{\mathbf c}   
\newcommand\ee{\mathbf e}
\newcommand\ff{\mathbf f}

\newcommand\mm{\mathbf m}

\newcommand\xx{\mathbf x}
\newcommand\yy{\mathbf y}
\newcommand\zz{\mathbf z}

\newcommand\VV{\mathbf V}

\newcommand\XX{\mathbf X}
\newcommand\YY{\mathbf Y}
\newcommand\ZZ{\mathbf Z}

\newcommand\zzero{\mathbf 0}

\newcommand\indic[1]{\boldsymbol1\xcpar{#1}}

\newcommand\etta{\boldsymbol1}

\newcommand\qq{^{1/2}}

\newcommand\oi{[0,1]}

\newcommand\dtv{d_{\mathrm{TV}}}

\newcommand\dd{\,\mathrm{d}}

\newcommand\lhs{left-hand side}
\newcommand\rhs{right-hand side}

\newcommand\tK{{\widetilde K}}
\newcommand\tM{{\widetilde M}}

\newcommand\tX{{\widetilde X}}

\newcommand\tY{{\widetilde Y}}
\newcommand\tYY{{\widetilde \YY}}

\newcommand\hpi{\hat{\pi}}

\newcommand\hX{\widehat X}

\newcommand\hXX{\widehat \XX}





\newcommand\tb{\tilde b}
\newcommand\tg{\tilde g}
\newcommand\tth{\tilde h}

\newcommand\doi{D_{01}}

\newcommand\tr{\tilde r}


\newcommand{\ignore}[1]{}
\newcommand\tf{\tilde f}

\newcommand\pif{f}
\newcommand\tpif{\tf}





\hyphenation{Upp-sala}
\hyphenation{super-additive}


%
%
\usepackage{tikz}
\usepackage{amssymb}
\usetikzlibrary{arrows,positioning}
\usepackage[english]{babel}
\usepackage{pgfplotstable}
\usepackage{pgfplots}
\usepackage{adjustbox}
\pgfplotsset{compat=1.3}

\begin{document}

\date{January~27, 2026}

\begin{abstract}
It is often asserted in the literature that one should expect positive autocorrelation for random walk Metropolis--Hastings (RWMH), especially if the typical proposal step-size is small relative to the variability in the target density.  In this paper, we consider a stationary RWMH chain~$\XX$ taking values in 
$d$-dimensional Euclidean space and (subject only to the existence of densities with respect to Lebesgue measure) with general target distribution having finite second moment and general proposal random walk step-distribution.  We prove, for any nonzero vector~$\ccc$, strict positivity of the autocorrelation function at unit lag for the stochastic process $\langle \ccc, \XX \rangle$, that is,
\[
\Corr(\langle \ccc, \XX_0 \rangle, \langle \ccc, \XX_1 \rangle) > 0,
\]
and we establish the same result, but with weak inequality (which can in some cases be equality) when the state space for~$\XX$ is changed to the integer grid $\bbZ^d$.
Further, for $\ccc \neq \zzero$ we establish the sharp lower bound
\[
\Corr(\langle \ccc, \XX_0 \rangle, \langle \ccc, \XX_1 \rangle) > \tfrac19
\]
on autocorrelation when we assume both that (i)~the target density~$\pi$ is spherically symmetric and unimodal in the specific sense that $\pi(\xx) = \hpi(\|\xx\|)$ for some nonincreasing function $\hpi$ on $[0, \infty)$ and that (ii)~the proposal step-density is symmetric about~$\zzero$.

We study the autocorrelation indirectly, by considering the incremental
variance function (or incremental second-moment function) at unit lag.  The
same approach allows us also for $r \in [2, \infty)$ to upper-bound the
incremental $r$th-absolute-moment function at unit lag.  

We give also closely related inequalities for the total variation distance
between 
two distributions on $\bbR^d$ differing
only by a location shift.
\end{abstract}

\maketitle

\newpage

\subsection*{Outline of paper.}
In \refS{S:setup} we 
set up the random walk
Metropolis--Hastings chain~$\XX$ and discuss absolute moments in general
finite dimension~$d$.  In \refS{S:poscorr}, for~$\XX$ we prove strictly
positive unit-lag auto-trace-correlation as defined in \refS{S:setup}; the
main results of the paper (\refT{T:main} and \refC{C:maincov}), in 
a suitably absolutely continuous setting, are found in that section.
Relating
auto-trace-correlation to the more familiar autocorrelation for (nonzero) linear
combinations, we prove strict positivity at unit lag for the latter in
\refS{S:poslin}; see \refT{T:poslin} and \refC{C:poslin}.  
If we restrict to suitable unimodal target distributions and
symmetric proposal step-densities, we can sharpen the lower bound on autocorrelation; this is the subject of \refS{S:unimodaltarget}.
Results in a discrete setting analogous to those in Sections~\ref{S:poscorr}--\ref{S:unimodaltarget} are presented in \refS{S:I}.
Sections~\ref{S:setup}--\ref{S:I} all concern unit lag; in \refS{S:even} we prove an easy and much stronger result for even-order lags.  Appendices
\ref{A:TVLB}--\ref{A:TVLBZ} establish generalized moment lower bounds on
total variation distance between two distributions on $\bbR^d$ differing
only by a location shift; such bounds are central to the proof of the main
\refT{T:main} and its discrete analogue \refT{T:Imain}.  \refApp{A:more TV}
improves one of the main results of \refApp{A:TVLB}, \refC{C:TVLB}, in two
different ways.  \refApp{A:symm}  discusses the effect of symmetrization of either the target distribution or the proposal step-distribution on incremental $r$th absolute moment.  \refApp{A:IRWMHunimodalr} treats the case of symmetric discrete unimodal target and odd proposal steps.

\subsection*{Acknowledgment and closely related work.}
We are grateful to Jingyi Zhang and James~C.\ Spall for introducing us to consideration of positive autocorrelation at unit lag for stationary random walk Metropolis--Hastings, for providing us (at the same time) with a derivation of the $1$-dimensional instance of \refC{C:SMHcov}, and for pointing out (as we do in our \refE{E-}; see also 
\cite[penultimate paragraph of Section~1]{Zhang(2025)}) that the autocorrelation at unit lag can be strictly negative for a stationary Metropolis--Hastings chain that is not a random walk Metropolis--Hastings chain.  The main result, Theorem~3.5, of~\cite{Zhang(2025)}, obtained with the assistance of the first author of the present paper, gives an alternative expression to our~\eqref{covsymmuni} and a proof of positive correlation in the setting of our \refC{C:symmunimodal}.

Our paper does not discuss at all the question of optimal proposal design discussed in 
\cite[Sections 4--5]{Zhang(2025)}.

\section{The Metropolis--Hastings set-up and absolute moments} \label{S:setup}

\subsection*{Outline of \refS{S:setup}. }
\refS{SS:setting} 
sets up the stationary Metropolis--Hastings chain~$\XX$.  In \refS{SS:moments} we discuss moments for random vectors, introducing some apparently new terminology along the way.  In Sections \ref{SS:SMHcov}--\ref{SS:SRWMHsymmphicov} we write down expressions for our moments of interest, first under quite minimal assumptions about~$\XX$, then (as is our main interest) when the proposal chain has a random walk structure, and finally when the proposal chain has a symmetric random walk structure.

\subsection{The setting} \label{SS:setting}

Dongarra and Sullivan~\cite{DS(2000)} cite ``Metropolis Algorithm for Monte Carlo'' \cite{MRRTT(1953)} as one of the ten algorithms ``with the greatest influence on the development and practice of science and engineering in the 20th century.''  This algorithm, which plays a central role in Markov Chain Monte Carlo (MCMC) and (among other various uses) is widely used both in discrete settings in combinatorial probability and computer science and in continuous settings in Bayesian inference, certainly has not declined in importance in the 21st century; indeed, for example, a MathSciNet search for the word ``Metropolis'' appearing in any search field for the years 2000--2025 conducted on August~27,~2025 returned 3,978 hits.  The algorithm, generalized by Hastings~\cite{Hastings(1970)}, provides a method for converting a given ``proposal'' Markov chain into one with a specified ``target'' stationary distribution, and the so-converted chain in practice is used for (usually approximate) sampling from the target distribution.  (Working in a discrete setting, Metropolis et al.~\cite{MRRTT(1953)} assumed that the proposal transition matrix is symmetric; Hastings~\cite{Hastings(1970)} removed this restriction.) 
Briefly (see below for more details), the Metropolis--Hastings (MH) algorithm
is a Markov chain (the MH chain) where, given a state~$\xx$, we first sample
a state~$\yy$ from the given proposal distribution $Q(\xx,\cdot)$, then
compute an acceptance probability $\ga(\xx,\yy)$, and finally move to state~$\yy$ 
with this probability $\ga(\xx,\yy)$, otherwise remaining at~$\xx$.

We will set up a Metropolis--Hastings (MH) chain $\XX = (\XX_t)_{t \geq 0}$
in discrete time, assuming, unless otherwise specified, the existence of
certain densities, as we will explain; we will also (later, separately, see
\refR{R:discrete set-up} and \refS{S:I}) 
discuss a discrete analogue. 
We will specialize to \emph{random walk} MH (RWMH) a bit later, in \refS{SS:SRWMHcov}.  Our MH setting, to be spelled out in full here, is essentially that of \cite[Sections 2.3.1 and 2.3.2 (for MH and RWMH, respectively)]{Tierney(1994)}, an important special case of \cite{Tierney(1998)} (see case~1, ``Common dominating measure'', of Section~2 therein), taking the dominating measure (called~$\mu$ in~\cite{Tierney(1994)} and~$\nu$ in~\cite{Tierney(1998)}) to be Lebesgue measure on $\bbR^d$. 

We take the state space for~$\XX$ to be $\bbR^d$ (for some positive integer~$d$) with its usual inner (or dot) product $\langle \cdot, \cdot \rangle$ and norm $\| \cdot \|$, equipped with its Borel $\gs$-algebra.

Unless otherwise specified, 
{\bf we assume that 
the target distribution is absolutely continuous and has density~$\pi$} 
with respect to Lebesgue measure $\gl \equiv \gl_d$ on~$\bbR^d$.  
Similarly,
we assume that all the transition proposal distributions $Q(\xx, \cdot)$
[where $Q(\xx, B)$ is the transition proposal density from~$x$ to the Borel
set~$B$] have densities $q(\xx, \cdot)$ with respect to~$\gl$.  We assume
joint measurability of $q(\xx, \yy)$ in $(\xx, \yy)$.

We write
\begin{equation}
\label{ga}
\ga(\xx, \yy) = 
\begin{cases}
\min\left\{ 1, \frac{\pi(\yy) q(\yy, \xx)}{\pi(\xx) q(\xx, \yy)} \right\}, & \mbox{if $\pi(\xx) q(\xx, \yy) > 0$,} \\
1, & \mbox{if $\pi(\xx) q(\xx, \yy) = 0$} 
\end{cases}
\end{equation}
for the probability of accepting a proposed transition from~$\xx$ to~$\yy$.  If the proposed transition 
from~$\xx$ is rejected, the chain~$\XX$ remains 
at state~$\xx$.

The transition distribution $K(\xx, \cdot)$ for the MH chain from state~$\xx$ is thus a mixture of a density $k(\xx, \cdot)$ with respect to~$\gl$ and unit mass at~$\xx$, where
\begin{align}
k(\xx, \yy) 
&= q(\xx, \yy) \ga(\xx, \yy) \nonumber \\ 
&= 
\begin{cases}
\min\left\{ q(\xx, \yy), \frac{\pi(\yy) q(\yy, \xx)}{\pi(\xx)} \right\}, & \mbox{if $\pi(\xx) q(\xx, \yy) > 0$,} \\
q(\xx, \yy), & \mbox{if $\pi(\xx) q(\xx, \yy) = 0$.}
\end{cases}  
\label{k} 
\end{align}
It is a key observation for the MH algorithm that~$K$ is reversible with respect to~$\pi$ and thus has~$\pi$ as a stationary distribution; indeed, for $\xx \neq \yy$ we have
\begin{equation}
\pi(\xx) k(\xx, \yy)
= \min\left\{ \pi(\xx) q(\xx, \yy), \pi(\yy) q(\yy, \xx) \right\} \\
= \pi(\yy) k(\yy, \xx).  
\end{equation}
It is also important when using the MH algorithm for $\pi$-sampling purposes that the kernel~$K$ be appropriately irreducible (see, for example, discussion in \cite[Section~3]{Tierney(1994)}), but irreducibility conditions will play no role in this paper and we do not assume any such condition. 

By a \emph{stationary} MH chain, we will mean that $\XX_0$ (and hence also every $\XX_t$) has the density~$\pi$ with respect to~$\gl$.  {\bf We assume stationarity of the chain~$\XX$ henceforth.}  Thus the joint distribution of $\XX_0$ and $\XX_1$ puts some mass on the diagonal $\XX_0 = \XX_1$, and the remainder of the mass has joint density
\begin{equation}
\pi(\xx) k(\xx, \yy)
= \min\left\{ \pi(\xx) q(\xx, \yy), \pi(\yy) q(\yy, \xx) \right\}
\end{equation}
at pairs $(\xx, \yy)$ with $\xx \neq \yy$.  In particular, $\XX_0$ and $\XX_1$ form an exchangeable pair.

\begin{remark}
As noted by Peskun~\cite{Peskun(1973)} (see also the last paragraph of \cite[Section~3]{Tierney(1998)}), for any $\pi$-reversible Markov kernel based on acceptance of the same proposed $Q$-transitions from~$\xx$ to~$\yy$ now with probabilities $\gb(\xx, \yy)$ we have $\gb(\xx, \yy) \leq \ga(\xx, \yy)$ for all $(\xx, \yy)$ with $\xx \neq \yy$.

All the results of this paper will be based on upper bounds on expressions involving nonnegative multiples of the acceptance probabilities $\ga(\xx, \yy)$ with $\xx \neq \yy$ and so also hold when~$\ga$ is replaced 
by~$\gb$.
\end{remark}

\begin{remark}
\label{R:discrete set-up}
The absolutely continuous setting described in this subsection will be the focus of our main results, but we can also establish analogous results in the \emph{integer-grid Metropolis--Hastings} (IMH) setting,
where the state space for~$\XX$ is the integer grid $\bbZ^d$ (for some
positive integer~$d$), we write~$\pi$ for the target probability mass
function, $q$ is a (proposal) transition matrix on $\bbZ^d$, the acceptance
probability is again given by the expression~\eqref{ga}, and the IMH
transition matrix has off-diagonal elements given by~\eqref{k}.  See
\refS{S:I}
and \refApp{A:IRWMHunimodalr}.

This is another important special case of \cite{Tierney(1998)} (see case~1, ``Common dominating measure'', of Section~2 therein), now taking the dominating measure to be counting measure on $\bbZ^d$. 
\end{remark}

\subsection{Moments for random vectors} \label{SS:moments}

Recall for a real-valued random variable~$Y$ that the mean $\E Y \in \bbR$ is well defined when $\E |Y| < \infty$, and, for 
$r \in (0, \infty)$ that $Y \in \L^r$ means that $\E |Y|^r < \infty$.

For a random vector $\YY = (Y_1, \ldots, Y_d)$, the mean vector $\E \YY \in \bbR^d$ is defined as the vector 
$(\E Y_1, \ldots, \E Y_d)$ of means when each $Y_i \in L^1$; we will write $\YY \in L^1$ for this condition, which is equivalent to $\E \| \YY \| < \infty$.  If $\YY \in L^1$, then for any 
measurable mapping~$W$ we have the following ``law of total expectation'':
\begin{equation}
\label{LoTE}
\E \YY = \E [\E(\YY \mid W)].
\end{equation} 

Similarly, given $r \in (0, \infty)$ one 
can show that $\YY \in L^r$, which we define as meaning that each $Y_i \in L^r$, if and only if $\E \| \YY \|^r$, which we will call the \emph{$r$th absolute moment of~$\YY$}, is finite.  If $\VV = (\VV_t)_{t \geq 0}$ is (like a stationary MH chain) a (strict-sense) stationary stochastic process with $\VV_0 \in L^r$, we will call 
$\E \| \VV_{s + t} - \VV_s \|^r \equiv \E \| \VV_t - \VV_0 \|^r$ its \emph{incremental $r$th-absolute-moment function} at \emph{lag}~$t$. 

If~$\YY$ and~$\tYY$ are two $d$-dimensional random vectors in $L^2$, their cross-covariance matrix $\Cov(\YY, \tYY)$ is the $d \times d$ matrix with $(i, j)$-entry equal to $\Cov(Y_i, \tY_j)$.  We then define the \emph{trace-covariance} 
$\trCov(\YY, \tYY)$ of~$\YY$ and~$\tYY$ to be the trace of their cross-covariance matrix, namely, the real number
\begin{align}
\trCov(\YY, \tYY) 
&= \trace \Cov(\YY, \tYY) 
= \sum_{i = 1}^d \Cov(Y_i, \tY_i) \notag\\
&= \E \langle \YY - \E \YY,\,\tYY - \E \tYY \rangle
= \E \langle \YY, \tYY \rangle - \langle \E \YY, \E \tYY \rangle
\end{align}
and the \emph{trace-variance} $\trVar \YY \in [0, \infty)$ of~$\YY$ to be the trace of the covariance matrix of $\YY \in L^2$:
\begin{align}\label{trVar}
  \trVar \YY = \trCov(\YY, \YY) = \sum_{i = 1}^d \Var Y_i = \E \| \YY - \E \YY \|^2 = \E \| \YY \|^2 - \|\!\E \YY \|^2. 
\end{align}
There is a simple relationship among trace-covariance, trace-variance, and second absolute moment, which is simplest when [as is true when $(\YY, \tYY) = (\XX_0, \XX_1)$] the two random vectors~$\YY$ and~$\tYY$ involved are assumed to have the same distribution, resulting in that case in
\begin{align}
\trCov(\YY, \tYY) 
&= \sum_{i = 1}^d \Cov(Y_i, \tY_i)
= \frac12 \sum_{i = 1}^d [2 \Var Y_i - \E (Y_i - \tY_i)^2] \nonumber \\
&= \frac12 (2 \trVar \YY - \E \| \YY - \tYY \|^2). \label{Pythagoras}  
\end{align}
Further, for any measurable mapping~$W$ we have the following ``law of total trace-covariance'':
\begin{equation}
\label{LoTC}
\trCov(\YY, \tYY) = \E \trCov(\YY, \tYY\,|\,W) + \trCov(\E(\YY\,|\,W), \E(\tYY\,|\,W)).
\end{equation}

Observe that if $\YY, \tYY \in L^2$, then
\begin{align}
| \trCov(\YY, \tYY) | 
&= |\!\E \langle \YY - \E \YY,\,\tYY - \E \tYY \rangle |
\leq \E | \langle \YY - \E \YY,\,\tYY - \E \tYY \rangle | \notag\\
&\leq \E (\| \YY - \E \YY \| \times \| \tYY - \E \tYY \|) \notag\\
&\leq (\E \| \YY - \E \YY \|^2 \times \E \| \tYY - \E \tYY \|^2)^{1/2} \notag\\
&= (\trVar \YY)^{1/2} (\trVar \tYY)^{1/2}. 
\end{align}
We are therefore led to define the \emph{trace-correlation} $\trCorr(\YY, \tYY)$ of $\YY, \tYY \in L^2$, when neither has a degenerate distribution, as
\begin{equation}
\label{corrdef}
\trCorr(\YY, \tYY) := \frac{\trCov(\YY, \tYY)}{(\trVar \YY)^{1/2} (\trVar \tYY)^{1/2}} \in [-1, 1].
\end{equation}
We caution that $\trCorr(\YY, \tYY)$ is \emph{not} in general the trace of the cross-correlation matrix of~$\YY$ and~$\tYY$.
Note that if~$\YY$ and~$\tYY$ are independent, then $\trCorr(\YY, \tYY) = 0$.  At the other extreme, 
$\trCorr(\YY, \tYY) = \pm 1$ if and only if $\tYY = a \YY + \bb$ almost surely for some $a \in \bbR$ with $\sgn a = \pm1$ and some $\bb \in \bbR^d$.

If $\VV = (\VV_t)_{t \geq 0}$ is (like a stationary MH chain) a (weak-sense) stationary stochastic process (with 
$\VV_0 \in L^2$ and $\trVar \VV_0 > 0$), we will call 
\begin{equation}
\trCov(\VV_s, \VV_{s + t}) \equiv \trCov(\VV_0, \VV_t)
\end{equation} 
its \emph{auto-trace-covariance function}, or, more simply, its \emph{autocovariance function}, and we will call 
\begin{equation}
\trCorr(\VV_s, \VV_{s + t}) \equiv \trCorr(\VV_0, \VV_t)
\end{equation}
its \emph{auto-trace-correlation function}, or, more simply, its \emph{autocorrelation function}, at \emph{lag}~$t$.

We will often assume that~$\pi$ has finite trace-variance $\gs^2$.  That is:\ {\bf We assume often that $\XX_0 \in L^2$ with $\gs^2 := \trVar \XX_0 \in (0, \infty)$}, the strict positivity of $\gs^2$ following from the existence of the 
density~$\pi$.  
At times we will make a more general assumption, namely, that for some particular value of $r \in (0, \infty)$ we have $\XX_0 \in L^r$.

A special case (\refC{C:maincov}) of our main theorem will show
that the autocorrelation at unit lag is always strictly positive for  a (stationary) RWMH chain with $\XX_0 \in L^2$.

\begin{example}\label{E-}
The
autocorrelation at unit lag can be strictly negative for an MH chain (with $\XX_0 \in L^2$) that is
not an RWMH chain.  
Consider first the following extreme discrete (IMH) counterexample (recall
\refR{R:discrete set-up}), a simple instance of case~2,
``Deterministic proposals'', of \cite[Section~2]{Tierney(1998)}.
Let $d = 1$ (although the counterexample is easily extended to any~$d$) and
consider the (now discrete) densities
\begin{align}
\pi(x)&
:=
\begin{cases}
\frac12, & \mbox{if $x \in \set{0, 1}$,} \\
0, & \mbox{otherwise;}
\end{cases}
\\
q(x, y)&
:=
\begin{cases}
1, & \mbox{if $y = 1 - x$,} \\
0, & \mbox{otherwise.}
\end{cases}
\end{align}
Then $\P(X_0 = 0, X_1 = 1) = \P(X_0 = 1, X_1 = 0) = 1/2$, so $\Corr(X_0, X_1) = -1$.
We may also obtain a counterexample with absolutely continuous target and
proposal distributions, as assumed  above, by perturbing this discrete example.
For example, let $X_0$ be uniform on $(-\eps,\eps)\cup(1-\eps,1+\eps)$,
and let $Q(x,\cdot)$ be uniform on $(1-\eta,1+\eta)$ for $x<\frac12$, and 
uniform on $(-\eta,\eta)$ for $x>\frac12$, with small $\eps,\eta>0$. 
Then, letting first $\eta\to0$ and then $\eps\to0$, we have
$\Corr(X_0, X_1) \to -1$.
\end{example}

(Strictly) positive autocorrelation at unit lag is something of an indicator of positive dependence between 
$\XX_0$ and $\XX_1$.  Another such indicator, for any given $r \in (0, \infty)$, is that
\begin{equation}
\label{ratio}
\frac{\E \| \XX_1 - \XX_0 \|^r}{\E \| \XX_0^* - \XX_0 \|^r} < 1,
\end{equation}
where $\XX_0^*$ is an independent copy of $\XX_0$.  Note that when $r = 2$, the inequality~\eqref{ratio} is equivalent to positive autocorrelation by~\eqref{Pythagoras}.

\subsection{An incremental $r$th-absolute-moment formula for general MH chains} \label{SS:SMHcov}

The following formula is immediate from \eqref{k}.

\begin{proposition}
\label{P:SMHr}
In the general MH
setting of \refS{SS:setting}, for $r \in (0, \infty)$ we 
have
\begin{align}
\lefteqn{\hspace{-.15in}\E \| \XX_1 - \XX_0 \|^r} \nonumber \\ 
&= \int\!\int\!\|\xx - \yy\|^r \min\{\pi(\xx) q(\xx, \yy), \pi(\yy) q(\yy, \xx)\} \dd \yy \dd \xx. \label{SMHr}
\end{align}
\nopf
\end{proposition}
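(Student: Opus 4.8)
The plan is to split the expectation according to whether $\XX_1$ equals $\XX_0$, using the explicit decomposition of the law of the pair $(\XX_0, \XX_1)$ established when setting up the stationary chain. Recall that under stationarity the joint law of $(\XX_0, \XX_1)$ decomposes into a singular part carried by the diagonal $\set{\XX_0 = \XX_1}$ (the accumulated mass of rejected proposals) together with an absolutely continuous part on $\bbR^d \times \bbR^d$ whose density at $(\xx, \yy)$ is $\pi(\xx) k(\xx, \yy)$. Accordingly I would first write
\[
\E \| \XX_1 - \XX_0 \|^r = \E\bigsqpar{\| \XX_1 - \XX_0 \|^r \indic{\XX_0 = \XX_1}} + \E\bigsqpar{\| \XX_1 - \XX_0 \|^r \indic{\XX_0 \neq \XX_1}}.
\]

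Since $r > 0$, the integrand $\| \XX_1 - \XX_0 \|^r$ is zero precisely when $\XX_0 = \XX_1$, so the first (diagonal) term vanishes; this is the only place the hypothesis $r > 0$ is used. For the second term I would integrate $\|\xx - \yy\|^r$ against the absolutely continuous part of the joint law, obtaining
\[
\E\bigsqpar{\| \XX_1 - \XX_0 \|^r \indic{\XX_0 \neq \XX_1}} = \int\!\int_{\xx \neq \yy} \|\xx - \yy\|^r\, \pi(\xx) k(\xx, \yy) \dd \yy \dd \xx.
\]
By \eqref{k} we have $\pi(\xx) k(\xx, \yy) = \min\{\pi(\xx) q(\xx, \yy), \pi(\yy) q(\yy, \xx)\}$ (in the remaining case of \eqref{k}, where $\pi(\xx) q(\xx, \yy) = 0$, the minimum is forced to $0$ as well, so the identity persists), and hence the integrand becomes $\|\xx - \yy\|^r \min\{\pi(\xx) q(\xx, \yy), \pi(\yy) q(\yy, \xx)\}$.

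Finally, because this integrand vanishes on the diagonal $\set{\xx = \yy}$ and the diagonal is $\gl \otimes \gl$-null in $\bbR^d \times \bbR^d$, I can enlarge the region of integration from $\set{\xx \neq \yy}$ to all of $\bbR^d \times \bbR^d$ without changing the value, which is exactly \eqref{SMHr}. There is essentially no obstacle here: the integrand is nonnegative, so Tonelli's theorem renders the iterated integral unambiguous and makes the identity valid in $[0, \infty]$ with no a priori integrability assumption on $\XX_0$. The only point requiring (minor) care is the vanishing of the diagonal contribution, and that is precisely where positivity of~$r$ enters.
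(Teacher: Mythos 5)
Your proof is correct and is essentially the paper's own argument made explicit: the paper declares \eqref{SMHr} ``immediate from \eqref{k},'' relying on the joint-law decomposition already recorded in \refS{SS:setting} (singular mass on the diagonal plus the off-diagonal density $\pi(\xx)k(\xx,\yy)=\min\{\pi(\xx)q(\xx,\yy),\pi(\yy)q(\yy,\xx)\}$), which is exactly the decomposition you use. Your additional observations --- that the diagonal term vanishes because $r>0$, that the identity for $\pi(\xx)k(\xx,\yy)$ persists when $\pi(\xx)q(\xx,\yy)=0$, and that Tonelli makes the identity valid in $[0,\infty]$ --- are precisely the details left implicit in the paper's one-line justification.
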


The following covariance formula is immediate from \refP{P:SMHr} (with $r = 2$) and~\eqref{Pythagoras}.

\begin{corollary}
\label{C:SMHcov}
In the general MH
setting of \refS{SS:setting}, if $\XX_0 \in L^2$ then
\begin{align}
\lefteqn{\hspace{-.15in}\trCov(\XX_0, \XX_1)} \nonumber \\ 
&= \frac12 \left[ 2 \gs^2 - \int\!\int\!\|\xx - \yy\|^2 \min\{\pi(\xx) q(\xx, \yy), \pi(\yy) q(\yy, \xx)\} \dd \yy \dd \xx \right]. \label{SMHcov}
\end{align}
\nopf
\end{corollary}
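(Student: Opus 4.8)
The plan is to obtain \eqref{SMHcov} by combining two facts already recorded in the excerpt, with essentially no further work. First I would invoke \refP{P:SMHr} with $r=2$ to write
\[
\E\|\XX_1-\XX_0\|^2=\int\!\int\!\|\xx-\yy\|^2\min\{\pi(\xx)q(\xx,\yy),\pi(\yy)q(\yy,\xx)\}\dd\yy\dd\xx.
\]
Next I would apply the identity \eqref{Pythagoras} to the exchangeable pair $(\YY,\tYY)=(\XX_0,\XX_1)$. This application is legitimate because stationarity guarantees that $\XX_0$ and $\XX_1$ share the common distribution with density $\pi$ — exactly the ``same distribution'' hypothesis under which \eqref{Pythagoras} was established — and because $\trVar\XX_0=\gs^2$ by the standing assumption $\XX_0\in L^2$. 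Thus \eqref{Pythagoras} reads
\[
\trCov(\XX_0,\XX_1)=\tfrac12\bigl(2\gs^2-\E\|\XX_0-\XX_1\|^2\bigr),
\]
and substituting the integral expression for $\E\|\XX_1-\XX_0\|^2$ into the right-hand side produces \eqref{SMHcov} verbatim.

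The only point I would pause on is finiteness: before performing the subtraction in \eqref{Pythagoras} I should confirm that the double integral, namely $\E\|\XX_1-\XX_0\|^2$, is finite, so that the right-hand side is a well-defined real number rather than a difference of infinities. This is immediate from $\XX_0\in L^2$ together with stationarity (whence $\XX_1\in L^2$ as well): the elementary bound $\|\XX_1-\XX_0\|^2\le 2\|\XX_0\|^2+2\|\XX_1\|^2$ gives $\E\|\XX_1-\XX_0\|^2\le 4\E\|\XX_0\|^2<\infty$.

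I do not expect any genuine obstacle here. The corollary is a one-line consequence of \refP{P:SMHr} and \eqref{Pythagoras}, and the derivation is a direct substitution carrying no new idea — which is precisely why the authors flag it as ``immediate.'' The only substantive content has already been absorbed into \refP{P:SMHr} (the min-form of the joint subdensity from~\eqref{k}) and into the stationarity-based identity \eqref{Pythagoras}, so once those are in hand the formula \eqref{SMHcov} follows mechanically.
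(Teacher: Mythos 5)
Your proof is correct and follows exactly the paper's route: the authors also derive \eqref{SMHcov} by taking $r=2$ in \refP{P:SMHr} and substituting into the identity \eqref{Pythagoras}, which applies since $\XX_0$ and $\XX_1$ share the stationary distribution $\pi$. Your added remark on finiteness of $\E\|\XX_1-\XX_0\|^2$ is a harmless (and sound) extra check that the paper leaves implicit.
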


\subsection{An incremental $r$th-absolute-moment formula for general
\mbox{RWMH} 
chains}\label{SS:SRWMHcov}

If the transition proposal densities $q(\xx, \yy)$ are of the \emph{random walk form} $\phi(\yy - \xx)$ for some 
density~$\phi$, 
then the chain is 
called \emph{random walk Metropolis--Hastings} (RWMH).

\begin{proposition}
\label{P:SRWMHr}
For RWMH and $r \in (0, \infty)$ we have
\begin{equation}
\label{SRWMHr}
\E \| \XX_1 - \XX_0 \|^r
= \int\!\|\zz\|^r \int\!\min\{\pi(\xx) \phi(-\zz), \pi(\xx - \zz) \phi(\zz)\} \dd \xx \dd \zz.
\end{equation} 
\end{proposition}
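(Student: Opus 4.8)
The plan is to derive \eqref{SRWMHr} directly from the general incremental-moment formula \eqref{SMHr} of \refP{P:SMHr} by specializing to the random walk form and then performing a single change of variables. First I would substitute $q(\xx, \yy) = \phi(\yy - \xx)$ and $q(\yy, \xx) = \phi(\xx - \yy)$ into \eqref{SMHr}, obtaining
\[
\E \| \XX_1 - \XX_0 \|^r = \int\!\int\!\|\xx - \yy\|^r \min\{\pi(\xx) \phi(\yy - \xx),\, \pi(\yy) \phi(\xx - \yy)\} \dd \yy \dd \xx.
\]

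Next, for each fixed~$\xx$ I would change the inner integration variable from~$\yy$ to $\zz := \xx - \yy$. Since this map is a reflection composed with a translation, its Jacobian has absolute value~$1$, so $\dd \yy = \dd \zz$, while $\|\xx - \yy\| = \|\zz\|$, $\phi(\yy - \xx) = \phi(-\zz)$, $\phi(\xx - \yy) = \phi(\zz)$, and $\pi(\yy) = \pi(\xx - \zz)$. This turns the double integral into
\[
\int\!\int\!\|\zz\|^r \min\{\pi(\xx) \phi(-\zz),\, \pi(\xx - \zz) \phi(\zz)\} \dd \zz \dd \xx.
\]

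Finally, because the integrand is nonnegative, Tonelli's theorem permits interchanging the order of the~$\xx$- and~$\zz$-integrations and pulling the factor $\|\zz\|^r$ (which does not depend on~$\xx$) outside the inner integral, yielding exactly~\eqref{SRWMHr}. There is no genuine obstacle here: the argument is a routine substitution together with an appeal to Tonelli, and the only point requiring a modicum of care is the bookkeeping of signs in the arguments of~$\phi$ under the reflection $\zz = \xx - \yy$, which is precisely what produces the asymmetric pair $\phi(-\zz)$, $\phi(\zz)$ in the stated formula.
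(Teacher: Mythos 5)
Your proof is correct and is essentially the paper's own argument: the paper likewise derives \eqref{SRWMHr} from \eqref{SMHr} by substituting the random walk form $q(\xx,\yy)=\phi(\yy-\xx)$ and making a measure-preserving change of variables, and your explicit appeal to Tonelli merely spells out the (implicit) interchange of the $\xx$- and $\zz$-integrations, which is justified by nonnegativity of the integrand.
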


\begin{proof}
Recall~\eqref{SMHr} and use the translation invariance of Lebesgue measure (that is, make a change of variables).
\end{proof}

\begin{corollary}
\label{C:SRWMHcov}
For RWMH, if $\XX_0 \in L^2$ then
\begin{align}
\lefteqn{\hspace{-.05in}\trCov(\XX_0, \XX_1)} \nonumber \\ 
&= \frac12 \left[ 2 \gs^2 - \int\!\|\zz\|^2 \int\!\min\{\pi(\xx) \phi(-\zz), \pi(\xx - \zz) \phi(\zz)\} \dd \xx \dd \zz \right].
\label{SRWMHcov}
\end{align}
\nopf
\end{corollary}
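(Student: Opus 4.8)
The plan is to read off \eqref{SRWMHcov} as a purely formal consequence of the incremental second-moment formula \refP{P:SRWMHr} and the elementary trace-covariance identity \eqref{Pythagoras}; no new probabilistic input is required.

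First I would specialize \refP{P:SRWMHr} to $r = 2$, obtaining
\[
\E \norm{\XX_1 - \XX_0}^2 = \int\!\norm{\zz}^2 \int\!\min\{\pi(\xx) \phi(-\zz), \pi(\xx - \zz) \phi(\zz)\} \dd \xx \dd \zz .
\]
Here the hypothesis $\XX_0 \in L^2$ (equivalently $\gs^2 < \infty$) is what guarantees that the \lhs, hence the double integral on the \rhs, is finite: by stationarity $\XX_1$ shares the distribution of $\XX_0$ and so also lies in $L^2$, whence $\XX_1 - \XX_0 \in L^2$ by the triangle inequality in $L^2$.

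Next I would apply \eqref{Pythagoras} with $(\YY, \tYY) = (\XX_0, \XX_1)$. Stationarity gives $\XX_0$ and $\XX_1$ the common density~$\pi$, so the ``same distribution'' hypothesis under which \eqref{Pythagoras} was derived holds, and in particular $\trVar \XX_0 = \trVar \XX_1 = \gs^2$. That identity then reads
\[
\trCov(\XX_0, \XX_1) = \tfrac12 \bigpar{2\gs^2 - \E \norm{\XX_1 - \XX_0}^2}.
\]
Substituting the displayed expression for $\E \norm{\XX_1 - \XX_0}^2$ into this last equation yields \eqref{SRWMHcov}, completing the argument. There is no genuine obstacle here; the result is immediate from the two cited ingredients. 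If anything, the only point worth flagging is the finiteness of the integral, which is exactly where $\XX_0 \in L^2$ is used — without it, $\trCov(\XX_0, \XX_1)$ would not even be defined.
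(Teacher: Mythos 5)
Your proposal is correct and is exactly the argument the paper intends: the corollary is stated without proof precisely because it is immediate from \refP{P:SRWMHr} with $r=2$ combined with the identity~\eqref{Pythagoras}, using stationarity so that $\XX_0$ and $\XX_1$ share the distribution~$\pi$ and $\trVar\XX_0=\trVar\XX_1=\gs^2$. Your added remark on where $\XX_0\in L^2$ enters (finiteness, so that the trace-covariance is well defined) is a sensible clarification but not a departure from the paper's route.
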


\subsection{An incremental $r$th-absolute-moment formula for general 
\mbox{RWMH} 
chains with symmetric proposal step-density}\label{SS:SRWMHsymmphicov}

We say that the density $\phi$ is \emph{symmetric}
if $\phi(-\xx)=\phi(\xx)$, $\xx\in\bbR^d$.

Recall that if
$\YY$ and $\tYY$ are two $d$-dimensional random vectors having densities $f$
and $\tf$,
then the total variation distance between them 
(or rather between their distributions)  
can be computed as
\begin{align}\label{dtv}
  \dtv(\YY,\tYY) 
= \frac12\int|f(\xx)-\tf(\xx)|\dd\xx
=1-\int \min\bigcpar{f(\xx),\tf(\xx)}\dd\xx.
\end{align}
For any constant vector~$\zz$, $\pi(\cdot - \zz)$ is the density of $\XX_0+\zz$.
Hence, if $\phi$ is symmetric, then the inner integral
\begin{align}\label{dtv2}
 \int\!\min\{\pi(\xx) \phi(-\zz), \pi(\xx - \zz) \phi(\zz)\} \dd \xx
\end{align} 
in~\eqref{SRWMHr} and in~\eqref{SRWMHcov} 
equals $\phi(\zz) [1 - \dtv(\XX_0, \XX_0 + \zz)]$.
Consequently, in the case of symmetric~$\phi$, \refP{P:SRWMHr} and \refC{C:SRWMHcov}
can be reformulated as follows.

\begin{proposition}
\label{P:SRWMHsymmphir}
For RWMH with symmetric~$\phi$ and $r \in (0, \infty)$ we have
\begin{equation}
\label{SRWMHsymmphir}
\E \| \XX_1 - \XX_0 \|^r
= \int\!\|\zz\|^r \phi(\zz) [1 - \dtv(\XX_0, \XX_0 + \zz)] \dd \zz.
\end{equation} 
\nopf
\end{proposition}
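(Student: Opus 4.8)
The plan is to take the random-walk formula \eqref{SRWMHr} of \refP{P:SRWMHr} as the starting point and carry out the identification already announced in the paragraph preceding the statement. Reading the right-hand side of \eqref{SRWMHr} as an iterated integral, the inner integral over~$\xx$ is
\[
\int \min\{\pi(\xx)\phi(-\zz),\, \pi(\xx-\zz)\phi(\zz)\}\dd\xx.
\]
Since $\phi$ is symmetric we have $\phi(-\zz)=\phi(\zz)$, and since $\phi(\zz)\ge 0$ (it is a density) we may pull this common factor out of the minimum, using $\min\{ca,cb\}=c\min\{a,b\}$ for $c\ge 0$, to obtain $\phi(\zz)\int\min\{\pi(\xx),\pi(\xx-\zz)\}\dd\xx$.

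The one substantive step is to recognize the remaining overlap integral as a total variation quantity. Here $\pi$ is the density of $\XX_0$, while $\pi(\cdot-\zz)$ is the density of the shifted variable $\XX_0+\zz$, as noted just before the statement. Applying the second (overlap) form of the total variation formula \eqref{dtv} to these two densities gives
\[
\int\min\{\pi(\xx),\pi(\xx-\zz)\}\dd\xx = 1 - \dtv(\XX_0,\XX_0+\zz),
\]
so that the inner integral equals $\phi(\zz)\,[1-\dtv(\XX_0,\XX_0+\zz)]$. Inserting this back into \eqref{SRWMHr} yields \eqref{SRWMHsymmphir}.

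I expect no real obstacle: the whole argument is the change of variables already performed in \refP{P:SRWMHr}, followed by the elementary factorization above and a direct appeal to \eqref{dtv}. The only point requiring a moment's care is the bookkeeping of which density is which in \eqref{dtv} --- namely matching $\pi(\xx-\zz)$ with the law of $\XX_0+\zz$ rather than $\XX_0-\zz$ --- and this is settled precisely by the observation that $\pi(\cdot-\zz)$ is the density of $\XX_0+\zz$.
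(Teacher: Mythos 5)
Your proof is correct and is essentially identical to the paper's own argument: the paper derives \eqref{SRWMHsymmphir} in the paragraph preceding the proposition by exactly this route --- starting from \eqref{SRWMHr}, factoring $\phi(\zz)=\phi(-\zz)$ out of the minimum, and identifying the overlap integral $\int\min\{\pi(\xx),\pi(\xx-\zz)\}\dd\xx$ with $1-\dtv(\XX_0,\XX_0+\zz)$ via \eqref{dtv}, noting that $\pi(\cdot-\zz)$ is the density of $\XX_0+\zz$. No gaps; nothing further needed.
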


\begin{corollary}
\label{C:SRWMHsymmphicov}
For RWMH 
with symmetric~$\phi$, if $\XX_0 \in L^2$ then
\begin{align}
\lefteqn{\hspace{-.05in}\trCov(\XX_0, \XX_1)} \nonumber \\ 
&= \frac12 \left[ 2 \gs^2 - \int\!\|\zz\|^2 \phi(\zz) [1 - \dtv(\XX_0, \XX_0 + \zz)] \dd \zz \right].
\label{SRWMHsymmphicov}
\end{align}
\nopf
\end{corollary}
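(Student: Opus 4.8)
The plan is to derive the asserted formula directly from \refC{C:SRWMHcov} by simplifying the inner $\xx$-integral in~\eqref{SRWMHcov} under the symmetry hypothesis on~$\phi$. No new estimate is needed: once one knows that, for each fixed~$\zz$,
\begin{equation*}
\int\!\min\{\pi(\xx) \phi(-\zz), \pi(\xx - \zz) \phi(\zz)\} \dd \xx = \phi(\zz)\,[1 - \dtv(\XX_0, \XX_0 + \zz)],
\end{equation*}
substituting this into the bracketed expression of~\eqref{SRWMHcov} and keeping the outer $\zz$-integral intact reproduces the claimed identity verbatim. So the entire content of the corollary reduces to verifying this one identity for the integrand~\eqref{dtv2}.

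To establish the identity I would proceed in two short steps. First, using symmetry of~$\phi$ in the form $\phi(-\zz) = \phi(\zz)$, factor the common scalar $\phi(\zz)$ out of the minimum:
\begin{equation*}
\min\{\pi(\xx) \phi(-\zz), \pi(\xx - \zz) \phi(\zz)\} = \phi(\zz)\,\min\{\pi(\xx), \pi(\xx - \zz)\}.
\end{equation*}
Integrating in~$\xx$ then reduces everything to recognizing $\int\!\min\{\pi(\xx), \pi(\xx - \zz)\}\dd\xx$ as an overlap integral of two densities. For that, I would invoke the total variation characterization~\eqref{dtv}, using that~$\pi$ is the density of~$\XX_0$ while $\pi(\cdot - \zz)$ is the density of the shifted vector $\XX_0 + \zz$. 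The second expression in~\eqref{dtv} then yields $\int\!\min\{\pi(\xx), \pi(\xx - \zz)\}\dd\xx = 1 - \dtv(\XX_0, \XX_0 + \zz)$, which completes the identity and hence the corollary.

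Since every step is a routine substitution, there is no genuine obstacle here; the statement is simply a reformulation of \refC{C:SRWMHcov}. The only point demanding any care is the bookkeeping of which argument of~$\phi$ carries the minus sign, so that the symmetry relation is applied to the correct factor before extracting the scalar $\phi(\zz)$. Once that factoring is done cleanly, the identification of the remaining integral with $1 - \dtv(\XX_0, \XX_0 + \zz)$ via~\eqref{dtv} is automatic.
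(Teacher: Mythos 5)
Your proposal is correct and is essentially identical to the paper's own justification: the paper likewise factors $\phi(\zz)$ out of the minimum using $\phi(-\zz)=\phi(\zz)$, identifies $\int\min\{\pi(\xx),\pi(\xx-\zz)\}\dd\xx$ with $1-\dtv(\XX_0,\XX_0+\zz)$ via~\eqref{dtv} and the fact that $\pi(\cdot-\zz)$ is the density of $\XX_0+\zz$, and then substitutes into~\eqref{SRWMHcov}, which is exactly why the corollary is stated there without a separate proof.
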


\section{Positive unit-lag autocorrelation} \label{S:poscorr}
We return to the general absolutely continuous RWMH
setting of \refS{SS:SRWMHcov}.  
The following is one of our two main results, the other being its consequence \refC{C:maincov}.

\begin{theorem}
\label{T:main}
Let $r \in [2, \infty)$.  For 
RWMH,
if $\XX_0 \in L^r$ then for any $\mm \in \bbR^d$ we have
\begin{equation}
\label{rUB}
\E \| \XX_1 - \XX_0 \|^r < 2^{r - 1} \E \| \XX_0 - \mm \|^r.
\end{equation}
\end{theorem}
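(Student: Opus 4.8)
The plan is to start from the RWMH incremental‑moment formula \eqref{SRWMHr} of \refP{P:SRWMHr}, which expresses $\E\norm{\XX_1-\XX_0}^r=\int\norm{\zz}^r g(\zz)\dd\zz$, where $g(\zz):=\int\min\{\pi(\xx)\phi(-\zz),\pi(\xx-\zz)\phi(\zz)\}\dd\xx$. By translation invariance I may take $\mm=\zzero$. The strategy is to find a \emph{pointwise‑in‑$\zz$} bound of the form
\[
\norm{\zz}^r g(\zz)\le 2^{r-1}\,\tfrac{\phi(\zz)+\phi(-\zz)}{2}\,\E\norm{\XX_0-\mm}^r
\]
and then integrate $\dd\zz$, using that $\int\tfrac12[\phi(\zz)+\phi(-\zz)]\dd\zz=1$ to preserve the constant $2^{r-1}$. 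The transparent special case is symmetric $\phi$, where \refP{P:SRWMHsymmphir} gives $g(\zz)=\phi(\zz)\,[1-\dtv(\XX_0,\XX_0+\zz)]$, so the target reduces to $\norm{\zz}^r[1-\dtv(\XX_0,\XX_0+\zz)]\le 2^{r-1}\E\norm{\XX_0-\mm}^r$ integrated against $\phi$.

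The crux — and the step I expect to be the main obstacle — is therefore a sharp \emph{weighted} moment lower bound on total variation distance under a location shift: for a density $f$, scalars $a,b\ge0$, and any $\mm$,
\[
\norm{\zz}^r\!\int\!\min\{a f(\xx),b f(\xx-\zz)\}\dd\xx\le 2^{r-1}\,\tfrac{a+b}{2}\int\!\norm{\xx-\mm}^r f(\xx)\dd\xx,
\]
applied with $a=\phi(-\zz)$, $b=\phi(\zz)$; the case $a=b$ is the pure location‑shift bound $\norm{\zz}^r[1-\dtv]\le 2^{r-1}\E\norm{\XX_0-\mm}^r$. The whole difficulty is the \emph{constant}: writing $1-\dtv=\int\min\{\pi(\xx),\pi(\xx-\zz)\}\dd\xx$, bounding $\norm{\zz}^r\le 2^{r-1}(\norm{\xx-\mm}^r+\norm{\xx-\zz-\mm}^r)$, and then dominating the overlap by $\pi(\xx)$ in the first term and by $\pi(\xx-\zz)$ in the second, yields only the weaker constant $2^r$; squeezing out the missing factor $2$ to reach the sharp $2^{r-1}$ is the real content. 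The bound is attained in the limit by a target placing equal mass at the two points $\mm\pm\zz/2$, shifted exactly onto itself, which shows $2^{r-1}$ is best possible. This is the inequality established in the appendices (\refC{C:TVLB}).

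To prove that inequality I would first reduce to dimension one by projecting onto the unit vector $\zz/\norm{\zz}$: projection can only \emph{increase} the overlap $\int\min\{af,bf(\cdot-\zz)\}$ (data processing: the pushforward of the pointwise minimum is dominated by the minimum of the pushforwards) while it can only \emph{decrease} the moment $\int\norm{\xx-\mm}^r f$ (since $|\langle\xx-\mm,\zz/\norm{\zz}\rangle|\le\norm{\xx-\mm}$), so both changes are favorable and it suffices to treat a real random variable with scalar shift $t=\norm{\zz}$. In the resulting one‑dimensional statement the extremal configuration is the symmetric two‑point law, and I expect the hypothesis $r\in[2,\infty)$ to be exactly what is needed: for the \emph{asymmetric} weights $a\ne b$ the inequality fails when $r<2$, so the argument must use $r\ge 2$ in an essential way — for instance by reducing to, or comparing against, two‑point targets and invoking convexity of $t\mapsto t^r$ with exponent at least $2$. (For symmetric $\phi$, i.e.\ $a=b$, the pure‑shift bound already holds for smaller $r$, so the restriction $r\ge2$ is forced by the general, asymmetric, proposal.)

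Finally I would integrate the per‑$\zz$ bound against $\phi(\zz)\dd\zz$: in the symmetric case $\int\norm{\zz}^r\phi(\zz)[1-\dtv]\dd\zz\le 2^{r-1}\E\norm{\XX_0-\mm}^r$, and in general the weighted form integrates the same way because $\int\tfrac12[\phi(\zz)+\phi(-\zz)]\dd\zz=1$. Strictness in \eqref{rUB} then follows because equality in the overlap inequality forces the target to be the two‑atom extremal law, which is impossible for a genuine Lebesgue density $\pi$; hence the per‑$\zz$ inequality is strict on a set of $\zz$ of positive measure, and the integrated inequality is strict.
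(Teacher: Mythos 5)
Your outer framework coincides with the paper's proof of \refT{T:main}: start from \eqref{SRWMHr}, establish a pointwise-in-$\zz$ bound against the symmetrized weight $\tfrac12[\phi(\zz)+\phi(-\zz)]$, and integrate using $\int\tfrac12[\phi(\zz)+\phi(-\zz)]\dd\zz=1$. Your weighted overlap inequality with $a=\phi(-\zz)$, $b=\phi(\zz)$ is, after dividing by $a+b$, exactly the paper's \refT{T:TVLB} (not \refC{C:TVLB}, which is only the symmetric case $a=b$), and your identification of the sharp constant, of the two-point extremal configuration at $\mm\pm\zz/2$, and of the fact that asymmetric weights force $r\ge2$ are all correct. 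Your reduction to dimension one by projecting onto $\zz/\norm{\zz}$ (data processing increases the overlap, while $|\innprod{\xx-\mm,\zz/\norm{\zz}}|\le\norm{\xx-\mm}$ decreases the moment) is sound, and is a clean repackaging of the paper's own reduction, which proves the linear-functional version \eqref{TVc} first and deduces \eqref{TV} via \eqref{a66}.

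The genuine gap is that you never prove the one-dimensional inequality, which is where essentially all of the mathematical content of the theorem lives; you defer it to the appendix result and then only gesture at how one might prove it. After your projection you still face a continuous density on $\bbR$, a shift $t=\norm{\zz}$, and unequal weights, and ``reducing to, or comparing against, two-point targets and invoking convexity of $t\mapsto t^r$ with exponent at least $2$'' is not an argument. The paper's proof needs two ideas absent from your sketch: (i) a lattice decomposition of the line into fibers $\set{x+nt:n\in\bbZ}$ with $x$ in a fundamental domain, converting the continuous problem into a statement about sequences $p_n$ with a real offset $b$ (\refL{L:TVLB equally spaced}); and (ii) within that discrete problem, a truncation step showing one may zero out every $p_n$ with $|b+n|\ge1$ (this already uses $2^{r-2}|b+n|^r\ge 1$, hence $r\ge2$), followed by minimizing $(1-b)^rq+b^rp$ over the offset $b$ and applying the power-mean inequality $[q^{1/(r-1)}+p^{1/(r-1)}]^{r-1}\le 2^{r-2}$ for $q+p=1$. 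That last inequality is equivalent to concavity of $x\mapsto x^{1/(r-1)}$ and holds precisely when $r\ge2$; this—not convexity of $t\mapsto t^r$, which holds for all $r\ge1$ and cannot distinguish the regimes—is where the hypothesis enters. Your strictness argument likewise leans on an unproved equality characterization: in the paper, strictness comes from the fact that fiber-wise equality forces $b-\floor{b}\in\set{0,\phi}$, which can hold only on a Lebesgue-null set of fibers when $\XX_0$ has a density.
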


Of course, we may choose $\mm$ in~\eqref{rUB} to minimize 
$\E \| \XX_0 - \mm \|^r$.
In particular, for $r=2$, we may choose $\mm=\E\XX_0$ and then the bound in
\eqref{rUB} is $2^{r-1}\trVar\XX_0$ by \eqref{trVar}.

\begin{proof}
Define
\begin{equation}
\label{bphi}
\bphi(\zz) := \tfrac12 [\phi(-\zz) + \phi(\zz)].
\end{equation}
According to \refT{T:TVLB} (and homogeneity), the $\zz$-integrand
\begin{equation}
\|\zz\|^r \int\!\min\{\pi(\xx) \phi(-\zz), \pi(\xx - \zz) \phi(\zz)\} \dd \xx
\end{equation} 
appearing in~\eqref{SRWMHr} [and in~\eqref{SRWMHcov} for $r = 2$] is strictly smaller than 
\begin{align}
2^{r - 2} \E \| \XX_0-\mm \|^r [\phi(-\zz) + \phi(\zz)]
= 2^{r - 1} \E \| \XX_0-\mm \|^r \bphi(\zz),  
\end{align}
for every $\zz$ such that $\bphi(\zz)>0$.
The result follows from this by integrating, using \eqref{SRWMHr} and
$\displaystyle{\int\bphi(\zz)\dd\zz=1}$.
\end{proof}

\begin{remark}
\label{R:other ineqs}
From
\refT{T:main} we can of course obtain a host of additional inequalities by averaging over~$r$, provided only that $\mm \equiv \mm(r)$ is measurable in~$r$.  In particular, suppose for simplicity that we choose~$\mm$ independent of~$r$.  Let~$\mu$ be any probability measure concentrated on $[2, \infty)$ and~$\psi$ is its moment generating function.  Then, with the natural conventions $\ln 0 = - \infty$ and $\psi(- \infty) = 0$, from~\eqref{rUB} we obtain
\begin{equation}
\label{psi ineq}
\E \psi(\ln \| \XX_1 - \XX_0 \|) \leq \tfrac12 \E \psi(\ln (2 \| \XX_0 - \mm \|)),
\end{equation}
and the inequality is strict provided the \rhs{} is finite.  As one example, if~$\mu$ is the law of $W + 2$, where $W \sim \mbox{Poisson}(\gl)$ with $\gl \geq 0$, then~\eqref{psi ineq} yields equivalently
\begin{equation}
\E \big[ \| \XX_1 - \XX_0 \|^2 \exp(\gl \| \XX_1 - \XX_0 \|) \big] 
\leq 2 \E \big[ \| \XX_0 - \mm \|^2 \exp(2 \gl \| \XX_0 - \mm \|) \big]. 
\end{equation}    
\end{remark}

\begin{remark}
\label{R:Lipschitz}
Trivially, \refT{T:main} can be used also 
to bound $\E \| \ff(\XX_1) - \ff(\XX_0) \|^r$ for any Lipschitz function~$\ff$ from $\bbR^d$ to any normed space, since 
$\E \| \ff(\XX_1) - \ff(\XX_0) \|^r \leq c_{\ff}^r \E \| \XX_1 - \XX_0 \|^r$, where $c_{\ff} \in [0, \infty)$ is the Lipschitz constant for~$\ff$.
\end{remark}

\begin{remark}
\label{R=}
Although the inequality \eqref{rUB} is strict for every instance, the
constant $2^{r-1}$ is best possible.
Note first that
there are examples with
discrete $\XX_0$ and $\ZZ$ such that equality holds (for every $r$).
For example, let $d=1$ and let 
$\P(X_0=\pm1)=\frac12$, 
$\P(Z=\pm2)=\frac12$; then  $|X_1 - X_0|$ equals~$0$ or~$2$ with probability $\frac12$ each 
so equality holds in \eqref{rUB} with $m=0$.
(See also \refE{E:discrete}.)
We may perturb
this example as in \refE{E-} to obtain absolutely continuous examples
showing that \eqref{rUB} does not hold with a constant smaller than $2^{r-1}$.
\end{remark}

For $r=2$, the \rhs{} of \eqref{rUB} is minimized by $\mm=\E \XX_0$, and by
\eqref{Pythagoras} and 
\eqref{corrdef} [or by \eqref{SRWMHcov}] we obtain

\begin{corollary}
\label{C:maincov}
For RWMH, if $\XX_0 \in L^2$ then
\begin{equation}
\label{rLB}
\trCorr(\XX_0, \XX_1) > 0.
\end{equation}
\nopf
\end{corollary}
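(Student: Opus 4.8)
The plan is to read off the corollary from \refT{T:main} at $r = 2$, with essentially no extra analytic work. First I would apply \refT{T:main} with exponent $r = 2$ and the specific choice $\mm = \E\XX_0$. By \eqref{trVar} we have $\E\|\XX_0 - \E\XX_0\|^2 = \trVar\XX_0 = \gs^2$, so the bound \eqref{rUB} specializes to $\E\|\XX_1 - \XX_0\|^2 < 2^{2-1}\gs^2 = 2\gs^2$.

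Next I would invoke the identity \eqref{Pythagoras}, which applies because stationarity guarantees that $\XX_0$ and $\XX_1$ share the same distribution. Taking $(\YY,\tYY) = (\XX_0,\XX_1)$ there gives $\trCov(\XX_0,\XX_1) = \tfrac12\bigl(2\gs^2 - \E\|\XX_0 - \XX_1\|^2\bigr)$, and the strict inequality just obtained makes the right-hand side strictly positive, so $\trCov(\XX_0,\XX_1) > 0$. Finally, since stationarity also gives $\trVar\XX_0 = \trVar\XX_1 = \gs^2$, which is strictly positive because $\pi$ is a genuine density, the definition \eqref{corrdef} yields $\trCorr(\XX_0,\XX_1) = \trCov(\XX_0,\XX_1)/\gs^2 > 0$, as claimed.

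I do not expect any genuine obstacle in this deduction: all of the analytic difficulty has already been absorbed into \refT{T:main} (and, through its proof, into the total-variation lower bound \refT{T:TVLB}). Once the strict inequality $\E\|\XX_1 - \XX_0\|^2 < 2\gs^2$ is in hand, the passage to positive autocorrelation is nothing more than the elementary rearrangement \eqref{Pythagoras} followed by normalization via \eqref{corrdef}. The only point worth flagging is that the strictness of \eqref{rLB} is inherited directly from the strictness in \eqref{rUB}, rather than requiring any separate argument.
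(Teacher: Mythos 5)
Your proof is correct and follows exactly the paper's route: the paper likewise obtains \refC{C:maincov} by applying \refT{T:main} with $r=2$ and $\mm=\E\XX_0$, then passing through~\eqref{Pythagoras} and~\eqref{corrdef}. Your attention to the two small points the paper leaves implicit — that \eqref{Pythagoras} applies because $\XX_0$ and $\XX_1$ are identically distributed by stationarity, and that $\gs^2>0$ because $\pi$ is a density — is exactly right and requires nothing further.
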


\begin{remark}
If $\XX_0 \in L^2$, an alternative lower bound on $\trCorr(\XX_0, \XX_1)$ can be obtained very simply from~\eqref{SRWMHcov} by upper-bounding the minimum expression by $\pi(\xx) \phi(-\zz)$.  We then find
\begin{equation}
\label{simpleLB}
\trCorr(\XX_0, \XX_1) \geq 1 - \frac{\E \| \ZZ \|^2}{\gs^2}.
\end{equation}
This lower bound can of course be negative (and can even be smaller than the smallest possible value $-1$ of a trace-correlation), but it is positive if and only if 
\begin{equation}
\label{2bounds}
\E \| \ZZ \|^2 < \gs^2.
\end{equation}
For example, if~$\ZZ$ has a symmetric distribution, then the lower bound~\eqref{simpleLB} is positive if and only if
\begin{equation}
\label{2vars}
\Var \ZZ < \Var \XX_0.
\end{equation}

A similar remark applies more generally to comparison of~\eqref{SRWMHr} and~\eqref{rUB}.
\end{remark}

The condition $r\ge2$ in \refT{T:main} is necessary in general, as shown in
\refR{R:TVLB-}. However,
in the special case of symmetric~$\phi$, we may extend the range of $r$ in \refT{T:main}.

\begin{theorem}
\label{T:main1}
For RWMH with symmetric $\phi$,
we have \eqref{rUB} for every $r \in [1, \infty)$ such that $\XX_0 \in L^r$.
\end{theorem}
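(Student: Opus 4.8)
The plan is to follow the proof of \refT{T:main} verbatim in structure, replacing the non-symmetric formula by its symmetric counterpart. Since $\phi$ is symmetric, \refP{P:SRWMHsymmphir} gives $\E\|\XX_1-\XX_0\|^r=\int\|\zz\|^r\phi(\zz)\,[1-\dtv(\XX_0,\XX_0+\zz)]\,\dd\zz$, and because $\int\phi(\zz)\dd\zz=1$ it suffices to establish the pointwise bound $\|\zz\|^r[1-\dtv(\XX_0,\XX_0+\zz)]<2^{r-1}\E\|\XX_0-\mm\|^r$ for every $\zz$ with $\phi(\zz)>0$ (indeed every $\zz\neq\zzero$) and then integrate against $\phi$. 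This is exactly the step in which \refT{T:main} invoked \refT{T:TVLB}; the difference is that here, thanks to symmetry, the inner integral is the \emph{balanced} overlap $\phi(\zz)[1-\dtv]$ with equal weights $\phi(-\zz)=\phi(\zz)$, rather than the weighted minimum $\int\min\{\pi(\xx)\phi(-\zz),\pi(\xx-\zz)\phi(\zz)\}\dd\xx$.

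The gain is that the balanced moment lower bound on total variation distance under a location shift holds for all $r\in[1,\infty)$, whereas the weighted version needs $r\ge2$ (and \refR{R:TVLB-} shows this restriction is sharp there). To expose the relevant structure I would reduce to one dimension: writing $\uu=\zz/\|\zz\|$ and $U=\langle\uu,\XX_0\rangle$, the pushforward (data-processing) property gives $\dtv(U,U+\|\zz\|)\le\dtv(\XX_0,\XX_0+\zz)$, while $|\langle\uu,\cdot\rangle|\le\|\cdot\|$ gives $\E|U-\langle\uu,\mm\rangle|^r\le\E\|\XX_0-\mm\|^r$; the projection $U$ again has a density and lies in $\L^r$. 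Hence it is enough to prove the one-dimensional inequality $t^r[1-\dtv(U,U+t)]<2^{r-1}\E|U-\mu|^r$ for $t>0$ and $r\ge1$.

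The hard part is the sharp constant in this one-dimensional bound. Writing $1-\dtv(U,U+t)=\int\min\{g(x),g(x-t)\}\dd x=:m$ for the density $g$ of $U$ and letting $\nu$ be the overlap measure (of mass $m$), one has $\nu\le g$ and its translate $\nu(\cdot+t)\le g$; combining these with the convexity estimate $|x-\mu|^r+|x-t-\mu|^r\ge 2^{1-r}t^r$ (valid for $r\ge1$, since $|x-\mu|+|x-t-\mu|\ge t$) yields $t^rm\le 2^{r}\E|U-\mu|^r$ in a line. Reducing the constant from $2^{r}$ to the sharp $2^{r-1}$ means recovering the factor $2$ lost where $\nu$ and its translate overlap, and this sharpening is precisely the refined total-variation moment inequality established in the appendices (the improvement of \refC{C:TVLB}); I expect this to be the main obstacle. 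Granting it, strictness holds for every $\zz\neq\zzero$ because a genuine (non-atomic) density $g$ cannot saturate the extremal two-point configuration, so integrating the strict pointwise bound against the probability density $\phi$ delivers the strict inequality \eqref{rUB} for every $r\in[1,\infty)$ with $\XX_0\in\L^r$.
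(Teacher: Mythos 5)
Your skeleton is the same as the paper's: start from \refP{P:SRWMHsymmphir}, bound the integrand $\|\zz\|^r[1-\dtv(\XX_0,\XX_0+\zz)]$ pointwise by $2^{r-1}\E\|\XX_0-\mm\|^r$, and integrate against~$\phi$. Indeed the paper's proof is exactly this, in two lines, citing \refT{T:TVLB99}; and your projection step ($U=\innprod{\uu,\XX_0}$ with $\uu=\zz/\|\zz\|$, data processing for $\dtv$, and $|\innprod{\uu,\XX_0-\mm}|\le\|\XX_0-\mm\|$) is a legitimate reduction that parallels the paper's own passage from the inner-product bound to the norm bound via \eqref{a66}. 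Your elementary overlap argument giving the constant $2^r$ is also correct as far as it goes.

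There are, however, two genuine gaps. First, the entire content of the theorem for $r\in[1,2)$ is the sharp constant $2^{r-1}$, and you do not prove it: you defer to ``the improvement of \refC{C:TVLB}'' in the appendices. Note that the improvement holding in full generality, \refT{T:TVLB3}, is only a \emph{weak} inequality; recovering the lost factor of~$2$ is not a routine refinement but is carried out in the paper via the fiber decomposition and the balanced ($\phi=\tfrac12$) discrete lemma, \refL{L:TVLB2}, a separate case analysis. Second, and decisively, your strictness argument is wrong at the endpoint $r=1$: it is \emph{not} true that a non-atomic density cannot saturate the pointwise bound. The paper's remark following \refT{T:TVLB99} gives the example $d=1$, $X_0$ uniform on $(-1-a,-1+a)\cup(1-a,1+a)$ with $a\in(0,1]$, $m=0$, $z=2$, for which $z\,[1-\dtv(X_0,X_0+z)]=2\cdot\tfrac12=1=\E|X_0|$ exactly; the equality cases of \refL{L:TVLB2} at $r=1$ include three-point and flat configurations, not only the two-point one, and these do survive smoothing. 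So your claim that the pointwise bound is strict ``for every $\zz\neq\zzero$'' fails, and with it your derivation of strict inequality in \eqref{rUB}. The repair --- which is what \refT{T:TVLB99} actually provides and the paper's proof uses --- is that for $r=1$ equality can occur for at most two (opposite) values of $\zz$, a Lebesgue-null set; since $\phi$ is a density, integrating the bound that holds for \emph{almost every} $\zz$, with strict inequality off a null set, still yields the strict inequality \eqref{rUB}. Your proof needs this almost-everywhere formulation; pointwise strictness is simply false at $r=1$.
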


\begin{proof}
We use \eqref{SRWMHsymmphir} and note that
$\|\zz\|^r [1 - \dtv(\XX_0, \XX_0 + \zz)]$
is by \refT{T:TVLB99} strictly smaller than 
$2^{r - 1} \E \| \XX_0 - \mm \|^r$ for almost every $\zz$.
The result follows 
by integrating.
\end{proof}

\begin{remark}
\refR{R:other ineqs} again applies, with~$\mu$ now allowed to be concentrated on $[1, \infty)$.
As one example, if~$\mu$ is the law of $W + 1$, where $W \sim \mbox{Poisson}(\gl)$ with $\gl \geq 0$, then we find
\begin{equation}
\E \big[ \| \XX_1 - \XX_0 \| \exp(\gl \| \XX_1 - \XX_0 \|) \big] 
\leq \E \big[ \| \XX_0 - \mm \| \exp(2 \gl \| \XX_0 - \mm \|) \big], 
\end{equation}
with strict inequality if the \rhs{} is finite.
\end{remark}

\begin{remark}
The example in \refR{R=} shows that the constant $2^{r-1}$ is best possible also in \refT{T:main1}. 
\end{remark}

\begin{remark}
Even in the case of symmetric $\phi$ as in \refT{T:main1},
the condition $r\ge1$ is necessary in general.
For $r<1$, a discrete counterexample is given by
$X_0$ uniform on $\set{0,1}$, $Z$ uniform on $\pm1$, and $m=0$,
see also \refR{R:TVLB2}. A counterexample with absolutely continuous $X_0$
and $Z$ (as assumed in the present section)
is as usual obtained by perturbing the discrete example.
\end{remark}

\section{Strictly positive unit-lag autocorrelation for nonzero linear combinations} \label{S:poslin}

\refT{T:main} 
and \refC{C:maincov} are somewhat unsatisfying.  For example, the conclusion of \refC{C:maincov} is that the unweighted \emph{average} of $\Cov(X_{0i}, X_{1i})$ over $i \in [d]$ is positive.  However, in this section we show, in the same general RWMH
setting as in~\refS{S:poscorr}, that in fact the results of that section extend, \emph{mutatis mutandis}, to individual coordinates and, more generally, to all linear combinations of the coordinates.

\begin{theorem}
\label{T:poslin}
Let $r \in [2, \infty)$, let~$m$ be any real number, and let~$\ccc$ be any
nonzero
vector in $\bbR^d$.  For
RWMH,
if\/ $\XX_0 \in L^r$ then
\begin{equation}
\label{rUBposlin}
\E | \langle \ccc, \XX_1 - \XX_0 \rangle |^r < 2^{r - 1} \E | \langle \ccc, \XX_0 \rangle - m |^r.
\end{equation}
\end{theorem}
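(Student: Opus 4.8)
The plan is to reduce the $d$-dimensional estimate to the one-dimensional total variation bound \refT{T:TVLB} that already drives the proof of \refT{T:main}, by marginalizing the inner integral onto the direction of~$\ccc$. Since both sides of~\eqref{rUBposlin} scale by $\gl^r$ under the simultaneous replacement $(\ccc, m) \mapsto (\gl\ccc, \gl m)$ with $\gl > 0$, I may assume $\|\ccc\| = 1$ (with~$m$ still ranging over all of~$\bbR$) and write $u := \ccc$. First I record the analogue of \refP{P:SMHr} and \refP{P:SRWMHr} obtained by replacing the integrand $\|\cdot\|^r$ with $|\langle\ccc, \cdot\rangle|^r$; as both functions vanish on the diagonal $\XX_0 = \XX_1$, the identical derivation yields
\[
\E|\langle\ccc, \XX_1 - \XX_0\rangle|^r = \int |\langle\ccc, \zz\rangle|^r \int \min\{\pi(\xx)\phi(-\zz),\, \pi(\xx - \zz)\phi(\zz)\}\dd\xx\dd\zz.
\]

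The crux is a marginalization inequality for the inner integral. Decompose $\bbR^d = \bbR u \oplus u^\perp$, writing $\xx = su + \ww$ and $\zz = \zeta u + \zz^\perp$ with $s, \zeta \in \bbR$ and $\ww, \zz^\perp \in u^\perp$, so that $\langle\ccc, \zz\rangle = \zeta$. Let $\rho(s) := \int_{u^\perp}\pi(su + \ww)\dd\ww$ be the marginal density of $S_0 := \langle\ccc, \XX_0\rangle$. For each fixed~$s$, the fibrewise inequality $\int \min\{F, G\} \le \min\{\int F, \int G\}$ combined with translation invariance of Lebesgue measure on~$u^\perp$ (which turns $\int_{u^\perp}\pi((s - \zeta)u + \ww - \zz^\perp)\dd\ww$ into $\rho(s - \zeta)$) gives
\[
\int_{\bbR^d} \min\{\pi(\xx)\phi(-\zz),\, \pi(\xx - \zz)\phi(\zz)\}\dd\xx \;\le\; \int_{\bbR} \min\{\rho(s)\phi(-\zz),\, \rho(s - \zeta)\phi(\zz)\}\dd s.
\]

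Finally I would multiply by $|\langle\ccc, \zz\rangle|^r = |\zeta|^r$ and apply the $d = 1$ instance of \refT{T:TVLB} to the density~$\rho$ with shift~$\zeta$: exactly as in the proof of \refT{T:main}, for every~$\zz$ with $\bphi(\zz) > 0$ the right-hand side above, after multiplication by $|\zeta|^r$, is strictly less than $2^{r-2}\E|S_0 - m|^r[\phi(-\zz) + \phi(\zz)]$, where $\E|S_0 - m|^r = \E|\langle\ccc, \XX_0\rangle - m|^r < \infty$ because $|S_0| \le \|\XX_0\|$. Integrating over~$\zz$ and using $\int\bphi(\zz)\dd\zz = 1$ then produces~\eqref{rUBposlin}.

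The main obstacle I anticipate is preserving strictness: the marginalization step is only a weak inequality, so the strict inequality must be supplied entirely by \refT{T:TVLB}. Since that bound is strict for every~$\zz$ with $\bphi(\zz) > 0$, while~$\bphi$ is a probability density (hence positive on a set of positive measure) and the right-hand side is finite, the strict pointwise estimate survives integration in~$\zz$. A related conceptual point, which explains why one marginalizes rather than appealing to \refT{T:main} directly, is that the scalar process $(\langle\ccc, \XX_t\rangle)$ is in general not Markov, so it cannot be treated as a one-dimensional RWMH chain; collapsing only the inner $\xx$-integral, rather than the whole chain, is what sidesteps this difficulty.
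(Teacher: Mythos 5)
Your proof is correct, and it reaches the theorem by a route that differs from the paper's in one genuine respect. Both arguments start from the same identity $\E|\langle\ccc,\XX_1-\XX_0\rangle|^r = \int|\langle\ccc,\zz\rangle|^r\int\min\{\pi(\xx)\phi(-\zz),\pi(\xx-\zz)\phi(\zz)\}\dd\xx\dd\zz$ and both finish by integrating a strict pointwise-in-$\zz$ bound, with strictness surviving because it holds on the positive-measure set $\{\bphi>0\}$ and the majorant is integrable. The difference is the middle step. The paper simply invokes the linear-functional assertion \eqref{TVc} of \refT{T:TVLB}, which is stated for arbitrary nonzero $\ccc\in\bbR^d$ (and is proved in the appendix by slicing $\bbR^d$ into $\zz$-translates of the slab $\{\xx:\innprod{\ccc,\xx}\in[0,1)\}$ and applying the discrete \refL{L:TVLB equally spaced} along each line); with the same homogeneity normalization as in the proof of \refT{T:main}, the pointwise bound $2^{r-2}\E|\innprod{\ccc,\XX_0}-m|^r[\phi(\zz)+\phi(-\zz)]$ is then immediate. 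You instead marginalize the inner $\xx$-integral over $\ccc^\perp$, using $\int\min\le\min\int$ fibrewise together with translation invariance on $\ccc^\perp$, thereby replacing $\pi$ by the density $\rho$ of $\langle\ccc,\XX_0\rangle$, and you then apply only the one-dimensional case of \eqref{TV}. In effect your marginalization lemma re-derives the needed instance of \eqref{TVc} from the scalar case, so the vector-valued form of the appendix theorem is never needed; this is a cleaner logical dependency, bought at the price of an extra (weak) inequality, and you correctly note that strictness must then be supplied entirely by the scalar bound. The paper's route buys brevity, since \eqref{TVc} was proved in exactly this generality for this purpose, and it transfers verbatim to the discrete setting via \refT{T:TVLBZ}. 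Your closing remark — that $(\langle\ccc,\XX_t\rangle)_t$ is in general not Markov, so \refT{T:main} cannot simply be applied in dimension one — is also correct, and is precisely why some reduction (yours or the appendix's) is unavoidable.
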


\begin{proof}
For RWMH we have in analogy with \eqref{SRWMHr},
see also \eqref{SMHr},
\begin{equation}\label{ZRWMHr}
\E |\innprod{\ccc, \XX_1 - \XX_0} |^r
= \int\!|\innprod{\ccc,\zz}|^r \int\!\min\{\pi(\xx) \phi(-\zz), \pi(\xx -
\zz) \phi(\zz)\} \dd \xx \dd \zz.
\end{equation} 
Hence, \refT{T:TVLB} yields
\begin{align}\label{ZRWMHr2}
\E |\innprod{\ccc, \XX_1 - \XX_0} |^r&
< \int\!2^{r-2}\E|\innprod{c,\XX_0}-m|^r(\phi(\zz)+\phi(-\zz)) \dd\zz
\notag\\&
=2^{r-1}\E|\innprod{c,\XX_0}-m|^r.
\end{align}  
\end{proof}

\begin{corollary}
\label{C:poslin}
Let~$\ccc$ be any  nonzero vector in $\bbR^d$.  For 
\mbox{RWMH},
if $\XX_0 \in L^2$, then
\begin{equation}
\label{LBposlin}
\Cov(\langle \ccc, \XX_0 \rangle, \langle \ccc, \XX_1 \rangle) > 0.
\end{equation}
\end{corollary}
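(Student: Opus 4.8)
The plan is to derive this as the $r=2$ specialization of \refT{T:poslin}, exactly mirroring how \refC{C:maincov} is extracted from \refT{T:main}. First I would observe that, by stationarity, the scalar random variables $U := \innprod{\ccc, \XX_0}$ and $V := \innprod{\ccc, \XX_1}$ are identically distributed, so the one-dimensional instance of the Pythagorean bookkeeping behind~\eqref{Pythagoras} gives
\[
\Cov(U, V) = \Var U - \tfrac12 \E | V - U |^2 = \Var\innprod{\ccc, \XX_0} - \tfrac12 \E |\innprod{\ccc, \XX_1 - \XX_0}|^2.
\]

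Next I would apply \refT{T:poslin} with $r = 2$ and the optimal choice $m = \E\innprod{\ccc, \XX_0}$, which yields
\[
\E |\innprod{\ccc, \XX_1 - \XX_0}|^2 < 2\, \E |\innprod{\ccc, \XX_0} - m|^2 = 2 \Var\innprod{\ccc, \XX_0}.
\]
Substituting this strict bound into the displayed identity immediately gives $\Cov(\innprod{\ccc, \XX_0}, \innprod{\ccc, \XX_1}) > 0$, which is exactly~\eqref{LBposlin}.

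The only point requiring a moment's care is that the covariance and variance are genuinely finite and that the strict inequality is not vacuous. Finiteness of $\Var\innprod{\ccc, \XX_0}$ follows from the hypothesis $\XX_0 \in L^2$, and its strict positivity follows from $\ccc \neq \zzero$ together with the assumed existence of the density~$\pi$, which guarantees that the projection $\innprod{\ccc, \XX_0}$ has a nondegenerate law. I do not expect any genuine obstacle here: all the substantive analytic work (the total-variation lower bound invoked through \refT{T:TVLB}) is already carried out inside \refT{T:poslin}, and the corollary is a direct translation via the variance / second-moment identity, precisely parallel to the passage from \refT{T:main} to \refC{C:maincov}.
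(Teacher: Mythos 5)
Your proof is correct and is essentially the paper's own argument: both reduce to the one-dimensional case of~\eqref{Pythagoras} with $Y = \langle \ccc, \XX_0 \rangle$, $\tY = \langle \ccc, \XX_1 \rangle$ and then invoke \refT{T:poslin} with $r = 2$; the paper merely normalizes $\E \XX_0 = \zzero$ and takes $m = 0$, which is the same as your choice $m = \E\langle \ccc, \XX_0 \rangle$.
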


\begin{proof}
We  assume without loss of generality that $\E \XX_0 = \zzero$.  Use~\eqref{Pythagoras} with $d = 1$, 
$Y = \langle \ccc, \XX_0 \rangle$, and $\tY = \langle \ccc, \XX_1 \rangle$ and apply~\refT{T:poslin} with $m = 0$ to conclude~\eqref{LBposlin}.
\end{proof}

\section{The case of unimodal target distribution} \label{S:unimodaltarget}

\subsection*{Outline of \refS{S:unimodaltarget}. }
In \refS{SS:symmetrization}, in
the fairly general setting of~\refS{SS:SRWMHsymmphicov}, we show that
symmetrization of the target density only increases $r$th absolute
moment and decreases correlation.  In~\refS{SS:rsymmetricunimodal} we
specialize our moment formulas to the case that $d = 1$, the target
density~$\pi$ is symmetric and unimodal with mean~$0$, and the proposal
step-density $\phi$ is symmetric. 
In \refS{SS:1/9} we
prove in the setting just described the sharp lower bound  $\Corr(X_0, X_1)
> \frac19$ and some other similar but somewhat more general results, and
in \refS{SS:high} we generalize \refS{SS:1/9} to higher dimensions. 

\subsection{Target symmetrization} \label{SS:symmetrization}

The point of this subsection is that, in the setting of
\refS{SS:SRWMHsymmphicov} (symmetric proposal step-density), symmetrization
of the target density only increases incremental $r$th absolute moment and
decreases correlation.  Specifically, given a target density~$\pi$,
define another symmetric target density~$\bpi$ by
\begin{equation}
\label{bpi}
\bpi(\xx) := \tfrac12 [\pi(-\xx) + \pi(\xx)].
\end{equation}
Let $\bXX=(\bXX_t)$ denote the corresponding MH chain.

Note  first that symmetrization of~$\pi$ does not affect $\E \|\XX_0\|^r$.  For the effect on incremental $r$th absolute moment we have the following proposition.
  
\begin{proposition}
\label{P:r bar versus not}
Let $r\in(0,\infty)$.
For RWMH with symmetric $\phi$, we have
\begin{equation}
\label{r bar versus not}
\E \| \bXX_1 - \bXX_0 \|^r \geq \E \| \XX_1 - \XX_0 \|^r.  
\end{equation}
\end{proposition}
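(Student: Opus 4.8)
The plan is to compare the integral representation \eqref{SRWMHsymmphir} of the incremental $r$th absolute moment for the chain~$\bXX$ with the corresponding quantity for~$\XX$. Since~$\phi$ is symmetric in both cases, \refP{P:SRWMHsymmphir} gives
\[
\E \| \bXX_1 - \bXX_0 \|^r
= \int\!\|\zz\|^r \phi(\zz) [1 - \dtv(\bXX_0, \bXX_0 + \zz)] \dd \zz
\]
and the analogous formula for~$\XX$. Because the factor $\|\zz\|^r \phi(\zz)$ is identical in the two integrals and is nonnegative, it suffices to prove the pointwise (in~$\zz$) inequality
\[
\dtv(\bXX_0, \bXX_0 + \zz) \leq \dtv(\XX_0, \XX_0 + \zz)
\]
for every~$\zz$. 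Integrating this against $\|\zz\|^r \phi(\zz) \dd \zz$ then yields~\eqref{r bar versus not}.

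First I would set up the relevant densities. Recall that $\bpi$ from~\eqref{bpi} is the density of~$\bXX_0$, that $\pi(\cdot - \zz)$ is the density of $\XX_0 + \zz$, and that by~\eqref{dtv} the total variation distance is
\[
\dtv(\XX_0, \XX_0 + \zz) = \frac12 \int |\pi(\xx) - \pi(\xx - \zz)| \dd \xx.
\]
The symmetrized version has density $\bpi(\xx) = \tfrac12[\pi(\xx) + \pi(-\xx)]$, so that the density of $\bXX_0 + \zz$ is $\bpi(\xx - \zz) = \tfrac12[\pi(\xx - \zz) + \pi(-\xx + \zz)]$. Then
\[
\dtv(\bXX_0, \bXX_0 + \zz)
= \frac12 \int \bigabs{\tfrac12[\pi(\xx) + \pi(-\xx)] - \tfrac12[\pi(\xx - \zz) + \pi(-\xx + \zz)]} \dd \xx.
\]

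The key step is a pointwise application of the triangle inequality inside the integrand. Writing the symmetrized difference as an average,
\[
\bpi(\xx) - \bpi(\xx - \zz)
= \tfrac12 [\pi(\xx) - \pi(\xx - \zz)] + \tfrac12 [\pi(-\xx) - \pi(-\xx + \zz)],
\]
the triangle inequality gives $|\bpi(\xx) - \bpi(\xx - \zz)| \leq \tfrac12 |\pi(\xx) - \pi(\xx - \zz)| + \tfrac12 |\pi(-\xx) - \pi(-\xx + \zz)|$. Integrating over~$\xx$ and using the substitution $\xx \mapsto -\xx$ (which, by translation invariance of Lebesgue measure, sends the second term's integral to $\int |\pi(\xx) - \pi(\xx - \zz)| \dd \xx$) shows that both terms contribute the same integral, yielding $\dtv(\bXX_0, \bXX_0 + \zz) \leq \dtv(\XX_0, \XX_0 + \zz)$ as required.

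I do not anticipate a serious obstacle here; the argument is essentially convexity of the total variation distance under averaging of one of its arguments, made concrete through the density formula~\eqref{dtv}. The only point requiring mild care is the change of variables in the second term, ensuring the reflected integral matches the original. One could alternatively phrase the whole argument abstractly: since $\bXX_0$ is the equal mixture of $\XX_0$ and its reflection $-\XX_0$, and $\dtv$ is jointly convex in the pair of distributions while being invariant under the simultaneous reflection of both arguments, the bound is immediate; but the direct density computation is cleaner to write out and relies only on tools already available in the excerpt.
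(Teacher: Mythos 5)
Your proof is correct and is essentially the paper's own argument in an equivalent guise: the paper applies the superadditivity $\min\{a+b,\,c+d\}\ge\min\{a,c\}+\min\{b,d\}$ to the overlap integrals $\int\min\{\bpi(\xx),\bpi(\xx-\zz)\}\dd\xx$ appearing in~\eqref{SRWMHr}, which, via the identity $1-\dtv=\int\min$ from~\eqref{dtv}, is exactly your triangle-inequality bound on $\int|\bpi(\xx)-\bpi(\xx-\zz)|\dd\xx$. Both arguments then identify the reflected term with the original quantity by the change of variables $\xx\mapsto-\xx$ (the paper follows it with $\zz\mapsto-\zz$ using the symmetry of~$\phi$, you with a translation in~$\xx$), so the two proofs coincide in substance.
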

\begin{proof}
Because~$\phi$ is assumed symmetric, from~\eqref{SRWMHr} we have
\begin{equation}\label{sw1}
\E \| \XX_1 - \XX_0 \|^r
= \int\!\|\zz\|^r \phi(\zz) \int\!\min\{\pi(\xx), \pi(\xx - \zz)\} \dd \xx \dd \zz
\end{equation}
and
\begin{equation}\label{sw2}
\E \| \bXX_1 - \bXX_0 \|^r
= \int\!\|\zz\|^r \phi(\zz) \int\!\min\{\bpi(\xx), \bpi(\xx - \zz)\} \dd \xx \dd \zz.
\end{equation}
It is easy to see that
\begin{multline}\label{sw3}
\min\{\bpi(\xx), \bpi(\xx - \zz)\}
\\
\ge
\tfrac12\bigsqpar{\min\{\pi(\xx), \pi(\xx - \zz)\} + \min\{\pi(- \xx), \pi(- \xx + \zz)\}}.
\end{multline}
Substituting \eqref{sw3} into \eqref{sw2} 
shows that 
\begin{align}
\lefteqn{\E \| \bXX_1 - \bXX_0 \|^r} \nonumber \\
&\geq \int\!\|\zz\|^r \phi(\zz) \int\!\frac12 \left[ \min\{\pi(\xx), \pi(\xx - \zz)\} + \min\{\pi(- \xx), \pi(- \xx + \zz)\} \right]
\dd \xx \dd \zz.
\end{align}
This can be written as the average of two (double) integrals;
one is identical to \eqref{sw1}, and the other is equal to it by a change of
variables and the symmetry of $\phi$.
Hence, \eqref{r bar versus not} follows.
\end{proof}  

For a general discussion of the effect of symmetrization of~$\pi$ and/or $\phi$, see \refApp{A:symm}.

\subsection{An incremental $r$th-absolute-moment formula for RWMH
with symmetric unimodal target density and symmetric proposal density} \label{SS:rsymmetricunimodal}

A fruitfully different approach from that of \refS{S:poscorr} can be taken when the target density is symmetric and unimodal.
{\bf In this subsection we assume that $d = 1$, 
that the target density~$\pi$ is symmetric and unimodal with mean~$0$, and that the proposal step-density~$\phi$ is  symmetric (about~$0$).}

\begin{theorem}
\label{T:rsymmuni}
Let $r \in (0, \infty)$.  Consider 
RWMH with a target density~$\pi$ as described above and proposal step-density~$\phi$  symmetric (about~$0$).
Then
\begin{align}
\label{rsymmuni}
\E |X_1 - X_0 |^r &
= 2 \int_0^{\infty}\!\phi(z) z^r \P(|X_0| > z / 2) \dd z
=\E\bigsqpar{|Z|^r\P(|X_0| > |Z| / 2)},
\end{align}
where~$X_0$ and~$Z$ are independent and~$Z$ has density~$\phi$.
\end{theorem}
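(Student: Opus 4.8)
The plan is to start from the incremental-moment formula for symmetric proposals and reduce the entire statement to computing, for fixed $z$, the inner integral
\[
I(z) := \int_{-\infty}^{\infty}\min\{\pi(x),\pi(x-z)\}\dd x,
\]
which by the total-variation identity \eqref{dtv} equals $1-\dtv(X_0,X_0+z)$, since $\pi(\cdot-z)$ is the density of $X_0+z$. Because $\phi$ is symmetric, \refP{P:SRWMHsymmphir} already gives
\[
\E|X_1-X_0|^r=\int_{-\infty}^{\infty}|z|^r\phi(z)\,I(z)\dd z,
\]
so the whole theorem reduces to the clean identity $I(z)=\P(|X_0|>|z|/2)$. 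First I would note that $I(z)=I(-z)$: substituting $x\mapsto-x$ and using $\pi(-x)=\pi(x)$ sends the integrand to $\min\{\pi(x),\pi(x-z)\}$ with $z$ negated. Hence it suffices to treat $z>0$.

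The key step, and the only place where unimodality enters, is locating the crossover of the two densities. Since $\pi(x)$ is a nonincreasing function of $|x|$ (symmetric unimodal about $0$), we have $\pi(x)\ge\pi(x-z)$ if and only if $|x|\le|x-z|$, which for $z>0$ amounts to $x\le z/2$. Therefore, up to the single point $x=z/2$ (where the two values coincide, and which is irrelevant for the integral),
\[
\min\{\pi(x),\pi(x-z)\}=
\begin{cases}
\pi(x-z), & x\le z/2,\\
\pi(x), & x\ge z/2.
\end{cases}
\]

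Then I would split $I(z)$ at $z/2$. On $(-\infty,z/2]$ the substitution $u=x-z$ turns $\int\pi(x-z)\dd x$ into $\int_{-\infty}^{-z/2}\pi(u)\dd u=\P(X_0<-z/2)$, while on $[z/2,\infty)$ we get $\int_{z/2}^{\infty}\pi(x)\dd x=\P(X_0>z/2)$. Symmetry of $\pi$ about $0$ gives $\P(X_0<-z/2)=\P(X_0>z/2)$, so
\[
I(z)=2\,\P(X_0>z/2)=\P(|X_0|>z/2),
\]
and for general $z$, $I(z)=\P(|X_0|>|z|/2)$. Substituting back and folding the $z$-integral over $(-\infty,0]$ onto $[0,\infty)$ using the symmetry of $\phi$ yields the first displayed equality in \eqref{rsymmuni}; recognizing $Z$ as an independent variable with density $\phi$ produces the second.

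I expect the only genuine content to be this crossover computation; everything else is a change of variables and bookkeeping. There are no integrability subtleties to manage, since \eqref{rsymmuni} is an exact identity valid for every $r\in(0,\infty)$ (with both sides allowed to be $+\infty$), the integrands being nonnegative throughout. The one point to state carefully is the crossover condition $|x|\le|x-z|\iff x\le z/2$, which is where symmetry and monotonicity of $\pi$ are used in tandem.
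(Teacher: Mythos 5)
Your proof is correct and takes essentially the same approach as the paper: both start from \refP{P:SRWMHsymmphir} and reduce the theorem to the identity $\int\min\{\pi(x),\pi(x-z)\}\dd x=\P(|X_0|>z/2)$, which the paper obtains from the observation $\min\{\pi(x),\pi(y)\}=\pi(\max\{|x|,|y|\})$ and you obtain by locating the crossover at $x=z/2$ and splitting the integral there---the identical use of symmetric unimodality. (A harmless nitpick: your claim ``$\pi(x)\ge\pi(x-z)$ if and only if $|x|\le|x-z|$'' holds in general only as an implication from $|x|\le|x-z|$, since $\pi$ need not be strictly decreasing on $[0,\infty)$; your displayed case formula for the minimum is nevertheless correct, because at ties either branch equals the minimum.)
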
 

\begin{proof}
We note from the symmetric unimodality of~$\pi$ that
\begin{equation}
\min\{\pi(x), \pi(y)\} = \pi(\max\{|x|, |y|\})
\end{equation}
and thus for $z \in (0, \infty)$ we have
\begin{equation}
1 - \dtv(X_0, X_0 + z) 
= \int\!\pi(\max\{|x|, |x - z|\}) \dd x
= \P(|X_0| > z / 2).
\end{equation}
Therefore~\eqref{rsymmuni} follows routinely from~\eqref{SRWMHsymmphir}.
\end{proof}

\begin{corollary}
\label{C:covsymmuni}
Consider RWMH
with a target density~$\pi$ as described above and proposal step-density~$\phi$  symmetric (about~$0$).  
If $X_0 \in L^2$, then
\begin{equation}
\label{covsymmuni}
\Cov(X_0, X_1) = \gs^2 - \int_0^{\infty}\!\phi(z) z^2 \P(|X_0| > z / 2) \dd z.
\end{equation}
\nopf
\end{corollary}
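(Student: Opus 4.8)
The plan is to obtain \eqref{covsymmuni} as an immediate consequence of \refT{T:rsymmuni} specialized to $r = 2$, combined with the Pythagorean-type identity \eqref{Pythagoras} relating trace-covariance, trace-variance, and incremental second moment. Since the assumption $X_0 \in L^2$ guarantees that every quantity in sight is finite, the entire content is the combination of two facts already established above.

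First I would set $r = 2$ in \refT{T:rsymmuni} to get
\[
\E |X_1 - X_0|^2 = 2 \int_0^{\infty}\!\phi(z) z^2 \P(|X_0| > z/2) \dd z.
\]
Because the chain is stationary, $X_0$ and $X_1$ are identically distributed, so I can invoke \eqref{Pythagoras} in the scalar case $d = 1$ with $Y = X_0$ and $\tY = X_1$. Using that $\trVar X_0 = \Var X_0 = \gs^2$ and $\trCov(X_0, X_1) = \Cov(X_0, X_1)$ when $d = 1$, that identity reads
\[
\Cov(X_0, X_1) = \gs^2 - \tfrac12 \E |X_1 - X_0|^2.
\]

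Substituting the first display into the second cancels the factor of~$2$ and yields exactly \eqref{covsymmuni}. There is no genuine obstacle here: the only ingredients are \refT{T:rsymmuni} at $r=2$ and the bookkeeping identity \eqref{Pythagoras}, so the result follows simply by stringing them together. This is precisely why the statement can be recorded with no separate proof.
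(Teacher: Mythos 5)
Your proof is correct and is exactly the argument the paper intends by stating the corollary with no proof: specialize \refT{T:rsymmuni} to $r=2$ and combine with the identity~\eqref{Pythagoras} (valid here since stationarity makes $X_0$ and $X_1$ identically distributed), so the factor of~$2$ cancels and \eqref{covsymmuni} follows.
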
 

\subsection{Correlation greater than $1/9$} \label{SS:1/9}

With the preparation of the preceding two subsections completed, we are nearly ready to state our most general theorem (\refT{T:unimodal}) for unimodal target densities.  {\bf In this subsection we assume that $d = 1$, that the target density~$\pi$ is unimodal with mode equal to some point $m \in \bbR$, and that the proposal step-density is symmetric  (about~$0$).}

\refT{T:unimodal} 
will follow rather easily from the following inequality of Winkler~\cite{Winkler(1866)} (sometimes also called the 
Camp--Meidell inequality \cite{Camp(1922), Meidell(1922)}), which extends
Gauss's inequality (the case $r = 2$).  
(Actually, Winkler \cite{Winkler(1866)} 
seems to consider only integer exponents $r$,
but the proof applies to all $r>0$; similarly
\cite{Camp(1922)} and \cite{Meidell(1922)} consider only even integers.
The general case is stated and proved in
\cite{DJ-D(1985)}.)
We include the proof, which is essentially the same as in \cite[Lemma~3]{DJ-D(1985)}, in order to emphasize when the inequality is an equality, a matter discussed for Gauss's inequality in \cite[Theorem~4.2]{Ion(2023)}.

\begin{lemma}[Winkler~\cite{Winkler(1866)}]
\label{L:Winkler}
Let $r \in (0, \infty)$.  If~$Y$ has a unimodal distribution about~$0$ and $y > 0$, then
\begin{equation}
\label{Winkler}
y^r \P(|Y| > y) \leq \left( \frac{r}{r + 1} \right)^r \E |Y|^r,
\end{equation}
with equality if and only if the distribution of~$Y$ is a mixture of unit mass at~$0$, 
{\rm Unif}$\left( - \frac{r + 1}{r} y, 0 \right)$, and {\rm Unif}$\left( 0, \frac{r + 1}{r} y \right)$.
\end{lemma}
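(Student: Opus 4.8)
The plan is to build the proof on Khinchin's representation of unimodal laws: a distribution is unimodal about~$0$ exactly when it is the law of $Y = UV$ with $U \sim \mbox{Unif}(0,1)$ independent of~$V$; equivalently, it is a mixture of unit mass at~$0$ and uniform laws on intervals having~$0$ as one endpoint. Fixing such a representation, I would first record the conditional quantities. Since $U \in (0,1)$, given $V$ we have $|Y| = U|V| \sim \mbox{Unif}(0,|V|)$, so
\[
\P(|Y| > y \mid V) = (1 - y/|V|)_+ \quad\text{and}\quad \E\bigl(|Y|^r \mid V\bigr) = |V|^r/(r+1),
\]
with the convention that the first expression is~$0$ when $V = 0$. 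Taking expectations gives $y^r\,\P(|Y| > y) = \E[y^r(1 - y/|V|)_+]$ and $\E|Y|^r = \E|V|^r/(r+1)$, so the lemma reduces to a pointwise estimate in the variable $w := |V|$.

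The heart of the matter is then the single-variable inequality
\[
y^r (1 - y/w)_+ \le \tfrac{1}{r+1}\bigl(\tfrac{r}{r+1}\bigr)^r w^r, \qquad w \ge 0,
\]
which I would prove by a routine maximization: it is trivial for $0 \le w \le y$, and for $w > y$ it amounts to bounding $w \mapsto y^r(w-y)/w^{r+1}$, whose derivative vanishes at the unique interior point $w^* = \tfrac{r+1}{r}\,y$, where the value equals $\tfrac{1}{r+1}\bigl(\tfrac{r}{r+1}\bigr)^r$ — exactly the claimed constant. Integrating this bound against the law of $|V|$ and inserting the two displays from the previous paragraph yields $y^r\,\P(|Y|>y) \le \bigl(\tfrac{r}{r+1}\bigr)^r \E|Y|^r$, which is~\eqref{Winkler}.

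For the equality statement, equality in the integrated bound forces the pointwise inequality to hold with equality for $|V|$-almost every value; since that inequality is strict except precisely at $w \in \{0, w^*\}$, we must have $|V| \in \{0, w^*\}$ almost surely. Passing back through $Y = UV$, the mass at $V = 0$ contributes a point mass at~$0$, the mass at $V = w^*$ a $\mbox{Unif}(0, \tfrac{r+1}{r}y)$ component, and the mass at $V = -w^*$ a $\mbox{Unif}(-\tfrac{r+1}{r}y, 0)$ component, which is exactly the asserted mixture; the converse is a direct computation (indeed $\mbox{Unif}(0, \tfrac{r+1}{r}y)$ alone already turns~\eqref{Winkler} into an equality). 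The one step demanding real care is this equality analysis: I must verify that the pointwise inequality is strict off $\{0, w^*\}$ (handling the degenerate endpoint $w = 0$ and the convention used there), and I should read off the law of~$Y$ directly from the fixed representation $Y = UV$ rather than invoking uniqueness of the Khinchin mixing measure.
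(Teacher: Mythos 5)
Your proof is correct and follows essentially the same route as the paper's: both rest on the Khinchin representation $Y \eqd UV$ with $U \sim \mathrm{Unif}(0,1)$ independent of $V$, condition on the second factor to reduce \eqref{Winkler} to a one-variable maximization whose unique interior maximizer is $w^* = \frac{r+1}{r}y$ with value $\frac{r^r}{(r+1)^{r+1}}$, and then read off the equality cases from the fact that equality forces $|V| \in \{0, w^*\}$ almost surely. The only difference is cosmetic: the paper maximizes $v^r(1-v)$ over $v = y/|w| \in (0,1)$ via strict log-concavity, whereas you differentiate $y^r(w-y)/w^{r+1}$ in $w$ directly.
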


\begin{proof}
According to the well-known \cite[Theorem~1.3]{DJ-D(1988)}, $Y$ has a unimodal distribution about~$0$ if and only if there exist independent random variables $U \sim \mbox{Unif}(0, 1)$ and~$W$ such that~$Y$ has the same distribution as $U W$.  We then compute each side of~\eqref{Winkler} by conditioning on $W = w$, and we need only show that conditional inequality holds with equality if and only if 
$w \in \left\{ - \frac{r + 1}{r}, 0, \frac{r + 1}{r} \right\}$ .  If $w = 0$, then
\begin{equation}
y^r \P(|U W| > y\,|\,W = w) = 0 = \left( \frac{r}{r + 1} \right)^r \E (|U W|^r\,|\,W = w). 
\end{equation}
Otherwise,
\begin{equation}
y^r \P(|U W| > y\,|\,W = w) 
= y^r \P\!\left( U \geq \frac{y}{|w|} \right)
=
\begin{cases}
y^r \left( 1 - \frac{y}{|w|} \right), &\mbox{if $|w| > y$} \\
0, &\mbox{otherwise},
\end{cases} 
\end{equation}
while
\begin{equation}
\left( \frac{r}{r + 1} \right)^r \E (|U W|^r\,|\,W = w) = |w|^r \frac{r^r}{(r + 1)^{r + 1}}.
\end{equation}
So we need only show for $v \in (0, 1)$ that
\begin{equation}
\label{vineq}
v^r (1 - v) \leq \frac{r^r}{(r + 1)^{r + 1}},
\end{equation}
with equality if and only if $v = r / (r + 1)$.  But the left side of~\eqref{vineq} is strictly log-concave and is uniquely maximized at $v = r / (r + 1)$, with equality then holding at~\eqref{vineq}. 
\end{proof}

\begin{theorem}
\label{T:unimodal}
Let 
$r \in (0, \infty)$.  Consider 
RWMH
with a unimodal target density~$\pi$ that has a mode at some point $m \in \bbR$ and a proposal step-density that is symmetric  (about~$0$).  If  $X_0 \in L^r$, then
\begin{equation}
\label{SRWMHunimodalr}
\E |X_1 - X_0|^r < \left( \frac{2 r}{r + 1} \right)^r \E |X_0 - m|^r.
\end{equation}
\end{theorem}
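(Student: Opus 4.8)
The plan is to reduce to the symmetric unimodal case already handled in \refT{T:rsymmuni} and then invoke Winkler's inequality, \refL{L:Winkler}, which supplies exactly the constant $\bigpar{2r/(r+1)}^r$.

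I would first normalize the mode. Under the simultaneous shift replacing $\pi$ by $\pi(\cdot+m)$ (equivalently, replacing $X_t$ by $X_t-m$), the RWMH increment $X_1-X_0$ is unchanged in law and $\E|X_0-m|^r$ is preserved, so both sides of~\eqref{SRWMHunimodalr} are invariant. Hence I may assume $m=0$, that is, $\pi$ is unimodal with mode at~$0$. Next I would symmetrize the target, as in \refS{SS:symmetrization}: set $\bpi(x):=\tfrac12[\pi(-x)+\pi(x)]$ and let $\bX=(\bX_t)$ be the associated RWMH chain (same symmetric $\phi$). A direct check shows that unimodality of $\pi$ about~$0$ makes $\bpi$ symmetric and unimodal about~$0$, and symmetrization leaves $\E|\bX_0|^r=\E|X_0|^r$ unchanged. \refP{P:r bar versus not} gives $\E|X_1-X_0|^r\le\E|\bX_1-\bX_0|^r$, so it suffices to bound the symmetrized increment. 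Since $\bpi$ is symmetric unimodal about~$0$, \refT{T:rsymmuni} (via~\eqref{rsymmuni}) yields
\[
\E|\bX_1-\bX_0|^r=\E\bigsqpar{|Z|^r\,\P(|\bX_0|>|Z|/2)},
\]
with $Z$ (density $\phi$) independent of $\bX_0$.

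The engine is then \refL{L:Winkler} applied to $Y=\bX_0$, which is unimodal about~$0$. For each fixed $z\neq0$, taking $y=|z|/2$ gives
\[
|z|^r\,\P(|\bX_0|>|z|/2)=2^r(|z|/2)^r\,\P(|\bX_0|>|z|/2)\le\Bigpar{\tfrac{2r}{r+1}}^r\E|\bX_0|^r.
\]
Taking expectation over $Z$ and recalling $\E|\bX_0|^r=\E|X_0|^r=\E|X_0-m|^r$ produces the claimed bound, but a priori only with ``$\le$''.

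The hard part will be strictness. By the equality clause of \refL{L:Winkler}, the pointwise bound above is an equality, for a given $z$, only when the law of $\bX_0$ is the specific mixture of a point mass at~$0$ and two uniforms whose absolutely continuous part is supported on the interval $[-\tfrac{r+1}{r}\tfrac{|z|}{2},\tfrac{r+1}{r}\tfrac{|z|}{2}]$ depending on~$z$. Since $\bX_0$ has the fixed, absolutely continuous, symmetric law $\bpi$ (no atom at~$0$), matching the equality law for a given $z$ would force $\bpi$ to be the uniform density on that interval; as the interval determines $|z|$, equality can hold for at most one value of $|z|$. Because $\phi$ is a density on $\bbR$, its support meets $(0,\infty)$ in a set of positive Lebesgue measure, so the pointwise inequality is strict on a set of positive $\phi$-measure, and integrating makes the displayed bound strict. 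I would finally note $\E|X_0-m|^r>0$ (immediate since $X_0$ has a density), so the inequality is nontrivial, completing the proof.
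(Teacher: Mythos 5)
Your proposal is correct and follows essentially the same route as the paper's proof: reduce to $m=0$, symmetrize the target via \refP{P:r bar versus not} (noting $\bpi$ is symmetric unimodal with the same $r$th absolute moment), rewrite the symmetrized increment with \refT{T:rsymmuni}, and apply \refL{L:Winkler} pointwise, using its equality clause to see that equality can occur for at most one value of $|z|$, so that integration against the absolutely continuous $\phi$ yields strict inequality. The only difference is that you spell out the strictness bookkeeping (no atom at $0$, the interval determining $|z|$, positive $\phi$-measure of the strict-inequality set) in more detail than the paper, which simply asserts equality holds for at most one $z>0$.
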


\begin{proof}
It is clear that we may suppose $m = 0$ without loss of generality.  Then, 
constructing~$\bpi$ from~\eqref{bpi} as in \refS{SS:symmetrization}, we find that~$\bpi$ has mean zero and $r$th absolute moment $\E |X_0|^r$ and is symmetric and unimodal, so that by~\eqref{r bar versus not} and \refT{T:rsymmuni}, we find for the RWMH
chain~$\bX$ with target density~$\bpi$ that
\begin{align}
\label{abc}
\E |X_1 - X_0|^r
&\leq \E |\bX_1 - \bX_0|^r
= 2 \int_0^{\infty} \phi(z) z^r \P(|\bX_0| > z / 2) \dd z.
\end{align}
But  by 
\refL{L:Winkler}
we have
\begin{equation}\label{def}
z^r \P(|\bX_0| > z / 2) \leq \left( \frac{2 r}{r + 1} \right)^r \E |\bX_0|^r = \left( \frac{2 r}{r + 1} \right)^r \E |X_0|^r,
\end{equation}
where, moreover, we can have equality for at most one $z > 0$. 
Hence,  \eqref{abc}--\eqref{def} yield
\begin{equation}
\E |X_1 - X_0|^r < \left( \frac{2 r}{r + 1} \right)^r \E |X_0|^r = \left( \frac{2 r}{r + 1} \right)^r \E |X_0 - m|^r.
\end{equation}
Therefore, \eqref{SRWMHunimodalr} holds, as desired.
\end{proof}

\begin{remark}
Suppose that~$\phi$ is symmetric.
When the unimodality condition of \refT{T:unimodal} is met, this theorem is always an improvement on the more general \refT{T:main1}.  For one thing, \refT{T:unimodal} allows more values of~$r$.  For another, even for $r \in [1, \infty)$, the bound $\left( \frac{2 r}{r + 1} \right)^r$ in \refT{T:unimodal} is no larger than the bound $2^{r - 1}$ in \refT{T:main1}, and is strictly smaller except for $r = 1$.  Indeed, the ratio 
$\left( \frac{2 r}{r + 1} \right)^r / 2^{r - 1} = 2 \left( \frac{r}{r + 1} \right)^r$ is strictly log-convex, equals $1$ at $r=1$, and converges  to $2 / e < 1$ as $r \to \infty$.
\end{remark}

\begin{corollary}
\label{C:unimodal}
Consider RWMH
with a unimodal target density~$\pi$ that has a mean and mode both equal to some point $\mu \in \bbR$ and a proposal step-density that is symmetric  (about~$0$).  If $X_0 \in L^2$, then 
\begin{equation}
\label{corr1/9}
\Corr(X_0, X_1) > \tfrac19.
\end{equation}
\end{corollary}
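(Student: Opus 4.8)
The plan is to specialize \refT{T:unimodal} to the exponent $r=2$ and then convert the resulting second-absolute-moment bound into a statement about correlation by means of the ``Pythagorean'' identity \eqref{Pythagoras}. All the real work has already been done in \refT{T:unimodal} (and, underlying it, \refL{L:Winkler}); this corollary is essentially bookkeeping.

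First I would invoke \refT{T:unimodal} with $r=2$ and with $m$ taken to be the common mean-and-mode $\mu$. Since $X_0\in L^2$, the hypotheses are met, and the theorem gives
\begin{equation*}
\E |X_1 - X_0|^2 < \Bigpar{\tfrac{2\cdot 2}{2+1}}^{2}\,\E |X_0 - \mu|^2 = \tfrac{16}{9}\,\E |X_0 - \mu|^2.
\end{equation*}
Here the hypothesis that the \emph{mean} also equals~$\mu$ is exactly what I need in order to identify the \rhs: because $\E X_0 = \mu$, we have $\E |X_0 - \mu|^2 = \Var X_0 = \gss$, so the displayed bound reads $\E |X_1 - X_0|^2 < \tfrac{16}{9}\gss$.

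Next I would apply \eqref{Pythagoras} in dimension $d=1$ with $Y=X_0$ and $\tY = X_1$; by stationarity $X_0$ and $X_1$ share the distribution~$\pi$, so $\Var X_0 = \Var X_1 = \gss$ and \eqref{Pythagoras} collapses to
\begin{equation*}
\Cov(X_0, X_1) = \gss - \tfrac12\,\E |X_1 - X_0|^2.
\end{equation*}
Combining this with the moment bound yields $\Cov(X_0, X_1) > \gss - \tfrac12\cdot\tfrac{16}{9}\gss = \tfrac19\gss$. Dividing through by $\gss$ (again legitimate since both marginals have variance $\gss>0$, $\gss$ being strictly positive because $X_0$ has a density) gives $\Corr(X_0, X_1) > \tfrac19$, which is \eqref{corr1/9}.

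I do not expect a genuine obstacle here: the sharp constant $\tfrac19$ emerges directly from the factor $\bigpar{2r/(r+1)}^r = (4/3)^2 = 16/9$ at $r=2$ in \refT{T:unimodal}, and the passage from $\E|X_1-X_0|^2$ to $\Cov(X_0,X_1)$ is the exact identity \eqref{Pythagoras}. The only point requiring care is the use of \emph{both} the mean and the mode being located at~$\mu$: the mode assumption is what licenses \refT{T:unimodal} (via symmetrization and \refL{L:Winkler}), while the mean assumption is what makes $\E|X_0-\mu|^2$ equal to the variance $\gss$, so that the moment bound translates cleanly into the stated correlation bound.
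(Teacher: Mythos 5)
Your proof is correct and is exactly the paper's argument: the paper also derives \eqref{corr1/9} by applying \refT{T:unimodal} with $m=\mu$ and $r=2$ and then converting the moment bound to a covariance bound via \eqref{Pythagoras}. Your write-up simply spells out the bookkeeping (including the role of the mean assumption in identifying $\E|X_0-\mu|^2$ with $\gss$), which the paper leaves implicit.
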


\begin{proof}
This follows from~\eqref{Pythagoras} and \refT{T:unimodal} (with $m = \mu$ and $r = 2$).
\end{proof}

\begin{corollary}
\label{C:symmunimodal}
Suppose that~$\pi$ is symmetric and unimodal about some point $\mu \in \bbR$, and that the proposal step-density is symmetric about~$0$.  If $X_0 \in L^2$, then 
\begin{equation}
\label{corr1/9symm}
\Corr(X_0, X_1) > \tfrac19.
\end{equation}
\end{corollary}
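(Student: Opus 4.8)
The plan is to deduce this directly from the already-proved \refC{C:unimodal}, whose hypothesis asks that the unimodal target have its \emph{mean} and its \emph{mode} at a common point~$\mu$. The entire task is therefore to check that the present, symmetry-based hypotheses supply exactly that coincidence; no fresh computation with the moment formulas is needed.

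First I would pin down the mode. Since $\pi$ is assumed unimodal \emph{about}~$\mu$, the mode is at~$\mu$ by definition; equivalently, a density that is symmetric about~$\mu$ and unimodal attains its supremum at the centre of symmetry, so~$\mu$ is a mode. Next I would pin down the mean: because $X_0 \in L^2 \subseteq L^1$, the mean $\E X_0$ exists, and symmetry of~$\pi$ about~$\mu$ forces $\E X_0 = \mu$. Thus~$\pi$ is unimodal with mean and mode both equal to~$\mu$, which is precisely the hypothesis of \refC{C:unimodal}; that corollary then gives $\Corr(X_0, X_1) > \tfrac19$ at once.

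I expect no real obstacle here: the whole content is the elementary observation that symmetry pins both the mean and the mode to the centre~$\mu$, after which \refC{C:unimodal} does the work. For a self-contained argument one could instead bypass \refC{C:unimodal} and argue directly. Reducing to $\mu = 0$ (correlation is translation-invariant) so that \refC{C:covsymmuni} applies, I would invoke \refL{L:Winkler} with $r = 2$ and $y = z/2$ to get $z^2\,\P(|X_0| > z/2) \le \tfrac{16}{9}\gss$ for every $z > 0$, integrate this against~$\phi$ using $\int_0^\infty \phi(z)\dd z = \tfrac12$ to bound the correction integral in~\eqref{covsymmuni} by $\tfrac89\gss$, and conclude $\Cov(X_0, X_1) \ge \tfrac19\gss$, hence $\Corr(X_0, X_1) \ge \tfrac19$. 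Strictness would then come from the equality clause of \refL{L:Winkler}, which holds for at most one value of~$z$ and so fails on a set of positive $\phi$-measure.
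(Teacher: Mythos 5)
Your proposal is correct and matches the paper's route exactly: the paper states \refC{C:symmunimodal} without separate proof precisely because, as you observe, symmetry of~$\pi$ about~$\mu$ forces both the mode and the mean (which exists since $X_0 \in L^2$) to equal~$\mu$, so \refC{C:unimodal} applies verbatim. Your self-contained fallback is also sound, and is in essence the paper's own proof of \refT{T:unimodal} specialized to the already-symmetric case (so that the symmetrization step via \refP{P:r bar versus not} can be skipped), with the strictness handled, as in the paper, by the equality clause of \refL{L:Winkler} holding for at most one $z>0$.
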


For something of a discrete analogue of \refC{C:symmunimodal}, see \refT{T:Isymmunimodal}.

\begin{remark}
\label{R:sharp}
The  
upper bound~\eqref{SRWMHunimodalr} is sharp, in the sense that (assuming $m = 0$), for each fixed $r \in (0, \infty)$, the supremum of $\E |X_1 - X_0|^r / \E |X_0|^r$ overall all choices of~$\pi$ and~$\phi$ as in \refT{T:unimodal} equals $\left( \frac{2 r}{r + 1} \right)^r$.  [The lower bound~\eqref{corr1/9symm} is correspondingly sharp.]
To see this (which results from the sharpness in \refL{L:Winkler}), 
note that if~$\pi$ is the Unif$(- (r + 1), r + 1)$ density with distribution function~$\Pi$ and $Z = \pm 2 r$ with probability $1/2$ each, then $\E |X_0|^r = (r + 1)^{r - 1}$ and (by \refT{T:rsymmuni})
\begin{equation}
\E |X_1 - X_0|^r = (2 r)^r \P(|X_0| > r) = (2 r)^r / (r + 1).
\end{equation}
The random variable $Z$ is discrete and thus not allowed as a 
proposal step-distribution in our setting, but we can approximate it and
take $\phi \equiv \phi_{\eps}$ to be the $(1/2, 1/2)$-mixture of the uniform distributions on 
$(- 2 r - \eps, - 2 r + \eps)$ and $(2 r - \eps, 2 r + \eps)$; then 
$\E |X_1 - X_0|^r / \E |X_0|^r$ will converge to $\left( \frac{2 r}{r + 1} \right)^r$ as $\eps \downarrow 0$ by 
\eqref{rsymmuni}, the above calculations, and the bounded convergence theorem.
However,
as stated in \refT{T:unimodal},
this supremum [and, correspondingly likewise, the infimum of $1/9$ in \eqref{corr1/9symm}] cannot be achieved (with a continuous proposal step-distribution),
as a consequence of
the necessary and sufficient condition 
for equality in \refL{L:Winkler}.
\end{remark}

\begin{example}
\label{E:normal}

(a)~Recalling
the discussion at~\eqref{ratio}, \refT{T:unimodal} with $r \neq 2$ can at least sometimes be used to bolster the evidence provided by \refC{C:symmunimodal} of positive dependence between $X_0$ and $X_1$.  For example, suppose that $X_0$ and $X_0^*$ are independent standard normal random variables.  Then, by \refT{T:unimodal} with $m = 0$, we  have
\begin{equation}
\label{UBratio}
\rho(r) := \frac{\E |X_1 - X_0|^r}{\E |X_0^* - X_0|^r} \leq \left( \frac{\sqrt{2}\,r}{r + 1} \right)^r =: \hrho(r),
\qquad r > 0;
\end{equation}
the bound $\hrho(r)$ here is log-convex in~$r$, with limit~$1$ as $r \downarrow 0$, minimum value approximately $0.6834$ at $r \approx 0.6145$, value~$1$ at $r = \sqrt{2} + 1$, and asymptotic growth 
$\hrho(r) \sim e^{-1} 2^{r / 2}$ as $r \to \infty$.  In particular, $\hrho(r) < 1$ (indicating positive dependence) if (and only if) $r < \sqrt{2} + 1$.

(b)~Of course, the bound 
$\hrho(r)$ in~\eqref{UBratio}
is just that:\ a bound, independent of the choice of step-density~$\phi$.  If, for example, 
$\phi$ is taken to be the normal density with mean~$0$ and variance~$4$, then 
by \refT{T:rsymmuni}, with~$Y$ and $Y^*$ independent standard normal random variables, 
the numerator of $\rho(r)$ equals
\begin{equation}
\label{rhornumerator}
\E |X_1 - X_0|^r
= 2^r \E[|Y|^r {\bf 1}(|Y| < |Y^*|)]
= 2^r \E \E \left[ |Y|^r {\bf 1}(|Y| < |Y^*|)\,\Big|\,|Y| \right].
\end{equation}
The expression~\eqref{rhornumerator} can be evaluated using a hypergeometric
function, but this does not lead
to a particularly simple
form for general~$r$; 
however $\rho(r)$ can be evaluated numerically (and exactly for integer $r$),
and with simple arguments we can bound~$\rho(r)$ more sharply than in
part~(a) of this example, as follows. 
With~$\varphi$ (not to be confused with~$\phi$) denoting the standard normal density, for 
$y > 0$ we have, by a well-known tail bound,
\begin{equation}
\label{normal tail bound}
\E \left[ |Y|^r {\bf 1}(|Y| < |Y^*|)\,\Big|\,|Y| = y \right]
= y^r \P(|Y^*| > y)
< 2\,y^{r - 1} \varphi(y),
\end{equation}
and so
\begin{align}
  \E |X_1 - X_0|^r 
< 2^{r + 1} \E[|Y|^{r - 1} \varphi(|Y|)]
= \frac{2^{(r + 1) / 2}}{\sqrt{\pi}} \E |Y|^{r - 1}.
\end{align}
The denominator of $\rho(r)$ equals $2^{r / 2} \E |Y|^r$, so
\begin{align}
\rho(r) 
&< \sqrt{\frac{2}{\pi}}\,\frac{\E |Y|^{r - 1}}{\E |Y|^r}
= \sqrt{\frac{2}{\pi}}\,\frac{2^{(r - 1) / 2} \pi^{-1/2} \Gamma(r / 2)}{2^{r / 2} \pi^{-1/2} \Gamma((r + 1) / 2)} 
\notag \\
&= \pi^{-1/2}\,\frac{\Gamma(r / 2)}{\Gamma((r + 1) / 2)} =: \trho(r)
\label{trho}
\end{align}
The function~$\trho$ is strictly decreasing and $\trho(1) = 1$; using this [unlike using~\eqref{UBratio}], we can conclude $\rho(r) < 1$ for all $r \in [1, \infty)$.  Further, since the inequality in~\eqref{normal tail bound} is also asymptotic equality as $y \to \infty$, we have $\rho(r) \sim \trho(r) \sim \sqrt{2 / (\pi r)}$ as $r \to \infty$.

By a different argument, we also have $\rho(r) < 1$ for $r \in (0, 1)$, and therefore $\rho(r) < 1$ for all 
$r \in (0, \infty)$.  Here is an argument that works for $r \in (0, 2)$ (and, pushing just a little more, also  for $r = 2$).  Return to the first of the two expressions for $\E |X_1 - X_0|^r$ in~\eqref{rhornumerator} and condition now on $|Y^*|$ instead of $|Y|$.  Since, for fixed $y^* > 0$ the function $y^r$ is increasing in $y \in (0, \infty)$ and the function ${\bf 1}(y < y^*)$ is decreasing, applying ``Chebyshev's other inequality''~\cite{FJ(1984)} 
shows that, conditionally given $Y^*$, 
the variables $|Y|^r$ and ${\bf 1}(|Y|<Y^*)$ are
negatively correlated.
Then taking expectations yields
\begin{align}
\E |X_1 - X_0|^r \leq 
2^r\E|Y|^r\P(|Y|<|Y^*|)=
2^{r - 1} \E |Y|^r
\end{align} 
and hence
\begin{equation}
\label{small r bound}
\rho(r) \leq 2^{(r - 2) / 2} < 1.
\end{equation}
It is easy to see 
that $\rho(r) \to \P(X_1\neq X_0)=1/2$ 
as $r \downarrow 0$, so the bound $2^{(r - 2) / 2}$ is sharp in this limit.

Although we shall not attempt a proof, plots indicate that $\rho(r)$ is strictly decreasing in $r \in (0, \infty)$.

For 
this example, not only is it true that $\rho(r) < 1$, but in fact we have the even stronger evidence of positive dependence between $X_0$ and $X_1$ that
\begin{equation}
\E h(|X_1 - X_0|) < \E h(|X^*_0 - X_0|)
\end{equation}
for any strictly increasing function $h:[0, \infty) \to \bbR$.  This follows from the stochastic inequality between 
$|X_1 - X_0|$ and $|X^*_0 - X_0|$, which in turn follows from the decreasing ratio of their densities over 
$(0, \infty)$.
\end{example}

\subsection{Higher dimensions} \label{SS:high}

\refT{T:unimodal} and \refC{C:symmunimodal} can be extended to higher dimensions.  {\bf In this subsection we assume that the target density~$\pi$ satisfies
\begin{equation}
\label{symmstarunimodal}
\mbox{$\pi(\xx) = \hpi(\| \xx \|)$ for some nonincreasing function~$\hpi$ on $[0, \infty)$}
\end{equation}
and that the proposal step-density is symmetric  (about~$\zzero$).}
This is, assuming that the target distribution has a density (and that we
choose it suitably), 
equivalent to the condition that the target distribution is spherically symmetric and 
star unimodal
(slightly extending \cite[Criterion in Section~2.2]{DJ-D(1988)}); 
if $d = 1$, 
the condition is equivalent to the hypothesis of~\refC{C:symmunimodal} with $\mu = 0$.

We now extend the bounds in~\eqref{SRWMHunimodalr} and \eqref{corr1/9}
to this setting.

\begin{theorem}
\label{T:unimodal-d}
Let 
$r \in (0, \infty)$.  Consider 
RWMH
with a  target density~$\pi$ that satifies \eqref{symmstarunimodal}
and a proposal step-density that is symmetric  (about~$0$).  If\/ $\XX_0 \in L^r$, then
\begin{equation}
\E \| \XX_1 - \XX_0 \|^r < \left( \frac{2 r}{r + 1} \right)^r \E \| \XX_0 \|^r.
\end{equation}
\end{theorem}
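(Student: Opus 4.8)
The plan is to run the same argument as in the proof of \refT{T:unimodal}, but with a genuinely $d$-dimensional replacement for the moment formula \eqref{rsymmuni}. Since the proposal step-density is symmetric, \refP{P:SRWMHsymmphir} applies and gives
\[
\E\|\XX_1-\XX_0\|^r=\int\|\zz\|^r\phi(\zz)\,[1-\dtv(\XX_0,\XX_0+\zz)]\dd\zz,
\]
so everything reduces to understanding the inner factor $1-\dtv(\XX_0,\XX_0+\zz)$ under the hypothesis \eqref{symmstarunimodal}, and then bounding it direction-by-direction with Winkler's inequality.

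The key (and main new) step is a clean evaluation of the total-variation factor, which I expect to be the part requiring the most care. Writing $\pi(\xx)=\hpi(\|\xx\|)$ with $\hpi$ nonincreasing, I would first note that $\min\{\pi(\xx),\pi(\xx-\zz)\}=\hpi(\max\{\|\xx\|,\|\xx-\zz\|\})$. The locus $\|\xx\|=\|\xx-\zz\|$ is the hyperplane $\langle\xx,\zz\rangle=\tfrac12\|\zz\|^2$ bisecting the segment from $\zzero$ to $\zz$; on its two sides the maximum equals $\|\xx-\zz\|$ and $\|\xx\|$, respectively. Splitting the integral in \eqref{dtv} accordingly, translating $\xx\mapsto\xx+\zz$ in the first piece, and using the reflection symmetry $\xx\mapsto-\xx$ (available because $\hpi(\|\cdot\|)$ is radially symmetric), both pieces become the same half-space integral, and I obtain the $d$-dimensional analogue of \refT{T:rsymmuni},
\[
1-\dtv(\XX_0,\XX_0+\zz)=\P\bigpar{|\langle\XX_0,\hzz\rangle|>\tfrac12\|\zz\|},
\qquad \hzz:=\zz/\|\zz\|.
\]

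It then remains to bound each one-dimensional projection. The crucial observation is that every projection $\langle\XX_0,\uu\rangle$ has a unimodal distribution about~$0$: its density at~$y$ is $\int_{\bbR^{d-1}}\hpi\bigpar{\sqrt{y^2+\|\ww\|^2}}\dd\ww$ (integrating $\pi$ over the hyperplane through $y\uu$ orthogonal to~$\uu$), which is even in~$y$ and, since $\hpi$ is nonincreasing, nonincreasing in~$|y|$. Hence \refL{L:Winkler} (with threshold $\tfrac12\|\zz\|$), followed by Cauchy--Schwarz $|\langle\XX_0,\hzz\rangle|\le\|\XX_0\|$ (which also guarantees finiteness of the middle term from $\XX_0\in L^r$), yields for each $\zz\ne\zzero$
\[
\|\zz\|^r\,\P\bigpar{|\langle\XX_0,\hzz\rangle|>\tfrac12\|\zz\|}
\le\Bigpar{\frac{2r}{r+1}}^r\E|\langle\XX_0,\hzz\rangle|^r
\le\Bigpar{\frac{2r}{r+1}}^r\E\|\XX_0\|^r.
\]
Multiplying by $\phi(\zz)$ and integrating (using $\int\phi=1$) gives the asserted bound. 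For strictness, the case $d=1$ is exactly \refT{T:unimodal} (with $m=0$), so I may assume $d\ge2$; then the Cauchy--Schwarz step is strict, because $|\langle\XX_0,\hzz\rangle|=\|\XX_0\|$ forces $\XX_0$ onto the line $\bbR\hzz$, an event of probability~$0$ when $\XX_0$ has a density in $\bbR^d$ with $d\ge2$. Thus the displayed pointwise bound is strict for every $\zz\ne\zzero$, and integrating against the density~$\phi$ turns this into the strict inequality claimed.
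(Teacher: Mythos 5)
Your proposal is correct and follows essentially the same route as the paper's own proof: the same reduction to $d\ge 2$ via \refT{T:unimodal}, the same evaluation $1-\dtv(\XX_0,\XX_0+\zz)=\P\bigpar{|\langle\XX_0,\zz\rangle|/\|\zz\|>\tfrac12\|\zz\|}$ under \eqref{symmstarunimodal}, then Winkler's inequality (\refL{L:Winkler}) applied to the (unimodal) one-dimensional projection, followed by the strict bound $\E|\langle\XX_0,\hzz\rangle|^r<\E\|\XX_0\|^r$, which is strict precisely because $d\ge2$ and $\XX_0$ has a density. The only cosmetic difference is that the paper invokes spherical symmetry to replace the projection onto $\zz/\|\zz\|$ by the first coordinate $X_{01}$, whereas you verify unimodality of the projection in an arbitrary direction directly; these amount to the same thing.
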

\begin{proof}
By \refT{T:unimodal}, we may suppose that $d \geq 2$.
First note that by \eqref{symmstarunimodal} we have
\begin{equation}
\min\{\pi(\xx), \pi(\yy)\} = \hpi(\max\{\| \xx \|, \| \yy \|\}),
\end{equation}
and thus for $\zz \neq 0$ we have
\begin{align}
1 - \dtv(\XX_0, \XX_0 + \zz) 
&= \int\!\hpi(\max\{\| \xx \|, \| \xx - \zz \|\}) \dd \xx \nonumber \\
&= \P\!\left( \left| \frac{\langle \XX_0, \zz \rangle}{\| \zz \|} \right| > \frac12 \| \zz \| \right)
= \P(|X_{01}| > \tfrac12 \| \zz \|), \label{TVhigh}
\end{align}
where $X_{01}$ is the first coordinate of $\XX_0$ and has a unimodal distribution.  Again by Winkler's  inequality
(\refL{L:Winkler}),
\begin{equation}
\label{morehigh}
\| \zz \|^r \P(|X_{01}| > \tfrac12 \| \zz \|) 
\leq \left( \frac{2 r}{r + 1}\right)^r \E |X_{01}|^r 
< \left( \frac{2 r}{r + 1}\right)^r  \E \| \XX_0 \|^r,
\end{equation}
where the second inequality is strict because $d \geq 2$ and $\XX_0$ has a density.  (We don't even need to consider strictness of the first inequality.)
Combining~\eqref{SRWMHsymmphir} and~\eqref{TVhigh}--\eqref{morehigh} gives the desired conclusion.
\end{proof}

\begin{corollary}
\label{C:unimodal-d}
Consider 
RWMH
with a  target density~$\pi$ that satifies \eqref{symmstarunimodal}
and a proposal step-density that is symmetric  (about~$0$).  If $\XX_0 \in L^2$, then
\begin{equation}
\label{corr1/9-d}
\trCorr(\XX_0, \XX_1) > \tfrac19.
\end{equation}
\nopf
\end{corollary}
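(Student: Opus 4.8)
The plan is to read off \eqref{corr1/9-d} directly from \refT{T:unimodal-d} with $r=2$, combined with the second-absolute-moment identity \eqref{Pythagoras}. First I would record the one structural fact I need about the hypothesis: the spherical symmetry built into \eqref{symmstarunimodal} forces $\E\XX_0=\zzero$. Indeed, $\pi(\xx)=\hpi(\|\xx\|)$ depends on~$\xx$ only through~$\|\xx\|$, so the target density is even; since $\XX_0\in L^2\subseteq L^1$, its mean vanishes. Consequently, by \eqref{trVar},
\[
\E\|\XX_0\|^2 = \trVar\XX_0 = \gs^2,
\]
which is the key identification that lets the plain $r$th moment appearing in \refT{T:unimodal-d} be rewritten in terms of the trace-variance appearing in the definition of trace-correlation.

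Next I would apply \refT{T:unimodal-d} with $r=2$. Since $\bigpar{\tfrac{2\cdot2}{2+1}}^2=\tfrac{16}{9}$, the theorem gives the strict bound
\[
\E\|\XX_1-\XX_0\|^2 < \tfrac{16}{9}\,\E\|\XX_0\|^2 = \tfrac{16}{9}\gs^2.
\]
I would then convert this incremental second moment into a trace-covariance via \eqref{Pythagoras}, which applies because stationarity of~$\XX$ makes $\XX_0$ and~$\XX_1$ identically distributed. Taking $(\YY,\tYY)=(\XX_0,\XX_1)$ there yields
\[
\trCov(\XX_0,\XX_1) = \tfrac12\bigpar{2\gs^2 - \E\|\XX_1-\XX_0\|^2} > \tfrac12\bigpar{2\gs^2 - \tfrac{16}{9}\gs^2} = \tfrac19\gs^2.
\]
Finally, dividing by $\gs^2=\trVar\XX_0=\trVar\XX_1$ (again by stationarity, and with $\gs^2>0$ preserving the strict inequality) and invoking the definition \eqref{corrdef} gives $\trCorr(\XX_0,\XX_1)>\tfrac19$, as claimed.

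There is essentially no genuine obstacle here: the entire content sits in \refT{T:unimodal-d}, and the corollary is a one-line arithmetic deduction, the threshold $\tfrac19$ arising precisely because $2-\tfrac{16}{9}=\tfrac29$ and halving gives $\tfrac19$. The only point requiring even a moment's care is the reduction to mean zero, namely confirming that \eqref{symmstarunimodal} forces $\E\XX_0=\zzero$ so that $\E\|\XX_0\|^2$ and $\gs^2$ coincide; once that identification is in hand, the strict inequality $\tfrac{16}{9}<2$ is exactly what produces the bound.
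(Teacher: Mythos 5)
Your proof is correct and is precisely the argument the paper intends (the corollary is stated without proof as an immediate consequence of Theorem~\ref{T:unimodal-d}, mirroring how Corollary~\ref{C:unimodal} follows from Theorem~\ref{T:unimodal} in dimension one): apply the theorem with $r=2$ and combine with~\eqref{Pythagoras}, using the fact that the spherical symmetry in~\eqref{symmstarunimodal} gives $\E\XX_0=\zzero$ so that $\E\|\XX_0\|^2=\gs^2$. No discrepancies to report.
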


\begin{remark}
In the setting of this subsection, we see from symmetry that if $\XX_0 \in L^2$, then
\begin{equation}
\trCov(\XX_0, \XX_1) = d \Cov(X_{01}, X_{11})
\end{equation}
and
\begin{equation}
\trVar \XX_0 = d \Var X_{01},
\end{equation}
so that
\begin{equation}
\trCorr(\XX_0, \XX_1) = \Corr(X_{01}, X_{11}).
\end{equation}
More generally, we conclude for any nonzero vector~$\ccc$  that
\begin{equation}
\Corr(\langle \ccc, \XX_0 \rangle, \langle \ccc, \XX_1 \rangle) > \tfrac19.
\end{equation}
\end{remark}

\section{Integer-grid Metropolis--Hastings} \label{S:I}

As a consequence of \refT{T:TVLBZ}, all results in Sections \ref{S:setup}--\ref{S:poslin} hold, 
\emph{mutatis mutandis}, also for IMH in $\bbZ^d$, except 
that we only get weak inequalities in \refT{T:main} and \refC{C:maincov}.
(Extension  of \refS{S:unimodaltarget}, even in dimension $d = 1$, is much more delicate, as discussed later in this section.)
Indeed, the results in Sections \ref{S:setup}--\ref{S:poslin} have the same proofs for $\bbZ^d$ as for 
$\bbR^d$, replacing Lebesgue measure by counting measure, i.e.,\ replacing
integrals by sums, and using \refT{T:TVLBZ} instead of \refT{T:TVLB}.
For completeness, we state the analogues of \refT{T:main} and \refC{C:maincov} explicitly, with IRWMH our abbreviation for integer-grid random walk Metropolis--Hastings, that is, IMH as in \refR{R:discrete set-up} with $q(\xx, \yy)$ a function of $\yy - \xx$.

\begin{theorem}
\label{T:Imain}
Let $r \in [2, \infty)$.  For IRWMH,
if $\XX_0 \in L^r$ 
then for any $\mm\in\bbR^d$ we  have
\begin{equation}
\label{ZrUB}
\E \| \XX_1 - \XX_0 \|^r \leq 2^{r - 1} \E \| \XX_0 - \mm \|^r.
\end{equation}
\nopf
\end{theorem}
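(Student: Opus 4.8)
The plan is to mirror exactly the proof of the continuous \refT{T:main}, replacing Lebesgue measure on $\bbR^d$ by counting measure on $\bbZ^d$ (so every integral becomes a sum) and invoking the discrete total-variation lower bound \refT{T:TVLBZ} in place of \refT{T:TVLB}. First I would record the discrete analogue of the incremental $r$th-moment formula~\eqref{SRWMHr}: for IRWMH with proposal step-mass-function~$\phi$, the same reversibility computation leading to~\eqref{SMHr} and~\eqref{SRWMHr} gives
\begin{equation}
\E \| \XX_1 - \XX_0 \|^r
= \sum_{\zz \in \bbZ^d}\!\|\zz\|^r \sum_{\xx \in \bbZ^d}\!\min\{\pi(\xx) \phi(-\zz), \pi(\xx - \zz) \phi(\zz)\}.
\end{equation}
As noted in the surrounding text, this derivation is formally identical to the $\bbR^d$ case, since the only property of Lebesgue measure used is translation invariance, which counting measure on $\bbZ^d$ shares.

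Next, with $\bphi(\zz) := \tfrac12[\phi(-\zz)+\phi(\zz)]$ as in~\eqref{bphi}, I would apply the discrete moment bound on total variation distance, \refT{T:TVLBZ}, to the inner $\xx$-sum. The point of that theorem is precisely to supply, for IMH, the same inequality that \refT{T:TVLB} supplies in the continuous setting, namely that for each~$\zz$ with $\bphi(\zz)>0$ the $\zz$-summand is bounded by
\begin{equation}
\|\zz\|^r \sum_{\xx}\!\min\{\pi(\xx) \phi(-\zz), \pi(\xx - \zz) \phi(\zz)\}
\leq 2^{r-1}\,\E\|\XX_0-\mm\|^r\,\bphi(\zz),
\end{equation}
but now only with \emph{weak} inequality. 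Summing over $\zz\in\bbZ^d$ and using $\sum_{\zz}\bphi(\zz)=1$ then yields~\eqref{ZrUB}. The substitution of \refT{T:TVLBZ} for \refT{T:TVLB} is the one genuine change; the homogeneity argument and the final integration-now-summation step are unchanged.

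The only real obstacle, and it is a conceptual one rather than a computational one, is the loss of strictness: in the continuous setting \refT{T:TVLB} delivers a strict inequality for every admissible~$\zz$, whereas its discrete counterpart \refT{T:TVLBZ} can hold with equality (as the extreme example in \refE{E-} and \refR{R=} already shows, where $|X_1-X_0|\in\{0,2\}$ each with probability $\tfrac12$ forces equality in~\eqref{ZrUB} with $r$ arbitrary and $m=0$). Consequently the strict $<$ of~\eqref{rUB} must be relaxed to the weak $\leq$ recorded in~\eqref{ZrUB}, and no sharpening is possible in general. Aside from this, I would simply remark that the proof is otherwise verbatim that of \refT{T:main}, so there is nothing further to check.
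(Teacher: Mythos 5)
Your proposal is correct and is essentially the paper's own proof: the paper establishes \refT{T:Imain} by observing that the argument for \refT{T:main} carries over verbatim once Lebesgue measure is replaced by counting measure (integrals by sums, starting from the discrete analogue of~\eqref{SRWMHr}) and \refT{T:TVLBZ} is invoked in place of \refT{T:TVLB}, with the strict inequality necessarily relaxed to a weak one since the discrete bound admits equality. One trivial slip: the equality example you cite ($X_0=\pm1$, $Z=\pm2$, giving $|X_1-X_0|\in\{0,2\}$ with probability $\tfrac12$ each) comes from \refR{R=} and \refE{E:discrete}, not \refE{E-} (the negative-correlation example), but this does not affect your argument.
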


\begin{corollary}
\label{C:Imaincov}
For 
RWMH, if $\XX_0 \in L^2$ then
\begin{equation}
\label{ZLB}
\trCorr(\XX_0, \XX_1) \geq 0.
\end{equation}
\nopf
\end{corollary}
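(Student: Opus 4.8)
The plan is to specialize \refT{T:Imain} to $r = 2$, exactly as \refC{C:maincov} is derived from \refT{T:main} in the absolutely continuous setting. First I would apply \refT{T:Imain} with $r = 2$ and the choice $\mm = \E \XX_0$; since $\XX_0 \in L^2$, the right-hand side of~\eqref{ZrUB} becomes $2^{2-1}\,\E \| \XX_0 - \E \XX_0 \|^2 = 2\gs^2$ by~\eqref{trVar}, yielding
\begin{equation}
\E \| \XX_1 - \XX_0 \|^2 \leq 2\gs^2.
\end{equation}

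Next I would invoke the identity~\eqref{Pythagoras}, which holds verbatim in the discrete (IRWMH) setting because stationarity makes $\XX_0$ and $\XX_1$ identically distributed, to rewrite the trace-covariance as
\begin{equation}
\trCov(\XX_0, \XX_1) = \tfrac12\bigl(2\gs^2 - \E \| \XX_1 - \XX_0 \|^2\bigr) \geq \tfrac12\bigl(2\gs^2 - 2\gs^2\bigr) = 0.
\end{equation}
Dividing by $\trVar \XX_0 = \gs^2$ (which is positive, so that $\trCorr$ is well defined via~\eqref{corrdef}) gives the claimed bound $\trCorr(\XX_0, \XX_1) \geq 0$.

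At the level of this corollary there is essentially no obstacle: all the real work is absorbed into \refT{T:Imain}, whose sole difference from its continuous counterpart \refT{T:main} is that it rests on the \emph{weak} total variation lower bound \refT{T:TVLBZ} in place of the \emph{strict} bound \refT{T:TVLB}. The one point worth emphasizing is precisely why the inequality here is weak rather than strict, in contrast to~\eqref{rLB}: equality can genuinely occur. This is illustrated by the two-point example of \refR{R=} (take $d = 1$, $\P(X_0 = \pm 1) = \tfrac12$, $\P(Z = \pm 2) = \tfrac12$), where lattice-concentrated mass lets the total variation distance saturate its bound and forces $\trCov(\XX_0, \XX_1) = 0$. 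Finally, unlike in the absolutely continuous case, where $\gs^2 > 0$ follows from the existence of the density~$\pi$, on $\bbZ^d$ the positivity $\gs^2 > 0$ is simply the non-degeneracy requirement already built into the definition~\eqref{corrdef} of trace-correlation.
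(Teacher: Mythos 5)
Your proof is correct and is essentially the paper's own argument: the paper obtains \refC{C:Imaincov} from \refT{T:Imain} exactly as \refC{C:maincov} is obtained from \refT{T:main}, namely by taking $r=2$ and $\mm=\E\XX_0$ and then invoking \eqref{Pythagoras} and \eqref{corrdef}, with the only discrete-specific change being that \refT{T:Imain} (via \refT{T:TVLBZ}) gives a weak rather than strict inequality. Your closing remarks on the genuine possibility of equality and on nondegeneracy match the paper's \refE{E:discrete} and \refR{R=}.
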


The following example shows that in general we cannot improve from weak to strict inequalities in \refT{T:Imain} and \refC{C:Imaincov}.
 
\begin{example}\label{E:discrete}
For a class of one-dimensional IRWMH examples where equality holds, suppose
that $X_0 \in L^2$ takes values in~$\bbZ$, and without loss of generality
assume $\E X_0 \in (0, 1]$.  Suppose also that the distribution $\cL(Z)$ of
the proposal transition-steps is concentrated on $\{-1, 0, 1\}$.
Take $m:=\E X_0$.  
\refL{L:TVLB equally spaced} and \refR{R:TVLB+} then apply directly, taking
$r := 2$, $b := - \E X_0$, and $p_n := \P(X_0 = n)$ for $n \in \bbZ$.  It
follows that,
for this class of examples and
excluding the trivial case $X_0 = 1$ \as,\ we have equality in~\eqref{ZrUB} [\ie,\ in~\eqref{ZLB}] if and only if $X_0$ is concentrated on $\{0, 1\}$
and~$Z$ is concentrated on $\{-1, 1\}$ 
with $Z = 2 X_0 - 1$ in distribution; in this exceptional case, 
$X_1 = (Z + 1) / 2$, which (by construction) is an independent (and hence uncorrelated) copy of $X_0$.
\end{example}

{\bf For the remainder of this section we assume about our IRWMH on $\bbZ^d$ that 
$d = 1$; the target distribution, with probability mass function~$\pi$, is discrete unimodal about~$0$ }(this centering being with no loss of generality, and our assumption then equivalent to having 
$\pi_n \geq \pi_{n - 1}$ for $n \leq 0$ and $\pi_n \leq \pi_{n - 1}$ for $n \geq 1$){\bf , and the probability mass function~$\phi$ for the proposal step-distribution is symmetric about~$0$}.  Since \refP{P:r bar versus not} carries over to the IRWMH setting, just as in \refS{S:unimodaltarget} for simplicity we may as well, and do, {\bf assume that~$\pi$ is symmetric about~$0$}. 

As the following example (closely related to \refR{R=} and \refE{E:discrete}) shows, the IRWMH analogues of \refT{T:unimodal} and \refC{C:symmunimodal} do \emph{not} hold, and in fact we cannot do better than the general \refT{T:Imain} even with the special assumptions we have just set forth.

\begin{example}
\label{E:bad discrete unimodal}
Let $d=1$, let $X_0$ be uniformly distributed on $\{-1, 0, 1\}$, and let~$Z$ be uniformly distributed on 
$\{-2, 2\}$.  Then  $\P(|X_1 - X_0| = 2) = 1/3 = 1 - \P(|X_1 - X_0| = 0)$, and so
\begin{equation}
\E |X_1 - X_0|^r = \tfrac13 2^r = 2^{r - 1} \times \tfrac23 = 2^{r - 1} \E |X_0|^r.  
\end{equation}
\end{example} 

In light of \refE{E:bad discrete unimodal}, it is perhaps surprising that there is an IRWMH analogue of \refC{C:symmunimodal} if we place an additional parity constraint on the support of~$\phi$.  

\begin{theorem}
\label{T:Isymmunimodal}
In the IRWMH setting, suppose that~$\pi$ is symmetric and unimodal about~$0$, but not degenerate at~$0$, and that the proposal step-distribution (say, of the random variable~$Z$) is symmetric and supported on the odd integers.  If $X_0 \in L^2$, then
\begin{equation}
\label{Icorr1/10symm}
\Corr(X_0, X_1) \geq \tfrac{1}{10},
\end{equation}
with equality if and only if the distribution of $X_0$ is a mixture of unit mass at~$0$ and 
{\rm Unif}$\{-2, -1, 0, 1, 2\}$ and $Z = \pm 3$ with probability $1/2$ each. 
\end{theorem}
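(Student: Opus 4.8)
The plan is to mirror the one-dimensional continuous argument of \refSS{SS:rsymmetricunimodal} and \refSS{SS:1/9}, replacing \refL{L:Winkler} by an integer Winkler-type inequality that is tight at a genuinely discrete extremizer. Normalizing to $\E X_0 = 0$, I first record the discrete analogue of \refT{T:rsymmuni}. By the $\bbZ$-version of \refP{P:SRWMHsymmphir} (the carry-over described at the start of \refS{S:I}) together with the total-variation computation for a symmetric unimodal pmf~$\pi$, namely $\min\{\pi(x),\pi(y)\}=\pi(\max\{|x|,|y|\})$, one gets for every positive \emph{odd} integer~$z$ that
\[
1-\dtv(X_0,X_0+z)=\sum_x\pi(\max\{|x|,|x-z|\})=\P\bigpar{|X_0|>z/2}=\P\bigpar{|X_0|\ge(z+1)/2},
\]
the last equality holding because $z/2$ is a half-integer. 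Hence $\E|X_1-X_0|^2=\E\bigsqpar{Z^2\,\P(|X_0|>|Z|/2)}$, and by \eqref{Pythagoras} the claim $\Corr(X_0,X_1)\ge\frac1{10}$ is equivalent to the moment bound $\E|X_1-X_0|^2\le\frac95\sigma^2$.

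Next I would introduce the discrete mixture representation of a symmetric unimodal law: setting $w_k:=(2k+1)(\pi_k-\pi_{k+1})\ge0$ for $k\ge0$, one checks $\pi=\sum_k w_k\,\mathrm{Unif}\{-k,\dots,k\}$ with $\sum_k w_k=1$, so that conditionally on an auxiliary variable~$W$ with $\P(W=k)=w_k$ we may take $X_0\sim\mathrm{Unif}\{-W,\dots,W\}$. A direct computation gives, for $k\ge1$,
\[
\E[X_0^2\mid W=k]=\frac{k(k+1)}3,\qquad \P\bigpar{|X_0|\ge s\mid W=k}=\frac{2(k-s+1)^+}{2k+1}.
\]
Conditioning on~$W$, it suffices (the moment bound being termwise in the law of~$Z$, hence preserved under averaging) to prove for every fixed $k\ge1$ and every positive odd $z=2s-1$ the conditional inequality
\[
z^2\,\frac{2(k-s+1)^+}{2k+1}\le\frac95\cdot\frac{k(k+1)}3,
\]
which, after clearing denominators and setting $a:=2s-1$ and $M:=2k+1$, becomes the clean integer inequality
\[
5a^2(M-a)\le\tfrac34\,M(M^2-1),\qquad a\in\{1,3,\dots,M-2\}.
\]

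I would then establish this by continuous relaxation plus a finite check. The real maximum of $a\mapsto5a^2(M-a)$ on $[0,M]$ is $\tfrac{20}{27}M^3$, attained at $a=\tfrac23M$; and $\tfrac{20}{27}M^3\le\tfrac34M(M^2-1)$ holds exactly when $M^2\ge81$, i.e.\ $k\ge4$. Thus for $k\ge4$ the inequality holds, and strictly at every odd~$a$, since the real maximizer $\tfrac23M$ is never an odd integer (it is even when $3\mid M$ and non-integral otherwise) and for $M>9$ the real maximum already lies strictly below the right-hand side. The integrality of~$a$ is essential here: for $M\in\{3,5,7\}$ the relaxation overshoots the target, so I would simply check these three cases by hand over the finitely many admissible odd~$a$; this yields strict inequality throughout except for the single pair $(M,a)=(5,3)$, that is $(k,s)=(2,2)$ and $z=3$, where equality holds.

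Integrating the conditional bound over~$W$ and~$Z$ gives $\E|X_1-X_0|^2\le\frac95\sigma^2$ and hence \eqref{Icorr1/10symm}. For the equality characterization I would use that the summands in the averaged inequality are nonnegative, so equality forces conditional equality for every $(k,z)$ in the joint support of $(W,Z)$; since the $k=0$ term vanishes identically and the conditional inequality is strict for every $(k,z)$ with $k\ge1$ other than $(k,z)=(2,3)$, this forces $|Z|\equiv3$ (so $Z=\pm3$ with probability $\tfrac12$ each) and $W\in\{0,2\}$ (so $X_0$ is a mixture of unit mass at~$0$ and $\mathrm{Unif}\{-2,-1,0,1,2\}$), with non-degeneracy excluding $W\equiv0$; a direct substitution confirms the converse. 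The main obstacle is precisely the integer inequality together with its sharp equality case: because the continuous relaxation is not tight (and in fact fails for small~$M$), one cannot avoid exploiting the parity constraint on~$a$, and the extremal discrete uniform on five points emerges exactly from this integrality.
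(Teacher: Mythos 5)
Your proof is correct and follows essentially the same route as the paper's: both reduce via the mixture representation of symmetric discrete unimodal laws (which you construct explicitly with weights $w_k=(2k+1)(\pi_k-\pi_{k+1})$, where the paper cites \cite[Theorem~4.3]{DJ-D(1988)}) to the same integer inequality, prove it by the continuous relaxation whose maximizer $\tfrac23 M$ is non-odd at the borderline case $M=9$ plus a finite check for $M\in\{3,5,7\}$, and extract the equality case $(M,a)=(5,3)$ termwise. The change of variables $a=2y+1$, $M=2k+1$ is only cosmetic relative to the paper's $(k,y)$ formulation.
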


\begin{proof}
The assertion~\eqref{Icorr1/10symm} is equivalent to
\begin{equation}
\label{Icorr1/10symm recast}
\E (X_1 - X_0)^2 \leq \tfrac95 \E X_0^2,
\end{equation}
and this is a trivial equality if $X_0 \equiv 0$.
Next, if $X_0$ and~$Z$ are distributed as described following display~\eqref{Icorr1/10symm}, we may (by conditioning on the mixing variable) suppose that either $X_0 \equiv 0$ (already considered) or that $X_0 \sim \mbox{Unif}\{-2, -1, 0, 1, 2\}$, in which case
$\P(|X_1 - X_0| = 3) = \frac25 = 1 - \P(|X_1 - X_0| = 0)$, so that
\begin{equation}
\E (X_1 - X_0)^2 = \tfrac{18}{5} = \tfrac95 \times 2 = \tfrac95 \E X_0^2 = \tfrac95 \Var X_0.
\end{equation}
We will complete the proof by showing that
\begin{equation}
\label{Icorr1/10symm strict}
\E (X_1 - X_0)^2 < \tfrac95 \E X_0^2
\end{equation}
for all other distributional choices of $X_0 \in L^2$ and~$Z$.

The key to proving~\eqref{Icorr1/10symm strict} is \cite[Theorem~4.3]{DJ-D(1988)}.  It was noted in the proof of \refL{L:Winkler} that a distribution is unimodal about~$0$ if and only if it is a mixture of unilateral uniform distributions, and thus (by conditioning on the mixing variable) we needed only to consider unilateral uniform distributions.  Somewhat similarly, \cite[Theorem~4.1]{DJ-D(1988)} asserts that a distribution on~$\bbZ$ is discrete unimodal about~$0$ if and only if it is a mixture of bilateral discrete uniform distributions, where a bilateral discrete uniform distribution is a uniform distribution on a set $\{- j, \ldots, k\}$ with $j \geq 0$ and $k \geq 0$.  Moreover, \cite[Theorem~4.3]{DJ-D(1988)} asserts that a \emph{symmetric} distribution on~$\bbZ$ is discrete unimodal about~$0$ if and only if it is a mixture of Unif$\{-k, \ldots, k\}$ distributions with $k \geq 0$.  By conditioning on the mixing variable, we thus need only to prove~\eqref{Icorr1/10symm strict} (i)~when $X_0 \sim \mbox{Unif$\{-k, \ldots, k\}$}$ and $k \geq 1$ and $k \neq 2$ (for general symmetric~$Z$) and (ii)~when $X_0 \sim \mbox{Unif$\{-2, -1, 0, 1, 2\}$}$ and~$Z$ is symmetric with $\P(|Z| = 3) < 1$.

For~$\phi$ supported on the odd integers, the following discrete analogue of \refT{T:rsymmuni} follows, by the same proof, \emph{mutatis mutandi}:
\begin{align}
\E |X_1 - X_0|^r 
&= 2 \sum_{z = 1}^{\infty} \phi(z) z^r \P(|X_0| > z / 2) \nonumber \\
&= 2 \sum_{y = 0}^{\infty} \phi(2 y + 1) (2 y + 1)^r \P(|X_0| > y).
\label{rsymmuni discrete}
\end{align}
So, with $r = 2$, we need only show
\begin{equation}
(2 y + 1)^2 \P(|U_k| > y) \leq \tfrac95 \E U_k^2, 
\end{equation}
with an appropriate demonstration of strict inequalities, when $U_k$ is distributed 
$\mbox{Unif}\{-k, \ldots, k\}$ and~$y$ is a nonnegative integer.  Since $|U_k| \leq k$ \as,\ 
$\P(|U_k| > y) = 2 (k - y) / (2 k + 1)$ for  $y \in \{0, \ldots, k - 1\}$,
and 
\begin{equation}
\E U_k^2 
= \frac{2}{2 k + 1} \sum_{i = 1}^k i^2 
= \frac{2}{2 k + 1} \times \frac16 k (k + 1) (2 k + 1) 
= \frac13 k (k + 1),
\end{equation}
we need only show  that
\begin{equation}
\label{ky ineq}
(2 y + 1)^2 (k - y) \leq \tfrac{3}{10} k (k + 1) (2 k + 1)\mbox{\ for $k, y \in \bbZ$ with $0 \leq y \leq k - 1$},
\end{equation} 
with strict inequality unless $k = 2$ and $y = 1$.

To prove~\eqref{ky ineq}, note that, as a function of real $y \in [0, \infty)$, the \lhs{} is strictly log-concave with maximum at $y = (4 k - 1) / 6$ and maximum value $2 (2 k + 1)^3 / 27$.  Now it is easy to check that
\begin{equation}
\frac{\frac{2}{27} (2 k + 1)^3}{\frac{3}{10} k (k + 1) (2 k + 1)}
= \frac{20}{81} \frac{(2 k + 1)^2}{k (k + 1)}
\end{equation}
is strictly less than~$1$ for $k \geq 5$.  It equals~$1$ for $k = 4$, but then $(4 k - 1) / 6 = 5 / 2$ is not an integer, so we have verified strict inequality in~\eqref{ky ineq} for all $k \geq 4$.  One can then check case-by-case that we have strict inequality in~\eqref{ky ineq} for $k = 1, 2, 3$, except when $k = 2$ and $y = 1$.    
\end{proof}

The inequality \eqref{Icorr1/10symm} in
\refT{T:Isymmunimodal}
is equivalent to the assertion for $r = 2$ that the supremum [call it $s(r)$] of
$\E |X_1 - X_0|^r / \E |X_0|^r$ 
taken over symmetric discrete unimodal distributions for $X_0 \in L^r$ 
and proposal transition 
distributions concentrated on the odd integers equals $9 / 5$.  We study 
the case of general $r \in (0, \infty)$ in \refApp{A:IRWMHunimodalr}.  The
treatment there is elementary but far from trivial, and we find the result
$s(r) \equiv 1$ in the special case $r \in (0, 1]$ a bit surprising.  In \refApp{A:IRWMHunimodalr}
we also generalize (somewhat) to higher dimensions. 

\begin{remark}
Our emphasis in this paper is on moment bounds that are independent of the
choice of proposal step-distribution.  However, it is perhaps worth noting a
simple special case of~\eqref{rsymmuni discrete}.  If~$\phi(-1) = \phi(1) =
\tfrac12$, then for every $r \in (0, \infty)$ we have $\E |X_1 - X_0|^r =
\P(|X_0| \geq 1) \leq \E |X_0|^r$ by Markov's inequality, with equality if
the (nondegenerate symmetric discrete unimodal) target~$\pi$ satisfies
$\pi(0) \in [1/3, 1)$ and $\pi(-1) = \pi(1) =(1 - \pi(0)) / 2$.
\end{remark}

\section{Even-order lags}\label{S:even}

If we summarize the foregoing results for $\XX_0 \in L^2$ as saying that the stationary unit-lag autocorrelation 
of~$\XX$ is nonnegative, then a \emph{much} stronger result holds (i) for even-order lags and (ii) (by the same proof) for arbitrary lags for chains in continuous time.  We state our result for the first of these cases.

\begin{theorem}
\label{T:even}
Let $X = (X_t)_{t \geq 0}$ be \emph{any} discrete-time 
stationary reversible Markov chain on \emph{any} (measurable) state space~$S$, 
and let~$\ff$ be \emph{any} (measurable) function from~$S$ to $\bbR^d$ such that $\ff(X_0) \in L^2$.  Then
\begin{align}
\trCov((\ff(X_0), \ff(X_t)) \geq 0  
\end{align}
for any even value of $t \geq 0$. 
\end{theorem}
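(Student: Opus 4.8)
For any discrete-time stationary reversible Markov chain $X = (X_t)_{t \geq 0}$ on a measurable state space $S$, any measurable $\ff : S \to \bbR^d$ with $\ff(X_0) \in L^2$, and any even $t \geq 0$, we have $\trCov(\ff(X_0), \ff(X_t)) \geq 0$.

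The plan is to reduce the trace-covariance to a sum of $d$ ordinary scalar covariances and then recognize each as a nonnegative quantity by exploiting self-adjointness of the Markov operator on $L^2(\pi)$, where $\pi$ denotes the stationary distribution of $X$. First I would fix $t = 2s$ for a nonnegative integer $s$, and write $\ff = (f_1, \dots, f_d)$ in coordinates, so that by the definition of trace-covariance we have $\trCov(\ff(X_0), \ff(X_{2s})) = \sum_{i=1}^d \Cov(f_i(X_0), f_i(X_{2s}))$. It therefore suffices to prove the scalar statement $\Cov(g(X_0), g(X_{2s})) \geq 0$ for each real-valued $g := f_i$ with $g(X_0) \in L^2$.

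The main idea is operator-theoretic. Let $P$ be the transition kernel and let it act on $L^2(\pi)$ by $(Pg)(x) = \E[g(X_1) \mid X_0 = x]$. Reversibility of $X$ with respect to $\pi$ is precisely the statement that $P$ is self-adjoint on $L^2(\pi)$. By stationarity, $\E[g(X_0) g(X_{2s})] = \langle g, P^{2s} g \rangle_{\pi}$, where $\langle \cdot, \cdot \rangle_{\pi}$ is the inner product on $L^2(\pi)$. After centering (replacing $g$ by $g - \E g(X_0)$, which changes neither the covariance nor the self-adjointness argument since constants are fixed by $P$), we obtain
\begin{equation}
\Cov(g(X_0), g(X_{2s})) = \langle g, P^{2s} g \rangle_{\pi} = \langle P^s g, P^s g \rangle_{\pi} = \| P^s g \|_{\pi}^2 \geq 0,
\end{equation}
where the middle equality uses self-adjointness of $P$ (hence of $P^s$) to move $P^s$ across the inner product. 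Summing over the $d$ coordinates gives the theorem. The case $s = 0$ (lag zero) is immediate, as it reduces to $\trVar \ff(X_0) \geq 0$.

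The only genuine obstacle is making the operator-theoretic manipulation rigorous on a general measurable state space, where one must confirm that $P$ restricts to a bounded self-adjoint operator on $L^2(\pi)$ and that $g(X_0) \in L^2$ places $g$ in the right space. Boundedness of $P$ on $L^2(\pi)$ with operator norm at most $1$ is standard (it follows from Jensen's inequality applied to the conditional expectation), and self-adjointness is exactly the reversibility hypothesis $\pi(\mathrm{d}x) P(x, \mathrm{d}y) = \pi(\mathrm{d}y) P(y, \mathrm{d}x)$. These are routine measure-theoretic facts, so the argument is clean; one need only take care that the intermediate quantity $P^s g$ lies in $L^2(\pi)$, which is guaranteed by the contraction property. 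The remark in the theorem that the same proof handles arbitrary lags in continuous time is explained by the fact that a reversible continuous-time chain has transition semigroup $(P_t)_{t \geq 0}$ with each $P_t$ self-adjoint, so one writes $P_t = P_{t/2}^* P_{t/2}$ and repeats the identity above with $s$ replaced by $t/2$.
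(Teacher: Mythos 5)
Your proof is correct, but it takes a different route from the paper's own argument. The paper stays entirely probabilistic: it first reduces general even lags to $t=2$ by observing that the skeleton chain $(X_{(t_0/2)\,t})_{t\ge0}$ is again stationary and reversible, and then, for $t=2$, applies the law of total trace-covariance conditioning on the middle state $X_1$; the Markov property makes the conditional trace-covariance term vanish, and stationarity plus reversibility force $\E[\ff(X_0)\,|\,X_1]=\E[\ff(X_2)\,|\,X_1]$ almost surely, so the covariance collapses to $\trVar(\E[\ff(X_1)\,|\,X_0])\ge0$. Your argument instead sets up the Markov operator $P$ as a self-adjoint contraction on $L^2(\pi)$, reduces to coordinates, and writes $\Cov(g(X_0),g(X_{2s}))=\langle P^s g,P^s g\rangle_{\pi}\ge0$ after centering; this is essentially the ``alternative proof'' that the paper itself sketches in the remark immediately following its proof (there phrased via spectral decomposition, though, as you show, plain self-adjointness and the factorization $P^{2s}=P^sP^s$ suffice). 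The trade-offs: the paper's conditioning argument is more elementary, requiring nothing beyond conditional expectation identities, and it treats the vector-valued $\ff$ in one stroke; your operator approach handles all even lags at once without the skeleton-chain reduction, and, as you note and as the paper's remark also points out, it extends directly to continuous time and (for lazy chains, where $P$ itself is positive semidefinite) to odd lags as well. The two proofs are in fact closely parallel: your identity $\langle g,P^{2}g\rangle_{\pi}=\|Pg\|_{\pi}^2$ is precisely the scalar, operator-language form of the paper's final display $\trCov(\ff(X_0),\ff(X_2))=\trVar(\E[\ff(X_1)\,|\,X_0])$.
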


\begin{remark}
(a)~In particular, if $S = \bbR^d$ and $\cL(\XX_0)$ is any Borel probability measure on~$\bbR^d$ having finite variance, then by taking~$\ff$ to be the identity function in \refT{T:even} we see that $\trCov(\XX_0, \XX_t) \geq 0$ for any even value of $t \geq 0$.

(b)~Since any MH chain is reversible, \refT{T:even} applies to it.  In particular, if $S = \bbR^d$, then part~(a) of this remark implies that if $\XX_0 \in L^2$, then $\trCov(\XX_0, \XX_t) \geq 0$ for any even value of $t \geq 0$.  
\end{remark}

\begin{proof}[Proof of \refT{T:even}]
The result of course holds for $t = 0$.  Further, for any even value $t_0 \geq 2$, the ``skeleton'' ($t_0 / 2$)-step stochastic process $(X_{(t_0 / 2)\,t})_{t \geq 0}$ is also a reversible Markov chain.  So it suffices to prove \refT{T:even} for $t = 2$.

For that, we use the law of total trace-covariance, conditioning on $X_1$, to compute
\begin{align}
\lefteqn{\trCov((\ff(X_0), \ff(X_2))} \nonumber \\ 
&= \E \trCov(\ff(X_0), \ff(X_2)\,|\,X_1) + \trCov(\E[\ff(X_0)\,|\,X_1], \E[\ff(X_2)\,|\,X_1]). \label{2terms}
\end{align}
Because~$X$ is a Markov chain, $X_0$ and $X_2$ are conditionally independent given $X_1$, so the first term in~\eqref{2terms} equals $\E 0 = 0$.  Because~$X$ is stationary and reversible, 
$\E[\ff(X_0)\,|\,X_1] = \E[\ff(X_2)\,|\,X_1]$ (almost surely), and so the second term in~\eqref{2terms} equals $\trVar(\E[\ff(X_0)\,|\,X_1]) = \trVar(\E[\ff(X_1)\,|\,X_0])$.  Thus
\begin{equation}
\label{coveven}
\trCov((\ff(X_0), \ff(X_2)) = \trVar(\E[\ff(X_1)\,|\,X_0]) \geq 0, 
\end{equation}
as desired. 
\end{proof}

\begin{remark}
An alternative proof of \refT{T:even} uses the spectral decomposition of the symmetric operators (on the $L^2$-space with respect to the reversing measure) that form the semigroup for the reversible Markov chain, noting that discrete even times and continuous times both yield positive semidefinite operators.  This alternative proof can also be used to yield the nonnegative-covariance result of \refT{T:even} at odd times, too, provided that the given Markov chain~$X$ has 
$\P(X_{t + 1} = x\,|\,X_t = x) \geq 1/2$ for every $x \in S$.
This can
always be arranged if one is willing to switch attention from the given
chain~$X$ to the ``lazy'' version~$\tX$ of the chain which averages the
one-step transition kernel for~$X$ with the identity kernel. 
\end{remark}

\begin{remark}
Let~$\XX$ be any 
RWMH
chain with the target and proposal distribution both having densities and with 
$\XX_0 \in L^2$.  By Theorems~\ref{T:main} and~\ref{T:even} we then 
have $\trCov(\XX_0, \XX_t) \geq 0$ for $t = 0, 1, 2, 4, 6, 8, \ldots$
(with strict inequality at least for $t = 0, 1$).
Nonnegativity of $\trCov(\XX_0, \XX_t)$ for $t = 3, 5, 7, \ldots$ (without switching to the lazy chain, of course) remains an open question.   
\end{remark}

\appendix

\section{A generalized moment lower bound on total variation distance for location shift} \label{A:TVLB}

We state here a general theorem used in the proof of our main \refT{T:main}.

\begin{theorem}
\label{T:TVLB}
Let\/ $d\ge1$ and
let\/ $\XX_0$ be a random vector in $\bbR^d$ having a density~$\pi$.
Let\/ $r \in [2, \infty)$.
If\/ $\phi \in [0, 1]$, and
$\zz$ and\/ $\mm$  are any deterministic vectors in $\bbR^d$, then  
\begin{equation}
\label{TV}
\|\zz\|^r \int\!\min\{\phi\,\pi(\xx), (1 - \phi) \pi(\xx - \zz) \} \dd \xx < 2^{r - 2} \E \|\XX_0-\mm\|^r.
\end{equation}
Furthermore, for any nonzero $\ccc\in\bbR^d$ and $m\in\bbR$ we have
\begin{equation}
\label{TVc}
|\innprod{\ccc,\zz}|^r \int\!\min\{\phi\,\pi(\xx), (1 - \phi) \pi(\xx - \zz) \} \dd \xx < 2^{r - 2} \E |\innprod{\ccc,\XX_0}-m|^r.
\end{equation}
\end{theorem}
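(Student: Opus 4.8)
The plan is to reduce the two displays to a single one-dimensional inequality. First, \eqref{TV} is the case $\ccc=\zz/\|\zz\|$ of \eqref{TVc}: with this choice $\innprod{\ccc,\zz}=\|\zz\|$, while $|\innprod{\ccc,\XX_0-\mm}|\le\|\XX_0-\mm\|$ gives $\E|\innprod{\ccc,\XX_0}-\innprod{\ccc,\mm}|^r\le\E\|\XX_0-\mm\|^r$, so \eqref{TVc} with $m=\innprod{\ccc,\mm}$ yields \eqref{TV}. It thus suffices to prove \eqref{TVc}, for which I would project onto the line $\bbR\ccc$ via $T\xx:=\innprod{\ccc,\xx}$. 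The densities $\phi\,\pi$ and $(1-\phi)\pi(\cdot-\zz)$ are those of the (sub-)measures $\phi\,\cL(\XX_0)$ and $(1-\phi)\cL(\XX_0+\zz)$, which push forward under $T$ to $\phi\,\rho$ and $(1-\phi)\rho(\cdot-s)$, where $\rho$ is the density of $W:=\innprod{\ccc,\XX_0}$ and $s:=\innprod{\ccc,\zz}$. Since $T_{*}(\alpha\wedge\beta)\le(T_{*}\alpha)\wedge(T_{*}\beta)$ for any measures $\alpha,\beta$, overlap mass can only increase under $T$, so
\[
\int\!\min\{\phi\,\pi(\xx),(1-\phi)\pi(\xx-\zz)\}\dd\xx\le\int_\bbR\min\{\phi\,\rho(w),(1-\phi)\rho(w-s)\}\dd w,
\]
while $|\innprod{\ccc,\zz}|^r=|s|^r$ and $\E|\innprod{\ccc,\XX_0}-m|^r=\int|w-m|^r\rho(w)\dd w$ depend only on the projection. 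This reduces \eqref{TVc} to the one-dimensional claim
\[
|s|^r\int_\bbR\min\{\phi\,\rho(w),(1-\phi)\rho(w-s)\}\dd w<2^{r-2}\int_\bbR|w-m|^r\rho(w)\dd w,
\]
for a genuine density $\rho$ (genuine since $\XX_0$ has a density and $\ccc\ne\zzero$), any $s\ne0$ and $m$, and $\phi\in[0,1]$.

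Next I would set up the soft ingredients for the one-dimensional claim. Put $h(w):=\min\{\phi\,\rho(w),(1-\phi)\rho(w-s)\}$ and $u:=|w-m|$, $v:=|w-s-m|$. Then $h\le\phi\,\rho(w)$ and $h\le(1-\phi)\rho(w-s)$; the triangle bound $u+v\ge|s|$ gives $|s|^r\le2^{r-1}(u^r+v^r)$ by convexity of $t\mapsto t^r$; and, translating the second branch by $-s$, one has the overlap estimate $\int(u^r+v^r)h\,\dd w\le\phi\int u^r\rho(w)\dd w+(1-\phi)\int v^r\rho(w-s)\dd w=\int|w-m|^r\rho(w)\dd w$. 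Combining these yields $|s|^r\int h\le2^{r-1}\int|w-m|^r\rho$ — the right shape, but with constant $2^{r-1}$, off by the crucial factor of two from the target $2^{r-2}$.

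The real difficulty is recovering that factor of two together with strictness, and the soft estimates provably cannot do it: at the extremal configuration, with $\rho$ concentrated near $\pm\tfrac12|s|$ and $s$ the full gap, the convexity step above is an equality, yet there $\int(u^r+v^r)h$ equals only $\tfrac12\int|w-m|^r\rho$ — exactly the slack that must be captured. To capture it I would argue by extremality: fixing the region on which each branch of the minimum is active makes both sides of the one-dimensional claim linear in $\rho$, so an extremal $\rho$ may be sought among densities supported on finitely many points, and a rearrangement should force those points onto an arithmetic progression with spacing governed by $s$ (the role I would expect an ``equally spaced'' lemma to play); solving the resulting finite extremal problem gives optimum exactly $2^{r-2}$, attained only by a two-point configuration. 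Since a genuine density can only approach, never equal, that discrete optimizer, the inequality is strict. Making this extremal reduction rigorous — rather than settling for the soft constant $2^{r-1}$ — is the step I expect to be the main obstacle.
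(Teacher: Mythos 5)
Your two opening reductions are sound and in fact mirror the paper's: deriving \eqref{TV} from \eqref{TVc} with $\ccc=\zz/\|\zz\|$ is exactly how the paper finishes, and the projection onto $\bbR\ccc$ (overlap mass can only increase under pushforward, while $|\innprod{\ccc,\zz}|^r$ and $\E|\innprod{\ccc,\XX_0}-m|^r$ are unchanged) is a legitimate reduction to $d=1$. The soft bound with constant $2^{r-1}$ is also correct. But the theorem itself --- the constant $2^{r-2}$, with strict inequality --- is never established: everything after ``To capture it I would argue by extremality'' is a plan, not a proof, and it is precisely the core of the result. Three concrete obstacles. (i) The passage from an optimization over densities $\rho$ to one over finitely supported measures is unjustified: the feasible set is an infinite-dimensional convex set, the functional is a ratio of linear functionals, and its supremum need not be attained at any usable extreme point, so ``an extremal $\rho$ may be sought among discrete configurations'' is itself a claim requiring proof. (ii) Your sketch never invokes the hypothesis $r\ge2$, yet the claimed optimum $2^{r-2}$ is \emph{false} for $r\in[1,2)$ --- see \refR{R:TVLB-}, where the two-point configuration $p_0=p$, $p_{-1}=1-p$ with $2^{r-2}<p<1$ violates the inequality --- so no argument insensitive to $r\ge2$ can succeed. (iii) Strictness does not follow merely from ``a genuine density can only approach the discrete optimizer''; one needs an equality characterization sharp enough to rule out that some density attains the bound.

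The paper closes exactly this gap with no extremality or compactness argument, by a pointwise slicing (``unfolding'') device: after normalizing $\innprod{\ccc,\zz}=1$, it partitions $\bbR^d$ into the translates $A+n\zz$ of the slab $A=\{\xx:\innprod{\ccc,\xx}\in[0,1)\}$, and for each fixed $\xx$ applies a purely discrete inequality, \refL{L:TVLB equally spaced}, to the sequence $p_n:=\pi(\xx+n\zz)$ along the arithmetic progression, with $b:=\innprod{\ccc,\xx}-m$; integrating over $A$ yields \eqref{TVc} weakly. That discrete lemma --- your anticipated ``equally spaced'' lemma --- carries all the difficulty: it is proved by zeroing out the terms with $|b+n|\ge1$ (this step needs $2^{r-2}\ge1$, i.e., $r\ge2$), reducing to two adjacent atoms, minimizing over $b$, and finishing with the concavity of $t\mapsto t^{1/(r-1)}$ (again $r\ge2$); and it comes with a complete list of equality cases. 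Strictness then follows because equality in the integrated inequality would force $\innprod{\ccc,\xx}-m$ to lie, modulo $1$, in the two-element set $\{0,\phi\}$ for a positive-measure set of $\xx\in A$, which is impossible. So the structure you guessed (arithmetic progressions, two-point extremizers, constant $2^{r-2}$) is the right one, but the route you propose toward it is the hard part of the theorem, and it is left undone.
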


We postpone the proof and comment first on the special case $\phi=\frac12$.
In this case, by \eqref{dtv} [see also \eqref{dtv2}], the integral in
\eqref{TV} equals $\frac12[1 - \dtv(\XX_0, \XX_0 + \zz)]$, and we thus
obtain
\begin{corollary}\label{C:TVLB}
Let\/ $\XX_0$ be a random vector in $\bbR^d$ having a density,
and 
let\/ $r \in [2, \infty)$.
If\/
$\zz$ and\/ $\mm$  are any deterministic vectors in $\bbR^d$, then  
\begin{equation}
\label{TVwithoutphi}
\|\zz\|^r [1 - \dtv(\XX_0, \XX_0 + \zz)] < 2^{r - 1} \E \|\XX_0-\mm\|^r.
\end{equation}
Furthermore, for any nonzero $\ccc\in\bbR^d$ and $m\in\bbR$ we have
\begin{equation}
\label{TVCc}
|\innprod{\ccc,\zz}|^r [1 - \dtv(\XX_0, \XX_0 + \zz)] < 2^{r - 1} 
\E |\innprod{\ccc,\XX_0}-m|^r.
\end{equation}
\end{corollary}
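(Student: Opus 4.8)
The plan is to reduce both inequalities to a single one-dimensional statement and to locate the sharp constant there. For \eqref{TV} I would project onto the direction $\hat\zz:=\zz/\|\zz\|$; for \eqref{TVc} onto $\hat\ccc:=\ccc/\|\ccc\|$. Writing $S$ for the corresponding scalar $\innprod{\hat\zz,\XX_0}$ (resp.\ $\innprod{\hat\ccc,\XX_0}$), this marginal has a density $\rho$ on $\bbR$, and integrating the integrand over each slice orthogonal to the chosen unit vector, the elementary fact $\int\min\{f,g\}\le\min\{\int f,\int g\}$ gives
\begin{equation}
\label{eq:pfslice}
\int\!\min\{\phi\,\pi(\xx),(1-\phi)\pi(\xx-\zz)\}\dd\xx\le\int_{\bbR}\min\{\phi\,\rho(s),(1-\phi)\rho(s-\zeta)\}\dd s,
\end{equation}
where $\zeta=\|\zz\|$ in the first case and $\zeta=\innprod{\hat\ccc,\zz}$ in the second. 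Everything then rests on the one-dimensional claim: for any density $\rho$ of a real $S$, any $\zeta\in\bbR$, $\phi\in[0,1]$, $m_1\in\bbR$, and $r\in[2,\infty)$,
\begin{equation}
\label{eq:pfcore}
|\zeta|^r\int_{\bbR}\min\{\phi\,\rho(s),(1-\phi)\rho(s-\zeta)\}\dd s<2^{r-2}\,\E|S-m_1|^r .
\end{equation}
Granting \eqref{eq:pfcore}: applying it with $m_1=m/\|\ccc\|$ and multiplying through by $\|\ccc\|^r$ (using $\innprod{\ccc,\zz}=\|\ccc\|\zeta$ and $\innprod{\ccc,\XX_0}=\|\ccc\|S$) yields \eqref{TVc} via \eqref{eq:pfslice}; applying it with $m_1=\innprod{\hat\zz,\mm}$ and bounding the scalar moment by the ambient one through the \CSineq, $|S-m_1|=|\innprod{\hat\zz,\XX_0-\mm}|\le\|\XX_0-\mm\|$, yields \eqref{TV}.

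To prove \eqref{eq:pfcore} I would first strip off $\phi$. After recentering (the overlap integral is translation invariant) I may take $m_1=0$, and using $\min\{A,B\}=\sqrt{\min\{A,B\}}\sqrt{\min\{A,B\}}\le\sqrt{AB}$ with $A=\phi\,\rho(s)$, $B=(1-\phi)\rho(s-\zeta)$,
\begin{equation}
\label{eq:pfgm}
\min\{\phi\,\rho(s),(1-\phi)\rho(s-\zeta)\}\le\sqrt{\phi(1-\phi)}\,\sqrt{\rho(s)\rho(s-\zeta)}\le\tfrac12\sqrt{\rho(s)\rho(s-\zeta)} ,
\end{equation}
which correctly encodes the vanishing of the overlap as $\phi\to0$ or $\phi\to1$. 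It therefore suffices to prove the $\phi$-free, symmetric inequality
\begin{equation}
\label{eq:pfsub}
|\zeta|^r\int_{\bbR}\sqrt{\rho(s)\rho(s-\zeta)}\,\dd s\le 2^{r-1}\,\E|S|^r\qquad(r\ge2) .
\end{equation}

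The hard part is \eqref{eq:pfsub}, and this is exactly where both the constant and the hypothesis $r\ge2$ are decided. The pointwise bound $|\zeta|\le2\max\{|s|,|s-\zeta|\}$ is tight at the configuration I expect to be extremal, namely $S=\pm\zeta/2$ with equal probabilities, but feeding it into a crude global estimate overshoots the constant: one loses a factor near the midpoint $s=\zeta/2$, where $|s|=|s-\zeta|=\zeta/2$ while the integrand can be large. To recover the sharp $2^{r-1}$ I would reduce to this extremal (more generally, equally spaced) configuration, showing that, for fixed $\E|S|^r$ and $\zeta$, the overlap $\int\sqrt{\rho(s)\rho(s-\zeta)}\dd s$ is largest relative to the moment when mass is placed symmetrically at $\pm\zeta/2$, where the ratio equals $2^{r}\cdot\tfrac12=2^{r-1}$; the convexity of $t\mapsto|t|^r$ available precisely for $r\ge2$ is what forces the optimizer to concentrate rather than spread, and is what breaks down for $r<2$. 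Finally, because $\XX_0$ has a density, its one-dimensional projection $S$ has a density too, so the two-point optimizer is never attained and the inequality in \eqref{eq:pfsub}, hence in \eqref{eq:pfcore}, \eqref{TV}, and \eqref{TVc}, is strict. I expect this reduction-to-extremal step to be the principal obstacle, requiring a dedicated one-dimensional lemma rather than a one-line estimate.
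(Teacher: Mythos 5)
Your opening reduction is sound: integrating $\min\{\phi\,\pi(\xx),(1-\phi)\,\pi(\xx-\zz)\}$ over hyperplanes orthogonal to the chosen unit vector and using $\int\min\{f,g\}\le\min\bigl\{\int f,\int g\bigr\}$ correctly bounds the $d$-dimensional overlap by the one-dimensional overlap of the marginal density $\rho$ of the projection, and this is a legitimate (and genuinely different) alternative to the paper's decomposition of $\bbR^d$ into the fibers $\set{\xx+n\zz:n\in\bbZ}$. The proof collapses at the next step, where you discard the minimum in favor of the geometric mean. The resulting intermediate claim
\[
|\zeta|^r\int_{\bbR}\sqrt{\rho(s)\,\rho(s-\zeta)}\,\dd s\le 2^{r-1}\,\E|S|^r
\]
is false --- not just with the constant $2^{r-1}$, but with any finite constant in its place. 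Take $\rho$ to be the density of $(1-\eps)\,\mathrm{Unif}(-\gd,\gd)+\eps\,\mathrm{Unif}(1-\gd,1+\gd)$ with $\gd$ small, and take $\zeta=1$, $m_1=0$. The supports of $\rho(\cdot)$ and $\rho(\cdot-1)$ meet only on $(1-\gd,1+\gd)$, where the two densities equal $\eps/(2\gd)$ and $(1-\eps)/(2\gd)$; hence $\int\sqrt{\rho(s)\rho(s-1)}\,\dd s=\sqrt{\eps(1-\eps)}$, while $\E|S|^r\to\eps$ as $\gd\to0$. For small $\eps$ the left side is of order $\sqrt{\eps}$ and the right side of order $\eps$ (for $r=2$ and $\eps=0.01$: roughly $0.0995$ versus $0.02$). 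By contrast, the min-overlap $\int\min\{\rho(s),\rho(s-1)\}\,\dd s$ equals $\eps$ here, so \refT{T:TVLB} itself survives comfortably; it is precisely the replacement $\min\{a,b\}\le\sqrt{ab}$, catastrophically lossy when the two masses are unbalanced, that destroys the inequality. The same example defeats your proposed endgame: the symmetric two-point configuration $S=\pm\zeta/2$ does give equality for the affinity functional, but the unbalanced configuration above does strictly better, so no reduction to the symmetric extremal can be valid once the min is gone, convexity of $|t|^r$ notwithstanding.

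Any repair must therefore keep the minimum (that is, the pair $\phi,1-\phi$ acting on the two translates) intact all the way down, which is what the paper does: its engine is the discrete \refL{L:TVLB equally spaced}, the inequality $\sum_n\min\{(1-\phi)p_{n-1},\phi p_n\}\le 2^{r-2}\sum_n|b+n|^rp_n$ for arbitrary nonnegative sequences $(p_n)$, proved by pruning $(p_n)$ down to two adjacent atoms and then optimizing over $b$ and the masses; \refT{T:TVLB} follows by applying this lemma along each fiber $\xx+\bbZ\zz$ and integrating over a fundamental slab, the corollary being the case $\phi=\tfrac12$. Strictness for densities is then not a one-line remark: it comes from the lemma's classified equality cases together with a countability argument (equality would force the fractional part of $\innprod{\ccc,\xx}-m$ to lie in $\set{0,\phi}$ for a positive-measure set of $\xx$, which is impossible). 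Note finally that even on its own terms your outline is incomplete: the decisive one-dimensional step --- the reduction to an extremal configuration --- is asserted rather than proved, and you flag it yourself as the principal obstacle; the counterexample shows that, for the quantity you reduced to, no such step can exist.
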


Since we are mainly interested in the applications above to the MH setting
in \refS{SS:setting}, we have stated these results for a random vector
$\XX_0$ having a  density (in other words, for absolutely continuous random
vectors).
However, we will also show in \refApp{A:more TV} (see \refT{T:TVLB3})
that if we relax $<$  to $\leq$ in \eqref{TVwithoutphi}--\eqref{TVCc}, then
the results extend not only to arbitrary random vectors in $\bbR^d$ but in fact to random variables taking values in any Banach space, and also from $r \geq 2$ to $r \geq 1$.

We believe the results of \refT{T:TVLB}, \refC{C:TVLB}, and \refT{T:TVLB3}
to be 
 new (even in dimension one)
and perhaps of independent interest. 
(They also motivate the title of this appendix.)

To prove 
\refT{T:TVLB},
we will first state and prove a lemma (\refL{L:TVLB equally spaced}) 
which is essentially a discrete version of the case $d=1$ of \refT{T:TVLB}.

\begin{lemma}
\label{L:TVLB equally spaced}
Let $r \in [2, \infty)$ and $\phi \in [0, 1]$.  For any doubly infinite
nonnegative sequence $(p_n)_{n \in \bbZ}$ 
and any $b \in \bbR$, we have
\begin{equation}
\label{pineq2}
\sum_n \min\{(1 - \phi) p_{n - 1}, \phi p_n\} 
\le
2^{r - 2} \sum_n |b + n|^r p_n.
\end{equation}
If further,
for simplicity, we assume that 
$b \in [0, 1)$, then equality 
holds in \eqref{pineq2}
if and only if 
either both sides are infinite or
one of the following (non-exclusive) sets of
conditions holds:
\begin{romenumerate}
\item \label{Ltvlb0}
$p_n = 0$ for all $n$;
\smallskip
\item \label{Ltvlb1}
\begin{alphiienumerate}
\item $p_n = 0$ for all $n\neq0$, and
\item $b=0$;
\end{alphiienumerate}
\smallskip
\item\label{Ltvlb2}
\begin{alphiienumerate}
\item $p_n=0$  for $n \notin \{-1, 0\}$, 
\item $p_{-1} = p_0 $, and
\item 
$\phi = b = 1/2$; 
\end{alphiienumerate}
\smallskip
\item\label{Ltvlb3}
\begin{alphiienumerate}
\item $p_n=0$  for $n \notin \{-1, 0\}$,
\item $r=2$, and
\item 
$\phi = b = \frac{p_{-1}}{p_{-1}+ p_0} $.
\end{alphiienumerate}
\end{romenumerate}
\end{lemma}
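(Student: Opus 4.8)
The plan is to reduce everything, by an index shift, to the case $b\in[0,1)$: replacing $(b,(p_n))$ by $(b-\lfloor b\rfloor,(p_{n+\lfloor b\rfloor}))$ leaves both sides of \eqref{pineq2} unchanged, so no generality is lost, and we may also assume the right-hand side is finite (otherwise the inequality is trivial and we are in the ``both sides infinite'' case). The engine of the proof is the elementary bound $\min\{X,Y\}\le \lambda X+(1-\lambda)Y$, valid for every $\lambda\in[0,1]$. Applying it to each summand with $X=(1-\phi)p_{n-1}$, $Y=\phi p_n$, and a weight $\lambda_n\in[0,1]$ still to be chosen, and then collecting the coefficient of each $p_m$, gives
\begin{equation*}
\sum_n \min\{(1-\phi)p_{n-1},\phi p_n\}\le \sum_m p_m\bigsqpar{(1-\phi)\lambda_{m+1}+\phi(1-\lambda_m)}.
\end{equation*}
It therefore suffices to choose the weights so that $(1-\phi)\lambda_{m+1}+\phi(1-\lambda_m)\le 2^{r-2}|b+m|^r$ for every $m$.

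The key point, and the reason a uniform choice such as $\lambda_n\equiv\tfrac12$ is doomed, is that the split must be adaptive in order to recover the \emph{sharp} constant $2^{r-2}$. I would take $\lambda_m=1$ for $m\le-1$ and $\lambda_m=0$ for $m\ge1$, leaving a single free parameter $\lambda_0\in[0,1]$. With these weights the required coefficient inequality reads $1-\phi\le 2^{r-2}|b+m|^r$ for $m\le-2$ and $\phi\le 2^{r-2}|b+m|^r$ for $m\ge1$, both automatic since $|b+m|\ge1$ and $r\ge2$ force the right side to be $\ge 2^{r-2}\ge1$; the two genuine constraints, at $m=-1$ and $m=0$, become $(1-\phi)\lambda_0\le 2^{r-2}(1-b)^r$ and $\phi(1-\lambda_0)\le 2^{r-2}b^r$. (The degenerate cases $\phi\in\{0,1\}$ are trivial, as the left side of \eqref{pineq2} then vanishes.) A feasible $\lambda_0\in[0,1]$ exists precisely when $1-B\le A$ with $A=2^{r-2}(1-b)^r/(1-\phi)$ and $B=2^{r-2}b^r/\phi$, i.e.\ when
\begin{equation*}
2^{r-2}\Bigsqpar{\tfrac{b^r}{\phi}+\tfrac{(1-b)^r}{1-\phi}}\ge 1.
\end{equation*}

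This last inequality is where the content concentrates, and I expect it to be the main obstacle. By the Cauchy--Schwarz--type bound $\tfrac{a^2}{\phi}+\tfrac{c^2}{1-\phi}\ge(a+c)^2$ (with $a=b^{r/2}$, $c=(1-b)^{r/2}$, equality iff $\phi=\phi^*:=a/(a+c)$), the left side is minimized over $\phi\in(0,1)$ at value $2^{r-2}\bigpar{b^{r/2}+(1-b)^{r/2}}^2$, so the requirement is equivalent to
\begin{equation*}
b^{s}+(1-b)^{s}\ge 2^{1-s},\qquad s:=r/2\ge1,
\end{equation*}
which is exactly the statement that the convex function $t\mapsto t^{s}+(1-t)^{s}$ on $[0,1]$ is minimized at $t=\tfrac12$. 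This settles the inequality; crucially the convexity is \emph{strict} for $s>1$ (so equality forces $b=\tfrac12$) and an \emph{identity} for $s=1$ (all $b$), which is what drives the equality analysis.

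For the equality statement (with $b\in[0,1)$, $\phi\in(0,1)$) I would trace equality back through the two steps with the chosen weights. Equality in the collected sum forces, for every $m$ with $p_m>0$, the coefficient identity $2^{r-2}|b+m|^r=(1-\phi)\lambda_{m+1}+\phi(1-\lambda_m)$; but with our weights the right side equals $\phi$ for $m\ge1$ and $1-\phi$ for $m\le-2$, each strictly below $1\le 2^{r-2}|b+m|^r$, so the support of $(p_n)$ must lie in $\{-1,0\}$ (empty or single-point support giving cases \ref{Ltvlb0}--\ref{Ltvlb1}). On the resulting two-point configuration, equality in $\min\{X,Y\}\le\lambda_0 X+(1-\lambda_0)Y$ at $n=0$ forces $\phi=\phi_0:=p_{-1}/(p_{-1}+p_0)$, while equality in the minimization bound forces both $\phi=\phi^*$ and the strict-convexity equality above. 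For $r>2$ convexity is strict, so $b=\tfrac12$, whence $\phi^*=\tfrac12=\phi_0$ yields $p_{-1}=p_0$, which is case \ref{Ltvlb2}; for $r=2$ convexity is an identity, $\phi^*=b$, and $\phi_0=\phi$ gives $b=\phi=p_{-1}/(p_{-1}+p_0)$, which is case \ref{Ltvlb3}. The converse, that each listed configuration attains equality, is then verified by direct substitution.
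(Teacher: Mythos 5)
Your proof is correct, and it takes a genuinely different route from the paper's. The paper proves \eqref{pineq2} by a reduction argument: replacing $p_n$ by $0$ for any single $n$ with $|b+n|\ge1$ lowers the difference between the two sides by at most $(2^{r-2}|b+n|^r-1)p_n\ge0$, so a sequential replacement (justified by dominated convergence) collapses everything to the configuration supported on $\{-1,0\}$; there, after normalizing $p_{-1}+p_0=1$, the paper bounds $\min\{(1-\phi)p_{-1},\phi p_0\}\le p_{-1}p_0$, minimizes the resulting expression over $b$ by calculus, and concludes with the power-mean bound $[p_{-1}^{1/(r-1)}+p_0^{1/(r-1)}]^{r-1}\le 2^{r-2}$. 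You instead exhibit a global linear certificate: majorizing each minimum by $\lambda_n X+(1-\lambda_n)Y$ with the adaptive weights $\lambda_m=1$ ($m\le-1$), $\lambda_m=0$ ($m\ge1$), and $\lambda_0$ free reduces the whole inequality to the one-parameter feasibility condition $2^{r-2}\bigl[b^r/\phi+(1-b)^r/(1-\phi)\bigr]\ge1$, which you settle by Cauchy--Schwarz in Engel form plus convexity of $t\mapsto t^{s}+(1-t)^{s}$ with $s=r/2\ge1$. Your optimization over $\phi$ for fixed $b$ is precisely dual to the paper's optimization over $b$ for fixed $(\phi,p_{-1},p_0)$; what your route buys is that no replacement/limiting argument is needed (nonnegative series may be rearranged freely), while the paper's route leads somewhat more directly into the equality bookkeeping.

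Two spots in your equality analysis need one more line each, though both are fillable from identities you already have. First, equality in $\min\{X,Y\}\le\lambda_0X+(1-\lambda_0)Y$ at $n=0$ forces $X=Y$ (i.e.\ $\phi=\phi_0$) only when $\lambda_0\in(0,1)$; but your coefficient identities $(1-\phi)\lambda_0=2^{r-2}(1-b)^r$ and $\phi(1-\lambda_0)=2^{r-2}b^r$ rule out the endpoints, since $b<1$ gives $\lambda_0>0$, while $\lambda_0=1$ would force $b=0$ and then $1-\phi=2^{r-2}\ge1$, a contradiction. Second, the single-point and degenerate cases should be dispatched explicitly: if the support is $\{0\}$, the left side of \eqref{pineq2} vanishes, so equality holds iff $b=0$, which is case \ref{Ltvlb1}; if the support is $\{-1\}$, equality is impossible because $b<1$; and if $\phi\in\{0,1\}$, the left side again vanishes, so equality forces the right side to vanish, landing in case \ref{Ltvlb0} or \ref{Ltvlb1}.
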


\begin{remark}\label{R:TVLB+}
The assumption $b\in[0,1)$ in the second part of \refL{L:TVLB equally spaced}
is for convenience in the statements of the exceptional cases.
The conditions for equality for general $b$ are given by 
the conditions above applied to the fractional part $b-\floor{b}$ and the
shifted sequence $(p_{n-\floor{b}})_n$.
\end{remark}

\begin{proof}[Proof of \refL{L:TVLB equally spaced}]
We assume for convenience $b\in[0,1)$ in the proof. 
The general case then follows 
by applying \eqref{pineq2} to the fractional part $b-\floor{b}$ and the
shifted sequence $(p_{n-\floor{b}})_n$.

We assume
$\sum_n |b + n|^r p_n < \infty$, since otherwise the result holds trivially;
note that then also $\sum_np_n<\infty$, and thus both sides of
\eqref{pineq2} are finite.
Hence
we may rewrite \eqref{pineq2} as
\begin{equation}
\label{pineq1}
2^{r - 2} \sum_n |b + n|^r p_n - \sum_n \min\{(1 - \phi) p_{n - 1}, \phi p_n\} \geq 0.
\end{equation}
If we replace a single $p_n$ by 0, keeping all $p_m$ for $m \neq n$ unchanged, then
the \lhs{} of~\eqref{pineq1} decreases by
\begin{align}\label{sa1}
\lefteqn{2^{r - 2} |b + n|^r p_n - \min\{(1- \phi) p_{n - 1}, \phi p_n\} -
  \min\{(1 - \phi) p_n, \phi p_{n + 1}\}} 
\notag\\&\qquad
 \geq 2^{r - 2} |b + n|^r p_n - \phi p_n - (1 - \phi) p_n = (2^{r - 2} |b + n|^r - 1) p_n,
\end{align}
which is nonnegative provided $|b + n| \geq 1$.
Hence, if the inequality~\eqref{pineq1} holds for the modified sequence, then it
holds for the original sequence.  Therefore, if we make the replacement sequentially for the values
$n = 1, 2, -2, 3, -3, \ldots, n_0, - n_0$ with $n_0 \geq 2$, we find by letting $n_0 \to \infty$ and
invoking the dominated convergence theorem, and writing $p := p_0$ and $q := p_{-1}$,
that the \lhs{} of \eqref{pineq1} is at least
\begin{align}
\lefteqn{\hspace{-.3in}2^{r - 2} (|b - 1|^r p_{-1} + |b|^r p_0) - \min\{(1 - \phi) p_{-1}, \phi p_0\}} 
\nonumber \\
&= 2^{r - 2} [(1 - b)^r q + b^r p] - \min\{(1 - \phi) q, \phi p\}. \label{qp}   
\end{align}
It is clear that the expression~\eqref{qp} is equivariant under scaling 
by $q + p$, and it trivially vanishes when $q + p = 0$,
so to establish its nonnegativity
we may and do assume that $q + p = 1$.  Then the expression becomes
\begin{align}
\lefteqn{\hspace{-.7in}2^{r - 2} [(1 - b)^r (1 - p) + b^r p] - \min\{(1 - \phi) (1 - p), \phi p\}} \nonumber \\
&\geq 2^{r - 2} [(1 - b)^r (1 - p) + b^r p] - p (1 - p). \label{bp} 
\end{align}
As a function of~$b$, the \rhs{} of~\eqref{bp} is minimized when 
\begin{align}\label{sa2}
b = \frac{q^{1 / (r - 1)}}{q^{1 / (r - 1)} + p^{1 / (r - 1)}},  
\end{align}
giving a value of
\begin{align}
\label{ponly}
\left[ \frac{2^{r - 2}}{[q^{1 / (r - 1)} + p^{1 / (r - 1)}]^{r - 1}} - 1 \right] q\,p
&\geq \left[ \frac{2^{r - 2}}{[2^{-1 / (r - 1)} + 2^{-1 / (r - 1)}]^{r - 1}} - 1 \right] q\,p \nonumber \\
&= 0. 
\end{align}
This completes the proof of~\eqref{pineq1}, and thus  of~\eqref{pineq2}.

It is easy to verify that equality holds in \eqref{pineq2} in the cases
\ref{Ltvlb0}--\ref{Ltvlb3}. 

Conversely, suppose that equality holds in \eqref{pineq2}.
Then we cannot have strict inequality anywhere in the argument above.
Hence, $p_n=0$ for $|n|\ge2$, 
since otherwise  \eqref{sa1} would be positive.
Furthermore, also $p_1=0$, since otherwise replacing $p_1$ by $0$, as in
\eqref{sa1} and now also using $p_2=0$, would decrease the \lhs{} of
\eqref{pineq1} by
\begin{align}\label{sa3}
2^{r - 2} (b + 1)^r p_1 - \min\{(1- \phi) p_{0}, \phi p_1\}
&\ge
p_1 - \min\{(1- \phi) p_{0}, \phi p_1\}
\notag\\&
>0.
\end{align}
As above, let $p:=p_0$ and $q:=p_{-1}$.
By scaling, and excluding case \ref{Ltvlb0},
we may again assume  $q + p = 1$, and we then must have equality in
\eqref{bp} and \eqref{ponly}; we also have the equation \eqref{sa2} since
the \rhs{} of \eqref{bp} has a strict minimum at this $b$.

If $p=0$, then \eqref{sa2} yields $b=1$, which is excluded.
If $p=1$, then $q=0$,
so \eqref{sa2} yields $b=0$, and thus case \ref{Ltvlb1}.

It remains to consider the case $0<p<1$.
Then equality in \eqref{bp} holds if and only if  $\phi=1-p$.
If $r=2$, this and \eqref{sa2} yield case \ref{Ltvlb3}.

Finally, if $0<p<1$ and $r>2$, then equality in \eqref{ponly} requires
$p=q=\frac12$, 
and thus $b=\frac12=\phi$ by \eqref{sa2}
and the relation $\phi=1-p$  just shown.
This is case \ref{Ltvlb2}.
\end{proof}

\begin{remark}\label{R:TVLB-}
The 
condition $r\ge2$ is necessary for  \refL{L:TVLB equally spaced}
in general.
If $r<2$, we may produce a counterexample as follows.
Take $p_0:=p$ with $2^{r-2}<p<1$, and
let $p_{-1}:=q:=1-p$ and 
$p_n:=0$ for  $n\notin\set{-1, 0}$.
Let also $b:=0$ and $\phi:=1-p$.
Then the \lhs{} of \eqref{pineq1} is
\begin{align}\label{sd1}
2^{r-2}q-\min\{(1 - \phi) p_{ - 1}, \phi p_0\}
=2^{r-2}q-pq
=\bigpar{2^{r-2}-p}q<0.
\end{align}

This example provides also a counterexample for $r<2$ for the discrete
\refT{T:TVLBZ} below: let 
$d=1$, $\P(X_0=n)=p_n$, $z=1$, and $m=0$.

A suitable perturbation making $X_0$ absolutely continuous shows that
$r\ge2$ is necessary for \refT{T:TVLB} too.

Furthermore, taking 
$Z\in\set{-1, 1}$ random with $\P(Z=-1)=q$ and $\P(Z=1)=p$
shows that $r\ge2$ is necessary in \refT{T:Imain}
(this is an instance of \refE{E:discrete}),
and suitable perturbations of $X_0$ and $Z$ 
show that $r\ge2$ is necessary in \refT{T:main}.
\end{remark}

\begin{proof}[Proof of \refT{T:TVLB}]
We begin with \eqref{TVc}. This is trivial if $\innprod{\ccc,\zz}=0$,
since we assume that $\XX_0$ has a density and thus
$\innprod{\ccc,\XX_0}-m\neq0$ a.s.
Hence we may assume $\innprod{\ccc,\zz}\neq0$.
By dividing $\ccc$ and $m$ by $\innprod{\ccc,\zz}$, we may further assume
$\innprod{\ccc,\zz}=1$.

Let $g(\xx):=\min\bigcpar{\phi\pi(\xx),(1-\phi)\pi(\xx-\zz)}$
and
$h(\xx):=|\innprod{\ccc,\xx}-m|^r\pi(\xx)$.
Further, let  
\begin{align}\label{e99}
A:=\set{\xx\in\bbR^d:\innprod{\ccc,\xx}\in[0,1)}.  
\end{align}
Then, for $n\in\bbZ$,  the  translate
$A+n\zz$ equals $\set{\xx\in\bbR^d:\innprod{\ccc,\xx}\in[n,n+1)}$.
These translates form a partition of $\bbR^d$, and thus  for any
nonnegative measurable function~$f$ we have
\begin{align}\label{jr1}
\int_{\bbR^d}f(\xx)\dd\xx&
=\sum_n \int_{A+n\zz}f(\xx)\dd\xx
=\int_{A}\sum_n f(\xx+n\zz)\dd\xx.
\end{align}
For every~$\xx$ we have
by \refL{L:TVLB equally spaced},
using \eqref{pineq2} with 
$b:=\innprod{\ccc,\xx}-m$
and
$p_n:=\pi(\xx+n\zz)$, that
\begin{align}\label{jr2}
&  \sum_n g(\xx+n\zz)
=\sum_n \min\{(1-\phi)p_{n-1}, \phi p_n\}
\le 2^{r-2}\sum_n |b+n|^rp_n
\notag\\&\hskip4em
= 2^{r-2}\sum_n |\innprod{\ccc,\xx+n\zz}-m|^rp_n
=2^{r-2}\sum_n h(\xx+n\zz).
\end{align}
We integrate this over $A$, and, using \eqref{jr1} twice, obtain
\begin{align}\label{jr3}
\int_{\bbR^d}g(\xx)\dd\xx&
=\int_{A}\sumnz g(\xx+n\zz)\dd\xx
\le\int_A 2^{r-2}\sumnz h(\xx+n\zz)\dd\xx
\notag\\&
=2^{r-2}\int_{\xx\in\bbR^d}h(\xx) \dd\xx
=2^{r-2}\E|\innprod{\ccc,\XX_0}-m|^r
.\end{align}

This shows \eqref{TVc} with weak inequality. To rule out equality, we note 
first that $g(\xx)\le\phi\pi(\xx)$, and thus $\int g(\xx)\dd\xx<\infty$.
Hence, equality in \eqref{TVc}, and thus in \eqref{jr3}, implies equality in
\eqref{jr2} for almost every $\xx\in A$.
Furthermore, \eqref{jr1} shows that
\begin{align}\label{jrpi}
\int_{A}\sum_n \pi(\xx+n\zz)\dd\xx
=\int_{\bbR^d}\pi(\xx)\dd\xx=1,  
\end{align}
and thus 
$0<\sum_n \pi(\xx + n \zz) < \infty$ holds for $\xx$ in a subset
of $A$ of positive Lebesgue measure.
It follows from \refL{L:TVLB equally spaced} and \refR{R:TVLB+} that for 
such $\xx$, 
if we have equality in \eqref{jr2}, then
$b-\floor{b}\in\set{0,\phi}$.
But this holds only for a
countable number  of $b = \innprod{\ccc, \xx} - m$, and thus only for a set of
$\xx\in\bbR^d$ of Lebesgue measure zero. 
This contradiction shows that equality cannot hold in \eqref{TVc}.

We turn to \eqref{TV}. The case $\zz=\zzero$ is trivial, again because
$\XX_0$ has a density. If $\zz\neq\zzero$, we choose 
$\ccc:=\ee_{\zz}:=\zz/\norm{\zz}$, the unit vector parallel to~$\zz$.
Then $\innprod{\ccc,\zz}=\norm{\zz}$, and thus 
\eqref{TV} follows from \eqref{TVc}, with $m=\innprod{\ee_\zz,\mm}$, 
noting that
\begin{align}
  \label{a66}
|\innprod{\ee_\zz,\XX_0}-m|=|\innprod{\ee_\zz,\XX_0-\mm}|\le\norm{\XX_0-\mm}.
\end{align}
\end{proof}

\refC{C:TVLB} follows immediately, as noted above.

\section{The case $\bbZ^d$} \label{A:TVLBZ}

In this appendix we state and prove a discrete analogue of \refT{T:TVLB}.
Note that, unlike  for \refT{T:TVLB}, we do not claim strict inequalities, nor do strict inequalities hold in general (see \refR{R:poss equal}).

\begin{theorem}
\label{T:TVLBZ}
Let\/ $d\ge1$ and 
let\/ $\XX_0$ be a random vector in $\bbZ^d$ having probability mass function 
$\pi(\xx):=\P(\XX_0=\xx)$, $\xx\in\bbZ^d$. 
Let\/ $r \in [2, \infty)$.
If\/ $\phi \in [0, 1]$, and
$\zz\in\bbZ^d$ and\/ $\mm\in\bbR^d$  are any deterministic vectors, then  
\begin{equation}
\label{TVZ}
\|\zz\|^r \sum_{\xx\in\bbZ^d}\!\min\{\phi\,\pi(\xx), (1 - \phi) \pi(\xx - \zz)\} 
\le 
2^{r - 2} \E \|\XX_0-\mm\|^r.
\end{equation}
Furthermore, for any  $\ccc\in\bbR^d$ and $m\in\bbR$ we have
\begin{equation}
\label{TVZc}
|\innprod{\ccc,\zz}|^r 
\sum_{\xx\in\bbZ^d}\!\min\{\phi\,\pi(\xx), (1 - \phi) \pi(\xx - \zz) \} \dd\xx 
\le 2^{r - 2} \E |\innprod{\ccc,\XX_0}-m|^r.
\end{equation}
\end{theorem}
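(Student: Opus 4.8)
The plan is to transcribe the proof of \refT{T:TVLB}, replacing the continuous ``slab'' decomposition of $\bbR^d$ by a decomposition of $\bbZ^d$ into cosets of the cyclic subgroup generated by~$\zz$, and again feeding the one-dimensional estimate \eqref{pineq2} of \refL{L:TVLB equally spaced} into each fiber. Because we now claim only a weak inequality, there is no need to analyze the cases of equality, which streamlines the argument considerably (and indeed strict inequality can genuinely fail here, cf.\ \refR{R:poss equal}).

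First I would prove \eqref{TVZc}. If $\innprod{\ccc,\zz}=0$ the left-hand side vanishes and the bound is trivial, so I may assume $\innprod{\ccc,\zz}\neq0$; dividing~$\ccc$ and~$m$ by $\innprod{\ccc,\zz}$, I may further normalize to $\innprod{\ccc,\zz}=1$ (in particular $\zz\neq\zzero$). Set $g(\xx):=\min\{\phi\,\pi(\xx),(1-\phi)\pi(\xx-\zz)\}$ and $h(\xx):=|\innprod{\ccc,\xx}-m|^r\pi(\xx)$. Since $\zz\neq\zzero$ and $\zz\in\bbZ^d$, the subgroup $H:=\{n\zz:n\in\bbZ\}$ of $\bbZ^d$ is infinite cyclic; fixing a set~$T$ of coset representatives for $\bbZ^d/H$, every $\xx\in\bbZ^d$ is uniquely $\xx=\ww+n\zz$ with $\ww\in T$ and $n\in\bbZ$. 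Hence for any nonnegative~$f$ on $\bbZ^d$ we have the discrete analogue of \eqref{jr1}, namely $\sum_{\xx}f(\xx)=\sum_{\ww\in T}\sum_{n}f(\ww+n\zz)$.

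The core of the proof is then a fiber-wise application of the lemma. For each fixed $\ww\in T$ I would take $p_n:=\pi(\ww+n\zz)$ and $b:=\innprod{\ccc,\ww}-m$; since $\innprod{\ccc,\zz}=1$ we have $\innprod{\ccc,\ww+n\zz}-m=b+n$, so that $g(\ww+n\zz)=\min\{(1-\phi)p_{n-1},\phi p_n\}$ and $h(\ww+n\zz)=|b+n|^rp_n$. Inequality \eqref{pineq2} (which holds for every sequence and every real~$b$, the restriction $b\in[0,1)$ being needed only for the equality characterization) then gives $\sum_n g(\ww+n\zz)\le 2^{r-2}\sum_n h(\ww+n\zz)$. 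Summing over $\ww\in T$ and invoking the coset identity twice yields $\sum_{\xx}g(\xx)\le 2^{r-2}\sum_{\xx}h(\xx)=2^{r-2}\E|\innprod{\ccc,\XX_0}-m|^r$, which is \eqref{TVZc} in the normalized case; undoing the scaling recovers the stated form. If the right-hand side is infinite the claim is vacuous, so no finiteness hypothesis is required.

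Finally \eqref{TVZ} follows from \eqref{TVZc} exactly as in the continuous case: the case $\zz=\zzero$ is trivial, and for $\zz\neq\zzero$ I would set $\ccc:=\ee_\zz:=\zz/\norm{\zz}$, so $\innprod{\ccc,\zz}=\norm{\zz}$, apply \eqref{TVZc} with $m:=\innprod{\ee_\zz,\mm}$, and use $|\innprod{\ee_\zz,\XX_0}-m|=|\innprod{\ee_\zz,\XX_0-\mm}|\le\norm{\XX_0-\mm}$ as in \eqref{a66}. The one genuinely new ingredient relative to \refT{T:TVLB} is the passage from the slab partition to the coset partition of~$\bbZ^d$: this is where I expect the only (modest) care to be needed, since one must verify that $\zz\in\bbZ^d$ makes $H\subseteq\bbZ^d$ an infinite cyclic subgroup and that translation by~$\zz$ shifts indices within each fiber correctly. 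Everything else is a literal transcription of the earlier proof with sums in place of integrals.
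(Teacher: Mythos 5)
Your proposal is correct and takes essentially the same route as the paper's own proof: both establish \eqref{TVZc} by applying \refL{L:TVLB equally spaced} (valid for arbitrary real~$b$, as you note) fiberwise along the orbits $\set{\xx+n\zz : n\in\bbZ}$ with $p_n:=\pi(\xx+n\zz)$ and $b:=\innprod{\ccc,\xx}-m$, sum over a transversal, and then deduce \eqref{TVZ} by taking $\ccc:=\zz/\norm{\zz}$, $m:=\innprod{\ccc,\mm}$, and the bound \eqref{a66}. The only difference is cosmetic: where you use an abstract set of coset representatives for $\bbZ^d/\set{n\zz : n\in\bbZ}$, the paper uses the concrete slab $A=\set{\xx\in\bbZ^d : \innprod{\ccc,\xx}\in[0,1)}$, which (after the normalization $\innprod{\ccc,\zz}=1$) is exactly such a transversal.
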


\begin{proof}
We follow closely the proof of \refT{T:TVLB}, replacing integrals by sums.
We begin with \eqref{TVZc}. Again, this is trivial if $\innprod{\ccc,\zz}=0$,
and we may assume
$\innprod{\ccc,\zz}=1$.

Let $g(\xx):=\min\bigcpar{\phi\pi(\xx),(1-\phi)\pi(\xx-\zz)}$
and
$h(\xx):=|\innprod{\ccc,\xx}-m|^r\pi(\xx)$.
Let further $A:=\set{\xx\in\bbZ^d:\innprod{\ccc,\xx}\in[0,1)}$.
The translates
$A+n\zz=\set{\xx\in\bbZ^d:\innprod{\ccc,\xx}\in[n,n+1)}$
 form a partition of $\bbZ^d$, and thus for any
nonnegative function~$f$ we have
\begin{align}\label{jz1}
\sum_{\xx\in\bbZ^d}f(\xx)&
=\sum_{\xx\in A}\sum_n f(\xx+n\zz).
\end{align}

For every $\xx\in \bbZ^d$, we have
by \refL{L:TVLB equally spaced},
using \eqref{pineq2} with 
$b:=\innprod{\ccc,\xx}-m$
and
$p_n:=\pi(\xx+n\zz)$,
\begin{align}\label{jz2}
&  \sum_n g(\xx+n\zz)
=\sum_n \min\{(1-\phi)p_{n-1}, \phi p_n\}
\le 2^{r-2}\sum_n |b+n|^rp_n
\notag\\&\hskip4em
= 2^{r-2}\sum_n |\innprod{\ccc,\xx+n\zz}-m|^rp_n
=2^{r-2}\sum_n h(\xx+n\zz).
\end{align}
We sum this over $\xx\in A$, and, using~\eqref{jz1} twice, obtain
\begin{align}\label{jz3}
\sum_{\xx \in \bbZ^d} g(\xx)&
=\sum_{\xx\in A}\sumnz g(\xx+n\zz)
\le\sum_{\xx\in A} 2^{r-2}\sumnz h(\xx+n\zz)
\notag\\&
=2^{r-2}\sum_{\xx \in \bbZ^d}h(\xx)
=2^{r-2}\E|\innprod{\ccc,\XX_0}-m|^r
.\end{align}
This proves \eqref{TVZc}.

For \eqref{TVZ},
the case $\zz=\zzero$ is trivial, so we assume $\zz\neq\zzero$.
Let $\ee_\zz:=\zz/\norm{\zz}\in\bbR^d$, so that $\norm{\ee_\zz}=1$ and
$\innprod{\ee_\zz,\zz}=\norm{\zz}$. 
Then \eqref{TVZ} follows from \eqref{TVZc} with $\ccc:=\ee_\zz$ and
$m:=\innprod{\ccc,\mm}$. 
\end{proof}

\begin{remark}
\label{R:poss equal}
We can have equality in~\eqref{TVZ} and \eqref{TVZc}.
A simple example (for any $r$) with $d=1$
is provided by 
$\P(X_0 = \pm1)  = \frac12$, $z =2$, $\phi=\frac12$, $m=0$, and any $c\in\bbR$.
(See also the related examples discussed in \refE{E:discrete}.)
\end{remark}

\section{More on total variation distance} \label{A:more TV}

If we relax $<$ to $\leq$ in~\eqref{TVwithoutphi}, then the following main theorem of this appendix improves \refC{C:TVLB} in two different ways.

\begin{theorem}\label{T:TVLB3}
Let\/ $\XX_0$ be a random variable taking values in a 
real
Banach space~$E$,
and 
let\/ $r \in [1, \infty)$.
If\/ $\zz$ and\/ $\mm$  are any deterministic vectors in~$E$, then  
\begin{equation}
\label{TVwithoutphi3}
\|\zz\|^r [1 - \dtv(\XX_0, \XX_0 + \zz)] \le 2^{r - 1} \E \|\XX_0-\mm\|^r.
\end{equation}
Furthermore, for any continuous linear functional 
$\chi\in E^*$ and $m\in\bbR$ we have 
\begin{equation}
\label{TVCE}
|\chi(\zz)|^r [1 - \dtv(\XX_0, \XX_0 + \zz)] 
\le 2^{r - 1} \E |\chi(\XX_0)-m|^r.
\end{equation}
\end{theorem}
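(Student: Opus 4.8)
The plan is to prove the functional bound \eqref{TVCE} first and then deduce \eqref{TVwithoutphi3} from it, exactly as the norm bound \eqref{TV} was deduced from \eqref{TVc} in \refT{T:TVLB}. For \eqref{TVwithoutphi3}, if $\zz=\zzero$ the claim is trivial, and otherwise I would choose, by the Hahn--Banach theorem, a norming functional $\chi\in E^*$ with $\|\chi\|=1$ and $\chi(\zz)=\|\zz\|$; applying \eqref{TVCE} with this $\chi$ and $m:=\chi(\mm)$ gives \eqref{TVwithoutphi3}, since $|\chi(\XX_0)-m|=|\chi(\XX_0-\mm)|\le\|\XX_0-\mm\|$. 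To prove \eqref{TVCE}, set $W:=\chi(\XX_0)$ and $a:=\chi(\zz)$. Because total variation distance does not increase under a pushforward (here under the map $\chi$), and $\chi(\XX_0+\zz)=W+a$, we have $1-\dtv(\XX_0,\XX_0+\zz)\le 1-\dtv(W,W+a)$. Thus everything reduces to the one-dimensional inequality
\[
|a|^r\,[1-\dtv(W,W+a)]\le 2^{r-1}\,\E|W-m|^r
\]
for a real random variable $W$ and reals $a,m$.

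For this scalar inequality I may assume $a>0$ (total variation is unchanged if $a$ is replaced by $-a$) and, replacing $W$ by $W-m$, that $m=0$. The key device is a folding argument: disintegrate $\cL(W)$ over the quotient map $\bbR\to\bbR/a\bbZ$ with fundamental domain $[0,a)$. Writing $\nu$ for the law of $W\bmod a$ and $p_n(u)$ for the conditional mass that $W$ places at the point $u+na$ in the fiber over $u$, the shift by $a$ permutes each fiber by $n\mapsto n+1$, so that the overlap measure $\cL(W)\wedge\cL(W+a)$ splits fiberwise and, by \eqref{dtv},
\[
1-\dtv(W,W+a)=\int_{[0,a)}\sum_n\min\{p_{n-1}(u),p_n(u)\}\,\dd\nu(u).
\]
Hence it suffices to prove, for each fixed probability sequence $(p_n)$ on the equally spaced points $t_n:=b+n$ (here $b=u/a$, after rescaling by $1/a$), the discrete bound
\[
\sum_n\min\{p_{n-1},p_n\}\le 2^{r-1}\sum_n|t_n|^r p_n \qquad(\star)
\]
integrating $(\star)$ against $\nu$ (after multiplying through by $a^r$) then yields the scalar inequality.

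The heart of the matter is $(\star)$ with the sharp constant $2^{r-1}$, valid for every $r\ge1$. For $r\ge2$ this is \refL{L:TVLB equally spaced} with $\phi=\tfrac12$, but for $r\in[1,2)$ that lemma does not apply. To obtain the sharp constant uniformly in $r$, I would first use the identity
\[
\sum_n\min\{p_{n-1},p_n\}=1-\tfrac12\sum_n|p_n-p_{n-1}|\le 1-\max_n p_n,
\]
where the inequality is the elementary bound $\max_n p_n\le\tfrac12\sum_n|p_n-p_{n-1}|$. It then remains to show $1-\max_n p_n\le 2^{r-1}\sum_n|t_n|^r p_n$. Viewing the left side minus the right side as an affine function of $(p_n)$ on each region where a given coordinate is largest, its maximum over the probability simplex (after a routine truncation to finitely supported $(p_n)$) is attained at a vertex, that is, at a uniform distribution on a finite index set $S$; the required inequality thereby reduces to the packing bound
\[
\sum_{n\in S}|b+n|^r\ge 2^{1-r}\,(|S|-1),
\]
which holds for every finite $S\subseteq\bbZ$ and every $b$ because the points $\{b+n:n\in S\}$ are at pairwise distance at least $1$ (so that the $i$-th smallest of the numbers $|b+n|$ is at least $(i-1)/2$), together with the convexity of $t\mapsto t^r$ for the small cases $|S|\le 3$.

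The main obstacle is precisely the extension to $r\in[1,2)$ with the sharp constant. Every direct estimate I can think of loses a factor of two and produces $2^r$ rather than $2^{r-1}$: the naive pairing $\min\{p_{n-1},p_n\}\le|t_{n-1}|^r p_{n-1}+|t_n|^r p_n$, any scheme charging each overlap to a single nearby point, and the coupling/overlap-measure estimate $|a|^r[1-\dtv]\le 2^{r-1}\bigpar{\int|x-m|^r+\int|x-a-m|^r}\dd\beta$ for the overlap measure $\beta$ all do so. The resolution is to exploit the \emph{full} sequence variation $\tfrac12\sum_n|p_n-p_{n-1}|$ through its domination of $\max_n p_n$, which is exactly what allows both endpoints of the critical central pair to contribute and so recovers the extra factor $\tfrac12$. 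I would also note, in contrast to \refT{T:TVLB} and \refC{C:TVLB}, that only the weak inequality is asserted here and it can genuinely be an equality (compare \refR{R:poss equal}), so no strictness analysis is required; and that the argument uses nothing about $E$ beyond Hahn--Banach and the contraction of $\dtv$ under $\chi$, which is what delivers the full Banach-space generality.
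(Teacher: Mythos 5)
Your proof is correct, and it reaches \eqref{TVCE} by a genuinely different route than the paper. The paper works directly on~$E$: it builds the $\sigma$-finite dominating measure $\mu=\sum_n\cL(\XX_0+n\zz)$, takes densities with respect to it, applies its key discrete inequality (\refL{L:TVLB2}, the $\phi=\tfrac12$, $r\ge1$ extension of \refL{L:TVLB equally spaced}, proved there by successively zeroing out distant terms and reducing to a two-point analysis) fiberwise, and integrates; Hahn--Banach then gives \eqref{TVwithoutphi3} exactly as in your last step. You instead (i) project to $\bbR$ first, using that total variation cannot increase under the measurable map~$\chi$, so that only a one-dimensional inequality for $W=\chi(\XX_0)$ remains, and (ii) prove the equally-spaced inequality $\sum_n\min\{p_{n-1},p_n\}\le 2^{r-1}\sum_n|b+n|^rp_n$ by a different mechanism: the identity $\sum_n\min\{p_{n-1},p_n\}=1-\tfrac12\sum_n|p_n-p_{n-1}|\le 1-\max_np_n$ (valid for probability sequences), an extreme-point argument reducing $1-\max_np_n\le 2^{r-1}\sum_n|b+n|^rp_n$ to uniform distributions on finite sets $S\subseteq\bbZ$, and the packing bound $\sum_{n\in S}|b+n|^r\ge 2^{1-r}(|S|-1)$. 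These steps all check out: your disintegration mod~$a$ is the same object as the paper's dominating measure, transplanted to $\bbR$; the extreme points of the polytope of probability vectors with prescribed largest coordinate are indeed the uniform distributions on sets containing that coordinate; and the packing bound holds since the $i$th smallest $|b+n|$ is at least $(i-1)/2$, giving $\sum_{j=1}^{k-1}j^r\ge k(k-1)/2\ge 2(k-1)$ for $k\ge4$ and $r\ge1$, while $k=2,3$ follow from convexity of $t\mapsto t^r$ as you indicate (for $k=3$ the two outermost points already contribute at least $2\ge 2^{2-r}$). What your route buys: the Banach-space measure theory shrinks to Hahn--Banach plus the data-processing inequality for $\dtv$, and your proof of the discrete inequality makes the source of the constant $2^{r-1}$ and the equality configurations (two points at $\pm\tfrac12$; the $r=1$ three-point cases) quite transparent. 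What the paper's route buys: its \refL{L:TVLB2} comes with a complete characterization of equality, which is reused later in the proof of \refT{T:TVLB99}, whereas your argument yields only the weak inequality --- which is, however, all that the present theorem asserts.
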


\begin{remark}
\label{R:TVLB3}
Note that, in this generality, equality is possible in \eqref{TVwithoutphi3}
and \eqref{TVCE} (for any $r$);
this is shown by the example in \refR{R:poss equal}.
\end{remark}

As shown in \refR{R:TVLB-}, the condition $r\ge2$ is necessary for
\eqref{pineq2} to hold in general, and in particular 
to  allow arbitrary $\phi\in\oi$. 
To  prove \refT{T:TVLB3}, we will first state and prove a lemma (\refL{L:TVLB2}) extending 
\refL{L:TVLB equally spaced} 
in the special case $\phi = \frac12$ to $r \geq 1$.

\begin{lemma}
\label{L:TVLB2}
Let $r \in [1, \infty)$.
Then, for any doubly infinite nonnegative sequence $(p_n)_{n \in \bbZ}$ 
and any $b \in \bbR$, we have \eqref{pineq2} with
$\phi =\frac12$, that  is,
\begin{equation}
\label{pineqC}
\frac12\sum_n \min\{p_{n - 1}, p_n\}
\le
2^{r - 2} \sum_n |b + n|^r p_n 
.\end{equation}
If further,
for simplicity, we assume that 
$b \in [0, 1)$, then equality 
holds in \eqref{pineq2}
if and only if 
either both sides are infinite or
one of the following (non-exclusive) sets of
conditions holds:
\begin{romenumerate}

\item \ref{Ltvlb0} in \refL{L:TVLB equally spaced};
\item \ref{Ltvlb1} in \refL{L:TVLB equally spaced};
\item \ref{Ltvlb2} in \refL{L:TVLB equally spaced};
\item \label{Ltvlb5}
\begin{alphiienumerate}
\item $p_n=0$  for $n \notin \{-1, 0\}$, 
\item $p_{-1} = p_0 $, and
\item $r=1$;
\end{alphiienumerate}
\item \label{Ltvlb4}
\begin{alphiienumerate}
\item $p_n = 0$ for $n \notin \{-1, 0, 1\}$,
\item $p_0 \geq \max\{p_{-1}, p_1\}$,
\item $r = 1$, and
\item $b = 0$.
\end{alphiienumerate}

\end{romenumerate}
\end{lemma}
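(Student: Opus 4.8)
The plan is to prove \eqref{pineqC} by a bond-charging argument that isolates the central bond $(-1,0)$ and reduces everything to a two-term inequality, and then to read the equality cases off that same decomposition. The essential new input, compared with \refL{L:TVLB equally spaced} (whose reduction used $2^{r-2}\ge1$ and hence needed $r\ge2$), is the elementary convexity bound $(1-b)^r+b^r\ge2^{1-r}$, valid for all $r\ge1$. Throughout I take $b\in[0,1)$ (the general case following from \refR{R:TVLB+}) and I assume the \rhs{} of \eqref{pineqC} is finite, so that $\sum_np_n<\infty$.

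For the inequality, write $a_n:=2^{r-2}|b+n|^r$ for the coefficient of $p_n$ on the right. I charge each non-central bond $\tfrac12\min\{p_{n-1},p_n\}$ to its \emph{outer} endpoint --- to site $n$ when $n\ge1$, to site $n-1$ when $n\le-1$. Each charged site $m$ (that is, every $m$ with $m\ge1$ or $m\le-2$) satisfies $|b+m|\ge1$, whence $a_m\ge2^{r-2}\ge\tfrac12$ for $r\ge1$, and therefore $\tfrac12\min\{p_{n-1},p_n\}\le\tfrac12 p_m\le a_m p_m$. These charges use each such site exactly once and leave sites $-1,0$ free, so it remains only to bound the central bond by the two-term inequality $\tfrac12\min\{q,p\}\le2^{r-2}[(1-b)^rq+b^rp]$ with $q:=p_{-1}$ and $p:=p_0$; summing then gives \eqref{pineqC}. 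To prove the two-term inequality, assume without loss of generality $p\le q$ and rewrite it as $2^{r-2}(1-b)^rq+(2^{r-2}b^r-\tfrac12)p\ge0$: if $2^{r-2}b^r\ge\tfrac12$ both coefficients are nonnegative, and otherwise the left side is decreasing in $p\in[0,q]$ and at $p=q$ equals $q[2^{r-2}((1-b)^r+b^r)-\tfrac12]\ge0$ by the convexity bound (Jensen for $t\mapsto t^r$). This is precisely where $r\ge1$ rather than $r\ge2$ suffices, since the bound is an equality at $b=\tfrac12$ when $r>1$ and for \emph{every} $b$ when $r=1$.

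For equality, that the listed configurations give equality is a direct substitution, the $r=1$ cases collapsing to $(1-b)+b=1$. Conversely, equality in \eqref{pineqC} forces equality in every charge. A left-bond outer site $m\le-2$ has $|b+m|\ge2-b>1$, so $a_m>\tfrac12$ and hence $p_m=0$; thus $p_m=0$ for all $m\le-2$ (for any $r$), and symmetrically $p_m=0$ for $m\ge2$. The right bond charged to site~$1$ has $a_1=2^{r-2}(1+b)^r$, which strictly exceeds $\tfrac12$ unless $r=1$ and $b=0$; so $p_1=0$ except in that one case, where equality instead demands only $\min\{p_0,p_1\}=p_1$, i.e.\ $p_1\le p_0$. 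With the surviving support in $\{-1,0\}$ (or $\{-1,0,1\}$ when $r=1,b=0$), the equality cases of the two-term inequality --- obtained from the same sign-and-convexity analysis to be $q=p>0$ with ($r=1$, any~$b$) or ($r>1$, $b=\tfrac12$), or else $b=0$ with $q=0$ (any~$r$) or with $0\le q\le p$ ($r=1$) --- translate into \ref{Ltvlb1} and \ref{Ltvlb2} for $r>1$, and into \ref{Ltvlb5} (when $b\in(0,1)$) and \ref{Ltvlb4} (when $b=0$, the central equality giving $p_{-1}\le p_0$, which combines with $p_1\le p_0$) for $r=1$; the trivial $p_{-1}=p_0=0$ is \ref{Ltvlb0}.

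The main obstacle is exactly the low range $r\in[1,2)$, where $2^{r-2}<1$ defeats the interior-zeroing estimate \eqref{sa1} of \refL{L:TVLB equally spaced}; routing each charge to the \emph{outer} endpoint repairs this, since the one-sided bound $\tfrac12\min\{p_{n-1},p_n\}\le\tfrac12 p_m$ loses only the factor $\tfrac12$ that $a_m\ge\tfrac12$ restores. The genuinely delicate bookkeeping is the borderline site~$1$ at $r=1,b=0$, the unique source of the three-point case \ref{Ltvlb4}: one must verify it cannot arise for $b\ne0$ or $r>1$, precisely because $a_1>\tfrac12$ is then strict.
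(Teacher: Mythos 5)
Your proof is correct, and it even completes a step the paper's own proof leaves out: the published argument ends with ``we leave it to the reader to establish the cases of equality,'' whereas you carry out that analysis in full, and your enumeration is right. The overall skeleton is the same as the paper's --- isolate the central bond and reduce everything to the two-term inequality $\tfrac12\min\{q,p\}\le 2^{r-2}\bigl[(1-b)^rq+b^rp\bigr]$, which both you and the paper settle using convexity of $t\mapsto t^r$ (you via Jensen at $p=q$; the paper via piecewise linearity in $p$ after normalizing $p+q=1$, checking $p\in\{0,\tfrac12,1\}$, as in \eqref{qp2}). But your reduction mechanism is genuinely cleaner. The paper zeroes out sites sequentially: a deleted site must absorb \emph{both} adjacent bonds, costing $p_n$ and hence requiring $2^{r-2}|b+n|^r\ge1$; for $r$ near $1$ this only works when $|b+n|\ge2$, which forces a two-pass argument [first \eqref{sa1} for $|b+n|\ge2$, then \eqref{za1} for $n=1$ and $n=-2$] together with a dominated-convergence limit. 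Your one-sided charging sends each bond to its \emph{outer} endpoint only, so each charged site absorbs a single bond and the weaker bound $2^{r-2}|b+m|^r\ge\tfrac12$, valid for all $|b+m|\ge1$ once $r\ge1$, suffices in a single pass with no limiting argument. This same termwise structure is what makes your equality analysis transparent: equality must hold charge by charge, the strict gap $a_m>\tfrac12$ eliminates every site outside $\{-1,0\}$ except site $1$ in the borderline case $r=1$, $b=0$ (correctly identified as the sole source of case \ref{Ltvlb4}), and your list of equality cases for the two-term inequality agrees with what the piecewise-linear analysis gives.

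One presentational quibble: ``without loss of generality $p\le q$'' is not a symmetry of the two-term inequality in $(p,q)$ alone, since the coefficients $(1-b)^r$ and $b^r$ differ; the reduction requires the swap $(p,q,b)\mapsto(q,p,1-b)$, under which both the claim and your sign-plus-Jensen argument are invariant [the bound $(1-b)^r+b^r\ge 2^{1-r}$ being symmetric in $b\leftrightarrow 1-b$]. The gap is trivially filled, but you should say so explicitly.
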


\begin{remark}\label{R:TVLB+C}
As in \refL{L:TVLB equally spaced},
the conditions for equality for general $b$ are given by 
the conditions above applied to the fractional part $b-\floor{b}$ and the
shifted sequence $(p_{n-\floor{b}})_n$.
\end{remark}

\begin{remark}
The value $\phi = 1/2$ is truly special.  For any other given value of $\phi \in (0, 1)$ and for any given value of 
$r \in [1, 2)$, 
it is easy to verify that
we obtain a counterexample to~\eqref{pineq2} by
taking  $p_{-1}:=\phi$, $p_0:=1-\phi$, and $p_n:=0$ for $n\notin\set{-1,0}$
with $b$ as in~\eqref{sa2} (interpreted as $b=0$ or 1 when $r=1$).
[Note  that for $r\in(1,2)$, the inequality in \eqref{ponly} goes in the
opposite direction.]
\end{remark} 

\begin{proof}[Proof of \refL{L:TVLB2}]
  We follow the proof of \refL{L:TVLB equally spaced}.
Note first that, again, 
we may (by shifting $n$ by $\floor b$)
assume that $b\in[0,1)$;
we may also assume $\sum_n |b + n|^r p_n < \infty$.

Under the present assumption $r\ge1$, the \rhs{} of \eqref{sa1} is
non-negative at least when $|b+n|\ge2$.
Hence, by successively replacing $p_n$ by 0 for $n=2,3,-3,4,-4,\dots$
and using dominated convergence as before, we see that it suffices to prove
\eqref{pineqC} when $p_n=0$  for $n\notin\set{-2,-1,0,1}$.
Furthermore, assuming this, so in particular $p_2=0$, we see similarly to
\eqref{sa1} that if we replace $p_1$ by 0, then
the difference between the sides of~\eqref{pineqC} [as in \eqref{pineq1}]
decreases  by
\begin{align}\label{za1}
&2^{r - 2} |b + 1|^r p_1 - \tfrac12\min\{p_0, p_1\} 
-\tfrac12\min\{p_1, p_2\} 
\notag\\&\qquad
 \geq 2^{r - 2} (b + 1)^r p_1 - \tfrac12 p_1 
=\bigsqpar{2^{r - 2} (b + 1)^r - \tfrac12} p_1
\ge0
.\end{align}
Hence it suffices to prove \eqref{pineqC} when also $p_1=0$.
By the same argument, now using $p_{-3}=0$, we may also assume $p_{-2}=0$.
Hence we are reduced to the situation in \eqref{qp}, 
and it suffices to show that
\begin{align}
 2^{r - 2} [(1 - b)^r q + b^r p] - \tfrac12\min\{ q,  p\} 
\ge0
\label{qp2}   
\end{align}
for $b\in[0,1)$.
By scaling, we may again suppose that $q=1-p$.
Then the \lhs{} of \eqref{qp2} is a linear function of $p$ on the intervals
$[0,\frac12]$ and $[\frac12,1]$, and consequently it suffices to verify
\eqref{qp2} for the three cases 
(i) $p=0$ and  thus $q = 1$, 
(ii) $p=1$ and thus $q=0$, and
(iii) $p=q=\frac12$. 
The first two cases are trivial.
Finally, when $p=q=\frac12$, the minimum over $b\in[0,1)$ of the \lhs{} of
\eqref{qp2} is attained at $b=\frac12$. (In fact, for arbitrary $p,q>0$, 
the  minimizing value of~$b$ is given by \eqref{sa2} for every $r > 1$.) And in this case
$p=q=b=\frac12$, \eqref{qp2} is immediately verified (with equality).

We leave it to the reader 
to establish the cases of equality.
\end{proof}

\begin{remark}\label{R:TVLB2}
The
condition $r \geq 1$ is necessary for
\refT{T:TVLB3} and
\refL{L:TVLB2}
in general.
If $r < 1$, we may produce a counterexample to \refL{L:TVLB2} as follows.
Take $p_{-1} := \frac12$, $p_0 := \frac12$, 
$p_n:=0$ for $n\notin\set{-1,0}$,
and 
$b := 0$.
Then the \lhs{} of \eqref{pineqC} is
\begin{equation}
2^{r - 3} - \tfrac14 < 0.
\end{equation}
An (essentially equivalent) counterexample to \refT{T:TVLB3} 
with $E=\bbR$ 
is obtained by taking $X_0$ uniform on $\set{0,1}$, $z=1$, and $m=0$.
\end{remark}

\begin{proof}[Proof of \refT{T:TVLB3}]
We begin with \eqref{TVCE}.
The case $\chi(\zz)=0$ is trivial, so we assume 
$\chi(\zz)\neq0$, and by homogeneity, we may assume
$\chi(\zz)=1$.

The formula \eqref{dtv} for the total variation distance extends to
random variables with values in any measurable space
as
\begin{align}\label{dtv3z}
  \dtv(\YY,\tYY) 
= \frac12\int|\pif(\xx)-\tpif(\xx)|\dd\mu(\xx)
=1-\int \min\bigcpar{\pif(\xx),\tpif(\xx)}\dd\mu(\xx)
\end{align}
for any $\gs$-finite measure $\mu$ such that $\YY$ and $\tYY$ have
densities $f$ and $\tf$ with respect to $\mu$.

The proof is now similar to the proof of \refT{T:TVLB}, but since we 
in general
cannot use Lebesgue measure, we first construct a suitable dominating measure.
Let $\nu_n:=\cL(\XX_0+n\zz)$, the distribution of the translated random
vector $\XX_0+n\zz$, and define the infinite measure
\begin{align}\label{zb1}
  \mu:=\sum_{n\in\bbZ}\nu_n.
\end{align}
Then, 
for any measurable function $g\ge0$ on $E$,
\begin{align}\label{zc1}
  \int g(\xx)\dd\mu(\xx)
&=\sum_n  \int g(\xx)\dd\nu_n(\xx)
=\sum_n\E g(\XX_0+n\zz)
\notag\\&
=\E\sum_n g(\XX_0+n\zz)
.\end{align}

We first apply \eqref{zc1} when $g$ is the indicator function $\etta_{E_k}$
of the set $E_k:=\set{\xx\in E:\chi(\xx)\in[k,k+1)}$ for an integer $k\in\bbZ$.
We have, since $\chi(\zz)=1$,
\begin{align}
  \sum_n g(\xx+n\zz)&
=\sum_n \indic{\chi(\xx+n\zz)\in[k,k+1)}
\notag\\&
=\sum_n \indic{\chi(\xx)+n\in[k,k+1)}
=1
\end{align}
for every $\xx\in E$. Consequently, \eqref{zc1} yields 
$\mu(E_k)=\int \etta_{E_k}\dd\mu=1$.
In particular, since $E=\bigcup_k E_k$, this shows that $\mu$ is $\gs$-finite.

By construction, $\cL(\XX_0) = \nu_0 \ll \mu$ and $\cL(\XX_0+\zz)=\nu_1 \ll \mu$,
so $\XX_0$ and $\XX_0+\zz$ both have distributions that are absolutely
continuous with repect to $\mu$, and thus they have
densities $\pif$ and $\tpif$ with respect to $\mu$. It is easily seen that
we can take
$\tpif(\xx)=\pif(\xx-\zz)$. Hence, \eqref{dtv3z} applies and  yields
\begin{align}\label{dtv4z}
1- \dtv(\XX_0,\XX_0+\zz)
=\int \min\bigcpar{\pif(\xx),\pif(\xx-\zz)}\dd\mu(\xx).
\end{align}

Let  $g(\xx):=\min\bigcpar{\pif(\xx),\pif(\xx-\zz)}$
and
$h(\xx):=|\chi(\xx)-m|^r\pif(\xx)$.
For every $\xx\in E$, we have
by \refL{L:TVLB2},
using \eqref{pineqC} with 
$p_n:=\pif(\xx+n\zz)$ and
$b:=\chi(\xx)-m$, that
\begin{align}\label{zc2}
  \sum_n g(\xx+n\zz)&
=\sum_n \min\{p_n,p_{n-1}\}
\le 2^{r-1}\sum_n |b+n|^rp_n
\notag\\&
=2^{r-1}\sum_n h(\xx+n\zz)
.\end{align}
Consequently, by applying \eqref{zc1} first to $g$ and then to $h$, we obtain 
\begin{align}\label{zc3}
  \int &\min\bigcpar{\pif(\xx),\pif(\xx-\zz)}\dd\mu(\xx)
=  \int g(\xx)\dd\mu(\xx)
=\E\sum_n g(\XX_0+n\zz)
\notag\\&
\le 2^{r-1}\E\sum_n h(\XX_0+n\zz)
=2^{r-1}\int h(\xx)\dd\mu(\xx)
\notag\\&
=2^{r-1}\int|\chi(\xx)-m|^r\pif(\xx)\dd\mu(\xx)
=2^{r-1}\E |\chi(\XX_{0})-m|^r
,\end{align}
where the last equality holds  since~$\pif$ is the density of $\XX_0$.
The result \eqref{TVCE} follows from \eqref{dtv4z} and \eqref{zc3}.

To show \eqref{TVwithoutphi3}, take
a continuous linear functional $\chi$ on $E$ with $\chi(\zz)=\norm{\zz}$
and $\norm{\chi}=1$.
(This is possible by the Hahn-Banach theorem.)
Then \eqref{TVwithoutphi3} follows from \eqref{TVCE}, with $m:=\chi(\mm)$,
since
\begin{align}
  |\chi(\XX_0)-m|
=
  |\chi(\XX_0-\mm)|
\le \norm{\XX_0-\mm}.
\end{align}
\end{proof}

As noted above (\refR{R:TVLB3}), in general we can have equality in
\eqref{TVwithoutphi3}. However, if we again consider a random vector in
$\bbR^d$ having a density, then the strict inequalities 
in~\eqref{TVwithoutphi}--\eqref{TVCc} extend to $r>1$, and partly to $r\ge1$.

\begin{theorem}
\label{T:TVLB99}
Let\/ $\XX_0$ be a random vector in $\bbR^d$ having a density.
If\/
$\zz$ and\/ $\mm$  are any deterministic vectors in $\bbR^d$, 
and $r>1$,
then  
\begin{equation}
\label{TVwithoutphi99}
\|\zz\|^r [1 - \dtv(\XX_0, \XX_0 + \zz)] < 2^{r - 1} \E \|\XX_0-\mm\|^r.
\end{equation}
Furthermore, for any nonzero $\ccc\in\bbR^d$ and $m\in\bbR$ we have
\begin{equation}
\label{TVCE99}
|\innprod{\ccc,\zz}|^r [1 - \dtv(\XX_0, \XX_0 + \zz)] < 2^{r - 1} 
\E |\innprod{\ccc,\XX_0}-m|^r.
\end{equation}

If\/ $r=1$,
then \eqref{TVwithoutphi99} holds when $d\ge2$.
If\/ $r=1$ and $d=1$, 
then~\eqref{TVwithoutphi99} holds for 
every~$\zz$ except possibly two (opposite) values.
Similarly, if $r=1$, then
\eqref{TVCE99} holds (for a given $\ccc$)
for every vector $\zz$ except possibly two (opposite) values.
\end{theorem}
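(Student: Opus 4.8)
The plan is to mirror the proof of \refT{T:TVLB}, but to replace \refL{L:TVLB equally spaced} by its $\phi=\tfrac12$, $r\ge1$ counterpart \refL{L:TVLB2}, and then to read strictness off from the equality cases recorded there. I would first treat \eqref{TVCE99}; the norm bound \eqref{TVwithoutphi99} then follows by the Hahn--Banach choice of $\chi$ used for \eqref{TVwithoutphi3} (equivalently, by taking $\ccc=\ee_\zz$). We may assume $\innprod{\ccc,\zz}\neq0$ and, by homogeneity, $\innprod{\ccc,\zz}=1$. Setting $p_n:=\pi(\xx+n\zz)$ and $b:=\innprod{\ccc,\xx}-m$ and integrating the pointwise bound \eqref{pineqC} over the slab $A=\set{\xx:\innprod{\ccc,\xx}\in[0,1)}$ yields the weak inequality exactly as in \eqref{jr1}--\eqref{jr3}. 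Since the integrand is bounded by $\pi(\xx)$, equality in \eqref{TVCE99} forces equality in \eqref{pineqC} for a.e.\ $\xx\in A$ with $0<\sum_np_n<\infty$; by \eqref{jrpi} this exceptional set has positive Lebesgue measure.

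For $r>1$ this is already a contradiction. By \refL{L:TVLB2} and \refR{R:TVLB+C}, the only equality sequences with $r>1$ are those of cases \ref{Ltvlb0}, \ref{Ltvlb1}, \ref{Ltvlb2}, and the nondegenerate ones force the fractional part of~$b$ into $\set{0,\tfrac12}$, i.e.\ $\innprod{\ccc,\xx}\in m+\tfrac12\bbZ$. This confines $\xx$ to a countable union of hyperplanes, a Lebesgue-null set, contradicting positivity of the equality set. Hence \eqref{TVCE99}, and then \eqref{TVwithoutphi99}, is strict for every~$\zz$ when $r>1$.

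For $r=1$ the new phenomenon is case \ref{Ltvlb5}, which yields equality in \eqref{pineqC} for \emph{every}~$b$ whenever $(p_n)$ is supported on two consecutive indices carrying equal values. As cases \ref{Ltvlb1}, \ref{Ltvlb4} again demand $b\in m+\bbZ$ and \ref{Ltvlb2} demands $b\in m+\tfrac12+\bbZ$ (both null), equality of the two integrals can persist only if, for a.e.\ $\xx$ in the positive-mass part of~$A$, the sampled sequence $(\pi(\xx+n\zz))_n$ is of type \ref{Ltvlb5}. Shifting along the lattice $\zz\bbZ$, this says exactly that $\supp\pi$ meets almost every line $\xx_0+\zz\bbZ$ in two consecutive lattice points of equal $\pi$-value: up to null sets, $\supp\pi$ splits into disjoint ``dominoes'' $\set{\ww,\ww+\zz}$ on which $\pi$ is constant, with $\pi(\ww-\zz)=\pi(\ww+2\zz)=0$. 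For the \emph{norm} bound with $d\ge2$ I would bypass this structure: applying the weak \eqref{TVCE} with $\ccc:=\ee_\zz:=\zz/\norm{\zz}$ and $m:=\innprod{\ee_\zz,\mm}$ gives
\[
\norm{\zz}^r\bigsqpar{1-\dtv(\XX_0,\XX_0+\zz)}\le2^{r-1}\E\bigabs{\innprod{\ee_\zz,\XX_0-\mm}}^r<2^{r-1}\E\norm{\XX_0-\mm}^r,
\]
the final inequality strict because $\bigabs{\innprod{\ee_\zz,\vv}}=\norm{\vv}$ only on the line $\bbR\zz$, which is $\pi$-null when $d\ge2$; this settles \eqref{TVwithoutphi99} for all~$\zz$ (indeed for all $r\ge1$) in dimension $d\ge2$.

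What remains---and what I expect to be the crux---is to bound the number of exceptional~$\zz$ in \eqref{TVCE99} (and in \eqref{TVwithoutphi99} when $d=1$, where $\ee_\zz=\pm1$ and the projection step above is an equality). Here the domino rigidity must be faced directly, via a lemma asserting that the decomposition of $\supp\pi$ described above can hold for at most one vector~$\zz$ up to sign. I would argue this by supposing it holds for two vectors $\zz\neq\pm\zz'$ and intersecting the two domino partitions: the pairing at scale~$\zz$, together with its emptiness constraints $\pi(\ww-\zz)=\pi(\ww+2\zz)=0$, is incompatible with a second such pairing at an independent scale~$\zz'$, since matching a point and its $\zz$-partner into $\zz'$-dominoes necessarily activates a forbidden third occupied lattice neighbor; pushing this through forces $\pi=0$ a.e., a contradiction. (The borderline $\zz'\parallel\zz$ with $\norm{\zz'}\neq\norm{\zz}$ is excluded by the ``exactly two consecutive'' count, as the uniform-on-an-interval example illustrates.) This leaves at most the two opposite values $\pm\zz_0$, completing \eqref{TVCE99} and, for $d=1$, \eqref{TVwithoutphi99}. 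The careful measure-theoretic bookkeeping in this rigidity step is the main obstacle.
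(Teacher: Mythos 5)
Your handling of $r>1$ is correct and coincides with the paper's own proof: the weak bound comes from summing \eqref{pineqC} over the slab $A$ exactly as in \eqref{jr1}--\eqref{jr3}, and strictness follows because the nondegenerate equality cases of \refL{L:TVLB2} for $r>1$ force the fractional part of $b=\innprod{\ccc,\xx}-m$ into $\set{0,\tfrac12}$, confining $\xx$ to a Lebesgue-null union of hyperplanes while \eqref{jrpi} produces a positive-measure equality set. Your disposal of \eqref{TVwithoutphi99} for $r=1$, $d\ge2$ (a.s.\ strictness in \eqref{a66} because $\set{\XX_0-\mm\in\bbR\zz}$ is null) is also exactly the paper's argument.

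The genuine gap is the final step, the count of exceptional $\zz$ when $r=1$, and it is not merely the ``bookkeeping'' you defer: the rigidity lemma you propose is false. In passing from the equality case \refL{L:TVLB2}\ref{Ltvlb5} to your domino description of $\supp\pi$, you discard the anchoring: for $\xx\in A$, equality forces $(\pi(\xx+n\zz))_n$ to be supported on the specific indices $\set{\floor{b}-1,\floor{b}}$ (\refR{R:TVLB+C}), i.e., the two occupied cells must \emph{straddle the hyperplane} $\set{\innprod{\ccc,\cdot}=m}$. Your reformulation keeps only the $\ccc$-free statement that a.e.\ line $\xx+\zz\bbZ$ meets $\supp\pi$ in two consecutive equal-mass cells with empty neighbors, and that property can hold simultaneously for non-proportional vectors: take $d=2$ and $\pi$ uniform on $[0,2)^2$; then a.e.\ line in direction $(1,0)$, and likewise a.e.\ line in direction $(0,1)$, meets the support in exactly two consecutive lattice points with equal values and vanishing third neighbors, and the two domino partitions coexist without ever ``activating a forbidden third occupied neighbor.'' Indeed, for this $\pi$ equality in \eqref{TVCE99} (with $r=1$, $m=1$) genuinely holds both for $(\ccc,\zz)=\bigpar{(1,0),\pm(1,0)}$ and for $(\ccc,\zz)=\bigpar{(0,1),\pm(0,1)}$; since your lemma makes no reference to $\ccc$, it would forbid this, so no amount of measure-theoretic care can establish it.

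What completes the proof in the paper is to keep the anchoring and convert it into a distributional identity rather than a support statement: equality forces $\XX_0\in A\cup(A-\zz)$ a.s.\ [see \eqref{b99}], whence, with $\XX_\pm$ the conditional laws of $\XX_0$ given $\innprod{\ccc,\XX_0}\gtrless m$ --- objects depending only on $(\ccc,m)$ and not on $\zz$ --- one obtains $\XX_+\eqd\XX_-+\zz$ or $\XX_+\eqd\XX_--\zz$ [see \eqref{xx99} and \eqref{xx99-}]. Each translation equation has at most one solution $\zz$, and solutions of the two equations are negatives of one another, giving at most two opposite exceptional values; the case $d=1$ of \eqref{TVwithoutphi99} then follows because it is equivalent to \eqref{TVCE99} there. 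If you reinstate the straddling condition in your domino picture, your argument collapses to precisely this one.
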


\begin{proof}
  We begin with \eqref{TVCE99}, and follow the proof of \eqref{TVc} in
  \refT{T:TVLB}.
Again, the result is trivial if $\innprod{\ccc,\zz}=0$, and we may assume
$\innprod{\ccc,\zz}=1$.
By replacing $\XX_0$ by $\XX_0-m\zz$, we may (for convenience)
also assume that $m=0$.

Suppose first $r>1$. If we have equality in \eqref{jr2}, then \refL{L:TVLB2}
yields the same conclusions as in the proof of \refT{T:TVLB}, and consequently
it is impossible to have equality in \eqref{TVCE99}; hence,
\eqref{TVCE99} holds.
This, as before, implies \eqref{TVwithoutphi99} by \eqref{a66}.
 
In the rest of the proof we consider the case $r=1$.
Then \refL{L:TVLB2}\ref{Ltvlb5}
shows another possibility for equality in
\eqref{jr2}: If $\xx$ belongs to the set~$A$ defined in \eqref{e99}, so that
$b=\innprod{\ccc,\xx}\in[0,1)$,
we may have
\begin{align}\label{b99}
  \begin{cases}
    \pi(\xx-\zz)=\pi(\xx)\ge0,
\\
\pi(\xx+n\zz)=0,\qquad n\notin\set{-1,0}.
  \end{cases}
\end{align}
The arguments in the proof of \refT{T:TVLB} show that in order to have
equality in \eqref{jr3} we have to have \eqref{b99} for a.e.\ $\xx\in A$.
[Note that \eqref{b99} holds trivially when $\sum_n\pi(\xx+n\zz)=0$.]
Consequently, $\pi(\xx)=0$ a.e.\ for $\xx\in\bigcup_{n\notin\set{-1,0}}(A+n\zz)$,
i.e., $\XX_0\in A\cup(A-\zz)$ \as,\ which means that \as
\begin{align}\label{aa99}
  \innprod{\ccc,\XX_0}>0\iff \XX_0\in A,
\qquad
  \innprod{\ccc,\XX_0}<0\iff \XX_0\in A-\zz.
\end{align}
It follows from \eqref{b99} also that 
$\P(\XX_0\in A)=\P(\XX_0\in A-\zz)=\frac12$, and that if we consider the
conditioned variables
\begin{align}
\XX_+&:= (\XX_0\mid \innprod{\ccc,\XX_0}>0), \label{xx+99}
\\
\XX_-&:= (\XX_0\mid \innprod{\ccc,\XX_0}<0),  \label{xx-99}
\end{align}
then we have the distributional equation
\begin{align}\label{xx99}
\XX_+\eqd \XX_-+\zz.
\end{align}
We have for convenience in the proof normalized $\ccc$ such that
$\innprod{\ccc,\zz}=1$, but \eqref{xx+99}--\eqref{xx-99}
are not affected by multiplying $\ccc$ by a positive constant, 
and replacing $\ccc$ by $-\ccc$ will just interchange $\XX_+$ and $\XX_-$.
Consequently, if we have equality in \eqref{TVCE99} for some non-zero
$\ccc$, 
then either \eqref{xx99} holds or
\begin{align}\label{xx99-}
\XX_+\eqd \XX_--\zz.
\end{align}
The distributional equations \eqref{xx99} and \eqref{xx99-} have at most one
solution~$\zz$ each (which are then the negatives of each other).
We have proved that \eqref{TVCE99} holds for all other $\zz\in\bbR^d$.

Turning to \eqref{TVwithoutphi99},
we first recall that the weak inequality \eqref{TVwithoutphi3} follows from
the weak version of \eqref{TVCE99} 
[i.e., \eqref{TVCE} with $\chi(\xx):=\innprod{\ccc,\xx}$] and \eqref{a66}.
However, in \eqref{a66} we have strict inequality unless $\XX_0-\mm$ is a
real multiple of $\ee_\zz$. If $d\ge2$, we thus \as{} have strict inequality
in \eqref{a66}, and consequently \eqref{TVwithoutphi99} holds.

If $d=1$, then \eqref{TVwithoutphi99} is equivalent to \eqref{TVCE99}.
\end{proof}

\begin{remark}
When $r=1$, it really is possible to have equality in \eqref{TVwithoutphi99} and  \eqref{TVCE99}.
One example is $d=1$, $X_0$ uniform
on $V:=(-1-a,-1+a)\cup(1-a,1+a)$ for some (any)
$a\in(0,1]$, $c=1$, $m = 0$, and $z=\pm2$.

In \eqref{TVCE99} we may have equality also for  $r = 1$ and $d\ge2$, possibly
for several $\cc$ (each with two $\zz$).
For example, let $\XX_0$ be uniform on $V^d$,  let $\cc$ be one of the
standard basis vectors $\ee_1,\dots,\ee_d$, let $\mm = 0$, and let $\zz=\pm2\cc$.
\end{remark}

\section{Symmetrization}
\label{A:symm}
In
\refS{S:unimodaltarget} we saw the usefulness of symmetrization of the target~$\pi$ for developing our results when the proposal step-density $\phi$ is symmetric.  The following general questions naturally arise:
\begin{enumerate}
\item[(i)] For fixed~$\phi$ (not necessarily symmetric), what is the effect of symmetrizing~$\pi$?
\item[(ii)] For fixed~$\pi$ (not necessarily symmetric), what is the effect of symmetrizing~$\phi$?
\end{enumerate}
In this appendix we address these two questions (in Sections~\ref{A:symmpi} and~\ref{A:symmphi}, respectively),
for convenience in the setting of integer-grid random walk Metropolis--Hastings (IRWMH); 
the results are the same in the
absolutely continuous RWMH setting, 
but it is simpler to give examples for IRWMH. 
(These examples can be carried over to RWMH by perturbation.) 

As we pointed out in \refS{SS:symmetrization}, symmetrization of~$\pi$ never has an effect on $\E \|\XX_0\|^r$, nor of course does symmetrization of~$\phi$.  So we may and do focus our attention on the effect of symmetrization on the incremental $r$th absolute moment $\E \|\XX_1 - \XX_0\|^r$, which, according to the IRWMH analogue of~\eqref{SRWMHr} is given by
\begin{equation}
\label{IRWMHr}
\E \|\XX_1 - \XX_0\|^r 
= \sum_{\zz \in \bbZ^d} \|\zz\|^r \sum_{\xx \in \bbZ^d} \min\{\pi(\xx) \phi(-\zz), \pi(\xx - \zz) \phi(\zz)\}.
\end{equation}
{\bf A useful class of one-dimensional examples for our treatment will take $X_0 \sim \pi$ to be distributed\ }Bernoulli$(p)${\bf\ and $Z \sim \phi$ with $\phi(1) = \gth = 1 - \phi(-1)$,\ }where $p \in (0, 1)$ and $\gth \in [0, 1]$.  For this class of examples we have from~\eqref{IRWMHr} that
\begin{align}
\E |X_1 - X_0|^r 
&= 2 \min\{(1 - p) \gth, p (1 - \gth)\}. \label{IRWMHr p gth}
\end{align}

\subsection{Symmetrization of~$\pi$}
\label{A:symmpi}
In \refS{SS:symmetrization} we saw that symmetrization of~$\pi$ for symmetric~$\phi$ can only increase 
$\E \|\XX_1 - \XX_0\|^r$.  But in this subsection we show, by using our class of one-dimensional examples, that symmetrization of~$\pi$ for \emph{general} fixed~$\phi$ multiplies 
$\E \|\XX_1 - \XX_0\|^r$ by a factor~$\ga$ that can be any number in $(1/2, \infty]$, and only such a number (assuming, to avoid trivialities, that $\E \|\bXX_1 - \bXX_0\|^r > 0$).

In general, using the discrete analogue
\begin{equation}
\label{Ibpi}
\bpi(\xx) := \tfrac12 [\pi(-\xx) + \pi(\xx)]
\end{equation}
of the notation~\eqref{bpi}, letting $\bXX = (\bXX_t)$ denote the corresponding IMH chain, and arguing much as we did in the proof of \refP{P:r bar versus not}, we find from~\eqref{IRWMHr} that
\begin{align}
\E \|\bXX_1 - \bXX_0\|^r 
&= \sum_{\zz \in \bbZ^d} \|\zz\|^r \sum_{\xx \in \bbZ^d} \min\{\bpi(\xx) \phi(-\zz), \bpi(\xx - \zz) \phi(\zz)\} \label{bar} \\
&\geq \tfrac12 
\left[ \sum_{\zz \in \bbZ^d} \|\zz\|^r \sum_{\xx \in \bbZ^d} \min\{\pi(\xx) \phi(-\zz), \pi(\xx - \zz) \phi(\zz)\} \right.
\nonumber \\
&{} \quad {} 
+ \left. \sum_{\zz \in \bbZ^d} \|\zz\|^r \sum_{\xx \in \bbZ^d} \min\{\pi(- \xx) \phi(-\zz), \pi(- \xx + \zz) \phi(\zz)\} \right]
\nonumber \\ 
&= \tfrac12 [\E \|\XX_1 - \XX_0\|^r + \E \|\XX_1(-) - \XX_0(-)\|^r] \nonumber \\
&\geq \tfrac12 \E \|\XX_1 - \XX_0\|^r, \label{half}
\end{align}
where $\XX(-) = (\XX(-)_t)$ denotes the MH chain corresponding to target $\xx \mapsto \pi(-\xx)$; thus 
$\ga \in [1/2, \infty]$.  It is not possible to have $\ga = 1/2$ because that would imply $\E \|\XX_1(-) - \XX_0(-)\|^r = 0$, from which it is not hard to show that $\E \|\XX_1 - \XX_0\|^r = 0$, too, which would in turn imply that 
$\E \|\bXX_1 - \bXX_0\|^r = 0$, contradicting an assumption we have made.

For our one-dimensional examples we have $\bpi(-1) = \bpi(1) = p / 2$ and $\bpi(0) = 1 - p$ and therefore, by~\eqref{bar},
\begin{align}
\lefteqn{\E |\bX_1 - \bX_0|^r} \nonumber \\ 
&= 2 \sum_{x \in \bbZ} \min\{\bpi(x) \gth, \bpi(x + 1) (1 - \gth)\} \nonumber \\
&= 2 \big[ \min\{\tfrac{p}{2} \gth, (1 - p) (1 - \gth)\} + \min\{(1 - p) \gth, \tfrac{p}{2} (1 - \gth)\} \big]. \label{bar1D}
\end{align}
Given $\ga \in (\tfrac12, \tfrac12 (\sqrt{2} + 1))]$, this last
expression~\eqref{bar1D} equals~$\ga$ times~\eqref{IRWMHr p gth} if $p =
\gth = \frac{2}{1 + 2 \ga} \in [2- \sqrt{2}, 1)$, and, 
given $\ga \in [\tfrac12 (\sqrt{2} + 1),\infty)$,
\eqref{bar1D} equals~$\ga$ times~\eqref{IRWMHr p gth} if 
$p = \frac{2}{1 + 2 \ga} \in (0, 2 -\sqrt{2}]$ 
and $\gth = \tfrac{2\ga-1}{2\ga} \in [2-\sqrt2,1)$.  
The possibility $\ga = \infty$ is covered by the following remark. 

\begin{remark}
If we assume that~$\phi$ is symmetric, then symmetrization of~$\pi$ multiplies $\E \|\XX_1 - \XX_0\|^r$ by a 
factor~$\ga$ that is at least~$1$ by \refP{P:r bar versus not} (and its IRWMH analogue) and can be any number in $[1, \infty]$.

To see this, suppose that $\ga \in [1, \infty]$ and
\begin{equation}
\label{pi5}
(\pi(-2), \pi(-1), \pi(0), \pi(1), \pi(2)) 
= \left( \frac{1}{4 \ga}, \frac{2 \ga - 1}{4 \ga}, 0, \frac{1}{4 \ga}, \frac{2 \ga - 1}{4 \ga} \right),
\end{equation}
with the interpretation that~\eqref{pi5} equals $(0, 1/2, 0, 0, 1/2)$ when $\ga = \infty$.  Suppose also that $\phi(1) = 1/2 = \phi(-1)$.  Then one computes $\E |X_1 - X_0|^r = \frac{1}{2 \ga}$ ($ = 0$ when $\ga = \infty$), finds that~$\bpi$ is uniform on 
$\{-2, -1, 1, 2\}$, and computes $\E |\bX_1 - \bX_0|^r = \frac12$.
\end{remark}

\subsection{Symmetrization of~$\phi$}
\label{A:symmphi}
Just as symmetrization of~$\pi$ for symmetric~$\phi$ can only increase $\E \|\XX_1 - \XX_0\|^r$ (\refP{P:r bar versus not}), so [starting from~\eqref{IRWMHr}] we find that symmetrization of~$\phi$ for symmetric~$\pi$ can only increase $\E \|\XX_1 - \XX_0\|^r$.  But in this subsection we show, again by using our class of one-dimensional examples, that symmetrization of~$\phi$ for \emph{general} fixed~$\pi$ (assumed nondegenerate) multiplies 
$\E \|\XX_1 - \XX_0\|^r$ by a factor~$\gb$ that can be any number in $(1/2, \infty]$, and only such a number (assuming, to avoid trivialities, that $\E \|\hXX_1 - \hXX_0\|^r > 0$).

Letting
\begin{equation}
\label{Ibphi}
\bphi(\zz) := \tfrac12 [\phi(-\zz) + \phi(\zz)],
\end{equation}
letting $\hXX = (\hXX_t)$ denote the corresponding IMH chain, and arguing as for~\eqref{half}, we find that
\begin{align}
\E \|\hXX_1 - \hXX_0\|^r 
&= \sum_{\zz \in \bbZ^d} \|\zz\|^r \bphi(\zz) \sum_{\xx \in \bbZ^d} \min\{\pi(\xx), \pi(\xx - \zz)\} \label{hat} \\
&\geq \tfrac12 \E \|\XX_1 - \XX_0\|^r, \label{hat half}
\end{align}  
and that $\gb = 1/2$ is ruled out in the same way that $\ga = 1/2$ was ruled out in \refS{A:symmpi}.

For our one-dimensional examples we have $\bphi(-1) = \bphi(1) = 1/2$ and therefore, by~\eqref{hat},
\begin{equation}
\label{hat1D}
\E |\hX_1 - \hX_0|^r = \sum_{x \in \bbZ^d} \min\{\pi(x), \pi(x - 1)\} = \min\{p, 1 - p\}.
\end{equation}
Given $\gb \in (1/2, 1]$, this last expression~\eqref{hat1D} equals~$\gb$ times~\eqref{IRWMHr p gth} if $p = \gth = \frac{1}{2 \gb} \in [\frac12, 1)$, and, given $\gb \in [1, \infty]$, \eqref{hat1D} equals $\gb$ times~\eqref{IRWMHr p gth} if $p = \frac12$ and $\gth = \frac{1}{2 \gb} \in [0, \frac12]$.

\begin{remark}
If we assume that~$\pi$ is symmetric, then symmetrization of~$\phi$ multiplies $\E \|\XX_1 - \XX_0\|^r$ by a 
factor~$\gb$ that is at least~$1$ (as discussed at the beginning of this subsection) and can be any number in 
$[1, \infty]$.

We can show this by a small modification of the case $\gb \in [1, \infty]$ discussed following~\eqref{hat1D}.  Indeed, in the present setting take $\pi(1) = \pi(-1) = 1/2$ and $\phi(2) = \gth = 1 - \phi(-2)$.  Then~\eqref{IRWMHr} yields 
$\E |X_1 - X_0|^r = 2^r \min\{\gth, 1- \gth\}$, and~\eqref{hat} yields $\E |\hX_1 - \hX_0|^r = 2^{r - 1}$.  The ratio
 $\E |\hX_1 - \hX_0|^r / \E |X_1 - X_0|^r$ equals $\frac{1}{2 \min\{\gth, 1 - \gth\}}$, which, by  taking $\gth = \frac{1}{2 \gb} \in [0, 1/2]$, can be any number $\gb \in [1, \infty]$. 
\end{remark}

\section{Symmetric discrete unimodal target and odd proposal steps}
\label{A:IRWMHunimodalr}

This appendix
is devoted to a study of the supremum $s(r)$ of the ratio $\E |X_1 - X_0|^r
/ \E |X_0|^r$ taken over
nondegenerate symmetric discrete unimodal distributions for $X_0 \in L^r$ and
proposal step-distributions concentrated on the odd integers. (The case $r
= 2$ was handled in \refT{T:Isymmunimodal}.)
For $r \in (0, \infty)$ and $k \in \bbN$, let
\begin{equation}
S(k; r) := \sum_{i = 1}^k i^r.
\end{equation}

\begin{theorem}
\label{T:Isymmunimodalr}
In the IRWMH setting, let $r \in (0, \infty)$, and consider
the supremum $s(r)$ of $\E |X_1 - X_0|^r / \E |X_0|^r$ taken over
nondegenerate symmetric  discrete unimodal
distributions~$\pi$ for $X_0 \in L^r$ and symmetric proposal step-distributions
concentrated on the odd integers. Then
$s(r) \geq 1$, $s(r) = 1$ if $r \in (0, 1]$, and if $r \notin (1, 1.043)$ we
 have
\begin{equation}
\label{sr simpler}
s(r) 
= \max_{k \in \bbN:\,k \leq k_0(r)} g(k; r),
\end{equation}
where
\begin{equation}
\label{gkr}
g(k; r) := \frac{(2 k - 1)^r}{S(k; r)}
\end{equation}
and
\begin{equation}
\label{k0r}
k_0(r) := \floor{\tfrac12 + (1 - 2^{-1/r})^{-1}} \geq 1.
\end{equation}
Further, $s(r)$ is strictly
increasing
on $[1.043, \infty)$.
\end{theorem}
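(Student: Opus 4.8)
The plan is to reduce the two-parameter supremum defining $s(r)$ to an explicit discrete optimization and then analyze it. First I would invoke the mixture representation \cite[Theorem~4.3]{DJ-D(1988)} already used in the proof of \refT{T:Isymmunimodal}: a symmetric discrete unimodal law for~$X_0$ is a mixture of the uniform laws $U_k\sim\mathrm{Unif}\{-k,\dots,k\}$, $k\ge1$. By the discrete analogue \eqref{rsymmuni discrete} of \refT{T:rsymmuni}, for fixed $\cL(X_0)$ the quantity $\E|X_1-X_0|^r=\sum_{y\ge0}2\phi(2y+1)(2y+1)^r\P(|X_0|>y)$ is a convex combination over the admissible~$\phi$, so its maximum is attained when $\phi$ puts mass $\tfrac12$ on each of $\pm(2y_0+1)$ for a single $y_0\ge0$, giving $\E|X_1-X_0|^r=(2y_0+1)^r\P(|X_0|>y_0)$. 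For each fixed~$y_0$ the ratio $\E|X_1-X_0|^r/\E|X_0|^r$ is a quotient of linear functionals of $\cL(X_0)$, so the mediant inequality reduces $\cL(X_0)$ to a single~$U_k$. Using $\E|U_k|^r=2S(k;r)/(2k+1)$ and $\P(|U_k|>y_0)=2(k-y_0)/(2k+1)$ for $0\le y_0\le k-1$, and taking the supremum over~$y_0$ and~$k$, I obtain
\begin{equation}
s(r)=\sup_{k\ge1,\;0\le y_0\le k-1}h(k,y_0),\qquad h(k,y_0):=\frac{(2y_0+1)^r(k-y_0)}{S(k;r)},
\end{equation}
with $g(k;r)=h(k,k-1)$; the bound $s(r)\ge1$ is immediate from $h(1,0)=1$.

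Next I would analyze $h$ for fixed~$k$. The sequence $y\mapsto(2y+1)^r(k-y)$ is log-concave, so on $\{0,\dots,k-1\}$ its maximum is at $y_0=k-1$ exactly when it exceeds its value at $k-2$, \ie, when $\bigpar{\tfrac{2k-1}{2k-3}}^r\ge2$; a short computation shows this is equivalent to $k\le\tfrac32+(2^{1/r}-1)^{-1}=\tfrac12+(1-2^{-1/r})^{-1}$, that is, to $k\le k_0(r)$. Thus for $k\le k_0(r)$ the per-$k$ optimum of $h(k,\cdot)$ equals $g(k;r)$, while for $k>k_0(r)$ it is attained at an interior $y_0<k-1$. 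The heart of \eqref{sr simpler} is therefore to show, for $r\ge1.043$, that these interior optima never exceed $\max_{k\le k_0(r)}g(k;r)$; in parallel I would confine the maximizing~$k$ to $k\le k_0(r)$ by showing $g(k;r)$ is eventually decreasing in~$k$.

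Bounding the interior optima is the main obstacle. Writing the continuous maximizer $y^\ast=\frac{2rk-1}{2(r+1)}$, so that $k-y^\ast\approx\frac{k}{r+1}$, one finds $h(k,y^\ast)\to\frac{r+1}{2}$ as $k\to\infty$; I would turn this into an explicit, decreasing-in-$k$ upper bound for the interior values and compare it against $g(2;r)=3^r/(1+2^r)$. The comparison hinges on relative growth rates: $\partial_r\log g(2;r)\big|_{r=1}=\log3-\tfrac23\log2>\tfrac12$, which is the initial growth rate of $\frac{r+1}{2}$, so $g(2;r)$ pulls ahead just above $r=1$. The difficulty is that at $r=1$ every point $(k,\floor{k/2})$ gives $h=1$, so all the relevant inequalities are near-equalities with exponentially thin margins for $r$ slightly larger than~$1$; it is precisely the need for a \emph{uniform} clean estimate that forces the restriction $r\ge1.043$ and leaves $(1,1.043)$ untreated.

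The case $r\in(0,1]$ I would handle separately by proving $h(k,y_0)\le1$ for all admissible $(k,y_0)$, \ie, $(2y_0+1)^r(k-y_0)\le S(k;r)$, with equality somewhere so that $s(r)=1$. For $r=1$ this is the quadratic inequality $4y_0^2-2(2k-1)y_0+k^2-k\ge0$, whose roots $\tfrac{k-1}{2}$ and $\tfrac{k}{2}$ enclose no integer, yielding $h\le1$ with equality at the integer root; for $r<1$ I would combine concavity of $x\mapsto x^r$ with the same integrality phenomenon. Finally, monotonicity on $[1.043,\infty)$ follows once \eqref{sr simpler} is established: since $\partial_r\log g(k;r)=\log(2k-1)-\E[\log I]$ with $I$ distributed $\propto i^r$ on $\{1,\dots,k\}$, and $\E[\log I]\le\log k<\log(2k-1)$ for $k\ge2$, each $g(k;\cdot)$ with $k\ge2$ is strictly increasing; as the feasible set $\{k\le k_0(r)\}$ only grows with~$r$ and the active maximizer has $k\ge2$ throughout $[1.043,\infty)$, the maximum $s(r)$ is strictly increasing.
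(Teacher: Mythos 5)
Your overall scaffolding coincides with the paper's: the mixture reduction to $s(r)=\sup_{k\ge1}\max_{0\le y\le k-1}h(k,y)$ with $h(k,y)=(2y+1)^r(k-y)/S(k;r)$ is the paper's \refL{L:helpful1}, your characterization of when the per-$k$ maximum sits at $y=k-1$ (via log-concavity and the comparison with $y=k-2$) is exactly \refL{L:helpful3} with the same $k_0(r)$, your $r=1$ quadratic argument with roots $\tfrac{k-1}{2},\tfrac{k}{2}$ reproduces part~(b) of the paper's proof, and your closing monotonicity argument (each $g(k;\cdot)$ with $k\ge2$ strictly increasing, $k_0(\cdot)$ nondecreasing) is the paper's. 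However, the core of~\eqref{sr simpler} in your proposal rests on a false asymptotic, and the resulting strategy fails.

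You claim $h(k,y^\ast)\to\frac{r+1}{2}$ as $k\to\infty$, where $y^\ast=\frac{2rk-1}{2(r+1)}$. Substituting $y^\ast$ gives $2y^\ast+1=\frac{r(2k+1)}{r+1}$ and $k-y^\ast=\frac{2k+1}{2(r+1)}$, so $h(k,y^\ast)=\frac12\frac{r^r}{(r+1)^{r+1}}\frac{(2k+1)^{r+1}}{S(k;r)}$ [the paper's $f_1(k;r)$], and since $S(k;r)\sim k^{r+1}/(r+1)$ this tends to $\bigl(\frac{2r}{r+1}\bigr)^r$, \emph{not} to $\frac{r+1}{2}$; the two agree only at $r=1$. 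The correct limit grows like $2^r/e$, and the quantity you actually need to dominate, $\sup_{k>k_0(r)}f_1(k;r)=f_1(k_0(r)+1;r)$, grows like $\frac{1}{e\sqrt2\ln2}2^r\approx0.375\cdot2^r$ (since $k_0(r)\sim r/\ln2$). Your plan of comparing the interior optima against $g(2;r)=3^r/(1+2^r)\sim(3/2)^r$ therefore collapses for all large $r$: the interior values are exponentially larger than $g(2;r)$. The comparison must be made against $g(k;r)$ with $k$ growing linearly in $r$; the paper compares $f_1(k_0(r)+1;r)$ with $g(\lceil r/\ln3\rceil;r)\sim\frac{2}{3\sqrt3}2^r\approx0.385\cdot2^r$, and the theorem hinges on the razor-thin margin $\frac{1}{e\sqrt2\ln2}<\frac{2}{3\sqrt3}$ (plus numerics on a compact $r$-range). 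Your erroneous polynomial-growth limit hides precisely this difficulty.

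Two further gaps. First, your ``explicit, decreasing-in-$k$ upper bound for the interior values'' is exactly \refL{L:helpful2}(b) (that $f_1(\cdot;r)$ is nonincreasing); this is not a routine fact — the paper needs the averaging \refL{L:Jensen+} and the inequality~\eqref{htar ineq} to prove it — so it cannot simply be announced. Second, for $r\in(0,1)$ your ``concavity plus integrality'' sketch is not a proof; the paper's argument fixes $y$, shows $b(k)=S(k;r)-(2y+1)^r(k-y)$ has increments $(k+1)^r-(2y+1)^r$ changing sign at $k=2y$, hence is minimized at $k\in\{2y,2y+1\}$, and then uses monotonicity in $r$ of $\sum_{i=1}^{2y}\bigl(\frac{i}{2y+1}\bigr)^r-y$ to conclude $b\ge0$. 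Some such reduction is needed before any concavity argument can close the case.
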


We generalize \refT{T:Isymmunimodalr} (somewhat) to higher dimensions in \refT{T:Isymmunimodalr high}.

\begin{remark}
\label{R:Isymmunimodalr}
(a)~A special case of \refT{T:Isymmunimodalr} is the inequality~\eqref{Icorr1/10symm} in \refT{T:Isymmunimodal}.  Indeed, by~\eqref{k0r} we have $k_0(2) = 3$, and therefore by \eqref{sr simpler}--\eqref{gkr} we have
\begin{equation}
s(2) = \max\{1, \tfrac95, \tfrac{25}{14}\} = \tfrac95.
\end{equation}
This is easily seen to be equivalent to~\eqref{Icorr1/10symm}.

(b) Equation~\eqref{sr simpler} allows us to compute $s(r)$ exactly for any particular value  of 
$r \in [1.043, \infty)$.  
It also provides explicit formulas for $s(r)$ for various $r$-intervals.  For example, using~\eqref{sr simpler} and~\eqref{k0r} one finds 
for $r \in [16 / 5, 21 / 5] = [3.2, 4.2]$ that
\begin{equation}
s(r) = g(4; r) = \frac{7^r}{4^r + 3^r + 2^r + 1}.
\end{equation}

(c) Our
proof (later in this appendix) of~\eqref{sr simpler} for $r \in [1.043, \infty)$ is slightly incomplete, since (i)~for 
$r \in [2.198, \infty)$ we rely on asymptotics which imply that~\eqref{sr simpler} holds for all $r \geq r_0$ for some $r_0$ and the clear indication from a plot that a certain piecewise smooth function is nonnegative for $r \in [2.198, \infty)$; and (ii)~similarly for $r \in [1.043, 2.198)$ we rely on the clear indication from a plot that a certain function is strictly positive.  We are, however, quite confident that with enough tireless effort one could determine a sufficient value of $r_0$ and, by computing enough function values and derivative bounds to establish what is clearly indicated by the plots, give a rigorously complete proof.

Our proof that~\eqref{sr simpler} holds for all sufficiently large~$r$ \emph{is} rigorously complete, as is our proof of \refT{T:Isymmunimodalr+} except for part~(b), and our proof of that part is rigorously complete for all sufficiently large~$r$.

(d)~We  will discuss computation for $r \in (1, 1.043)$ in \refS{A:compute}.  Based on that discussion, we strongly believe the following conjecture.
\end{remark}

\begin{conj}
\label{Conj:s g}
For all $r \in (0, \infty)$ we  have
\begin{equation}
\label{s g conj}
s(r) 
= \sup_{k \in \bbN} g(k; r) 
= \max_{k \in \bbN:\,k < (r + 1) 2^r} g(k; r)
= \max_{k \in \bbN:\,k \leq \ceil{r}} g(k; r).
\end{equation}
\end{conj}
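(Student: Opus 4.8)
The plan is to establish the three equalities in \eqref{s g conj} in turn, starting from an extreme-point reduction of $s(r)$ and then analysing the one-parameter family $g(\cdot;r)$ from \eqref{gkr}. First I would repeat the extreme-point argument used for \refT{T:Isymmunimodal}: by \cite[Theorem~4.3]{DJ-D(1988)} every nondegenerate symmetric discrete unimodal law is a mixture of the uniform laws $\mathrm{Unif}\{-k,\dots,k\}$ with $k\ge1$, and every symmetric proposal step-law on the odd integers is a mixture of the two-point laws $Z=\pm(2y_0+1)$ with $y_0\ge0$. Since $\E|X_1-X_0|^r$ is, through \eqref{rsymmuni discrete}, bilinear in these two mixing measures while $\E|X_0|^r$ is linear in the first, the ratio $\E|X_1-X_0|^r/\E|X_0|^r$ is maximized at a pair of extreme points; evaluating at $X_0\sim\mathrm{Unif}\{-k,\dots,k\}$ and $Z=\pm(2y_0+1)$ gives the two-parameter formula
\[
s(r)=\sup_{k\ge1,\;0\le y_0\le k-1}\frac{(2y_0+1)^r\,(k-y_0)}{S(k;r)}.
\]
The slice $y_0=k-1$ reproduces $g(k;r)$ exactly, so $\sup_k g(k;r)\le s(r)$ is immediate, and the first equality in \eqref{s g conj} reduces to the reverse bound.

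For that reverse bound I would fix $k$ and maximize $h(y_0):=(2y_0+1)^r(k-y_0)$ over $y_0\in\{0,\dots,k-1\}$. The map $\log h$ is strictly concave in the real variable $y_0$ with maximizer $y^\ast=\frac{2rk-1}{2(r+1)}$, and $y^\ast\ge k-1$ exactly when $k\le r+\tfrac12$; in that regime the integer optimum sits at the right endpoint $y_0=k-1$ and the slice value is precisely $g(k;r)$, already of the required form. The substantive regime is $k>r+\tfrac12$, where the optimal $y_0$ may be interior and the slice value is \emph{not} a $g$-value. Here I would bound it by the relaxed quantity
\[
\frac{h(y^\ast)}{S(k;r)}=\frac{r^r\,(2k+1)^{r+1}}{2(r+1)^{r+1}\,S(k;r)},
\]
and show that this is dominated by $\max_{k'}g(k';r)$, using sharp $r$-uniform two-sided estimates for the partial sums $S(k;r)=\sum_{i=1}^k i^r$ of Euler--Maclaurin type. (Note that this domination is genuinely non-local: for $r=2$ the pair $k=6,\,y_0=4$ gives value $162/91$, which exceeds the nearby $g(5;2)=81/55$ and is dominated only by the global maximum $g(2;2)=9/5$.)

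The two finite-range assertions are then handled by studying $g(\cdot;r)$ directly. The middle equality is the easy one: from $S(k;r)>\int_0^k x^r\,\dd x=\frac{k^{r+1}}{r+1}$ we get $g(k;r)<\frac{2^r(r+1)}{k}$, which is $<1=g(1;r)$ once $k>(r+1)2^r$, so truncating the supremum at $k<(r+1)2^r$ discards only dominated terms. For the last equality I would show $k\mapsto g(k;r)$ is unimodal with argmax at some $k^\ast(r)\le\ceil{r}$ by analysing the ratio $g(k+1;r)/g(k;r)=\bigl(\frac{2k+1}{2k-1}\bigr)^r\frac{S(k;r)}{S(k;r)+(k+1)^r}$ and proving it crosses $1$ exactly once; the asymptotic balance at $k\approx r$ (where the two factors are $\approx e^{+1}$ and $\approx e^{-1}$) indicates that $k^\ast(r)$ hugs $\ceil{r}$ from below, consistent with, and sharpening, the coarser bound $k\le k_0(r)$ from \eqref{k0r} and \refT{T:Isymmunimodalr}.

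The main obstacle is precisely the point where \refT{T:Isymmunimodalr} is left numerically incomplete: proving, \emph{uniformly in} $r$, that every interior slice value in the regime $k>r+\tfrac12$ is dominated by $\max_{k\le\ceil{r}}g(k;r)$, together with the narrow window $r\in(1,1.043)$. The difficulty is that the comparison mixes the continuous maximizer $y^\ast$ with the integer grid, and the integrality gap $h(\floor{y^\ast})/h(y^\ast)$ depends delicately on the fractional part of $y^\ast$ in a way that interacts with the position of $k$ relative to $\ceil{r}$; a clean bound seems to require either an explicit majorization between the competing uniform/two-point configurations or very tight control of the differences $S(k;r)-\frac{k^{r+1}}{r+1}$ as functions of both $k$ and $r$. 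I expect the sub-interval $r\in(1,1.043)$ to resist any one-line estimate and to need the finite but careful verification indicated for \refS{A:compute}; turning that verification into a rigorous monotonicity statement for the relevant function of $r$ is the crux that would upgrade \refConj{Conj:s g} from conjecture to theorem.
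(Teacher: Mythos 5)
You should first be clear about the status of this statement: it is a \emph{conjecture} in the paper, not a theorem. The authors prove only the middle equality of \eqref{s g conj} --- by exactly your argument, $S(k;r)\ge k^{r+1}/(r+1)$ hence $g(k;r)\le (r+1)2^r k^{-1}\le 1=g(1;r)$ for $k\ge(r+1)2^r$ --- and support the other two equalities only numerically (\refR{R:Isymmunimodalr}, \refS{A:compute}). Your setup faithfully mirrors the paper's machinery: your extreme-point reduction to $\mathrm{Unif}\{-k,\dots,k\}$ targets and two-point odd steps is \refL{L:helpful1} (giving $s(r)=\sup_k\max_{0\le y\le k-1}f(k,y;r)$), your relaxed quantity $h(y^\ast)/S(k;r)$ is precisely the paper's $f_1(k;r)$ from \eqref{f1kr}, and your endpoint criterion $k\le r+\tfrac12$ is a conservative version of \refL{L:helpful3}. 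Note also that outside the window $r\in(1,1.043)$ the \emph{first} equality is already a theorem: \refT{T:Isymmunimodalr} gives $s(r)=\max_{k\le k_0(r)}g(k;r)$ there, and $\sup_k g(k;r)\le s(r)$ always since $g(k;r)=f(k,k-1;r)$. So the genuinely open content is the window $(1,1.043)$ and the bound $k\le\ceil{r}$ on the argmax.

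Your proposal does not close either open part, and the one concrete tool you name for the crux provably cannot work where it is most needed. You propose to dominate every interior slice by $f_1(k;r)$ and then prove $f_1(k;r)\le\max_{k'}g(k';r)$ via ``sharp $r$-uniform'' Euler--Maclaurin estimates for $S(k;r)$. The paper's final remark in \refS{A:compute} shows this is impossible near $r=1$: for every fixed $k$ one has $f_1(k;1)=\frac{4k^2+4k+1}{4k^2+4k}>1=g(2;1)=s(1)$, and $f_1(k;\cdot)$, $g(2;\cdot)$ are continuous, so $f_1(k;r)\le g(2;r)$ \emph{fails} for all $r>1$ sufficiently close to $1$. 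The domination only kicks in once $k$ exceeds the paper's $k_1(r)\sim\mathrm{const}\cdot(r-1)^{-1}$ (\refP{P:compute}), leaving for each $r\in(1,1.043)$ a finite but unboundedly growing (as $r\downarrow1$) family of pairs $(k,y)$ to compare against $\max_{k'}g(k';r)$ --- exactly the obstruction that forced the authors to leave this as a conjecture. Your plan for the third equality is likewise not carried out, and its heuristic is off: the ratio $g(k+1;r)/g(k;r)$ is $1+O(r^{-1})$ to leading order for \emph{every} $k$ comparable to $r$ (your ``$e^{+1}$ versus $e^{-1}$'' balance cancels identically), so locating the crossing requires the two-term asymptotics of \refL{L:helpful4}; the actual argmax sits near $r/\ln 3\approx 0.91\,r$ (the paper's $K(r)$ in \refT{T:Isymmunimodalr+}), not hugging $\ceil{r}$. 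In short, your roadmap reproduces the paper's partial results but stalls at the same two points the paper does, so the statement remains a conjecture after your argument.
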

It is only the first and third equalities in~\eqref{s g conj} that are conjectures.  The second equality is clear, because, by the integral  comparison $S(k; r) \geq k^{r + 1} / (r + 1)$, we have
\begin{equation}
g(k; r) \leq (r + 1) 2^r (1 - \tfrac{1}{2 k})^r k^{-1} \leq (r + 1) 2^r k^{-1} \leq 1 = g(1; r), 
\end{equation}
with the last inequality holding if $k \geq (r + 1) 2^r$.
\smallskip

We also believe the following greater detail:

\begin{conj}
\label{Conj:Kr}
For each $k \geq 1$ there is a unique real root $r = r_k$ to the equation $g(k; r) = g(k + 1; r)$ (with $r_1 = 1$), and $r_k \in [1, \infty)$ increases strictly monotonically to $\infty$ as $k \to \infty$; indeed 
$r_k \sim k \ln 3$ as $k \to \infty$.  If $r \in (r_{k - 1}, r_k)$ with $k \geq 2$, then $s(r) = g(j; r)$ uniquely for $j = k$.  If $r = r_k$, then $s(r) = g(j; r)$ precisely for $j \in \{k, k + 1\}$.
\end{conj}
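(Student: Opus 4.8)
The plan is to reduce everything to the single transcendental equation defining $r_k$ and then to analyze it. Writing $a_k := \frac{2k+1}{2k-1}$ and recalling $S(k;r) = \sum_{i=1}^k i^r$, the defining relation $g(k;r) = g(k+1;r)$ is, after clearing denominators and using $S(k+1;r) = S(k;r) + (k+1)^r$, equivalent to
\[
(a_k^r - 1)\,S(k;r) = (k+1)^r,
\qquad\text{i.e.}\qquad
\Phi_k(r) := (a_k^r - 1)\sum_{i=1}^k\Bigl(\tfrac{i}{k+1}\Bigr)^r = 1 .
\]
First I would record the boundary behaviour of $R_k(r) := g(k+1;r)/g(k;r) = a_k^r\,S(k;r)/S(k+1;r)$: as $r \downarrow 0$ one has $R_k(r) \to k/(k+1) < 1$, while $S(k;r) \sim k^r$ forces $R_k(r) \to \infty$ as $r \to \infty$; hence a root $r_k$ of $R_k = 1$ exists for every $k$. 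The base case $r_1 = 1$ is immediate, since $R_1(r) = 3^r/(1+2^r) = 1$ reduces to $3^r = 1 + 2^r$, solved uniquely by $r = 1$.

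For the uniqueness of $r_k$ I would study the single factor $\log R_k$. Here $\log R_k(r) = r\log a_k + \log S(k;r) - \log S(k+1;r)$, and differentiating gives $\frac{d}{dr}\log R_k = \log a_k - w_k(r)\bigl(\log(k+1) - m_k(r)\bigr)$, where $m_k(r) = \E\log I$ for $\P(I=i)\propto i^r$ on $\{1,\dots,k\}$ and $w_k(r) = (k+1)^r/S(k+1;r)$. The factor $a_k^r - 1$ is increasing while each $(i/(k+1))^r$ with $i \le k$ is decreasing, so $\Phi_k$ is a product of one increasing and one decreasing positive factor; the crux is to show this product meets the level $1$ exactly once. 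I expect the cleanest route is to prove that $g(\,\cdot\,;r)$ is \emph{strictly log-concave in $k$} for each fixed $r$ — equivalently that $R_k(r)$ is strictly decreasing in $k$ — which makes the sequence $g(1;r), g(2;r), \dots$ strictly unimodal and simultaneously forces each $R_k(\,\cdot\,)$ to cross $1$ at a single $r$. This single-crossing/unimodality statement is the main analytic obstacle, since it concerns the net monotonicity of a ratio built from competing increasing and decreasing pieces and does not follow from elementary convexity of either piece alone.

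The strict monotonicity $r_{k-1} < r_k$ and $r_k \to \infty$ would then follow from the same log-concavity together with the limiting comparisons above, and the asymptotics $r_k \sim k\ln 3$ can be pinned down by a two-term expansion. Setting $r = ck$ and dividing the defining equation by $k^r$ turns it into $(a_k^r - 1)\,\sigma_k = (1 + 1/k)^r$ with $\sigma_k = \sum_{i=1}^k (i/k)^r$. Using $r\log a_k = c + O(k^{-2})$, $(1+1/k)^r = e^{c}\bigl(1 - \tfrac{c}{2k}\bigr) + O(k^{-2})$, and the expansion $\sigma_k = \frac{1}{1-e^{-c}} - \frac{c}{2k}\,\frac{e^{-c}(1+e^{-c})}{(1-e^{-c})^3} + O(k^{-2})$, one finds that the $O(1)$ terms cancel \emph{identically} (both sides equal $e^c$), so the constant $c$ is fixed only at order $1/k$, where the balance reduces to
\[
\frac{1 + e^{-c}}{(1-e^{-c})^2} = e^{c},
\]
i.e.\ $e^{c} = 3$, giving $c = \ln 3$. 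The delicate point here is precisely that the leading orders are tautological, so rigour demands controlling the expansion of $\sigma_k$ uniformly to order $1/k$ and then invoking the implicit function theorem to convert the formal balance into the statement $r_k/k \to \ln 3$.

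Finally, the identification of $s(r)$ on the intervals would be deduced by combining the log-unimodality in $k$ with \refConj{Conj:s g}, which asserts $s(r) = \max_k g(k;r)$: strict unimodality gives a unique maximizing index $j$ for each $r$, equal to $k$ on $(r_{k-1}, r_k)$, with a tie between $j = k$ and $j = k+1$ exactly at $r = r_k$. I expect the two genuine obstacles to be (i) the single-crossing / log-concavity-in-$k$ assertion underlying uniqueness and interlacing, and (ii) the dependence on the still-unproven \refConj{Conj:s g}; both of these, rather than the asymptotic computation, are what keep the statement a conjecture.
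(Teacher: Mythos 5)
This statement is an open conjecture in the paper (the authors explicitly write ``We also believe the following greater detail''), so there is no proof of record to compare against; the relevant question is whether your program closes the gap, and it does not --- as you yourself say at the end. Your reductions are sound: the equation $g(k;r)=g(k+1;r)$ is indeed equivalent to $(a_k^r-1)S(k;r)=(k+1)^r$ with $a_k=\tfrac{2k+1}{2k-1}$; the boundary limits $R_k(r)\to k/(k+1)$ as $r\downarrow 0$ and $R_k(r)\to\infty$ as $r\to\infty$ give existence of a root; $r_1=1$ is correct; your derivative formula $\tfrac{\ddx}{\ddx r}\log R_k=\log a_k-w_k(r)(\log(k+1)-m_k(r))$ checks out; and the two-term balance does collapse, after the tautological leading order, to $\frac{1+e^{-c}}{(1-e^{-c})^2}=e^{c}$, i.e.\ $e^{-c}=\tfrac13$, consistent with the paper's \refL{L:helpful4}, \refC{C:sasy}, and the asymptotics of $K(r)$ in \refT{T:Isymmunimodalr+}.

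The genuine gaps are these. First, your central claim that strict log-concavity of $k\mapsto g(k;r)$ ``simultaneously forces each $R_k(\cdot)$ to cross $1$ at a single $r$'' is a non sequitur: monotonicity of $R_k(r)$ in $k$ at fixed $r$ says nothing about the $r$-dependence of a single $R_k$, which is what uniqueness of $r_k$ requires. What you would actually need is single-signed (or single sign-changing) behavior of $r\mapsto\tfrac{\ddx}{\ddx r}\log R_k(r)$ --- e.g.\ positivity of your derivative formula whenever $R_k(r)=1$ --- and neither you nor the paper establishes anything of the sort; this is precisely the heart of why the statement is a conjecture. (Note that the weaker ingredient, unimodality of $k\mapsto g(k;r)$ for fixed $r\ge 1.043$, is proved in the paper as \refT{T:Isymmunimodalr+}(a) by an entirely different route, via the auxiliary sequences $h_k$ and $\tth_k$; so that part of your program is partially available, but it does not touch the $r$-direction.) Second, the identification of $s(r)$ on the intervals $(r_{k-1},r_k)$ leans on \refConj{Conj:s g}, which is itself open --- the paper's \refT{T:Isymmunimodalr} only gives $s(r)=\max_{k\le k_0(r)}g(k;r)$ for $r\notin(1,1.043)$, and even there parts of the argument rest on numerics. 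Third, your asymptotic argument, while formally correct, needs the $o(1/k)$ errors uniform in $c$ and a separate argument excluding root sequences with $r_k/k\to 0$ or $\infty$; without uniqueness (gap one) you cannot even speak of ``the'' root $r_k$ whose asymptotics you are computing. In short: correct scaffolding, correct target value $\ln 3$, but the two load-bearing claims remain exactly as open as they are in the paper.
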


Along the lines of \refConj{Conj:Kr}, we will discuss pinning down the $\arg \max$ in $s(r) = \max_{k \in \bbN} g(k; r)$ for $r \in [1.043, \infty)$ in \refS{A:argmax}.  The main result there is \refT{T:Isymmunimodalr+}, which improves on \refT{T:Isymmunimodalr}.
\medskip

We prepare for the proof of \refT{T:Isymmunimodalr} with five helpful lemmas (Lemmas~\ref{L:helpful1}, \ref{L:Jensen+}, \ref{L:helpful2}, \ref{L:helpful3}, and~\ref{L:helpful4}).  The proof of \refT{T:Isymmunimodalr} can be found after the proof of~\refL{L:helpful4}.

\begin{lemma}
\label{L:helpful1}
For $r \in (0, \infty)$ we  have
\begin{equation}
\label{ISRWMHunimodalr}
s(r) = \sup_{k \in \bbN}\,\max_{y \in \bbZ:\,0 \leq y \leq k - 1} f(k, y; r),
\end{equation}
where 
\begin{equation}
\label{fkyr}
f(k, y; r) := \frac{(2 y + 1)^r (k - y)}{S(k; r)} \leq f_1(k; r)
\end{equation}
with
\begin{equation}
\label{f1kr}
f_1(k; r) := \frac{1}{2} \frac{r^r}{(r + 1)^{r + 1}} \frac{(2 k + 1)^{r + 1}}{S(k; r)}.
\end{equation}
\nopf
\end{lemma}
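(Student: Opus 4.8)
The plan is to reduce the optimization defining $s(r)$, taken over all admissible pairs $(\pi,\phi)$, to the extremal configurations in which $X_0\sim\mbox{Unif}\{-k,\dots,k\}$ and $Z=\pm(2y+1)$ deterministically, and then to bound the resulting explicit ratio by a one-variable calculus argument. First I would invoke the moment formula~\eqref{rsymmuni discrete}, which is valid here because $\pi$ is symmetric discrete unimodal about~$0$ and $\phi$ is supported on the odd integers, to write
\begin{equation}
\frac{\E|X_1-X_0|^r}{\E|X_0|^r}
=\sum_{y=0}^{\infty}[2\phi(2y+1)]\,\frac{(2y+1)^r\,\P(|X_0|>y)}{\E|X_0|^r}.
\end{equation}
Since $\phi$ is symmetric and supported on the odds, the weights $2\phi(2y+1)$ are nonnegative and sum to~$1$, so the ratio is a convex combination over~$y$ and is therefore at most $\max_{y\ge0}\,(2y+1)^r\P(|X_0|>y)/\E|X_0|^r$, with this maximum approached by concentrating~$\phi$ on a single pair $\pm(2y+1)$. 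This reduces the proposal step to a single odd two-point distribution.

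Next I would fix such a~$y$ and optimize over~$\pi$. Both $\P(|X_0|>y)$ and $\E|X_0|^r$ are linear functionals of $\cL(X_0)$. By \cite[Theorem~4.3]{DJ-D(1988)}, every symmetric discrete unimodal law is a mixture of the uniform laws $U_k:=\mbox{Unif}\{-k,\dots,k\}$, $k\ge0$; writing $\cL(X_0)=\sum_k w_k U_k$ and noting that the degenerate component $U_0=\delta_0$ contributes~$0$ to both functionals and so drops out (this is where nondegeneracy of~$\pi$ is used), the mediant inequality gives
\begin{equation}
\frac{\P(|X_0|>y)}{\E|X_0|^r}
=\frac{\sum_{k\ge1}w_k\,\P(|U_k|>y)}{\sum_{k\ge1}w_k\,\E|U_k|^r}
\le\max_{k\ge1}\frac{\P(|U_k|>y)}{\E|U_k|^r},
\end{equation}
so the supremum over~$\pi$ is attained at a pure~$U_k$. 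For~$U_k$ one has $\P(|U_k|>y)=2(k-y)/(2k+1)$ for $0\le y\le k-1$ (and $0$ for $y\ge k$) and $\E|U_k|^r=2S(k;r)/(2k+1)$, whence the ratio equals exactly $f(k,y;r)$ of~\eqref{fkyr}; the values $y\ge k$ give ratio~$0$ and may be discarded. Combining the two reductions yields~\eqref{ISRWMHunimodalr}.

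Finally I would establish the bound $f(k,y;r)\le f_1(k;r)$. Setting $t:=2y+1$ gives $(2y+1)^r(k-y)=\tfrac12\,t^r(2k+1-t)$, and I would maximize the continuous function $t\mapsto t^r(2k+1-t)$ over $t>0$; its unique critical point is at $t=r(2k+1)/(r+1)$, with maximum value $r^r(2k+1)^{r+1}/(r+1)^{r+1}$. Dividing by $S(k;r)$ produces $f_1(k;r)$ as in~\eqref{f1kr}. The main obstacle in the whole argument is the bookkeeping in the two reductions---verifying linearity and the convex-combination structure and correctly discarding the degenerate $\delta_0$ component---rather than any genuine analytic difficulty; the closing optimization is routine calculus.
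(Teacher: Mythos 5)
Your proof is correct and is essentially the paper's own argument: the paper proves \eqref{ISRWMHunimodalr} by "the same argument as for $r=2$ in Theorem~\ref{T:Isymmunimodal}," i.e., the moment formula~\eqref{rsymmuni discrete} plus the reduction to pure $\mbox{Unif}\{-k,\ldots,k\}$ targets via the mixture representation \cite[Theorem~4.3]{DJ-D(1988)}, which is exactly your two-step reduction (your convex-combination/mediant bookkeeping, in either order of $\pi$ and $\phi$, is the same decomposition). Your maximization of $t^r(2k+1-t)$ with $t=2y+1$ is the same calculus as the paper's log-concavity argument locating the maximizer $y=\frac{2rk-1}{2(r+1)}$, and yields the identical value $f_1(k;r)$.
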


\begin{proof}
The proof of~\eqref{ISRWMHunimodalr}, with $f(k, y; r)$ defined at~\eqref{fkyr}, follows the same argument as for $r = 2$ in \refT{T:Isymmunimodal} and so is left to the reader.  The expression $f(k, y; r)$ is log-concave in real~$y$ and thus is maximized over real~$y$ when the derivative of $\ln f(k, y; r)$ vanishes, namely, at 
\begin{equation}
\label{real maximizing y}
y = \frac{2 r k - 1}{2 (r + 1)},
\end{equation}
with value $f_1(k; r)$ given by~\eqref{f1kr}.
\end{proof}

\begin{remark}
\label{R:bounds comparison}
(a)~For every $r \in (0, \infty)$ we have
\begin{equation}
\left( \frac{2 r}{r + 1} \right)^r \leq s(r) \leq 2^{r - 1}.
\end{equation}
The upper bound follows from the more general \refT{T:Imain}, and the lower bound (which we note is the constant appearing in \refT{T:unimodal-d}) follows by applying the inequality $s(r) \geq f(k, y_k; r)$, valid for any sequence $(y_k)$ of integers satisfying $0 \leq y \leq k - 1$, to any sequence for which $\frac{y_k}{k} \to \frac{r}{r + 1}$ (the maximizing ``$v$'' in the proof of \refL{L:Winkler}) and passing to the limit.

As we shall see in \refC{C:sasy}, $s(r) \sim \frac{2 \sqrt{3}}{9} 2^r$ as $r
\to \infty$.  Thus for symmetric unimodal (absolutely continuous) targets,
symmetric discrete unimodal targets with  symmetric odd proposal
steps, and general targets (in either the absolutely continuous
setting or the discrete setting), 
we have $\sup (\E |X_1 - X_0 | / \E |X_0|^r) \sim c 2^r$ as $r \to \infty$,
with~$c$ equal to $1 / e \approx 0.368$,
$2 \sqrt{3} / 9 \approx 0.385$, and $1/2$, respectively.  
(See \refT{T:unimodal} with \refR{R:sharp}, \refT{T:Isymmunimodalr}, 
and \refT{T:Imain} with \refE{E:discrete}.)
Thus we might regard the shrinkage factor $2 / e \approx 0.74$ as the gain from restricting to symmetric unimodal targets, the factor $4 \sqrt{3} / 9 \approx 0.77$ as the gain from restricting to unimodal targets to the situation in \refT{T:Isymmunimodalr}, and the modest increase factor $2 e \sqrt{3} / 9 \approx 1.05$ as ``the cost of discreteness'' for symmetric unimodal targets.

(b)~For $r \in (0, 1]$, the symmetric unimodal constants $\left( \frac{2 r}{r + 1} \right)^r$ are better than the constant~$1$ in \refT{T:Isymmunimodalr} by a shrinkage factor that is log-convex, equaling~$1$ at $r = 1$ and in the limit as $r \to 0$ and never any smaller than approximate value $0.79$ at $r \approx 0.30$.       
\end{remark}

\begin{lemma}\label{L:Jensen+}
Let $b > 0$, and suppose that $\psi:(-b, b) \to \bbR$ is convex.
Then the 
average
\begin{align}
\label{Psitau}
\Psi(\tau) 
:= 
\frac12 \int_{-1}^1 \psi(x \tau) \dd x
\end{align}
is a nondecreasing function of $\tau \in [0, b)$.
\end{lemma}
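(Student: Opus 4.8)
The plan is to reduce to the case of an \emph{even} convex function and then simply exploit monotonicity. First I would observe that the integral defining $\Psi(\tau)$ is unchanged if $\psi$ is replaced by its even part $\psi_e(u) := \tfrac12[\psi(u) + \psi(-u)]$: the substitution $x \mapsto -x$ in~\eqref{Psitau} gives $\int_{-1}^1 \psi(x\tau)\dd x = \int_{-1}^1 \psi(-x\tau)\dd x$, so both equal $\int_{-1}^1 \psi_e(x\tau)\dd x$. Since $u \mapsto \psi(-u)$ is convex whenever $\psi$ is, the even part $\psi_e$ is again convex (being an average of two convex functions) and is by construction even. Thus, with no loss of generality, I may assume from the outset that $\psi$ itself is even and convex on $(-b,b)$.

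Next I would record the elementary fact that an even convex function is nondecreasing on $[0,b)$. For $0 \le s \le t < b$ one writes $s$ as the convex combination $s = \alpha(-t) + (1-\alpha)t$ with $\alpha = (t-s)/(2t) \in [0,\tfrac12]$, and then convexity together with the evenness relation $\psi(-t) = \psi(t)$ yields $\psi(s) \le \alpha\psi(-t) + (1-\alpha)\psi(t) = \psi(t)$.

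With these two reductions in hand, the conclusion is immediate. Folding the integral over the (now even) integrand gives $\Psi(\tau) = \int_0^1 \psi(x\tau)\dd x$. For any $0 \le \tau_1 \le \tau_2 < b$ and any $x \in [0,1]$ we have $0 \le x\tau_1 \le x\tau_2 < b$, so the monotonicity just established produces the pointwise inequality $\psi(x\tau_1) \le \psi(x\tau_2)$; integrating over $x \in [0,1]$ then gives $\Psi(\tau_1) \le \Psi(\tau_2)$, as desired.

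I do not expect any serious obstacle here. The symmetrization-plus-folding route is what makes the argument clean: it avoids any appeal to differentiability of $\psi$ (convex functions need not be differentiable, only differentiable a.e.\ with monotone one-sided derivatives), so there is no need to differentiate under the integral sign or to justify interchange of limit and integral. The only point meriting a moment's care is checking that the argument $x\tau$ always stays inside the domain $(-b,b)$, which is clear since $|x\tau| \le \tau < b$ for all $x \in [-1,1]$ and $\tau \in [0,b)$.
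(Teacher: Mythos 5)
Your proof is correct and is essentially the paper's own argument: the paper's one-line proof rests on the pointwise inequality $\psi(x\tau_1)+\psi(-x\tau_1)\le\psi(x\tau_2)+\psi(-x\tau_2)$, which is exactly your statement that the even part $\psi_e$ is nondecreasing on $[0,b)$, followed by integration. Your write-up merely makes explicit the convexity argument (evenness plus convexity implies monotonicity on the positive half-line) that the paper leaves to the reader.
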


Stronger results are given in \cite[Theorem 5]{BrucknerOstrow(1962)}; 
we give a simple direct proof for completeness.

\begin{proof}
If $0 \le \tau_1 < \tau_2 < b$, then, by the convexity of~$\psi$, for any $x \in (0, 1)$ we have
\begin{align}
\psi(x \tau_1) + \psi(-x \tau_1) \le \psi(x \tau_2)+\psi(-x \tau_2). 
\end{align}
Now integrate over $x \in (0, 1)$.
\end{proof}

\begin{lemma}
\label{L:helpful2}
Suppose 
that $r \in [1, \infty)$.
\smallskip

\noindent
{\rm (a)}~For real $a, t \in [0, \infty)$ we have
\begin{align}
h(t, a; r) 
&:= [1 + (3 + a) t]^r \Bigl( [1 + (2 + a) t]^{r + 1} - [1 + a t]^{r + 1} \Bigr) \nonumber \\
&{} \qquad {} - [1 + (1 + a) t]^r \Bigl( [1 + (4 + a) t]^{r + 1} - [1 + (2 + a) t]^{r + 1} \Bigr) \geq 0. \label{htar ineq}
\end{align}

\noindent
{\rm (b)}~The function $f_1(k; r)$ defined at~\eqref{f1kr} is nonincreasing in integer $k \geq 1$.
\end{lemma}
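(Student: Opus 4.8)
The plan is to prove part~(a) by recognizing each factor of the form $[\,\cdot\,]^{r+1}-[\,\cdot\,]^{r+1}$ as an integral of $s^r$ over an interval of half-width~$t$, and then to deduce part~(b) from part~(a) by a telescoping summation. For part~(a), I would dispose of the trivial case $t=0$ (where $h\equiv 0$) and assume $t>0$. Writing $[\,1+(2+a)t\,]^{r+1}-[\,1+at\,]^{r+1}=(r+1)\int_{c_1-t}^{c_1+t}s^r\dd s$ with midpoint $c_1:=1+(1+a)t$, and $[\,1+(4+a)t\,]^{r+1}-[\,1+(2+a)t\,]^{r+1}=(r+1)\int_{c_2-t}^{c_2+t}s^r\dd s$ with $c_2:=1+(3+a)t=c_1+2t$, the definition of~$h$ becomes $h=(r+1)\bigl[c_2^{\,r}\mu_1-c_1^{\,r}\mu_2\bigr]$, where $\mu_j:=\int_{c_j-t}^{c_j+t}s^r\dd s$. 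Since $c_1=1+(1+a)t\ge 1+t>t$ (so $c_1-t\ge 1>0$), the inequality $h\ge 0$ is equivalent to $\Phi(c_1)\ge\Phi(c_2)$ for $\Phi(c):=c^{-r}\int_{c-t}^{c+t}s^r\dd s$; as $c_2>c_1$, it suffices to show $\Phi$ is nonincreasing on $(t,\infty)$.

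The key step, and the place where $r\ge 1$ enters, is the monotonicity of~$\Phi$. Substituting $s=c+v$ gives $\Phi(c)=\int_{-t}^{t}(1+v/c)^r\dd v$, so that
\begin{equation*}
\Phi'(c)=-\tfrac{r}{c^2}\int_{-t}^{t}v\,(1+v/c)^{r-1}\dd v=-\tfrac{r}{c^2}\int_0^t v\bigl[(1+v/c)^{r-1}-(1-v/c)^{r-1}\bigr]\dd v.
\end{equation*}
For $0<v\le t<c$ we have $1+v/c\ge 1-v/c>0$, and since $r-1\ge 0$ the bracket is nonnegative; hence $\Phi'\le 0$, $\Phi$ is nonincreasing, and part~(a) follows. (For $r=1$ the bracket vanishes identically, so $\Phi$ is constant and $h\equiv 0$.)

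For part~(b), I would first record that $f_1(k+1;r)\le f_1(k;r)$ is equivalent, after clearing the positive constant in~\eqref{f1kr} and using $S(k+1;r)=S(k;r)+(k+1)^r$, to $[(2k+3)^{r+1}-(2k+1)^{r+1}]\,S(k;r)\le(2k+1)^{r+1}(k+1)^r$. To obtain this, apply part~(a) with $t=1$ and $a=2(j-1)$, which says precisely that $\Phi_1(2j)\ge\Phi_1(2j+2)$ for $\Phi_1(c):=c^{-r}\int_{c-1}^{c+1}s^r\dd s$; chaining these gives $\Phi_1(2i)\ge\Phi_1(2k+2)$, that is,
\begin{equation*}
(2i+1)^{r+1}-(2i-1)^{r+1}\ge(2i)^r\,\frac{(2k+3)^{r+1}-(2k+1)^{r+1}}{(2k+2)^r},\qquad 1\le i\le k.
\end{equation*}
Summing over~$i$, the left-hand side telescopes to $(2k+1)^{r+1}-1$, while the right-hand side equals $[(2k+3)^{r+1}-(2k+1)^{r+1}]\,S(k;r)/(k+1)^r$ (using $\sum_{i=1}^k(2i)^r=2^rS(k;r)$ and $(2k+2)^r=2^r(k+1)^r$). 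Rearranging yields the displayed inequality, with a little room to spare, completing part~(b).

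I expect the main obstacle to be the recognition in part~(a) that $h$ is, up to the factor $r+1$, a comparison of two adjacent integrals of $s^r$ weighted by the \emph{opposite} endpoints $c_2^{\,r}$ and $c_1^{\,r}$, so that the entire statement collapses to the monotonicity of the single function~$\Phi$. Once this representation is in hand the role of $r\ge 1$ is transparent, and the passage to part~(b) is then a routine telescoping of the even-argument instances of~(a).
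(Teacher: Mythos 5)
Your proposal is correct; both parts check out, and it is worth distinguishing how each relates to the paper's own argument. For part (a) you and the paper do essentially the same thing: the paper's quantity $h_1(t,a;r)=[1+(1+a)t]^{-r}\bigl([1+(2+a)t]^{r+1}-[1+at]^{r+1}\bigr)$ is exactly $(r+1)\Phi(c_1)$ in your notation, and the paper proves it is nonincreasing in $a$ by rewriting it as $(r+1)t\int_{-1}^1(1+x\tau)^r\dd x$ with $\tau=t/c_1$ and invoking the convexity of $z\mapsto(1+z)^r$ via \refL{L:Jensen+}; your computation $\Phi'(c)=-\tfrac{r}{c^2}\int_0^t v\bigl[(1+v/c)^{r-1}-(1-v/c)^{r-1}\bigr]\dd v\le0$ uses the monotonicity of $x\mapsto x^{r-1}$, which is the derivative form of that same convexity, so under the change of variables $\tau=t/c$ the two arguments coincide. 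For part (b) your route is genuinely different in organization, though built from the same raw material. The paper reduces $f_1(k+1;r)\le f_1(k;r)$ to the inequality \eqref{Hkrineq} for $S(k;r)$ and proves \emph{that} by induction on $k$: a hand-checked base case $k=1$ (for all $r\ge1$), plus an inductive step shown to be equivalent to $h((2k+1)^{-1},0;r)\ge0$. Since $\Phi_t(c)=t\,\Phi_1(c/t)$, the paper's instance at step $k$ is precisely your inequality $\Phi_1(2k+2)\ge\Phi_1(2k+4)$, so the two proofs consume the same family of part-(a) instances; but you combine them by chaining and telescoping — summing $(2i+1)^{r+1}-(2i-1)^{r+1}\ge(2i)^r\bigl[(2k+3)^{r+1}-(2k+1)^{r+1}\bigr]/(2k+2)^r$ over $i=1,\dots,k$ — which yields the target bound directly (with the spare $-1$ on the left) and needs neither a base case nor an auxiliary induction. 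The trade-off is modest: your telescoping is self-contained and a bit cleaner, while the paper's induction isolates the single-step comparison \eqref{Hkrineq} as an explicit intermediate statement.
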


In passing, we conjecture a considerable strengthening of~\eqref{htar ineq} when  $r \in \bbN$:

\begin{conj}
\label{Conj:strong!}
If  $r \in \bbN$ is arbitrary but fixed, then the polynomial $h(t, a; r)$ in $(t, a)$ has nonnegative coefficients. 
\end{conj}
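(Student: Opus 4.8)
The plan is to strip the affine shift out of the five nodes $1+(j+a)t$ and thereby reduce the claimed coefficient positivity to a single-variable question. Writing $u:=1+at$, every node becomes $1+(j+a)t=u+jt$, so the polynomial $h(t,a;r)$ of \eqref{htar ineq} satisfies $h(t,a;r)=H(1+at,t)$, where
\begin{equation*}
H(u,t):=(u+3t)^r\bigl[(u+2t)^{r+1}-u^{r+1}\bigr]-(u+t)^r\bigl[(u+4t)^{r+1}-(u+2t)^{r+1}\bigr].
\end{equation*}
The key structural fact is that $H$ is homogeneous of degree $2r+1$ in $(u,t)$, being in each term a product of a homogeneous factor of degree~$r$ and one of degree~$r+1$.

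The first real step is the equivalence that does the work. Letting $h_n:=[t^n]H(1,t)$ be the coefficients of the univariate polynomial $H(1,t)$, homogeneity gives $H(u,t)=\sum_{n=0}^{2r+1}h_n\,u^{2r+1-n}t^n$ with all exponents nonnegative. Substituting $u=1+at$, which itself has nonnegative coefficients in $(a,t)$, then writes $h(t,a;r)$ as a $\{h_n\}$-weighted combination of products of nonnegative-coefficient polynomials; hence $h_n\ge0$ for every $n$ would force every coefficient of $h(t,a;r)$ to be nonnegative, which is \refConj{Conj:strong!}. Conversely, setting $a=0$ gives $h(t,0;r)=H(1,t)=\sum_n h_n t^n$, so the conjecture in turn forces each $h_n\ge0$. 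Thus I would record that \refConj{Conj:strong!} is \emph{equivalent} to the one-variable assertion that
\begin{equation*}
H(1,t)=(1+3t)^r\bigl[(1+2t)^{r+1}-1\bigr]-(1+t)^r\bigl[(1+4t)^{r+1}-(1+2t)^{r+1}\bigr]
\end{equation*}
has nonnegative coefficients.

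The remaining and genuinely hard step is to establish $h_n\ge0$ for every $n$ and every $r\in\bbN$. Expanding by the binomial theorem, each $h_n$ is an explicit \emph{signed} sum of convolutions $\sum_{i+j=n}\binom{r}{i}\binom{r+1}{j}\alpha^i\beta^j$ with $\alpha,\beta\in\{1,2,3,4\}$, and the whole difficulty is that these contributions carry opposite signs; showing that the positive part dominates uniformly in $(r,n)$ is the main obstacle. For orientation I would first verify the small cases $H(1,t)\equiv0$ (for $r=1$) and $H(1,t)=8t^4+16t^5$ (for $r=2$), which already show that a block of lowest-order coefficients vanishes identically, so only the top block is in play. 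The natural line of attack, mirroring the integral-symmetrization that proves part~(a), is to re-expand $H(1,t)$ about the central node $1+2t$: with $s:=1+2t$ the multipliers become $s\pm t$ and the bracketed nodes become symmetric pairs $s-2t,\,s$ and $s,\,s+2t$, so that pairing symmetric terms should convert the signed binomial sums into sums with visibly nonnegative summands. I expect this symmetrizing reorganization, rather than any single scalar inequality, to be where the real work lies.
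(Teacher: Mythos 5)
You have not proved the statement, and neither does the paper: this is \refConj{Conj:strong!}, stated in the paper as an open conjecture (the paper proves only the far weaker pointwise inequality $h(t,a;r)\ge0$ for real $t,a\ge0$, via the monotonicity argument in \refL{L:helpful2}(a), which says nothing about the signs of individual coefficients). Your proposal openly leaves its ``remaining and genuinely hard step'' unproved, so what you have is a reduction, not a proof: the assertion that the one-variable polynomial $H(1,t)$ has nonnegative coefficients is exactly as much a conjecture at the end of your argument as \refConj{Conj:strong!} was at the start, and the final paragraph (re-expanding about the central node $1+2t$ in the hope that the signed binomial convolutions pair off into visibly nonnegative terms) is a heuristic, not an argument. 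That is the genuine gap.

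That said, the part you did write is correct and is a real observation. The substitution $u=1+at$ turning every node $1+(j+a)t$ into $u+jt$, the homogeneity of $H(u,t)$ of degree $2r+1$, and the resulting equivalence
\begin{equation*}
\text{\refConj{Conj:strong!}} \iff \text{all coefficients of } H(1,t)=h(t,0;r) \text{ are nonnegative}
\end{equation*}
are all valid (the forward direction because $h(t,a;r)=\sum_n h_n(1+at)^{2r+1-n}t^n$ is then a nonnegative combination of nonnegative-coefficient polynomials, the converse by setting $a=0$), and your checks $H(1,t)\equiv0$ at $r=1$ and $H(1,t)=8t^4+16t^5$ at $r=2$ are accurate. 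So the two-variable conjecture collapses to a one-variable positivity question, which is a useful simplification and could reasonably be recorded as a remark accompanying the conjecture --- but it does not settle it.
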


\begin{proof}[Proof of \refL{L:helpful2}]
(a)~We will prove that, for fixed $r \in [1, \infty)$ and $t \in [0, \infty)$ the  function
\begin{equation}\label{h1}
h_1(t, a; r) := [1 + (1 + a) t]^{-r} \Bigl( [1 + (2+ a) t]^{r + 1} - [1 + a t]^{r + 1} \Bigr) 
\end{equation}
is nonincreasing in $a \in [0, \infty)$, and therefore
\begin{equation}\label{h1z}
h_1(t, a + 2; r) \leq h_1(t, a; r),
\end{equation}
which is equivalent to~\eqref{htar ineq}.

With $h_1(t,a;r)$ as  in~\eqref{h1}, and assuming $t\ge0$, we have
\begin{align}\label{h1a}
  h_1(t,a,r)&=[1+(1+a)t]^{-r}(r+1)t\int_{-1}^1\bigsqpar{1+(1+a+x)t}^r\dd x
\notag\\&
=(r+1)t\int_{-1}^1\Bigsqpar{1+\frac{xt}{1+(1+a)t}}^r\dd x
\notag\\&
=(r+1)t\int_{-1}^1\bigpar{1+x\tau}^r\dd x
,\end{align}
where
\begin{align}
  \tau=\tau(t,a):=\frac{t}{1+(1+a)t}.
\end{align}
Since $\tau$ is a nonincreasing function of $a$,
it suffices to show that the final integral in \eqref{h1a}
is nondecreasing in $\tau\in[0,1)$, which is the special  case $\psi(z) := (1 + z)^r$ and $b = 1$ 
of  \refL{L:Jensen+}.

(b)~For this it suffices to show for $k \geq 0$  that
\begin{equation}
\frac{f_1(k + 1; r)}{f_1(k; r)} = \left( \frac{2 k + 3}{2 k + 1} \right)^{r + 1} \frac{S(k; r)}{S(k + 1; r)} \leq 1,
\end{equation}
or equivalently  that
\begin{equation}
\label{Hkrineq}
S(k; r) \leq (k + 1)^r \left[ \left( \frac{2 k + 3}{2 k + 1} \right)^{r + 1} -  1 \right]^{-1}.
\end{equation}
Since one checks that~\eqref{Hkrineq} holds for all $r \in [1, \infty)$ when $k = 1$, it suffices to show that for all $k \geq 1$ we  have
\begin{align}
\lefteqn{S(k + 1; r) - S(k; r) = (k + 1)^r} \nonumber \\
&\leq (k + 2)^r \left[ \left( \frac{2 k + 5}{2 k + 3} \right)^{r + 1} -  1 \right]^{-1}
- (k + 1)^r \left[ \left( \frac{2 k + 3}{2 k + 1} \right)^{r + 1} -  1 \right]^{-1}, 
\end{align}
which, omitting some calculations, is equivalent to
\begin{equation}
h((2 k + 1)^{-1}, 0; r) \geq 0.
\end{equation}
But this follows from~\eqref{htar ineq}.
\end{proof}

\begin{lemma}
\label{L:helpful3}
Let $r \in (0, \infty)$.  Given  $k \in \bbN$, the expression $f(k, y; r)$
is maximized over integers $y \in [0, k - 1]$ by $y = k - 1$ if and only if $k \leq k_0(r)$.
\end{lemma}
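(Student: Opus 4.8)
The plan is to discard the $y$-independent factor $S(k;r)$ from \eqref{fkyr} and study instead the integer maximization of
\[
G(y) := (2y+1)^r (k-y), \qquad y \in \{0, 1, \dots, k-1\},
\]
so that $f(k,y;r) = G(y)/S(k;r)$ and the assertion becomes: $G$ attains its maximum over $\{0,\dots,k-1\}$ at $y = k-1$ if and only if $k \le k_0(r)$. The case $k = 1$ is trivial, since then $y = 0 = k-1$ is the only admissible value and $k_0(r) \ge 1$; so I assume $k \ge 2$.

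First I would record strict log-concavity of $G$. As already noted in the proof of \refL{L:helpful1}, $f(k,y;r)$ is log-concave in real~$y$; in fact strictly so, since on $[0,k)$ we have $\ln G(y) = r\ln(2y+1) + \ln(k-y)$ and
\[
\frac{d^2}{dy^2}\ln G(y) = -\frac{4r}{(2y+1)^2} - \frac{1}{(k-y)^2} < 0
\]
for every $r \in (0,\infty)$. Hence the finite sequence $G(0), G(1), \dots, G(k-1)$ is strictly log-concave, so the successive ratios $G(y+1)/G(y)$ are strictly decreasing. Consequently the sequence is unimodal, and $y = k-1$ is a maximizer if and only if the last ratio satisfies $G(k-1)/G(k-2) \ge 1$: once this holds, every earlier ratio strictly exceeds it, so $G$ is nondecreasing across the whole block and peaks at $k-1$, while if $G(k-1) < G(k-2)$ then $y=k-1$ is not a maximizer.

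Next I would convert $G(k-1) \ge G(k-2)$ into a bound on $k$. Writing it out gives $(2k-1)^r \ge 2\,(2k-3)^r$, i.e.\ $\bigl(\tfrac{2k-1}{2k-3}\bigr)^r \ge 2$, i.e.\ $\tfrac{2k-1}{2k-3} \ge 2^{1/r}$. Putting $a := 2^{1/r} > 1$ and solving the linear inequality $2k-1 \ge a(2k-3)$ (with $1-a<0$ reversing the sense) yields $k \le \tfrac{3a-1}{2(a-1)} = \tfrac32 + \tfrac{1}{a-1}$. A one-line computation then gives $\tfrac32 + (2^{1/r}-1)^{-1} = \tfrac12 + (1-2^{-1/r})^{-1}$, so the condition reads $k \le \tfrac12 + (1-2^{-1/r})^{-1}$; since $k$ is an integer, this is equivalent to $k \le \floor{\tfrac12 + (1-2^{-1/r})^{-1}} = k_0(r)$ by \eqref{k0r}.

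I expect the only delicate points to be bookkeeping rather than substance: (i)~using strict log-concavity to reduce global maximality over the integer block to the single local comparison $G(k-1) \ge G(k-2)$; and (ii)~the treatment of the equality (a tie between $y=k-1$ and $y=k-2$), which occurs exactly when $\tfrac12 + (1-2^{-1/r})^{-1}$ is itself an integer---there the weak inequality keeps $y=k-1$ a co-maximizer and the floor keeps $k = k_0(r)$, so the stated ``if and only if'' is preserved. No genuine analytic obstacle arises, since $\ln G$ is strictly concave uniformly in $r > 0$.
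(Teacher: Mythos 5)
Your proof is correct and follows essentially the same route as the paper's: both handle $k=1$ separately, then use log-concavity of $f(k,\cdot\,;r)$ in real $y$ to reduce maximality at $y=k-1$ to the single comparison with $y=k-2$, and both convert that comparison into $k \leq \tfrac12 + (1-2^{-1/r})^{-1}$, hence $k \leq k_0(r)$ by integrality. Your version merely spells out the details (the second-derivative computation and the algebra) that the paper compresses into ``is easily seen to be equivalent to.''
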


\begin{proof}
Both conditions in the ``if and only if'' claim are satisfied by $k = 1$, since then 
$\frac12 + (1 - 2^{-1/r})^{-1} > 3/2 > 1 = k$.  So we may assume $k \geq 2$.
The expression $f(k, y; r)$ is log-concave in real~$y$ and so is maximized over integers $y \in [0, k - 1]$ by $y = k - 1$ if and only if its value at $y = k - 1$ dominates the value at $y = k - 2$.  This is easily seen to be equivalent to $k \leq \tfrac12 + (1 - 2^{-1/r})^{-1}$, which is equivalent to $k \leq k_0(r)$ because~$k$ is an integer.
\end{proof}

Because of its central appearance in~\eqref{gkr} and \eqref{fkyr}--\eqref{f1kr}, we next consider asymptotics  for 
$S(k; r)$.  As it turns out, we will need two-term asymptotics when $r \to \infty$ with~$k$ growing approximately linearly in~$r$.

\begin{lemma}
\label{L:helpful4}
Consider
any $c \in (0, \infty)$ and any $k \equiv k(r)$ with $k(r) \sim c^{-1} r$ as $r \to \infty$.  Let 
$x \equiv x(c) := e^{-c} \in (0, 1)$ and $\gamma \equiv \gamma(r) := r / k$ (which converges to~$c$) and $\xi \equiv \xi(r) := e^{- \gamma}$ (which converges to~$x$).  Then as $r \to \infty$ we  have
\begin{equation}
\label{Hasy}
S(k; r) = k^r (1 - \xi)^{-1} \Bigl[ 1 - (1 + o(1)) \tfrac12 c^2 x (1 + x) (1 - x)^{-2} r^{-1} \Bigr]. 
\end{equation} 
\end{lemma}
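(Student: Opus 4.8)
The plan is to factor out the largest summand and treat the remaining sum as a perturbed geometric series. First I would reindex by $i = k - j$, writing
\[
S(k;r) = k^r \sum_{j=0}^{k-1}\bigpar{1-j/k}^r .
\]
Using $r/k = \gamma$ and the Taylor expansion $r\ln(1-j/k) = -\gamma j - \sum_{m\ge2}\gamma\,j^m/(m\,k^{m-1})$, each summand factors \emph{exactly} as
\[
\bigpar{1-j/k}^r = \xi^{\,j}\,e^{-E_j},
\qquad
E_j := \gamma\sum_{m\ge2}\frac{j^m}{m\,k^{m-1}}\ \ge\ 0,
\]
so in particular $\bigpar{1-j/k}^r \le \xi^{\,j}$ for every $j\ge0$. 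The leading behaviour comes from $\sum_{j\ge0}\xi^{\,j} = (1-\xi)^{-1}$ (legitimate since $\xi\to x<1$, so $\xi<1$ for large $r$), and the required correction from the leading piece $\gamma j^2/(2k)$ of $E_j$.

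Next I would expand $e^{-E_j} = 1 - \tfrac{\gamma j^2}{2k} + O(j^3/k^2) + O(j^4/k^2)$ in the dominant range and sum against $\xi^{\,j}$, using the standard identity $\sum_{j\ge0} j^2\xi^{\,j} = \xi(1+\xi)(1-\xi)^{-3}$ and the convergence of $\sum_j j^4\xi^{\,j}$. This yields
\[
\sum_{j=0}^{k-1}\bigpar{1-j/k}^r
= (1-\xi)^{-1} - \frac{\gamma}{2k}\,\frac{\xi(1+\xi)}{(1-\xi)^3} + o(1/k)
= (1-\xi)^{-1}\Bigsqpar{1 - \frac{\gamma}{2k}\,\frac{\xi(1+\xi)}{(1-\xi)^2} + o(1/k)}.
\]
Finally, since $\gamma/k = r/k^2$ and $k\sim c^{-1}r$ give $\gamma/k\sim c^2/r$, while $\gamma\to c$ and $\xi\to x$, the bracketed correction equals $(1+o(1))\,\tfrac12 c^2 x(1+x)(1-x)^{-2}r^{-1}$, which after multiplication by $k^r$ is exactly~\eqref{Hasy}.

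The main obstacle is making the error terms genuinely $o(1/k)$ (equivalently $o(1/r)$, since $k\asymp r$) \emph{uniformly}, because the expansion of $e^{-E_j}$ degrades as $j$ approaches $k$. I would handle this by splitting the sum at $j=\sqrt k$. For $j\le\sqrt k$ the expansion is valid with $\gamma$ bounded, and the neglected terms contribute $\sum_j \xi^{\,j}\,O(j^3/k^2+j^4/k^2)=O(1/k^2)$; extending the $\sum\xi^{\,j}$ and $\sum j^2\xi^{\,j}$ sums from $\sqrt k$ out to $\infty$ costs only $O(\xi^{\sqrt k})$. For $j>\sqrt k$ the pointwise bound $\bigpar{1-j/k}^r\le\xi^{\,j}$ gives a tail $\le \xi^{\lceil\sqrt k\rceil}/(1-\xi) = O(e^{-c\sqrt k})$, smaller than any power of $1/k$, and the same bound controls the truncation of the approximating geometric sums. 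Collecting these estimates yields the claimed $o(1/k)$ error and completes the proof.
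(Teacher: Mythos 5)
Your proof is correct and follows essentially the same route as the paper's own argument (which the paper gives only as a ``broad sketch''): factor out $k^r$, expand $r\ln(1-j/k)$ to second order to obtain $\xi^j$ times a $1-\tfrac12 j^2\gamma^2 r^{-1}$ correction, and sum against the identities $\sum_{j\ge0}\xi^j=(1-\xi)^{-1}$ and $\sum_{j\ge0}j^2\xi^j=\xi(1+\xi)(1-\xi)^{-3}$. Your exact factorization $(1-j/k)^r=\xi^j e^{-E_j}$ with $E_j\ge0$, and the resulting split at $j=\sqrt k$ with the pointwise bound $(1-j/k)^r\le\xi^j$ for the tail, supply precisely the uniform error control that the paper's sketch leaves implicit.
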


\begin{proof}
We give only a broad (but illuminating) sketch of the proof.  The idea is to  write
\begin{equation}
S(k; r) = k^r \sum_{j = 0}^{k - 1} \left( 1 - \frac{j}{k} \right)^r
\end{equation}
and when $j = o(r\qq)$ use 
\begin{equation}
\ln \left[ \left( 1 - \frac{j}{k} \right)^r \right] = r \ln \left( 1 - \frac{j}{k} \right) 
= - j \gamma - (1 + o(1)) \frac12 j^2 \gamma^2 r^{-1},
\end{equation}
so that
\begin{align}\label{dx1}
\left( 1 - \frac{j}{k} \right)^r 
&= e^{- j \gamma} \bigl[ 1 - (1 + o(1)) \tfrac12 j^2 \gamma^2 r^{-1} \bigr] \nonumber \\
&= \xi^j \bigl[ 1 - (1 + o(1)) \tfrac12 j^2 c^2 r^{-1} \bigr].
\end{align}
Finally, $\sum_{j = 0}^{\infty} \xi^j = (1 - \xi)^{-1}$ and
\begin{equation}
\sum_{j = 0}^{\infty} \xi^j j^2 
= \xi (1 + \xi) (1 - \xi)^{-3}
\sim x (1 + x) (1 - x)^{-2} (1 - \xi)^{-1}.
\end{equation}
\end{proof}
\vspace{-.01in}

\begin{proof}[Proof of \refT{T:Isymmunimodalr}]
If $k \geq 1$, then $2 k - 1 \geq k$ and so
\begin{equation}
g(k; r) = \left[ \sum_{i = 1}^k \left( \frac{i}{2 k - 1} \right)^r \right]^{-1} 
\end{equation}
is nondecreasing in $r \in (0, \infty)$, and strictly increasing if $k \geq
2$.  
The final sentence in the statement of the theorem then follows provided
that we verify the other assertions,
noting that for $r>1$ we have $k_0(r)\ge2$ and $g(2,r)>1=g(1,r)$.

We prove (a) $s(r) \geq 1$ for $r \in (0, \infty)$, (b) $s(1) = 1$, (c) $s(r) = 1$ for $r \in (0, 1)$, and 
equation~\eqref{sr simpler} for (d) $r \in [2.198, \infty)$ and (e) $r \in [1.043, 2.198)$.
\smallskip

(a)~It is obvious from~\eqref{ISRWMHunimodalr} that $s(r) \geq 1$, since $f(1, 0; r) = 1$.
\smallskip

(b)~When $r = 1$, \eqref{fkyr} simplifies to
\begin{equation}
\label{f1ky}
f(k, y; 1) = \frac{2 (2 y + 1) (k - y)}{k (k + 1)},
\end{equation}
maximized over integer
$y \in [0, k]$ uniquely by $y = \floor{k/2}$, with
\begin{equation}
\label{f1kk2}
f(k, \floor{k/2}; 1) = 1.
\end{equation}
Thus $s(1) = 1$.
\smallskip

(c)~We will show that $f(k, y; r) \leq 1$ for all integers $0 \leq y \leq k - 1$, and then the result follows because [recall~(a) in this proof] $s(r) \geq 1$, too.  For this, fix $r \in (0, 1)$ and integer $y \geq 0$ and  consider
\begin{align}
b(k) 
&\equiv b(k; r, y) \nonumber \\ 
&:= S(k; r) [1 - f(k, y; r)] = S(k; r) - (2 y + 1)^r (k - y), \quad k \geq y. \label{bky}
\end{align}
Observe  that $b(y) = S(y; r) > 0$ and that for $k \geq y$ we have
\begin{equation}
b(k + 1) - b(k) 
= (k + 1)^r - (2 y + 1)^r
\begin{cases}
< 0, & \mbox{if $k < 2 y$;} \\
= 0, & \mbox{if $k = 2 y$;} \\
> 0, & \mbox{if $k > 2 y$.}
\end{cases}
\end{equation}
Thus  
\begin{align}
b(k) 
&\geq b(2 y) = b(2 y + 1) = S(2 y; r) - y (2 y + 1)^r \nonumber \\
&= (2 y + 1)^r \left[ \sum_{i = 1}^{2 y} \left( \frac{i}{2 y + 1} \right)^r - y \right]
=: (2 y + 1)^r\,\tb(r; y).
\end{align}
Clearly, $\tb(r; y)$ is decreasing in~$r$, so 
\begin{equation}
\tb(r; y) \geq \lim_{\tr \uparrow 1} \tb(\tr; y) = 0.
\end{equation}

(d)~Let $r \in (2.198, \infty)$.  To prove~\eqref{sr simpler}, we will show that 
\begin{equation}
\label{f1kr bound}
\mbox{$f_1(k; r) \leq g(\ceil{r / \ln 3}; r)$ for all $k \geq k_0(r) + 1$}.  
\end{equation}
It then follows from~\eqref{ISRWMHunimodalr} and \refL{L:helpful3}, using~\eqref{gkr}, that 
\begin{align}
s(r)
&= \max\left\{ \max_{k \in \bbN:\,k \leq k_0(r)}\,\max_{y \in \bbZ:\,0 \leq y \leq k - 1} f(k, y; r), \right. \nonumber \\
&{} \qquad \qquad \left. \sup_{k \in \bbN:\,k \geq k_0(r) + 1}\,\max_{y \in \bbZ:\,0 \leq y \leq k - 1} f(k, y; r) \right\} \nonumber \\
&= \max\left\{ \max_{k \in \bbN:\,k \leq k_0(r)} g(k; r),
\sup_{k \in \bbN:\,k \geq k_0(r) + 1}\,\max_{y \in \bbZ:\,0 \leq y \leq k - 1} f(k, y; r) \right\}
\label{max max max sup max}
\end{align}
and by~\eqref{fkyr} and~\eqref{f1kr bound} that
\begin{align}
\lefteqn{\sup_{k \in \bbN:\,k \geq k_0(r) + 1}\,\max_{y \in \bbZ:\,0 \leq y \leq k - 1} f(k, y; r)} \nonumber \\
&\leq \sup_{k \in \bbN:\,k \geq k_0(r) + 1} f_1(k; r) 
\leq g(\ceil{r / \ln 3}; r) 
\leq \max_{k \in \bbN:\,k \leq k_0(r)} g(k; r), \label{sup max}  
\end{align}
where the last inequality follows since it is not hard to verify that $\ceil{r / \ln 3} \leq k_0(r)$ for all 
$r \in (0, \infty)$.  Combining \eqref{max max max sup max}--\eqref{sup max} then yields~\eqref{sr simpler}.

We now prove~\eqref{f1kr bound} by invoking \refL{L:helpful2}(b) and showing that
\begin{equation}
\label{f1ineq}
f_1(k_0(r) + 1; r) \leq g(\ceil{r / \ln 3}; r).
\end{equation}

A plot of the ratio of the two sides of~\eqref{f1ineq} clearly indicates that the inequality holds for 
$r \in [2.198, \infty)$.  To give a formal proof,
we note first that as $r\to\infty$, \eqref{k0r} 
yields $k_0(r)\sim r/\ln 2$,
and then \refL{L:helpful4} and the definitions~\eqref{f1kr} and~\eqref{gkr}
yield
\begin{align}
  f_1(k_0(r)+1; r) &\sim \frac{1}{e\sqrt2 \ln 2}2^r,
\\
g(\ceil{r/\ln 3}; r)&\sim \frac{2}{3\sqrt 3} 2^r.
\end{align}
Numerically, the two constants  are, respectively, 
\begin{align}
\frac{1}{e\sqrt2 \ln 2}\approx 0.375,
\qquad
\frac{2}{3\sqrt 3} \approx 0.385,
\end{align}
and thus \eqref{f1ineq} holds for $r\ge r_0$ for some $r_0$.
One then can
use numerical analysis 
[recall \refR{R:Isymmunimodalr}(c)]
to confirm the inequality for $r \in [2.198, r_0)$
(analysis we omit).

(e)~Let 
$r \in [1.043, 2.198)$.  By \refL{L:helpful1} and the same sort of argument as for part~(d), we need only 
note [by two single plots of the difference, or ratio, of the two sides of~\eqref{f16r} and
of~\eqref{maxes}, plots that can be made 
into a complete rigorous proof
using numerical analysis] that 
\begin{equation}
\label{f16r}
f_1(6; r) < \max\{g(2; r), g(3; r)\}
\end{equation}
and
\begin{equation}
\label{maxes}
\max_{(k, y) \in \bbZ^2 \setminus \{(2, 1), (3, 2)\}:\,0 \leq y \leq k - 1 \leq 4} f(k, y; r) < \max\{g(2; r), g(3; r)\},
\end{equation} 
where furthermore the \rhs{s} equal $g(2,r)$ if $k_0(r)=2$, i.e., if
$r<\ln2/\ln(5/3)\approx 1.357$.
\end{proof}

\refT{T:Isymmunimodalr} allows us to compute asymptotics of $s(r)$ as $r \to \infty$ and, as the proof of the following corollary to \refT{T:Isymmunimodalr} will demonstrate, any value~$k$ achieving $s(r)$ [as in~\eqref{sr simpler}].

\begin{corollary}
\label{C:sasy}
As $r \to \infty$ we have
\begin{equation}
s(r) \sim \frac{2 \sqrt{3}}{9} 2^r.
\end{equation}
\end{corollary}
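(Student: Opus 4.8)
The plan is to show $s(r)/2^r \to \tfrac{2\sqrt3}{9}$ by analyzing the quantity $g(k;r)$ from \eqref{gkr} in the regime where the optimal~$k$ grows linearly in~$r$. The starting point is the algebraic identity
\[
g(k;r) = \frac{(2k-1)^r}{S(k;r)} = 2^r\Bigpar{1 - \tfrac{1}{2k}}^r\,\frac{k^r}{S(k;r)},
\]
which isolates the factor $2^r$. Writing $\gamma := r/k$ and $\xi := e^{-\gamma}$ as in \refL{L:helpful4}, one expects $(1 - \tfrac{1}{2k})^r \to e^{-\gamma/2} = \xi\qq$ and, by the leading term of \refL{L:helpful4}, $k^r/S(k;r) \to 1 - \xi$, so that $g(k;r)/2^r \approx \psi(\gamma)$, where $\psi(c) := e^{-c/2}(1 - e^{-c})$ for $c \in (0,\infty)$. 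A one-variable calculus check shows $\psi$ has a unique maximum at $c = \ln 3$ (where $e^{-c} = \tfrac13$), with $\psi(\ln 3) = 3\qqw(1 - \tfrac13) = \tfrac{2\sqrt3}{9}$, and $\psi(c) \to 0$ both as $c \downarrow 0$ and as $c \to \infty$. The target constant is thus $\max_c \psi(c)$.

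For the lower bound I would take $k = k_r$ to be the nearest integer to $r/\ln 3$, so that $k_r \sim (\ln 3)\qw r$ and hence $\gamma = r/k_r \to \ln 3$ and $\xi \to \tfrac13$. Since $s(r) \geq g(k;r)$ for every~$k$ by \refL{L:helpful1} (recall $g(k;r) = f(k, k-1; r)$), applying the leading term of \refL{L:helpful4} with $c = \ln 3$ together with the displayed identity gives $g(k_r;r)/2^r \to \psi(\ln 3) = \tfrac{2\sqrt3}{9}$, whence $\liminf_r s(r)/2^r \geq \tfrac{2\sqrt3}{9}$.

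For the upper bound I would invoke \refT{T:Isymmunimodalr} to write $s(r) = \max_{1 \le k \le k_0(r)} g(k;r)$ for all large~$r$, and let $k_r$ denote an index achieving this maximum. From \eqref{k0r} one has $k_0(r) \sim (\ln 2)\qw r$, so $\gamma_r := r/k_r \geq (1 - o(1))\ln 2$; hence any subsequential limit $\gamma_*$ of $\gamma_r$ lies in $[\ln 2, \infty]$. Passing to such a subsequence, there are two cases. If $\gamma_* = \infty$, then the crude bound $g(k;r) \le (2k-1)^r/k^r = (2 - \tfrac1k)^r \le 2^r e^{-\gamma/2}$ gives $g(k_r;r)/2^r \le e^{-\gamma_r/2} \to 0$. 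If $\gamma_* \in [\ln 2, \infty)$, then $k_r \to \infty$ with $k_r \sim \gamma_*\qw r$, so \refL{L:helpful4} applies with $c = \gamma_*$ and the displayed identity yields $g(k_r;r)/2^r \to \psi(\gamma_*) \le \tfrac{2\sqrt3}{9}$ (using that the global maximizer $\ln 3$ of~$\psi$ lies in $[\ln 2, \infty)$). In either case the subsequential limit is at most $\tfrac{2\sqrt3}{9}$, so $\limsup_r s(r)/2^r \le \tfrac{2\sqrt3}{9}$.

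The two bounds together give $s(r) \sim \tfrac{2\sqrt3}{9}2^r$. The main obstacle is the upper bound: because \refL{L:helpful4} is an asymptotic for each \emph{fixed} growth rate~$c$, one cannot simply substitute the (a priori unknown) maximizing $k_r$, and it is the compactness/subsequence argument on $\gamma_r = r/k_r$—splitting off the degenerate case $\gamma_r \to \infty$ via the crude bound—that makes the passage to the limit legitimate. As a by-product, the argument pins down the maximizing scale $k \approx r/\ln 3$ promised in the text preceding the corollary.
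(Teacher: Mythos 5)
Your proof is correct and takes essentially the same route as the paper's: both rest on the identity $g(k;r)=2^r\bigl(1-\tfrac{1}{2k}\bigr)^r\,k^r/S(k;r)$, the leading term of \refL{L:helpful4} for $k\sim c^{-1}r$, the trivial bound $S(k;r)\ge k^r$ to kill the small-$k$ (large-$\gamma$) regime, and the maximization of $x^{1/2}(1-x)$ at $x=1/3$. The only difference is presentational: your explicit subsequence/compactness argument on $\gamma_r=r/k_r$ spells out the uniformity step that the paper's ``it suffices to show'' phrasing leaves implicit, and your lower bound uses $s(r)\ge g(k;r)$ directly from \refL{L:helpful1} rather than \eqref{sr simpler}.
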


\begin{proof}
Because~\eqref{sr simpler} holds for all large~$r$, with $k_0(r)$ defined at~\eqref{k0r} and satisfying 
$k_0(r) \sim r/\ln 2 = O(r)$, it suffices to show that if $k \equiv k(r)$
satisfies $k \sim c^{-1} r$ as $r \to \infty$, then, with the notation $x = e^{-c}$ introduced in the statement of \refL{L:helpful4},
\begin{equation}
\label{sim}
g(k, r) \sim x^{1/2} (1 - x) 2^r 
\end{equation}
and that if $c \geq 2 \ln(3 \sqrt{3} / 2)$ (\ie,\ $x \leq 4/27$) then
\begin{equation}
\label{max bound}
\max_{j \leq k} g(j, r) \leq \frac{2 \sqrt{3}}{9} 2^r.
\end{equation}
Then, because $x^{1/2} (1 - x)$ is maximized over $x \in (0, 1)$ by $x = 1 / 3$ and 
$(1/3)^{1/2} [1 - (1/3)] = 2 \sqrt{3} / 9$, the corollary follows. 

Given our preceding work, the proof of~\eqref{sim} is quite simple.  Indeed, we have 
$(2 k - 1)^r \sim 2^r k^r x^{1/2}$ and, by use of the first of the two terms in~\eqref{Hasy}  in \refL{L:helpful4},
$S(k; r) \sim k^r (1 - x)^{-1}$.  Thus, by the definition~\eqref{gkr}, \eqref{sim} holds.

Proving~\eqref{max bound} is even much simpler.  We apply the trivial  inequality $S(j; r) \geq j^r$, so that 
for $j \leq k \sim c^{-1} r$ we  have
\begin{align}
g(j; r) 
&= \frac{(2 j - 1)^r}{S(j; r)} 
\leq 2^r \left( 1 - \frac{1}{2 j} \right)^r \leq 2^r e^{- r/ (2 j)} \nonumber \\
&\leq 2^r e^{- r / (2 k)} 
= (1 + o(1)) e^{-c/2} 2^r
\leq (1 + o(1)) \frac{2 \sqrt{3}}{9} 2^r.   
\end{align}
\end{proof}

We next note that \refT{T:Isymmunimodalr} is extended easily to higher dimensions, according to the following IRWMH analogue of \refT{T:unimodal-d}, if we place a suitable restriction on the support of the proposal 
step-distribution. 

\begin{theorem}
\label{T:Isymmunimodalr high}
In the IRWMH setting in dimension~$d$, let $r \in (0, \infty)$, and consider
the (one-dimensional) supremum $s(r)$ as defined in the statement of \refT{T:Isymmunimodalr}.  If
\begin{equation}
\label{Istar}
\pi(\xx) = \hpi(\|\xx\|)\mbox{\rm \ for some nonincreasing function~$\hpi$ on $[0, \infty)$}
\end{equation}
and the proposal step-distribution is symmetric and concentrated on odd multiples of the coordinate vectors in 
$\bbZ^d$, then
\begin{equation}
\label{sr ineq}
\E \|\XX_1 - \XX_0\|^r \leq s(r) \E \|\XX_0\|^r.
\end{equation}
\end{theorem}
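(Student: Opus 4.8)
The plan is to reduce the $d$-dimensional bound to the one-dimensional supremum $s(r)$ coordinate by coordinate, in close analogy with the proof of \refT{T:unimodal-d} but using the sharp discrete constant $s(r)$ in place of Winkler's inequality (\refL{L:Winkler}). We may assume $\E\|\XX_0\|^r<\infty$, as otherwise the claim is trivial. First I would record the IRWMH analogue of \eqref{SRWMHsymmphir}: since the proposal step-distribution is symmetric, the incremental $r$th moment is
\[
\E \|\XX_1 - \XX_0\|^r = \sum_{\zz \in \bbZ^d} \|\zz\|^r\, \phi(\zz)\,[1 - \dtv(\XX_0, \XX_0 + \zz)].
\]
Because $\phi$ is concentrated on odd multiples of the coordinate vectors, only the terms $\zz = n \ee_i$ with $n$ odd contribute, and there $\|\zz\| = |n|$.

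The key step is to evaluate $1 - \dtv(\XX_0, \XX_0 + n\ee_i)$ using \eqref{Istar}. Exactly as at \eqref{TVhigh}, the hypothesis $\pi(\xx) = \hpi(\|\xx\|)$ gives $\min\{\pi(\xx), \pi(\xx - n\ee_i)\} = \hpi(\max\{\|\xx\|, \|\xx - n\ee_i\|\})$, and the set $\{\xx : \|\xx\| \ge \|\xx - n\ee_i\|\}$ is the half-space $\{x_i \ge n/2\}$. Splitting the sum defining $1-\dtv$ over this half-space and its complement, and shifting the summation index by $n\ee_i$ in the complementary sum, yields
\[
1 - \dtv(\XX_0, \XX_0 + n\ee_i) = \P(|X_{0i}| > |n|/2),
\]
where $X_{0i}$ is the $i$th coordinate of $\XX_0$. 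Here the oddness of $n$ is essential: it makes $|n|/2$ a half-integer, so no probability mass sits on the separating hyperplane and the split is clean. This is the discrete substitute for the separating hyperplane having Lebesgue measure zero in the continuous computation \eqref{TVhigh}. I would also verify that each marginal $\cL(X_{0i})$ is symmetric and discrete unimodal about~$0$: symmetry and permutation-invariance of $\hpi(\|\cdot\|)$ make the marginals identical, and $\P(X_{0i} = m) = \sum_{\xx'\in\bbZ^{d-1}} \hpi(\sqrt{m^2 + \|\xx'\|^2})$ is nonincreasing in $|m|$ because $\hpi$ is nonincreasing.

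With these two facts in hand, the reduction is bookkeeping. Writing $w_i := \sum_{n\ \mathrm{odd}} \phi(n\ee_i)$ for the mass of $\phi$ in the $\pm\ee_i$ directions (so $\sum_i w_i = 1$) and, when $w_i>0$, normalizing to the one-dimensional symmetric proposal $\tilde\phi_i(n) := \phi(n\ee_i)/w_i$ on the odd integers, the inner sum over~$n$ equals $w_i$ times the one-dimensional incremental moment of \eqref{rsymmuni discrete} for target $\cL(X_{0i})$ and proposal $\tilde\phi_i$. By the very definition of $s(r)$ in \refT{T:Isymmunimodalr}, that one-dimensional moment is at most $s(r)\,\E|X_{0i}|^r$ (trivially so if $X_{0i}$ is degenerate at~$0$, both sides then vanishing). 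Summing over~$i$ and using the coordinate symmetry $\E|X_{0i}|^r \equiv \mu_r$ gives
\[
\E\|\XX_1 - \XX_0\|^r \le s(r)\sum_{i=1}^d w_i\,\E|X_{0i}|^r = s(r)\,\mu_r \le s(r)\,\E\|\XX_0\|^r,
\]
the final inequality holding since $|X_{0i}| \le \|\XX_0\|$. The main obstacle is the discrete total-variation identity of the second paragraph; everything else follows once the oddness hypothesis has been correctly exploited to keep all probability mass off the separating hyperplane.
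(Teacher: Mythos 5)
Your proposal is correct and follows essentially the same route as the paper's proof: reduce coordinate-by-coordinate, use the oddness of the steps to keep all mass off the separating hyperplane $\{x_i = n/2\}$ so that $1-\dtv(\XX_0,\XX_0+n\ee_i) = \P(|X_{0i}|>|n|/2)$, check that each marginal $X_{0i}$ is symmetric discrete unimodal, and invoke the definition of $s(r)$ direction by direction before summing. The paper's version skips the explicit normalization of $\phi$ to one-dimensional proposals $\tilde\phi_i$ (keeping the directional masses unnormalized on both sides of the key inequality), but this is only a cosmetic difference.
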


\begin{proof}
Denote the probability mass function for the proposal step-distribution by~$\phi$.
Proceeding analogously in the IRWMH setting to how we proceeded in the proof
of the RWMH \refT{T:unimodal-d}, we note that
$\P\!\left( |\langle \XX_0, \zz \rangle| = \tfrac12 \|\zz\|^2 \right) = 0$ 
when $\phi(\zz) > 0$ 
by our hypothesis about~$\phi$
and therefore have 
\begin{equation}
\label{high1}
\E \|\XX_1 - \XX_0\|^r 
= \sum_{\zz \in \bbZ^d} \phi(\zz) \|\zz\|^r \P\!\left( |\langle \XX_0, \zz \rangle| > \tfrac12 \|\zz\|^2 \right). 
\end{equation}
By assumption, if $\phi(\zz) > 0$, then $\zz = (2 y + 1)\,\ee_i$ for some $i \in \{1, \ldots, d\}$ and $y \in \bbZ$ and so
\begin{equation}
\label{high2}
\|\zz\|^r \P\!\left( |\langle \XX_0, \zz \rangle| > \tfrac12 \|\zz\|^2 \right)
= |2 y + 1|^r \P\!\left( |X_{0 i}| > \tfrac12 |2 y + 1| \right).
\end{equation}
But it follows from~\eqref{Istar} that each $X_{0 i}$ has a distribution
that is discrete unimodal (about~$0$), 
so by the (one-dimensional) definition of $s(r)$ 
in \refT{T:Isymmunimodalr}, 
see also \eqref{ISRWMHunimodalr} and \eqref{rsymmuni discrete},
we have 
\begin{align}
\lefteqn{\hspace{-.7in}\sum_{y \in \bbZ} \phi((2 y + 1)\,\ee_i) |2 y + 1|^r \P\!\left( |X_{0 i}| > \tfrac12 |2 y + 1| \right)} \nonumber \\ 
&\leq s(r) \E |X_{0 i}|^r \sum_{y \in \bbZ} \phi((2 y + 1)\,\ee_i) \label{high3}
\end{align}
for $i \in \{1, \ldots, d\}$.  Therefore, by \eqref{high1}--\eqref{high3},
\begin{align}
\E \|\XX_1 - \XX_0\|^r
&= \sum_{i = 1}^d \sum_{y \in \bbZ} \phi((2 y + 1) \ee_i) |2 y + 1|^r \P\!\left( |X_{0 i}| > \tfrac12 |2 y + 1| \right) \nonumber \\
&\leq s(r) \sum_{i = 1}^d \E |X_{0 i}|^r \sum_{y \in \bbZ} \phi((2 y + 1) \ee_i) \nonumber \\
&\leq s(r) \E \|\XX_0\|^r \sum_{i = 1}^d \sum_{y \in \bbZ} \phi((2 y + 1) \ee_i) \nonumber \\
&= s(r) \E \|\XX_0\|^r \sum_{\zz \in \bbZ^d} \phi(\zz)
= s(r) \E \|\XX_0\|^r, 
\end{align} 
as desired.
\end{proof}

\subsection{Computation of $s(r)$ for $r \in [1.043, \infty)$}
\label{A:argmax}

Naive application of \refT{T:Isymmunimodalr} to compute $s(r)$ for a given $r \in
[1.043, \infty)$ requires a search through positive integers $k \leq
k_0(r)$, where $k_0(r)$ is given by~\eqref{k0r}.  Our next theorem, the main
result of this subsection, 
shows how the search can be narrowed
considerably and that we can with very few exceptions give simple exact
formulas [namely, \eqref{Kr} and~\eqref{tKr}] for the maximizing value
of~$k$. 

\begin{theorem}
\label{T:Isymmunimodalr+}
Assume $r \in [1.043, \infty)$.
Let $k_0(r)$, $g(k; r)$, and $s(r)$ be as defined in \refT{T:Isymmunimodalr} [see \eqref{sr simpler}--\eqref{k0r}].
Further, let 
\begin{align}
K(r) 
&:= \left\lceil \frac12 \times \frac{3^{r / (r - 1)} - 1}{3^{1 / (r - 1)} - 1} \right\rceil \label{Kr} \\
&= \left\lceil \frac{r - 1}{\ln 3} + \frac{\ln 3}{12} r^{-1} + O(r^{-2}) \right\rceil + 1\mbox{\ as $r \to \infty$} \label{Kr asy} 
\end{align} 
and
\begin{equation}
\label{tKr}
\tK(r) := \left\lceil \frac{r - 1}{\ln 3} \right\rceil + 1.
\end{equation}

\noindent
{\rm (a)}~The sequence $g(k; r)_{k \in \bbN}$ is unimodal with a mode in $[K(r) - 1, \infty)$ and strictly increasing for $k \in [1, K(r) - 1]$.
\smallskip

\noindent
{\rm (b)}~We have
\begin{equation}
s(r) = \max_{k \in \bbN:\,K(r) - 1 \leq k \leq k_0(r)} g(k; r).
\end{equation}

\noindent
{\rm (c)}~For all sufficiently large~$r$ we have
\begin{equation}
s(r) = \max\{g(K(r) - 1; r), g(K(r); r), g(K(r) + 1; r)\}.
\end{equation}

\noindent
{\rm (d)}~For every $j \in \bbZ \cap [11, 1000]$ and $r = j / 10$ we have
\begin{equation}
s(r) = g(K(r); r).
\end{equation} 

\noindent
{\rm (e)}~For every $r \in \bbZ \cap [2, 30000]$ we have
\begin{equation}
s(r) = g(K(r); r).
\end{equation} 

\noindent
{\rm (f)}~For every $j \in \bbZ \cap [11, 1000]$ except $j = 801$ and $r = j / 10$ we have
\begin{equation}
s(r) = g(\tK(r); r),
\end{equation} 
and for $r = 801/10$ we have
\begin{equation}
s(r) = g(\tK(r) - 1; r) = g(72; r).
\end{equation} 

\noindent
{\rm (g)}~For every $r \in \bbZ \cap [2, 30000]$ except $r = 24622$ we have
\begin{equation}
s(r) = g(\tK(r); r),
\end{equation} 
and for $r = 24622$ we have
\begin{equation}
s(r) = g(\tK(r) - 1; r) = g(22411; r).
\end{equation} 
\end{theorem}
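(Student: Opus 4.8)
The plan is to reduce every part to the sign of the one-step difference of the sequence $k \mapsto g(k;r)$, and then to separate the \emph{analytic} content (parts (a)--(c)) from the \emph{finite-computation} content (parts (d)--(g)). Writing $S(k+1;r) = S(k;r) + (k+1)^r$ and clearing denominators in the inequality $g(k+1;r) \gtrless g(k;r)$, one finds
\begin{equation*}
g(k+1;r) > g(k;r) \iff S(k;r) > R(k;r) := \frac{(2k-1)^r (k+1)^r}{(2k+1)^r - (2k-1)^r},
\end{equation*}
so the monotonicity of $g(\cdot;r)$ is governed by the sign of $D(k) := S(k;r) - R(k;r)$. Given this, part~(b) is immediate from the ``strictly increasing on $[1,K(r)-1]$'' half of part~(a) together with the representation $s(r) = \max_{k\le k_0(r)} g(k;r)$ of \refT{T:Isymmunimodalr} [see~\eqref{sr simpler}]: strict monotonicity on $[1,K(r)-1]$ forces any maximiser to satisfy $k \ge K(r)-1$. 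Likewise, once part~(a) supplies unimodality and we adjoin a single ``decrease past $K(r)+1$'' estimate, the global maximiser lies in $\{K(r)-1,K(r),K(r)+1\}$, giving part~(c); so the genuine work is in~(a) and in the upper-threshold estimate needed for~(c).

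For part~(a) I would first establish the strictly increasing range. The exact condition $S(k;r) > R(k;r)$ is converted into the explicit bound $k \le K(r)-1$ by sandwiching $S(k;r)$ between geometric-type comparison sums; the key algebraic fact is that, with $u := 3^{1/(r-1)}$ (so that $u^{r-1}=3$ and hence $u^r = 3u$), the closed form \eqref{Kr} is exactly $K(r) = \ceil{\tfrac12\,(u^r-1)/(u-1)}$, i.e.\ $2K(r)$ is the ceiling of the geometric sum of ratio~$u$ joining $u^0=1$ to $u^{r-1}=3$. This identity is precisely what makes the clean lower bound on $S(k;r)$ reproduce the threshold $K(r)$. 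For \emph{unimodality} I would show that $D$ changes sign at most once, equivalently that the increments $R(k+1;r)-R(k;r)$ cross the increments $S(k+1;r)-S(k;r)=(k+1)^r$ exactly once; the monotonicity from \refL{L:helpful2}(b) (that $f_1(k;r)$ is nonincreasing in~$k$) and the averaging/convexity inequality of \refL{L:Jensen+} are the natural tools for this single-crossing statement. Together these place the mode in $[K(r)-1,\infty)$, as claimed.

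Part~(c) is where asymptotics enter. Using the two-term estimate of $S(k;r)$ in \refL{L:helpful4} (and the resulting $g(k;r)\sim x^{1/2}(1-x)2^r$, $x=e^{-c}$ with $k\sim c^{-1}r$, from the proof of \refC{C:sasy}), the integer mode sits within $O(1)$ of the continuous optimum $r/\ln 3$; the refined expansion \eqref{Kr asy}, including the $\tfrac{\ln 3}{12}r^{-1}$ correction, is exactly what is needed to show that for all sufficiently large~$r$ one has $D(K(r)+1)<0$, hence (with unimodality) mode $\le K(r)+1$. Combined with mode $\ge K(r)-1$ from~(a), this confines the maximiser to three consecutive values and proves~(c).

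Finally, parts~(d)--(g) are finite verifications. By part~(b) the search for the maximiser is confined to the explicit window $K(r)-1 \le k \le k_0(r)$ (and, for the larger listed~$r$, to the three values from part~(c)); for each tabulated~$r$ one computes the finitely many values $g(k;r)$ and reads off the argmax, checking it equals $K(r)$ in~(d)/(e) and $\tK(r)$ in~(f)/(g). The exceptional values $r=801/10$ and $r=24622$ are precisely those for which the relevant fractional parts make the argmax fall at $\tK(r)-1$ instead of $\tK(r)$, as confirmed by direct evaluation. I expect the \textbf{main obstacle} to be the sharpness demanded in part~(c): near the mode the second difference of $\ln g(k;r)$ is only of order $r^{-1}$, while the distance from $K(r)$ to the true integer mode is $O(1)$, so confining the mode to a window of width two requires the second-order asymptotics of both $S(k;r)$ and the threshold, together with careful bookkeeping of fractional parts --- the same delicacy that produces the isolated exceptional $r$'s in (f) and (g).
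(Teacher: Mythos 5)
Your proposal reproduces the paper's architecture exactly --- the reduction of monotonicity of $g(\cdot;r)$ to the sign of $D(k)=S(k;r)-R(k;r)$ [the paper's $h_k$, \eqref{hk}], part (b) from part (a) plus \eqref{sr simpler}, part (c) from unimodality plus a single ``decrease past $K(r)+1$'' estimate, and (d)--(g) by finite computation --- but both analytic components, (a) and (c), are left as sketches whose stated mechanisms would not carry the proof. For (a), the assertion that ``sandwiching $S(k;r)$ between geometric-type comparison sums'' converts $D(k)>0$ into $k\le K(r)-1$ is unsubstantiated, and it also misreads what $K(r)$ is: in the paper, $K(r)=\ceil{1/(2t_r)}$ arises from the turning point $t_r=\frac{3^{1/(r-1)}-1}{3^{r/(r-1)}-1}$ of the one-variable function $t\mapsto(1-t)^r-(1-3t)^r$, not as the crossing point of $S$ and $R$ (indeed that crossing point, i.e.\ the mode of $g$, can lie beyond $K(r)-1$; that it is at most $K(r)+1$ for large $r$ is precisely the content of (c)). More seriously, the single-crossing (unimodality) of $D$ cannot be obtained from \refL{L:helpful2}(b) and \refL{L:Jensen+}: those results concern the envelope $f_1(k;r)$ and serve in the paper only to control the tail $k\ge k_0(r)+1$. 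The missing idea is the identity \eqref{hk2}, which shows that the increments $(\gD h)_k$ of $h_k=D(k)$ have the same sign as the increments of $\tth_k=\bigl(1-\frac{1}{2k+2}\bigr)^r-\bigl(1-\frac{3}{2k+2}\bigr)^r$; unimodality of $\tth$ (from the turning-point analysis above), together with $h_1>0$ [\eqref{h1>}] and $h_k\to-\infty$, then gives via \refL{L:unimodal 2} that $g$ is unimodal with mode at least $K(r)-1$ and strictly increasing before that. Without this identity, or a genuine substitute, part (a) is unproven.

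For part (c) you name the correct target, $D(K(r)+1)<0$ for all large $r$, and correctly flag the near-degeneracy, but the proposal stops exactly where the work begins; this is not a routine consequence of \eqref{Kr asy}. At $k\sim r/\ln 3$ one has $\xi=e^{-r/k}\approx 1/3$, so in the expansion of $\rho(r)=(K+2)^{-r}\bigl[\bigl(\frac{2K+3}{2K+1}\bigr)^r-1\bigr]S(K+1;r)$ the order-$r^{-1}$ term $\frac12\gamma^2(1-3\xi)(1-\xi)^{-2}r^{-1}$ essentially cancels, and one must expand all three factors to order $r^{-2}$ (which requires strengthening the two-term \refL{L:helpful4} to three terms, with the sums $\Sigma_m$) and track the fractional part $\theta(r)$, arriving at $\rho(r)=1-\frac98(\ln 3)^4[1+\theta(r)]r^{-2}+o(r^{-2})<1$. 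That computation is the substance of part (c) and is absent from your proposal. Parts (d)--(g) as you describe them are fine modulo (a)--(c) and \refT{T:Isymmunimodalr}; note only that the paper, because the proof of \eqref{sr simpler} is itself incomplete for moderate $r$ [\refR{R:Isymmunimodalr}(c)], additionally verifies $f_1(k_0(r)+1;r)\le s(r)$ for each tabulated $r$ (with a separate argument at $r=11/10$) so that the finite verifications are self-contained.
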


To establish \refT{T:Isymmunimodalr+}, we need only prove parts~(a)
and~(c). Then part~(b) follows from \refT{T:Isymmunimodalr} and part~(a),
parts (d)--(e) follow by verification (using exact  arithmetic in
{\tt Mathematica}) that
\begin{equation}
g(K(r); r) > \max\{g(K(r) - 1; r), g(K(r) + 1; r)\}
\end{equation}
for the values of~$r$ in parts (d)--(e), and parts (f)--(g) follow by
verification that $K(r) = \tK(r)$ for the non-exceptional values of~$r$ and
$K(r) = \tK(r) - 1$ for the exceptional values. Parts~(a) and~(c) are
proved later in this 
subsection.

We
have made the proof of parts (d)--(e) [and thus also the proofs of parts (f)--(g)] rigorously complete by using 
{\tt Mathematica} to verify also that for each of the values~$r$ in question the value of $f_1(k_0(r) + 1; r)$ is no greater than the claimed value of $s(r)$.  Parts (d)--(e) then follow rigorously by the same argument as for claim~(d) in the proof of \refT{T:Isymmunimodalr}, namely, that $s(r) \geq g(k; r)$ for any $k \in \bbN$ but also
\begin{equation}
\sup_{k \in \bbN:\,k \geq k_0(r) + 1}\,\max_{y \in \bbZ:\,0 \leq y \leq k - 1} f(k, y; r)
\leq \sup_{k \in \bbN:\,k \geq k_0(r) + 1} f_1(k; r) = f_1(k_0(r) + 1; r). 
\end{equation}
This argument suffices for all values in question except $r = 11/10$ in part~(d), where, as in the argument for claim~(e) in the proof of \refT{T:Isymmunimodalr}, we instead need only (i) show that $f_1(6; 11/10) < g(2; 11/10)$, and indeed
\begin{equation}
f_1(6; 11/10) < 1.058 < 1.065 < g(2; 11/10);  
\end{equation}
and (ii)~verify that $f(k, y; 11/10) < g(2; 11/10)$ for
\begin{equation} 
(k, y) \in \{(2, 0), (3, 0), (3, 1), (4, 0), (4, 1), (4, 2), (5, 0), (5, 1), (5, 2), (5, 3)\}
\end{equation}
(which is simple to do).

\begin{remark}
Despite the impression given by \refT{T:Isymmunimodalr+}(d)--(e), we do not \emph{always} have
$s(r) = g(K(r); r)$.  For example, if $r = 2.04$, then $K(r) = 3$, but $s(r) = g(2; r) > 1.839 > 1.837 > g(3; r)$.
\end{remark}

To prove part~(a) we require some background facts about unimodal sequences established in the next two lemmas.  Recall that a sequence $(a_k)_{k \geq 1}$ is said to be \emph{unimodal} with (positive integer) mode $M$ if 
$(\gD a)_k := a_{k + 1} - a_k \geq 0$ for $1 \leq k \leq M - 1$ and $(\gD a)_k \leq 0$ for $k \geq M$.  Despite the term ``unimodal'', and because of ties, the mode of a unimodal sequence need not be unique; as an extreme example, for a constant sequence every index is a mode.

\begin{lemma}
\label{L:unimodal 1}
Given a sequence $(a_k)_{k \geq 1}$ and a nonnegative integer~$M$, if $(\gD a)_k \geq 0$ for all 
$1 \leq k \leq M - 1$ and $(\gD a)_M \leq 0$ and the sequence $(\gD a)_k$ is nonincreasing for $k \in [M, \infty)$, then $(a_k)$ is unimodal with mode~$M$.
\end{lemma}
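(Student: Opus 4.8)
The plan is to verify directly the two inequalities that constitute the stated definition of unimodality with mode~$M$, namely $(\gD a)_k \ge 0$ for $1 \le k \le M - 1$ and $(\gD a)_k \le 0$ for $k \ge M$. The first of these is literally one of the hypotheses of the lemma, so there is nothing to prove for it; the entire content of the lemma lies in deducing the second inequality from the sign condition at~$M$ together with the monotonicity of the difference sequence.

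For the second inequality I would argue in a single step. By hypothesis $(\gD a)_M \le 0$, and the sequence $(\gD a)_k$ is nonincreasing on $[M, \infty)$; hence for every integer $k \ge M$ we have $(\gD a)_k \le (\gD a)_M \le 0$. Combining this with the first inequality (the first hypothesis) yields exactly the definition of $(a_k)$ being unimodal with mode~$M$, which completes the argument. Note that the full strength of nonincreasingness is not even needed here: all that is used is that each $(\gD a)_k$ with $k \ge M$ does not exceed $(\gD a)_M$.

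Since each of the two required inequalities follows in one line from the hypotheses, I do not anticipate any genuine obstacle; the lemma essentially repackages its hypotheses into the language of the preceding definition. The only point meriting a moment's care is the boundary situation in which the index range $1 \le k \le M - 1$ is empty (that is, $M \le 1$), in which case the first condition holds vacuously and the chaining argument for $k \ge M$ alone delivers the conclusion.
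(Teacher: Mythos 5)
Your proof is correct and coincides with the paper's own argument: the paper's one-line proof is exactly your chaining step $(\gD a)_k \leq (\gD a)_M \leq 0$ for $k \geq M$, combined with the first hypothesis being verbatim the first half of the definition. Nothing further is needed.
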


\begin{proof}
This is clear from the definition of unimodal sequence, because for $k \geq M$ we have $(\gD a)_k \leq (\gD a)_M \leq 0$.
\end{proof}

\begin{lemma}
\label{L:unimodal 2}
Given a sequence 
$(a_k)_{k \geq 1}$, an integer $M \geq 2$, and an integer $\tM \in [1, M - 1]$, suppose that the sequence 
$((\gD a)_k)_{k \geq 0}$ is unimodal with mode~$\tM$ and satisfies $(\gD a)_1 \geq 0$, $(\gD a)_{M - 1} \geq 0$, and $(\gD a)_M \leq 0$.  Then $(a_k)$ is unimodal with mode~$M$.  
\end{lemma}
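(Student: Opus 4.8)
The plan is to deduce the conclusion from \refL{L:unimodal 1} applied to the sequence $(a_k)$ with mode $M$; that lemma requires three things, namely that $(\gD a)_k \ge 0$ for $1 \le k \le M-1$, that $(\gD a)_M \le 0$, and that $(\gD a)_k$ is nonincreasing for $k \in [M, \infty)$. Writing $b_k := (\gD a)_k$ for brevity, the standing hypothesis is that $(b_k)$ is unimodal with mode $\tM$, i.e.\ nondecreasing for indices up to $\tM$ and nonincreasing for indices at least $\tM$; note also that $\tM \le M-1 < M$.

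Two of the three required properties are essentially free. The middle one, $(\gD a)_M = b_M \le 0$, is assumed outright. For the last, since $M > \tM$, the ray $[M, \infty)$ lies in the nonincreasing tail $[\tM, \infty)$ of $(b_k)$, so $(\gD a)_k = b_k$ is nonincreasing there, as needed.

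The only property needing an argument is the first, $b_k \ge 0$ for $1 \le k \le M-1$, and here I would split the range at the mode. Because $\tM \in [1, M-1]$ we have $[1, M-1] = [1, \tM] \cup [\tM, M-1]$. On $[1, \tM]$ the sequence $b$ is nondecreasing, so $b_k \ge b_1 \ge 0$ by the hypothesis $(\gD a)_1 \ge 0$. On $[\tM, M-1]$ the sequence $b$ is nonincreasing, so $b_k \ge b_{M-1} \ge 0$ by the hypothesis $(\gD a)_{M-1} \ge 0$. Together these give $b_k \ge 0$ on all of $[1, M-1]$.

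Having verified all three hypotheses, \refL{L:unimodal 1} then delivers that $(a_k)$ is unimodal with mode $M$. There is no real obstacle here beyond the bookkeeping: the point worth flagging is that unimodality of the difference sequence $(b_k)$ alone does not fix the sign of $b$ on $[1, M-1]$, and it is precisely the two endpoint nonnegativities $(\gD a)_1 \ge 0$ and $(\gD a)_{M-1} \ge 0$ that force $b \ge 0$ across both the rising and the falling portions of $(b_k)$.
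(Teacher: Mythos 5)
Your proof is correct and is precisely the argument the paper intends: the paper's proof of this lemma is simply ``This follows by applying Lemma~\ref{L:unimodal 1},'' and your verification of its three hypotheses (using unimodality of $(\gD a)_k$ together with the endpoint conditions $(\gD a)_1 \ge 0$ and $(\gD a)_{M-1} \ge 0$ to get nonnegativity on $[1,M-1]$, and $M > \tM$ for the nonincreasing tail) is exactly the bookkeeping being left to the reader.
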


\begin{proof}
This follows by applying \refL{L:unimodal 1}.
\end{proof}

We are ready now for the proof of \refT{T:Isymmunimodalr+}(a).

\begin{proof}[Proof of \refT{T:Isymmunimodalr+}(a)]
Fix $r \in [1.043, \infty)$ and drop it from the notation (when not needed for clarity) for this proof.  
[Thus, for example, we write $S(k)$ rather than $S(k; r)$.]  
A brief calculation, notably using $S(k + 1) = S(k) + (k + 1)^r$, yields
\begin{align}\label{gdg}
(\gD g)_k 
&= g(k + 1) - g(k) \nonumber \\ 
&= \frac{(2 k + 1)^r - (2 k - 1)^r}{S(k) S(k + 1)} 
\left\{ S(k) - (k + 1)^r \left[ \left( \frac{2 k + 1}{2 k - 1} \right)^r - 1 \right]^{-1} \right\} \nonumber \\
&= \frac{(2 k + 1)^r - (2 k - 1)^r}{S(k) S(k + 1)} (\gD \tg)_k 
\end{align}
for $k \geq 1$, where for $k \geq 1$ we define
\begin{equation}
\tg_k := \sum_{j = 1}^{k - 1} \left\{ S(j) - (j + 1)^r \left[ \left( \frac{2 j + 1}{2 j - 1} \right)^r - 1 \right]^{-1} \right\}.   
\end{equation}
Therefore~$g$ is unimodal with mode~$M$ if and only if~$\tg$ is unimodal with mode~$M$.

Thus we can prove unimodality for~$g$ by proving unimodality for~$\tg$, and we carry out the latter by applying \refL{L:unimodal 2} to the sequence $a = \tg$.  There are four hypotheses to verify in \refL{L:unimodal 2}.  The first is that the sequence $h = (h_k)_{k \geq 1}$ defined by
\begin{equation}
\label{hk}
h_k := (\gD \tg)_k = S(k) - (k + 1)^r \left[ \left( \frac{2 k + 1}{2 k - 1} \right)^r - 1 \right]^{-1}
\end{equation}  
is unimodal.  For this, observe that for $k \geq 1$ we have
\begin{align}\label{hk2}
\lefteqn{\left[ \left( \frac{2 k + 1}{2 k - 1} \right)^r - 1 \right] 
\left[ \left( \frac{2 k + 3}{2 k + 1} \right)^r - 1 \right] (\gD h)_k} \nonumber \\  
&= (k + 1)^r \left( \frac{2 k + 1}{2 k - 1} \right)^r \left[ \left( \frac{2 k + 3}{2 k + 1} \right)^r - 1 \right] - (k + 2)^r \left[ \left( \frac{2 k + 1}{2 k - 1} \right)^r - 1 \right] \nonumber \\
&= (k + 1)^r \left[ \left( \frac{2 k + 3}{2 k - 1} \right)^r - \left( \frac{2 k + 1}{2 k - 1} \right)^r \right] - (k + 2)^r \left[ \left( \frac{2 k + 1}{2 k - 1} \right)^r - 1 \right] \nonumber \\
&= (2 k - 1)^{-r} \times \nonumber \\ 
&\quad \Big\{ (k + 1)^r \left[ (2 k + 3)^r - (2 k + 1)^r \right] 
- (k + 2)^r \left[ (2 k + 1)^r - (2 k - 1)^r \right] \Big\} \nonumber \\
&= 2^{-r} (2 k - 1)^{-r} (2 k + 2)^r (2 k + 4)^r (\gD \tth)_k
\end{align}
where
\begin{align}
\tth_k
&:= (2 k + 2)^{-r} \left[ (2 k + 1)^r - (2 k - 1)^r \right] \nonumber \\ 
&= \left( 1 - \frac{1}{2 k + 2} \right)^r - \left(1 - \frac{3}{2 k + 2} \right)^r.
\end{align}
Thus~$h$ is unimodal with mode~$M$ if and only if $\tth$ is unimodal with mode~$M$.

The function $t \mapsto (1 - t)^r - (1 - 3 t)^r$ is strictly increasing for $t \in [0, t_r]$ and strictly decreasing for 
$t \in [t_r, 1/3]$, where $t_r := \frac{3^{1 / (r - 1)} - 1}{3^{r / (r - 1)} - 1}$.  It follows that $(\gD \tth)_k > 0$ if 
$k \leq \left\lfloor \frac{1}{2 t_r} \right\rfloor - 2$ and $(\gD \tth)_k < 0$ if $k \geq \left\lceil \frac{1}{2 t_r} \right\rceil - 1 = K(r) - 1$.  If $1 / (2 t_r) \in \bbN$, then we find that $\tth$ is unimodal with unique mode $K(r) - 1$.  If $1 / (2 t_r)$ is not an integer, then we find that $\tth$ is unimodal, with (i)~unique mode $K(r) - 2$ if 
$\tth_{K(r) - 2} > \tth_{K(r) - 1}$, (ii)~unique mode $K(r) - 1$ if $\tth_{K(r) - 2} < \tth_{K(r) - 1}$, and (iii)~precisely these two values as the modes if $\tth_{K(r) - 2} = \tth_{K(r) - 1}$.
By \eqref{hk2}, the same holds for the sequence $h$.

In order to use \refL{L:unimodal 2}, we also show that $h_1 \geq 0$ and $\liminf_{k\to\infty} h_k < 0$.  Indeed, from~\eqref{hk} we have
\begin{equation}\label{h1>}
h_1 = 1 - \frac{2^r}{3^r - 1} > 0
\end{equation}
and for large~$k$ the asymptotics
\begin{equation}
h_k = (1 + o(1)) \frac{k^{r + 1}}{r + 1} - (1 + o(1)) k^r \frac{k}{r} \sim - \frac{k^r}{r (r + 1)}.
\end{equation} 

It follows that $(\gD\tg)_k=h_k\ge h_1>0$ for $1\le k\le K(r)-2$, and 
\refL{L:unimodal 2} applies to $(\tg_k)_{k\ge1}$ with some $M\ge K(r)-1$.
The result follows by \eqref{gdg}.
\end{proof}

We can now use \refT{T:Isymmunimodalr+}(a) to prove \refT{T:Isymmunimodalr+}(c).

\begin{proof}[Proof of \refT{T:Isymmunimodalr+}(c)]
By \refT{T:Isymmunimodalr+}(a), it suffices to prove that
\begin{equation}
g(K(r) + 2; r) < g(K(r) + 1; r), 
\end{equation}
or, equivalently, writing~$K$ as abbreviation for $K(r)$, that
\begin{equation}
\label{less than 1}
\rho(r) := (K + 2)^{-r} \left[ \left( \frac{2 K + 3}{2 K + 1} \right)^r - 1 \right] S(K + 1; r) < 1
\end{equation}
for all sufficiently large~$r$.

This requires \emph{three}-term 
asymptotics for each of the three factors in~\eqref{less than 1}.  We will use notation as in \refL{L:helpful4}, with slight modification.  Since the argument~$k$ of $S(k; r)$ appearing in~\eqref{less than 1} is $K + 1$, we will define $\gam \equiv \gam(r)$ by $\gam := r / (K + 1)$ and $\xi \equiv \xi(r)$ by $\xi := e^{-\gam}$.  

The first two factors can be handled in rather straightforward fashion, and the result is  that
\begin{equation}
(K + 2)^{-r} = (K + 1)^{-r} [\xi + \tfrac12 \gam^2 \xi r^{-1} + (1 + o(1)) \tfrac{1}{24} \gam^3 (3 \gam - 8) \xi r^{-2}]
\end{equation}
and 
\begin{equation}
\left( \frac{2 K + 3}{2 K + 1} \right)^r - 1
= \xi^{-1} (1 - \xi) + (1 + o(1)) \tfrac{1}{12} \gam^3 \xi^{-1} r^{-2}.  
\end{equation}

The third factor can be handled by elaborating on the proof of \refL{L:helpful4}, resulting  in
\begin{equation} 
S(K + 1; r) 
= (K + 1)^r [\Sigma_0 - \tfrac12 \gam^2 \Sigma_2 r^{-1}  
+ (\tfrac18 \gam^4 \Sigma_4 - \tfrac13 \gam^3 \Sigma_3) r^{-2} + o(r^{-2})]
\end{equation}
where $\Sigma_m := \sum_{j = 0}^{\infty} e^{- \gam j} j^m$ and in particular
\begin{align}
\Sigma_0 &= (1 - \xi)^{-1}, \nonumber \\
\Sigma_2 &= (1 - \xi)^{-3} \xi (1 + \xi), \nonumber \\
\Sigma_3 &= (1 - \xi)^{-4} \xi (1 + 4 \xi + \xi^2), \nonumber \\
\Sigma_4 &= (1 - \xi)^{-5} \xi (1 + \xi) (1 + 10 \xi + \xi^2).
\end{align}

Upon multiplying the three factors, the upshot is  that
\begin{equation}
\rho(r) = 1 + \tfrac12 \gam^2 (1 - 3 \xi) (1 - \xi)^{-2} r^{-1} + a_2 r^{-2} + o(r^{-2}),
\end{equation}
where
\begin{align}
a_2 
&= \tfrac{1}{24} \gam^3 (3 \gam - 8) + \tfrac{1}{12} \gam^3 (1 - \xi)^{-1} - \tfrac14 \gam^4 \xi (1 + \xi) (1 - \xi)^{-2} 
\nonumber \\ 
&{} \quad {} - \tfrac13 \gam^3 \xi (1 + 4 \xi + \xi^2) (1 - \xi)^{-3} 
+ \tfrac18 \gam^4 \xi (1 + \xi) (1 + 10 \xi + \xi^2) (1 - \xi)^{-4}.
\end{align}
If, recalling~\eqref{Kr asy}, we now substitute $K + 1 = \frac{r - 1}{\ln 3} + 2 + \theta(r) + O(r^{-1})$, where $0 \leq \theta(r) < 1$, we obtain
\begin{equation}
\rho(r) = 1 - \tfrac98 (\ln 3)^4 [1 + \theta(r)] r^{-2} + o(r^{-2}),
\end{equation}
which is smaller than~$1$ for all sufficiently large~$r$. 
\end{proof}

\subsection{Computation of $s(r)$ for $r \in (1, 1.043)$}
\label{A:compute}

We have discussed computation of $s(r)$ for $r \geq 1.043$ in \refT{T:Isymmunimodalr} and \refS{A:argmax}.
The next result shows (recall also \refL{L:helpful1}) that
we can rigorously compute $s(r)$ exactly for any given $r \in (1, 1.043)$, by
considering the maximum of $g(k; r)$ for the first approximately $7.325 (r - 1)^{-1}$
values of~$k$. 

\begin{proposition}
\label{P:compute}
Recall the definition~\eqref{fkyr} of $f(k, y; r)$.  For $r \in (1, 1.043)$ we have
\begin{align}
s(r) 
&= \max_{k \in \bbN:\,k \leq k_1(r)}\,\max_{y \in \bbZ:\,0 \leq y \leq k - 1} f(k, y; r) \label{10001043} \\ 
&= \max_{k \in \bbN:\,k \leq k_1(r)}\,
\max\left\{ f\!\left( k, \left\lfloor \frac{2 r k - 1}{2 (r + 1)} \right\rfloor; r \right), 
f\!\left( k, \left\lceil \frac{2 r k - 1}{2 (r + 1)} \right\rceil; r \right) \right\}, 
\label{10001043 better}
\end{align}
where
\begin{align}
\label{k1r}
k_1(r) 
&:= \left\lceil \frac12 \left\{ \left[ (2^r + 1)^{-1}  \left( \frac{3 (r + 1)}{2 r} \right)^r \right]^{1 / (r + 1)} - 1 \right\}^{-1} \right\rceil - 1 \\ 
& \sim \frac{1}{\ln 3  -\frac12 - \frac23 \ln 2} (r - 1)^{-1}\mbox{\rm \ as $r \downarrow 1$}.
\end{align}
\end{proposition}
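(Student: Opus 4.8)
The plan is to reuse the decreasing-envelope idea from the proof of \refT{T:Isymmunimodalr}, but to cut off the supremum over~$k$ at a threshold defined not by $f_1(k;r)\le1$ (which fails here, since $f_1(k;r)$ decreases to $\bigpar{\frac{2r}{r+1}}^r>1$ as $k\to\infty$ when $r>1$) but rather by $f_1(k;r)\le g(2;r)$. Concretely, by \refL{L:helpful1} we have $s(r)=\sup_k\max_{0\le y\le k-1}f(k,y;r)$ with $f(k,y;r)\le f_1(k;r)$, and by \refL{L:helpful2}(b) the envelope $f_1(\cdot\,;r)$ is nonincreasing (valid since $r>1$). Hence $\sup_{k\ge k_1(r)+1}\max_y f(k,y;r)\le f_1(k_1(r)+1;r)$, and everything reduces to showing $f_1(k_1(r)+1;r)\le g(2;r)$; since $g(2;r)=f(2,1;r)$ is itself one of the terms appearing in the finite maximum on the \rhs{} of \eqref{10001043} (note $k_1(r)\ge2$ throughout $(1,1.043)$, as $k_1(r)\approx 7.3\,(r-1)^{-1}$), this immediately yields \eqref{10001043}.

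The core computation is the inequality $f_1(k_1(r)+1;r)\le g(2;r)$, which I would establish using only the elementary bound $S(k;r)\ge k^{r+1}/(r+1)$. Writing $m:=k_1(r)+1$ and inserting this lower bound into \eqref{f1kr}, the desired inequality $f_1(m;r)\le 3^r/(2^r+1)=g(2;r)$ reduces, after factoring $2^{r+1}$ out of $(2m+1)^{r+1}/m^{r+1}$, to $\bigpar{1+\frac{1}{2m}}^{r+1}\le (2^r+1)^{-1}\bigpar{\frac{3(r+1)}{2r}}^r=:B^{r+1}$, that is, to $m\ge\frac{1}{2(B-1)}$. The definition \eqref{k1r} is exactly $k_1(r)=\lceil\frac{1}{2(B-1)}\rceil-1$, so $m=k_1(r)+1=\lceil\frac{1}{2(B-1)}\rceil\ge\frac{1}{2(B-1)}$ and the inequality holds. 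Along the way one must check $B>1$ for $r\in(1,1.043)$ (equivalently $\bigpar{\frac{3(r+1)}{2r}}^r>2^r+1$, with equality at $r=1$), so that $k_1(r)$ is a well-defined positive integer; this is a routine one-variable monotonicity check.

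For the sharper form \eqref{10001043 better}, I would invoke the log-concavity of $y\mapsto f(k,y;r)$ already recorded in \refL{L:helpful1}, whose unconstrained maximum sits at the real point $\frac{2rk-1}{2(r+1)}$ of \eqref{real maximizing y}; by log-concavity the maximum over integers $y\in[0,k-1]$ is attained at one of the two integers $\lfloor\frac{2rk-1}{2(r+1)}\rfloor$, $\lceil\frac{2rk-1}{2(r+1)}\rceil$, which is precisely the inner maximum in \eqref{10001043 better}. Finally, the stated asymptotic follows by a Taylor expansion of $\ln B$ about $r=1$: writing $r=1+\eps$ one finds $\ln B=\frac{\eps}{2}\bigpar{\ln 3-\frac12-\frac23\ln2}+O(\eps^2)$, whence $B-1\sim\frac{\eps}{2}\bigpar{\ln3-\frac12-\frac23\ln2}$ and $k_1(r)\sim\frac{1}{2(B-1)}\sim\bigpar{\ln3-\frac12-\frac23\ln2}^{-1}(r-1)^{-1}$.

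I expect the main obstacle to be pinning down the exact algebraic match between the closed form \eqref{k1r} and the cutoff inequality $f_1(k_1(r)+1;r)\le g(2;r)$ --- that is, recognizing that $g(2;r)$ (rather than~$1$) is the correct comparison level, and that the crude bound $S(k;r)\ge k^{r+1}/(r+1)$ is simultaneously strong enough to reach that level and clean enough to invert into the displayed ceiling formula. Everything else (the log-concavity reduction in~$y$, the monotonicity of $f_1$, and the asymptotic expansion) is routine.
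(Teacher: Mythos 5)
Your proposal is correct and takes essentially the same route as the paper's proof: the same reduction via \refL{L:helpful1} and the monotonicity of $f_1$ from \refL{L:helpful2}(b), the same comparison level $g(2;r)=3^r/(1+2^r)$, the same integral-comparison bound $S(k;r)\geq k^{r+1}/(r+1)$ leading to the bound $f_1(k_1(r)+1;r)\leq\bigpar{\frac{2r}{r+1}}^r\bigsqpar{1+\frac{1}{2(k_1(r)+1)}}^{r+1}$, and the same log-concavity argument in~$y$ for passing to \eqref{10001043 better}. Your only departures are expository: you spell out the algebra identifying \eqref{k1r} with the cutoff inequality $m\geq\frac{1}{2(B-1)}$ and supply the Taylor expansion of $\ln B$ at $r=1$ for the stated asymptotic, both of which the paper leaves implicit.
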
 

\begin{proof}
Observe that the expression appearing in curly braces in~\eqref{k1r} is indeed strictly positive  for 
$r \in (1, 2.14]$.  Also, as noted in the proof of \refL{L:helpful1}, the expression $f(k, y; r)$ given in~\eqref{fkyr} is log-concave in real~$y$ and maximized over real~$y$ at $y = \frac{2 r k - 1}{2 (r + 1)}$ 
[recall~\eqref{real maximizing y}]; the maximum over integer~$y$ is thus at either the floor or ceiling of this value.  This leads us from~\eqref{10001043} to~\eqref{10001043 better}. 

The strategy for proving~\eqref{10001043} is to note that 
\begin{equation}
\label{sr LB}
s(r) \geq g(2;r) = \frac{3^r}{S(2; r)}
\end{equation}
for all $r \in (1, \infty)$
while also upper-bounding 
\begin{equation}
s_1(r) := \sup_{k > k_1(r)}\,\max_{0 \leq y \leq k - 1} f(k, y; r)
\end{equation}
as follows, using~\eqref{fkyr}--\eqref{f1kr}, \refL{L:helpful2}(b), and the simple integral-comparison lower  bound 
$S(k; -r) \geq (r + 1)^{-1} k^{r + 1}$:
\begin{equation}
\label{s1r bound}
s_1(r) \leq f_1(k_1(r) + 1;r) \leq \left( \frac{2 r}{r + 1} \right)^r \left[1 + \frac{1}{2 (k_1(r) + 1)} \right]^{r + 1}.
\end{equation}
With the definition~\eqref{k1r} of $k_1(r)$, this implies
\begin{equation}
\label{s1r bound b}
s_1(r) \leq \frac{3^r}{1 + 2^r} = g(2; r) \leq s(r),
\end{equation}
completing the proof.
\end{proof}

\begin{remark}
(a) We
have used~\eqref{10001043 better} and {\tt Mathematica} (i)~with exact arithmetic to verify (rigorously) that 
$s(r) = g(2; r) = 3^r / (1 + 2^r)$ for $r = 1.001, 1.002, \ldots, 1.043$ and (ii)~with floating-point arithmetic to verify that $s(r) = g(2; r)$ for $r = 1 + 2^{-j}$ for $j = 10, \ldots, 25$.  This evidence is the basis of our strong belief that 
\refConj{Conj:s g} is correct.

(b)~\refP{P:compute} is much ado about nothing if \refConj{Conj:s g} is correct.  Indeed, in that case we would  find
\begin{equation}
s(r) = g(2, r) = \frac{3^r}{1 + 2^r}\mbox{\ for $r \in [1, 2]$}.
\end{equation}
In particular (compare \refR{R:bounds comparison}), as $r \downarrow 1$ we would then have
\begin{equation}
s(r) = 1 + (1 + o(1)) (\ln 3 - \tfrac23 \ln 2) (r - 1) \approx 1 + 0.637 (r - 1),
\end{equation}
whereas the corresponding supremum in the absolutely continuous symmetric
unimodal case equals (see  \refT{T:unimodal} and \refR{R:sharp})
\begin{equation}
\left( \frac{2 r}{r + 1} \right)^r  = 1 + (1 + o(1)) \frac12 (r - 1)
\end{equation} 
and in the general case equals (see  \refT{T:main} and \refR{R=})
\begin{equation}
2^{r - 1} = 1 + (1 + o(1) (\ln 2) (r - 1) \approx 1 + 0.693 (r - 1).
\end{equation} 

(c) We
could narrow the gap $(1, 1.043)$ in \refT{T:Isymmunimodalr}, but we do not know how to eliminate the gap entirely.
The difficulty is that we do not know a better tractable bound on $f(k, y; r)$ than $f_1(k, r)$ [recalling~\eqref{fkyr}], and for each $k \in \bbN$ the inequality $f_1(k; r) \leq g(2; r)$ fails for all $r > 1$ sufficiently close to~$1$, since 
$f_1(k; \cdot)$ and $g(2; \cdot)$ are both continuous and
\begin{equation}
f_1(k; 1) = \frac{4 k^2 + 4 k + 1}{4 k^2 + 4 k} > 1 = g(2; 1).
\end{equation}
\end{remark}

\newcommand\vol{\textbf}

\end{document}